\documentclass[11pt]{article}
\pdfoutput=1


\usepackage[bib]{preamble/packages}
\usepackage{preamble/commands}
\usepackage{preamble/formatting}


\usepackage{spectralsequences}
\usepackage{adjustbox}

\AtEveryBibitem{
    \clearfield{urlyear}
    \clearfield{urlmonth}
}


\newcommand*{\C}{\mathcal{C}}
\newcommand*{\D}{\mathcal{D}}

\newcommand*{\A}{\mathbf{A}}
\newcommand*{\E}{\mathbf{E}}

\newcommand*{\Z}{\mathbf{Z}}

\newcommand*{\F}{\mathbf{F}}
\newcommand*{\G}{\mathbf{G}}

\newcommand*{\K}{\mathrm{K}}

\newcommand*{\X}{\mathfrak{X}}
\newcommand*{\Y}{\mathfrak{Y}}


\newcommand*{\Ga}{\Gamma}
\renewcommand*{\epsilon}{\eps}
\newcommand*{\Sph}{\S}

\DeclareMathOperator{\Gr}{Gr}

\newcommand*{\Tr}{\mathrm{Tr}}

\newcommand*{\M}{\mathfrak{M}}
\newcommand*{\Mbar}{\xoverline{\M}}

\newcommand*{\Mellbar}{\xoverline{\M}_\Ell}

\renewcommand*{\top}{\mathrm{top}}

\newcommand*{\kappabar}{\xoverline{\kappa}}


\newcommand*{\Span}{\mathrm{Span}}
\newcommand*{\all}{\mathrm{all}}

\newcommand*{\al}{\alpha}
\newcommand*{\be}{\beta}

\newcommand*{\OO}{\mathcal{O}}

\title{The descent spectral sequence\\
for topological modular forms}
\author{Christian Carrick\footnote{\href{mailto:carrick@math.uni-bonn.de}{\texttt{carrick@math.uni-bonn.de}}},\, Jack Morgan Davies\footnote{\href{mailto:davies@uni-wuppertal.de}{\texttt{davies@uni-wuppertal.de}}},\, and Sven van Nigtevecht\footnote{\href{mailto:nigtevec@math.uni-bonn.de}{\texttt{nigtevec@math.uni-bonn.de}}}}
\date{10th June 2026}

\begin{document}
\maketitle

\begin{abstract}
    We prove the Gap Theorem for the spectrum of topological modular forms $\mathrm{Tmf}$. This removes a longstanding circularity in the literature, thereby confirming the computation of $\pi_\ast \mathrm{tmf}$ from over two decades ago by Hopkins and Mahowald. Our approach is crucially a modern one, developing and refining many techniques in synthetic spectra.
\end{abstract}

\setcounter{tocdepth}{2}
\tableofcontents

\newpage

\section{Introduction}

Elliptic cohomology and topological modular forms ($\tmf$) play an essential role in modern stable homotopy theory. Aside from its connections to physics and number theory, $\tmf$ is a vital approximation to the stable homotopy groups of spheres $\pi_\ast \Sph$; see \cite{goerss_etal_resolution_K2_local} and \cite{wang_xu_61stem}.
The work of Isaksen--Wang--Xu \cite{isaksen_wang_xu_dimension_90}, the state of the art in computations of stable homotopy groups of spheres, uses $\tmf$ as a necessary tool.
A thorough understanding of the homotopy groups of $\tmf$ is therefore indispensable to our knowledge of the stable homotopy groups of spheres.

The computation of $\pi_\ast \tmf$ was first announced by Hopkins--Mahowald in \cite[Section~15]{tmf_book}, and details of this computation have appeared in many sources; see \cite{bauer_tmf}, \cite{bruner_rognes_ASS_tmf}, \cite{isaksen_etal_motivic_ANSS_tmf}, \cite{konter_Tmf}, \cite{rezk_tmf_lecture_notes}.
Shockingly, a complete proof has never appeared in the literature: all sources take either $\MU_*(\tmf)$ or $\uH_*(\tmf;\F_2)$ as input.
This circularity in the literature was pointed out by Meier \cite{meier_review_homology_tmf}; see \cref{ssec:history} below for a more detailed discussion.
Mathew \cite{mathew_homology_tmf} has shown that computing $\MU_*(\tmf)$ or $\uH_*(\tmf;\F_2)$ requires the Gap Theorem.
Giving an independent proof of the Gap Theorem would therefore fix the circularity.
This is what we do in this paper.

\begin{restatable}{theoremLetter}{thmgap}
    \label{thm:gap}
    The homotopy groups $\oppi_n \Tmf$ vanish for $-21<n<0$.
\end{restatable}

The spectrum $\tmf$ is by definition the connective cover of $\Tmf$, and the Gap Theorem allows one to deduce $\MU_*(\tmf)$ from $\MU_*(\Tmf)$.
The latter of these homologies follows directly from the definition of $\Tmf$ as the global sections of a spectral stack.

As a consequence of our techniques, we also obtain an independent computation of the homotopy groups of both $\Tmf$ and $\tmf$; see \cref{thm:homotopyoftmf}.

The difficulty in proving the Gap Theorem lies in running a spectral sequence converging to $\pi_*\Tmf$ that does not depend on the Adams (ASS) or Adams--Novikov (ANSS) spectral sequences for $\tmf$.
In work of Konter \cite{konter_Tmf} for example, the descent spectral sequence (DSS) for $\Tmf$ is computed, but only by assuming the ANSS for $\tmf$.
Running the DSS without this input is difficult for at least two reasons: the DSS is not \emph{a priori} an algebra over the ANSS for $\Sph$, and the DSS is much less sparse than the ANSS for $\tmf$.
The former makes it difficult to deduce differentials using low-degree information from $\pi_*\Sph$, and the latter means there are many possible differentials that need to be ruled out.
On the other hand, directly computing the ANSS for $\Tmf$ would be extremely difficult; we detail the problems in \cref{sssec:DSSvsANSS}.

However, in \cite{CDvN_part1} the authors constructed the synthetic $\E_\infty$-ring $\Smf$ of \emph{synthetic modular forms}, which implements the DSS for $\Tmf$ as an algebra over the ANSS for $\Sph$ in the strongest possible sense. Exploiting this structure is the key idea of this paper.

\begin{center}
    \emph{Using the $\E_\infty$-algebra $\Smf$ in $\MU$-synthetic spectra in tandem with modern techniques, we carefully compute the DSS, leading to the Gap Theorem.}
\end{center}

The map of synthetic $\E_\infty$-rings $\Sph\to\Smf$ allows us to prove detection statements for $\Tmf$, using low-degree information about $\pi_*\Sph$ and the naturality of Toda brackets along $\E_\infty$-ring maps. These detection statements force certain differentials in low degrees that we propagate to all further differentials using many techniques in synthetic spectra, such as generalized versions of the Leibniz rule, synthetic transfer maps, synthetic Toda brackets, and a version of Moss' theorem. 

The variants of the Leibniz rule allow one to make sophisticated differential stretching arguments, and we use this technique in particular to deduce a critical $d_7$ on $\Delta^4$, solving a problem posed by Isaksen--Kong--Li--Ruan--Zhu \cite[Problem~1.2]{isaksen_etal_motivic_ANSS_tmf}.
One notable use of synthetic Toda brackets is our proof of the $d_{23}$-differential killing $\kappabar^6$.
Previous sources have sketched a proof of this differential using a $6$-fold Toda bracket; using synthetic brackets, we give a much simpler argument using a $4$-fold bracket.

Below, we detail the history of this circularity and our approach to $\Tmf$ using the descent spectral sequence.
We then describe our techniques and highlight some key steps in the computation.


\subsection{History}\label{ssec:history}
The discovery of \emph{topological modular forms} $\tmf$ was announced in Hopkins' 1994 ICM address \cite{hopkins_icm95_tmf}. Many of the insights to its construction were years ahead of their time, and it took a while for sufficient technology to appear. The push towards these new techniques, such as model categories, obstruction theories for operadic structures, and eventually higher categories and derived algebraic geometry, launched several threads of homotopy theory still occupying mathematicians to this day. We would like to give a short summary of the history of $\tmf$, some of which appears in print and some that does not.

\subsubsection{Original construction}
Hopkins' original construction of $\tmf$ (completed at the prime $2$), was done in several chromatic layers. First, he considered the $G_{48}$-homotopy fixed-points spectrum of height $2$ Morava E-theory $\uE_2^{hG_{48}}$, where $G_{48}$ is a rank $48$ subgroup of the height $2$ extended Morava stabiliser group $\G_2$ acting on the height $2$ Morava E-theory spectrum $\uE_2$. It was only after computing with the associated homotopy fixed-point spectral sequence, that it became clear that there was strong connection between $\pi_\ast \uE_2^{hG_{48}}$ and modular forms. In hindsight, these fixed-points are precisely $L_{\K(2)}\tmf$, the localisation of $\tmf$ at the height $2$ Morava K-theory spectrum $\K(2)$.

Classically, one would use the Landweber exact functor theorem to show that the height~$n$ extended Morava stabiliser group $\G_n$ acts on the cohomology theory associated with $\uE_n$ for any $n$. This action on $\uE_n$ is not sufficient to construct the homotopy fixed-point spectrum above though, as it is confined to the homotopy category of spectra. Inspired by Robinson's $\E_1$-obstruction theory, and its applications to the Morava K-theories $\K(n)$ \cite{robinson_obstructiontheory} and variants of Morava E-theories by Baker \cite{baker_ainfty_etheories}, Hopkins--Miller refined this $\G_n$-action on $\uE_n$ to one of $\A_\infty = \E_1$-rings, with a proof written down by Rezk \cite{rezk_hopkinsmillertheorems}. Roughly speaking, one proves that the spaces of $\E_1$-endomorphisms of $\uE_n$ is discrete. The same techniques also show that the spaces of maps from $\uE_n^{\otimes d}$ to $\uE_n$ is discrete, which suggests that $\uE_n$ is a commutative monoid in $\E_1$-rings, so an $\E_\infty$-ring. 

This suspicion was confirmed by Goerss--Hopkins \cite{goerss_hopkins_obstructiontheory} using their $\E_\infty$-obstruction theory, and lead to the first Goerss--Hopkins--Miller theorem, recognising $\uE_n$ as an $\E_\infty$-ring with an $\E_\infty$-action by $\G_n$; a mild correction to the original proof appears in \cite[Section~7]{pstragowski_vankoughnett_obstruction_theory}. An alternative approach using derived algebraic geometry was given by Lurie in \cite{Lurie_ecsurvey,Lurie_EC2} as well as his generalisation appearing in \cite[Section~8.1]{behrens_lawson_TAF} and \cite{davies_luries_theorem}. Crucially, this $\E_\infty$-action of $\G_n$ on $\uE_n$ yields an $\E_\infty$-ring $\uE_2^{hG_{48}}$. The $\uE_2$-page of the associated homotopy fixed-point spectral sequence is hard to compute, as $G_{48}$ is a complicated group, but more so because the action of $\G_n$ on $\pi_\ast \uE_n$ is notoriously mysterious to compute with. Nevertheless, Hopkins--Mahowald computed this $G_{48}$-homotopy fixed-point spectral sequence. This particular computation has never appeared in print, although it can be recovered from this article; also see \cite{duan_etal_K2_tmf} for more on this connection.

The next step in constructing $\tmf$ was to understand its $\K(1)$-local piece: \emph{initial $\E_\infty$-$\K(1)$-local elliptic cohomology theory} $T$. This $\E_\infty$-ring is described in \cite[Section~16]{tmf_book}, originally written in 1998, and a full proof computing the homotopy groups of $T$ was given by Laures in \cite{laures_k1localtmf_2004}. Of course, this $T$ is simply $L_{\K(1)}\tmf$ in disguise. The universality of this $T$ produces a map $T \to L_{\K(1)} \uE_2^{hG_{48}}$, whose pullback along the localisation map $\uE_2^{hG_{48}} \to L_{\K(1)} \uE_2^{hG_{48}}$ gives a definition of $L_2 \tmf = \Tmf^\wedge_2$, the localisation of $\tmf$ at $\uE_2$ and the $2$-completion of \emph{projective topological modular forms} $\Tmf$. A similar construction of an integral $\Tmf$ using these chromatic techniques can be found in Behrens' chapter \cite[Section~12]{tmf_book}. Again, Lurie provides an alternative construction of $\TMF$ in \cite[Section~7]{Lurie_ecsurvey,Lurie_EC2} using derived algebraic geometry, with extensions to $\Tmf$ also given by the second named author with Linskens in \cite{Davies_Linskens_Tatecurve}. The unpublished Hopkins--Mahowald computation, together with a further computation of $\pi_\ast L_{\K(1)} \uE_2^{hG_{48}}$ and Laures' work on~$T$, could now be combined to give the homotopy groups of $L_2\tmf$ together with a version of the Gap Theorem (\Cref{thm:gap}).

\subsubsection{A series of computations}
In the absence of all of the details of the Hopkins--Mahowald computation of $\pi_\ast \uE_2^{hG_{48}}$, multiple authors have provided various new insights into the computations of $\pi_\ast \tmf$.

\begin{itemize}
    \item In \cite[Section~15]{tmf_book}, originally written in 1998, Hopkins--Mahowald sketch how $\pi_*\tmf$ can be computed by either an Adams spectral sequence (ASS) or an Adams--Novikov spectral sequence (ANSS). The $\uE_2$-pages of both spectral sequences are assumed here and most details are omitted.
    \item In \cite{rezk_tmf_lecture_notes}, originally written in 2001, Rezk assumes the $\MU$-homology of $\tmf$ and outlines how one can compute the homotopy groups of $\tmf$.
    \item In \cite{bauer_tmf}, Bauer assumes the $\MU$-homology of $\tmf$, and gives many subsequent details in the ANSS for $\tmf$.
    \item In \cite{konter_Tmf}, Konter uses Bauer's work to compute the DSS for $\Tmf$.
    \item Much later, in \cite{bruner_rognes_ASS_tmf}, Bruner--Rognes produce the most thorough computation of $\oppi_\ast\tmf$ using the ASS for $\tmf$. They assume the $\F_p$-homology of $\tmf$ as their starting point.
    \item Recently, in \cite{isaksen_etal_motivic_ANSS_tmf}, Isaksen--Kong--Li--Ruan--Zhu compute the ANSS for $\tmf$ in the context of $\mathbf{C}$-motivic homotopy theory, using the $\uE_2$-pages of the ASS and ANSS for $\tmf$ as input.
\end{itemize}

A logical flowchart connecting these computations appears below.

In all of these examples, there is a reliance on either the $\MU$- or $\F_p$-homology of $\tmf$, or on the $\uE_2$-page of the ASS or ANSS for $\tmf$, and no reason is given why one might know these \emph{a priori}.
Indeed, the definition of $\tmf$ as the connective cover of $\Tmf$ does not make it clear how to compute its homology from first principles.

Mathew \cite{mathew_homology_tmf} suggests a path to close this hole in the literature. He shows that if $\Tmf$ satisfies the Gap Theorem (\cref{thm:gap}), then one can deduce the $\MU$-homology of $\tmf$ from $\MU_\ast\Tmf$, which itself follows from the algebro-geometric definition of $\Tmf$. The $\F_p$-homology of $\tmf$ follows from its $\MU$-homology by a careful analysis of the map $\MU \to \F_p$. (Bear in mind, however, that the $\F_p$-homology of $\Tmf$ vanishes for all $p$.)

In \cite{mathew_homology_tmf}, Mathew implies that a proof of the Gap Theorem can be found in \cite{konter_Tmf}, but obtaining the Gap Theorem in this way would be circular, as it relies on Bauer's work which assumes a computation of the $\MU$-homology of $\tmf$. The first time this circularity is explicitly brought up in the literature is in Meier's review of \cite{mathew_homology_tmf}; see \cite{meier_review_homology_tmf}.

This left the literature in a precarious situation. Many groundbreaking papers in topology, such as \cite{wang_xu_61stem,isaksen_wang_xu_dimension_90}, rely on the homotopy groups of $\tmf$ as well as its ASS and ANSS. To fix this circularity, we prove the Gap Theorem using the \emph{descent spectral sequence} for $\Tmf$; see \cref{thm:dss}.
In addition, this yields the ANSS for $\tmf$; see \cref{thm:ansstmf}.

\subsection{The descent spectral sequence}

The $\E_\infty$-ring $\Tmf$ of \emph{\textbr{projective} topological modular forms} is defined as the global sections of the Goerss--Hopkins--Miller sheaf $\calO^\top$ on the moduli stack of generalised elliptic curves $\Mellbar$; see \cite[Section~12]{tmf_book} for a construction and \cite{davies_otop} for an axiomatic characterisation. This sheaf has the property that $\oppi_{2d} \calO^\top$ is isomorphic to the $d$-fold tensor power of the canonical line bundle $\omega$. Immediately from this property, one obtains a \emph{descent spectral sequence} (DSS) converging to the homotopy groups of $\Tmf$:
\begin{equation}\label{eq:dss}
\uE_2^{n,s} \cong \uH^s(\Mellbar, \,\omega^{\otimes (n+s)/2}) \implies \oppi_n \Tmf.
\end{equation}
The $\uE_2$-page is computed purely algebraically; see \cite{deligne_modularforms} for the computation where $s=0$ and $n\geq 0$, and \cite[Figures~10 and~25]{konter_Tmf} for the general pictures.

The main computation of this article is to determine all of the differentials in this spectral sequence.

\begin{restatable}{theoremLetter}{thmdss}
    \label{thm:dss}
    The DSS for $\Tmf$ of \eqref{eq:dss} takes the form depicted in \cref{etwoprime2,efiveprime2_part1,efiveprime2_part2,efiveprime2_part3,efiveprime2_part4} at the prime~$2$, depicted in \cref{ssprime3} at the prime~$3$, and collapses otherwise as detailed in \cref{rationalcomputation}.
\end{restatable}

The Gap Theorem (\cref{thm:gap}) immediately follows. In fact, we do not know how to prove the Gap Theorem without computing essentially the whole DSS.

Computing this spectral sequence leads us immediately to the homotopy groups of $\Tmf$, and hence also to the homotopy groups of its connective cover $\tmf$, which we call \emph{connective topological modular forms}.

\begin{restatable}{corollaryLetter}{htpytmf}
    \label{thm:homotopyoftmf}
    The homotopy groups of $\Tmf$, and hence also those of $\tmf = \tau_{\geq 0} \Tmf$, are computed; see \cref{rationalcomputation} away from $6$, \cref{ssprime3} at the prime~$3$, and \cref{efiveprime2_part1,efiveprime2_part2,efiveprime2_part3,efiveprime2_part4} at the prime~$2$.
\end{restatable}

Not only do we obtain the homotopy groups of $\tmf$, but also its ANSS.

\begin{restatable}{corollaryLetter}{ansstmf}
    \label{thm:ansstmf}
    There is an inclusion of the ANSS for $\tmf$ into the DSS for $\Tmf$ as a retract of spectral sequences. In particular, the ANSS for $\tmf$ is the region under the blue line of \cref{efiveprime2_part1,efiveprime2_part2,efiveprime2_part3,efiveprime2_part4} at the prime $2$ from the $\uE_5$-page, the region under the blue line of \cref{ssprime3} and $3$, and the connective part of \cref{rationalcomputation} away from $6$.
\end{restatable}

The ANSS and homotopy groups of periodic topological modular forms, denoted $\TMF$, also follow from the DSS for $\Tmf$.

\begin{restatable}{corollaryLetter}{sssTMF}
    \label{thm:sssTMF}
    The ANSS for $\TMF$ is obtained from the DSS for $\Tmf$ by inverting $\Delta^{24}$.
    Specifically, at the prime $2$ it is obtained by inverting $\Delta^{8}$ in \cref{etwoprime2,efiveprime2_part1,efiveprime2_part2,efiveprime2_part3,efiveprime2_part4}, at the prime $3$ by inverting $\Delta^{3}$ in \cref{ssprime3}, and away from $6$ by inverting $\Delta$ in \cref{rationalcomputation}.
\end{restatable}

\Cref{thm:gap,thm:dss} and their corollaries above are proven in \cref{sec:proofs_main_theorems}.

The following flowchart indicates the relationship between these results and the literature on topological modular forms.
\[\begin{tikzcd}
	{\text{DSS for Tmf (Th.~\ref{thm:dss})}} & {\text{Gap Theorem (Th.~\ref{thm:gap})}} \\
    { } &   {\text{MU-homology of tmf}}   &   {\F_p\text{-homology of tmf}}\\
	{\text{DSS for TMF}} & {{\uE_2\text{-page of ANSS for tmf}}} & {{\uE_2\text{-page of ASS for tmf}}} \\
	{\text{ANSS for TMF}} & {\text{ANSS for tmf}} & {\text{ASS for tmf}}
	\arrow[from=1-1, to=1-2]
	\arrow["\Delta^{-24}"', from=1-1, to=3-1]
	\arrow["{\text{\cite[Cor.~5.2]{mathew_homology_tmf}}}", from=1-2, to=2-2, swap]
	\arrow["{\text{\cite[Th.~5.13]{mathew_homology_tmf}}}", from=1-2, to=2-3, bend left=15]
	\arrow["{\text{\cite[Th.~C]{CDvN_part1}}}"', no head, from=3-1, to=4-1, "\cong"]
	\arrow["{\text{\cite[\textsection 5\&7]{bauer_tmf}}}", from=2-2, to=3-2, swap, shift right=1.5]
	\arrow["{\text{\cite[\textsection 6\&8]{bauer_tmf}}}", from=3-2, to=4-2, swap, shift right=1.5]
    \arrow["{\text{\cite{rezk_tmf_lecture_notes}}}", shift left=1.5, from=2-2, to=3-2]
    \arrow["{\text{\cite{rezk_tmf_lecture_notes}}}", shift left=1.5, from=3-2, to=4-2]
	\arrow["{\text{\cite[Part I]{bruner_rognes_ASS_tmf}}}", from=2-3, to=3-3]
    \arrow["{\text{\cite[Part II]{bruner_rognes_ASS_tmf}}}", from=3-3, to=4-3]
	\arrow["{{\Delta^{-24}}}", from=4-2, to=4-1]
	\arrow["{\text{\cite{isaksen_etal_motivic_ANSS_tmf}}}"{name=M}, from=3-3, to=4-2]
    \arrow[no head, from=3-2, to=M,shorten=0.3em]
    \arrow[from=1-1,to=4-2, bend right,"{\text{Cor.~\ref{thm:ansstmf}}}"]
    \arrow["{\text{\cite[§21]{rezk_tmf_lecture_notes}}}", from=2-2, to=2-3]
\end{tikzcd}\]

As the goal of this article is to find a geodesic route from the definition of $\Tmf$ to its homotopy groups, we do not compute all of the structure of $\tmf$.
For instance, we do not compute all hidden extensions in the ANSS for $\tmf$, nor do we give a comprehensive record of Toda brackets in $\tmf$.
We only determine the information needed to deduce \cref{thm:gap,thm:dss}; in particular, the tables of \cref{ssec:tables} only contain the information we need to establish these results.
Generalisations of the techniques to deduce hidden extensions are discussed in \cref{rmk:moreexts}.

A more thorough record of the multiplicative structures on $\tmf$ can be found in \cite{bruner_rognes_ASS_tmf} and \cite{isaksen_etal_motivic_ANSS_tmf}.
We find the techniques of \cite{isaksen_etal_motivic_ANSS_tmf}, comparing the ANSS and the ASS of $\tmf$, to be the more methodical and efficient path, once the Gap Theorem has been proven. In addition, we should mention that the multiplicative structure of $\pi_\ast\tmf$ at odd primes is discussed in \cite[Section~13]{bruner_rognes_ASS_tmf}, and that Marek \cite{marek_nu_tmf} has computed the bigraded homotopy groups of the $\F_2$-synthetic analogue of $\tmf$ based on \cite{bruner_rognes_ASS_tmf}.

\begin{remark}
    It may be possible to deduce the Gap Theorem from the homotopy groups of $L_{\K(2)}\tmf$ using an open-closed decomposition of $\Mellbar$ via the $j$-invariant.
    The homotopy groups of $L_{\K(2)}\tmf$ are essentially computed by \cite{duan_etal_K2_tmf} using techniques from equivariant homotopy such as restrictions, transfers, norms, and vanishing lines coming from Real bordism theory. However, they do not prove the Gap Theorem, and we do not pursue this approach in this paper.
    Our approach proceeds more from first principles using the DSS, and our methods apply much more broadly in contexts outside the homotopy fixed-point spectral sequence.
\end{remark}

\subsubsection{The DSS versus the ANSS}\label{sssec:DSSvsANSS}

We do not use the ANSS for $\Tmf$ in any of our computations, and this is a very deliberate choice.
While its $\uE_2$-page is readily computed using the definition of $\Tmf$ (see \cite[Proposition~5.1]{mathew_homology_tmf}), the resulting spectral sequence is incredibly hard to compute.
Part of the $\uE_2$-page is the same as that of the DSS, which we call the \emph{connective region}; this is the area under the line $5s \leq n+12$.
The ANSS differs outside of this region, as elements in negative stems are moved down by one filtration.
We refer to this region as the \emph{nonconnective region}, although it also lives in stems $n\geq 0$, but only in high filtration.
This filtration shift in the nonconnective region causes many of the techniques of this article to utterly fail for the ANSS of $\Tmf$.

\begin{itemize}
    \item The differentials in the DSS are propagated from explicit atomic differentials using the meta-arguments of \cref{ssec:meta}.
    These meta-arguments fail for the ANSS for $\Tmf$, as all elements outside of the connective region are $\Delta$-power torsion.
    \item The class $c_4$ is seen to be a $d_{11}$-cycle by arguing that the target of the potential $d_{11}$ is one that supports a $d_{11}$; see \cref{cor:nolinecrossingE11}.
    Not only do we not know how to justify this second nonzero $d_{11}$ due to the first problem above, but now $c_4$ could \emph{a priori} support a $d_{10}$ in the ANSS.
    The same issue holds for other classes such as $h_1\Delta$.
    \item One way of stating the essential difference between $\Smf$ and $\opnu \Tmf$ is that the former is an \emph{even} $\MU$-synthetic spectrum, and the latter is not.
    Concretely, this means that the DSS for $\Tmf$ satisfies a checkerboard phenomenon, and the ANSS for $\Tmf$ does not, and this causes the difficulties mentioned above; see \cref{rmk:checkmate} for more details.
\end{itemize}

We do not know how to work around these problems without appealing to the computation of the DSS.

As mentioned above, the advantage of the ANSS over the DSS for $\Tmf$ is that the former naturally receives a map from the ANSS for $\S$.
Using synthetic spectra, we are able to define a similar map for the DSS, so that the ANSS has no advantage over the DSS.

\subsection{Synthetic methods}

The first technical step necessary for our computation is to put the DSS on good footing. In particular, we would like it to receive a map from the ANSS for $\S$.
A map out of the ANSS for $\S$ can be constructed by moving to a setting where it has a universal property.
The $\infty$-category $\Syn_\MU$ of $\MU$-synthetic spectra defined by Pstr\k{a}gowski \cite{pstragowski_synthetic} provides such a setting.
It functions as a type of $\infty$-category of modified Adams--Novikov spectral sequences; see \cite[Section~9]{burklund_hahn_senger_manifolds} or \cite[Section~1]{christian_jack_synthetic_j}.
There is a functor $\nu \colon \Sp \to \Syn_\MU$ called the \emph{synthetic analogue functor}, which sends $X$ to the synthetic spectrum encoding the ANSS for $X$.
Importantly, this functor is not essentially surjective; synthetic spectra outside its essential image act as modified Adams--Novikov spectral sequences.
The $\infty$-category $\Syn_\MU$ is also naturally symmetric monoidal, with $\nu\S$ being the monoidal unit.
This means $\nu \S$ is initial in $\CAlg(\Syn_\MU)$, which gives the ANSS for~$\S$ the universal property we need.

To obtain the desired map, one has to implement the DSS for $\Tmf$ as an object in this $\infty$-category.
In previous work \cite{CDvN_part1}, we construct an $\E_\infty$-algebra in $\MU$-synthetic spectra that does exactly this.
It is constructed by imitating the definition of $\Tmf$ as global sections of a sheaf of spectra; accordingly, we call it \emph{synthetic modular forms}.

\begin{construction}[\cite{CDvN_part1}]
    \label{constr:recall_Smf_definition}
    Write $\Mellbar$ for the moduli stack of generalised elliptic curves. The Goerss--Hopkins--Miller sheaf of $\E_\infty$-rings $\calO^\top$ is a sheaf
    \[
        \calO^\top \colon (\mathrm{Aff}_{/\xoverline{\scriptstyle\M}_\Ell}^\et)^\op \to \CAlg(\Sp)
    \]
    on the small \'{e}tale site of affine schemes with an \'etale map to $\Mellbar$.
    The $\E_\infty$-ring $\Tmf$ is defined as the global sections of $\calO^\top$ over $\Mellbar$.
    By \cite[Proposition~2.8]{CDvN_part1}, the composition of $\calO^\top$ with $\nu \colon \Sp \to \Syn_\MU$ is an \'etale sheaf valued in $\Syn_\MU$.
    The synthetic spectrum of \defi{\textbr{projective} synthetic modular forms}, denoted by $\Smf$, is defined as the global sections of this sheaf.
    Since $\nu$ is lax symmetric monoidal, $\Smf$ is naturally an $\E_\infty$-algebra in $\Syn_\MU$.
\end{construction}

Implementing the DSS as a homotopy ring in synthetic spectra is enough to give a map of  multiplicative spectral sequences
\[\mathrm{ANSS}(\S)\to\mathrm{DSS}(\Tmf).\]
However, the $\E_\infty$-structure on $\Smf$ ensures that this map also respects all coherent multiplicative structure; see \cite[Theorem~B]{CDvN_part1}. We use this structure for example to show that the elements $\eta$, $\nu$, $\kappa$ and $\kappabar$ are detected along the $\E_\infty$-map $\nu\S\to\Smf$; see \cref{detectionsection}. This is crucial also to our use of tools such as the synthetic Leibniz rule and synthetic Toda brackets.

\begin{remark}
    We would like to emphasize that implementing the DSS as a synthetic $\E_\infty$-ring in this way is not simply a matter of taste. Both the $\E_\infty$-structure and the use of the $\infty$-category of synthetic spectra are crucial to almost all aspects of our computation. We do not know how one could carry out this computation without them. 
\end{remark}

\begin{remark}
    There is also a synthetic version of the periodic $\TMF$, denoted by $\SMF$, but this turns out to be equivalent to $\opnu\TMF$ as an $\E_\infty$-algebra; see \cite[Theorem~C]{CDvN_part1}.
    For the connective version $\tmf$, it is not clear how to give a purely synthetic definition that would help us carry out this computation.
    For instance, when phrased in terms of $\MU$-synthetic spectra, the $\E_\infty$-algebra $\mmf$ of \cite{mmf} is by definition given by $\opnu\tmf$.
    As such, the synthetic spectrum $\opnu \tmf$ does not give a new way to compute the ($\uE_2$-page of the) ANSS for $\tmf$.
    See also the discussion in \cite[Remark~4.9]{CDvN_part1}.
\end{remark}

\begin{remark}\label{rmk:checkmate}
    The synthetic spectrum $\Smf$ is not equivalent to $\opnu\Tmf$, stemming from the fact that the DSS and ANSS for $\Tmf$ are not isomorphic.
    The homological (a.k.a.\ canonical) t-structure on $\Syn_\MU$ allows one to see this: $\Smf$ is $(-1)$-connective but not connective, and there is an $\E_\infty$-map $\opnu\Tmf \to \Smf$ that exhibits $\opnu\Tmf$ as the connective cover of $\Smf$; see \cite[Corollary~4.15]{CDvN_part1}.
    Another essential difference between $\Smf$ and $\opnu\Tmf$ is that the former is an \emph{even} $\MU$-synthetic spectrum (in the sense of \cite[Section~5.2]{pstragowski_synthetic}), while the latter is not.
    In particular, after $p$-completion, $\Smf$ is a $\mathbf{C}$-motivic spectrum.
\end{remark}

\subsubsection{Truncated synthetic spectra}

The key to the computation of the DSS is not to apply our techniques to $\Smf$ directly, but rather to synthetic truncations of it.
Let $X$ be a synthetic spectrum.
For every $k\geq 1$, there is a synthetic spectrum $X/\tau^k$, and these fit into a tower
\[
    X \to \dotsb \to X/\tau^2 \to X/\tau.
\]
If $X$ is an $\E_\infty$-algebra, then this tower is naturally one of $\E_\infty$-algebras.
The bottom object $X/\tau$ encodes exactly the $\uE_2$-page of the underlying spectral sequence.
For larger $k$, the object $X/\tau^k$ can be thought of as encoding a designer spectral sequence that artificially terminates on the $\uE_{k+1}$-page, while still being a highly structured object.
In particular, Toda brackets in it make sense; these can be thought of as Toda brackets that are only ``temporarily'' defined, in the sense that they may involve elements that do not survive past a certain page. Similarly, we can speak of hidden extensions that are only ``temporarily'' defined.
In addition to these features, the homotopy group $\oppi_{n,s}X/\tau^k$ captures information about the $n$-stem, but does not see phenomena that occur in filtrations $s+k$ and above.
Applied to $X = \Smf$, these truncated objects therefore allow one to work with the DSS as if it did have a vanishing line.

Carrying this out requires a description of the homotopy groups of $\oppi_{*,*}X/\tau^k$ for $k\geq 1$.
The \emph{Omnibus Theorem} of \cref{ssec:omnibus} carefully expresses how the bigraded homotopy groups of a synthetic spectrum capture its underlying spectral sequence.
The version we describe is a modification and generalisation of the original theorem of the same name of Burklund--Hahn--Senger \cite[Theorem~9.19]{burklund_hahn_senger_manifolds}.
It is modified in that it describes synthetic spectra of the form $X/\tau^k$, and more general in that it applies to an arbitrary synthetic spectrum $X$, not just a synthetic analogue.
The notion of the underlying spectral sequence is made precise by the \emph{signature spectral sequence}; we recall this in \cref{ssec:filtered_spectra}, but this has appeared before in \cite[Definition~1.5]{christian_jack_synthetic_j} and \cite[Definition~1.10]{CDvN_part1}.

\subsubsection{Total differentials and the synthetic Leibniz rule}
A major concrete advantage of working in synthetic spectra is that the differentials in the spectral sequence that a synthetic spectrum $X$ implements may be understood via the boundary map $\delta_1^\infty$ in the cofiber sequence
\[
    \begin{tikzcd}
        \opSigma^{0,-1} X \ar[r,"\tau"] & X \ar[r] & X/\tau \ar[r,"\delta_1^\infty"] & \opSigma^{1,\, -2} X.
    \end{tikzcd}
\]
This boundary map is called the \emph{total differential}, and it has been used by many authors in both synthetic and motivic spectra; see \cite{chua_E3_ASS} and \cite{isaksen_etal_motivic_ANSS_tmf}, for example.

The total differential captures all differentials in the following way. If $x \in \oppi_{*,*}X/\tau = \uE_2^{*,*}$, then $d_2(x)$ can be computed as the mod $\tau$ reduction of $\delta_1^\infty(x)$.
If $\delta_1^\infty(x)$ is $\tau$-divisible, then this vanishes, in which case $d_3(x)$ can be computed as the mod $\tau$ reduction of any choice of $\tau$-division of $\delta_1^\infty(x)$, and so on.
There are also truncated versions $\delta_n^N$ coming from the cofiber sequences
\[
    \begin{tikzcd}
        \opSigma^{0,-n}X/\tau^{N-n} \ar[r,"\tau^n"] & X/\tau^N \ar[r] & X/\tau^n \ar[r,"\delta_n^N"] & \opSigma^{1,\, -n-1} X/\tau^{N-n},
    \end{tikzcd}
\]
which capture the differentials $d_{n+1},\dotsc,d_N$ in a similar manner.

As an immediate consequence, if $X$ is a homotopy ring in synthetic spectra, then $\delta_1^\infty$ is $\oppi_{*,*}X$-linear, and $\delta_n^N$ is $\oppi_{*,*}(X/\tau^N)$-linear. This leads to a direct correspondence between stretched differentials and hidden extensions, which has been studied also in \cite{chua_E3_ASS} and \cite{isaksen_etal_motivic_ANSS_tmf}. We give an example of this that appears in our computation.

\begin{example}[Linearity of the total differential]
In \cref{prop:d5_Delta}, we use information from the sphere to deduce a crucial differential $d_5(\Delta)=h_2g$. This results in a total differential $\delta_4^8(\Delta)=\nu\kappabar$, where $\Delta$ is the unique lift of $\Delta\in\oppi_{24,0}\Smf/\tau$ to $\oppi_{24,0}\Smf/\tau^4$; see \cref{lem:total_differential_14_Delta}. We import the hidden $2$-extension from $2\nu$ to $\eta^3$ in the ANSS for $\S$ to deduce the relation $4\nu=\tau^2\eta^3$ in $\oppi_{3,1}\Smf$. Using that $\delta_4^8$ is $\oppi_{*,*}\Smf$-linear, we have
\[\delta_4^8(4\Delta)=4\nu\kappabar=\tau^2\eta^3\kappabar,\]
which results in the ``stretched'' differential $d_7(4\Delta)=h_1^3g$ of \cref{prop:dsevens}.
\end{example}

The most useful aspect of the truncated total differential $\delta_n^N$ is a theorem of Burklund that describes a \emph{synthetic Leibniz rule}, which is a major improvement of the ordinary Leibniz rule in a multiplicative spectral sequence.
We give a proof of his result in \cref{ssec:synthetic_leibniz}.

In a multiplicative spectral sequence, the \emph{Leibniz rule} says that for $x,y\in \uE_{r}^{*,*}$, we have
\[
    d_r(xy) = d_r(x)\cdot y + (-1)^{\abs{x}}x\cdot d_r(y),
\]
where $\abs{x}$ refers to the stem of $x$. A limiting factor of this rule however is that it can only deduce nontrivial differentials of the \emph{same length} as the ones that we started with.
The synthetic Leibniz rule on the other hand can deduce a differential on $xy$ of length $\leq 2r-2$, even if $x$ and $y$ both support $d_r$-differentials.
We prove this below as \cref{syntheticleibnizrule}.
Note that it only requires a homotopy-commutative structure.

\begin{theorem}[Burklund]
    Let $R$ be a homotopy ring in $\Syn_E$.
    For any $r\geq 1$, the map
    \[\delta_r^{2r} \colon \oppi_{*,*}R/\tau^r \to \oppi_{*-1,\, *+r+1} R/\tau^r\]
    is a derivation. In particular, for any two classes $x,y \in \oppi_{\ast,\ast}R/\tau^r$, we have the equality
    \[\delta_r^{2r}(xy) = \delta_r^{2r}(x)\cdot y + (-1)^{\abs{x}}x\cdot  \delta_{r}^{2r}(y).\]
\end{theorem}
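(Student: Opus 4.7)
The plan is to exploit the compatibility of the multiplication on $R$ with the defining cofiber sequence
\[\Sigma^{0,-r}R/\tau^r \xrightarrow{\tau^r} R/\tau^{2r} \to R/\tau^r \xrightarrow{\delta_r^{2r}} \Sigma^{1,-r-1}R/\tau^r,\]
in the spirit of the classical argument that a Bockstein operator associated to a square-zero ideal is a derivation. The key structural input is the centrality of $\tau$: the $\tau$-adic tower $\{R/\tau^k\}$ is multiplicative, so each $\oppi_{\ast,\ast}R/\tau^k$ inherits a ring structure, the quotient maps are ring homomorphisms, and the kernel of $\oppi_{\ast,\ast}R/\tau^{2r} \to \oppi_{\ast,\ast}R/\tau^r$ is the ideal generated by $\tau^r$, which is square-zero because $\tau^{2r}=0$ in $R/\tau^{2r}$. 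This square-zero phenomenon is exactly what produces a derivation of reach $2r$ rather than just $r$.

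To execute the argument, for $x,y\in \oppi_{\ast,\ast}R/\tau^r$ I would pick partial lifts $\tilde x,\tilde y$ into $R/\tau^{2r}$, keeping track of the fact that the obstructions to honest lifts are $\delta_r^{2r}(x)$ and $\delta_r^{2r}(y)$ themselves. Under the induced homotopy multiplication on $R/\tau^{2r}$, the product $\tilde x\cdot\tilde y$ is a candidate lift of $xy$, and its defect from being a genuine lift is computed by a graded Leibniz expansion using the square-zero structure of the kernel. Matching terms then yields the formula $\delta_r^{2r}(xy)=\delta_r^{2r}(x)\cdot y + (-1)^{\abs{x}}x\cdot\delta_r^{2r}(y)$. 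A cleaner conceptual version would apply the octahedral axiom to the commutative square of multiplication maps
\[\begin{tikzcd}
R/\tau^{2r}\otimes R/\tau^{2r} \ar[r]\ar[d] & R/\tau^{2r}\ar[d] \\
R/\tau^{r}\otimes R/\tau^{r} \ar[r] & R/\tau^{r}
\end{tikzcd}\]
and extract the identification of the boundary on the product with the sum of boundaries on the factors directly from the octahedral triangle.

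The main obstacle is that $R$ is only assumed to be a homotopy ring, so the multiplications on the truncations $R/\tau^k$ exist only at the level of homotopy rather than as coherent $\E_1$-algebra structures in $\Syn_E$. This forces all of the diagram chasing to be carried out with explicit representatives, and the ambiguities in choices of lifts must be verified to cancel modulo the relevant filtration. Carefully tracking the bidegree shift $\Sigma^{1,-r-1}$ and the sign $(-1)^{\abs{x}}$, the latter arising from the graded symmetric structure on $\Syn_E$, is a further delicate matter, though standard once the skeleton of the argument is in place.
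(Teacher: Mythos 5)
There is a genuine gap at the central step: you propose to ``pick partial lifts $\tilde x,\tilde y$ into $R/\tau^{2r}$,'' but in a stable $\infty$-category the class $\delta_r^{2r}(x)$ is precisely the obstruction to a lift of $x$ along $R/\tau^{2r}\to R/\tau^r$ existing at all. When $\delta_r^{2r}(x)\neq 0$, there is no $\tilde x$, partial or otherwise, and the phrase ``the product $\tilde x\cdot\tilde y$ is a candidate lift of $xy$'' has no content. The classical chain-level Bockstein argument survives precisely because one lifts a representing cocycle to an arbitrary (non-cocycle) cochain in the larger coefficient group, which is always possible since cochain groups are discrete; no such chain-level model is available for $\oppi_{*,*}R/\tau^k$. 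Your observation that $\ker(\oppi_{*,*}R/\tau^{2r}\to\oppi_{*,*}R/\tau^r)$ is a square-zero ideal is correct but does not rescue the plan, since a derivation formula requires an explicit map into (or out of) a square-zero object, and you have not supplied one. The proposed ``octahedral'' square is also not directly usable: the vertical fiber on the left-hand column is $\mathrm{fib}\bigl(R/\tau^{2r}\otimes R/\tau^{2r}\to R/\tau^r\otimes R/\tau^r\bigr)$, which is not $\Sigma^{0,-r}R/\tau^r\otimes\Sigma^{0,-r}R/\tau^r$ nor anything immediately tractable; making this strategy precise requires a Mayer--Vietoris decomposition of the fiber and you have not worked it out.

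The paper's proof sidesteps the nonexistent lift entirely by exhibiting $\delta_r^{2r}$ as a projection out of a split object. In the universal case $R=\S$ the right unit $\eta_R\colon C\tau^r\to C\tau^r\otimes C\tau^r$ admits the retraction $\mu$, giving a $C\tau^r$-module splitting $C\tau^r\otimes C\tau^r\simeq C\tau^r\oplus\opSigma^{1,-r-1}C\tau^r$; the projection onto the second factor is identified with $\id\otimes\delta_r^\infty$, the splitting is shown to be an isomorphism of rings onto a trivial square-zero extension (the class $\sigma\cdot\sigma$ vanishes for degree reasons), and the composite $\eta_R$ followed by the projection is therefore a derivation and coincides with $\delta_r^{2r}$. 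The general case follows by applying $R\otimes\blank$. This is the ingredient your sketch is missing. A minor additional point: the sign $(-1)^{\abs{x}}$ is the Koszul sign coming from $\delta_r^{2r}$ having degree $(-1,r+1)$, i.e.\ odd topological degree, not from the symmetry of $\Syn_E$.
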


Using the synthetic Leibniz rule, we solve a question posed by Isaksen--Kong--Li--Ruan--Zhu \cite[Problem~1.2]{isaksen_etal_motivic_ANSS_tmf}. There is a differential $d_7(\Delta^4)=h_1^3g\Delta^3$ in the DSS that does not follow from the ordinary Leibniz rule. This first appeared in the context of the ANSS for $\tmf$ without proof in \cite[Section 15]{tmf_book}. It appeared again in Bauer's account of ANSS for $\tmf$ \cite{bauer_tmf}; however, it only appears in his charts and is not mentioned in the text.
The difficulty of this differential was pointed out by \cite{isaksen_etal_motivic_ANSS_tmf}, who showed that it follows from information in the ASS for $\tmf$, which requires the Gap Theorem. We show that it follows quite easily from the total differential $\delta_4^8(\Delta)$ along with the synthetic Leibniz rule.
For the full proof, see \cref{prop:dsevens}.

\begin{theorem}
    In the DSS for $\Tmf$, there is a differential $d_7(\Delta^4)=h_1^3g\Delta^3$.
\end{theorem}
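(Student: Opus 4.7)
The plan is to derive this $d_7$ by computing the truncated total differential $\delta_4^8(\Delta^4)$ in $\oppi_{*,*}\Smf/\tau^4$ and reading the answer back off as a stretched spectral sequence differential. All three ingredients are already in hand: the formula $\delta_4^8(\Delta) = \nu\kappabar$ established in \cref{lem:total_differential_14_Delta}, the synthetic Leibniz rule (which asserts that $\delta_4^8$ is a derivation on $\oppi_{*,*}\Smf/\tau^4$), and the hidden $2$-extension $4\nu = \tau^2\eta^3$ in $\oppi_{3,1}\Smf$, imported from the ANSS for $\Sph$ via the $\E_\infty$-map $\nu\Sph \to \Smf$ and the detection statements of \cref{detectionsection}.

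First I would iterate the synthetic Leibniz rule. Since $\abs{\Delta} = 24$ is even, no signs appear, and two applications give
\[
    \delta_4^8(\Delta^4) \;=\; 4\Delta^3 \cdot \delta_4^8(\Delta) \;=\; 4\nu\kappabar\,\Delta^3
\]
in $\oppi_{95,\,5}\Smf/\tau^4$. Next I would substitute the sphere-level relation $4\nu = \tau^2\eta^3$, linear in $\oppi_{*,*}\Smf/\tau^4$, to conclude
\[
    \delta_4^8(\Delta^4) \;=\; \tau^2\,\eta^3\kappabar\,\Delta^3.
\]
Reading this total differential through the Omnibus Theorem of \cref{ssec:omnibus}, the visible factor of $\tau^2$ stretches the would-be $d_5$ two pages further, producing precisely $d_7(\Delta^4) = h_1^3 g\Delta^3$, where one uses that $\eta$ is detected by $h_1$ and $\kappabar$ by $g$.

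The step needing the most care is the middle one: the identity $4\nu = \tau^2\eta^3$ must be invoked in $\oppi_{3,1}\Smf/\tau^4$, not merely in $\oppi_{3,1}\Smf$, so that it can be multiplied into the Leibniz-rule computation above. This is where the $\E_\infty$-map $\nu\Sph \to \Smf$ and the detection of $\eta$ and $\nu$ along it are essential. Once this is in place, no further analysis of the intricate structure of the DSS is needed; the synthetic Leibniz rule together with the sphere's hidden extension do all the work, which is precisely what makes this argument bypass the difficulty flagged in \cite{isaksen_etal_motivic_ANSS_tmf}.
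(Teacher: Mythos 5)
Your argument is correct and is essentially the paper's own proof: you use the total differential $\delta_4^8(\Delta)=\nu\kappabar$ from \cref{lem:total_differential_14_Delta}, the synthetic Leibniz rule to get $\delta_4^8(\Delta^4)=4\Delta^3\nu\kappabar$, and the sphere-level hidden extension $4\nu=\tau^2\eta^3$ to rewrite this as $\tau^2\eta^3\kappabar\Delta^3$, then read off the stretched differential via \cref{prop:total_differential_versus_differentials}. The only cosmetic difference is that the paper carries an unknown unit $u\in(\Z/8)^\times$ through the computation and discards it at the end using that $\eta$ is $2$-torsion, but this does not affect the substance of the argument.
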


Indeed, applying the synthetic Leibniz rule to the total differential $\delta_4^8(\Delta)=\nu\kappabar$ yields
\[\delta_4^8(\Delta^4)=4\Delta^3\delta_4^8(\Delta) = 4\nu\kappabar\Delta^3 = \tau^2\eta^3\kappabar\Delta^3.\]
We are not aware of another way to deduce this differential that does not depend on the Gap theorem.

\subsubsection{Synthetic Toda brackets and Moss' Theorem}
We require the extensive use of Toda brackets to deduce hidden extensions and nonzero differentials, as well as to rule out possible differentials. Toda brackets in synthetic spectra often have advantages over their non-synthetic counterparts. For example, it is often easier in this setting to apply versions of Moss' theorem, which gives conditions for when a Toda bracket is detected by a Massey product formed on some page of a spectral sequence. Moreover, synthetic Toda brackets often have smaller indeterminacy than their non-synthetic counterparts.

We give a treatment of Toda brackets formed in the Picard-graded homotopy groups of the unit in a monoidal stable $\infty$-category and various bracket shuffling formulas; see \cref{sec:toda_brackets_setup}.
This is far from the most general context in which one can speak of Toda brackets, but is sufficient for our applications, in particular in the bigraded homotopy of synthetic $\E_\infty$-rings like $\Smf/\tau^k$.
Our approach generalises what is already in the literature by allowing for brackets of arbitrary length and the flexibility of working in a stable monoidal $\infty$-category. This approach applies quite broadly, for example to motivic and equivariant spectra.

We also describe a general approach to determining where synthetic Toda brackets (of arbitrary length) are detected in their associated spectral sequences. Here, we closely follow ideas of Burklund in \cite{burklund_cookware_draft}. In particular, we prove a general form of Moss' theorem that applies to $3$-fold synthetic Toda brackets; see \cref{thm:synthetic_moss}. We apply this extensively to $3$-fold brackets, and we apply our approach to some crucial $4$-fold brackets as well.

One such example is the classical bracket $\angbr{\kappa, 2, \eta, \nu}$ from the sphere.
In $\Smf$, one has $\tau^2 \kappabar \in \angbr{\kappa, 2, \eta, \nu}$; see \cref{sssec:kappabar}. As the class $\kappabar$ comes from the synthetic sphere, the Nishida nilpotence theorem tells us that some power of $\kappabar$ is $\tau$-power torsion in the synthetic sphere, and hence also in $\Smf$. Using the $4$-fold Toda bracket containing $\kappabar$ and the shuffling formula for $4$-fold Toda brackets in $\Smf/\tau^{24}$, we find
\[\tau^{22} \kappabar^6 \in \tau^{16}\kappabar^{4} \angbr{\kappa, 2, \eta, \nu} \tau^4\kappabar = \angbr{\tau^{16}\kappabar^4, \kappa, 2, \eta} \tau^4\nu \kappabar = 0,\]
using that $\tau^4 \nu\kappabar = 0$, a consequence of the key $d_5$-differential of \cref{prop:d5_Delta}.
The truncated Omnibus Theorem (\cref{thm:omnibus}) now yields the key $d_{23}$-differential.
For the full proof, see \cref{prop:diff23}.

\begin{theorem}
    In the DSS for $\Tmf$, there is a differential $d_{23}(h_1\Delta^5)=g^6$.
\end{theorem}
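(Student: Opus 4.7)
The plan is to capture this $d_{23}$ as the $\tau$-torsion behaviour of $\kappabar^6$ inside the truncated synthetic ring $\Smf/\tau^{24}$. All the structural ingredients are already in place in the preceding discussion: the synthetic relation $\tau^{2}\kappabar \in \angbr{\kappa, 2, \eta, \nu}$, lifted from the classical bracket in the sphere along $\nu\Sph \to \Smf$ using the detection theorems cited in the excerpt; the vanishing $\tau^{4}\nu\kappabar = 0$, which follows from the key $d_5$-differential $d_5(\Delta) = h_2 g$ of \cref{prop:d5_Delta}; and the truncated Omnibus Theorem (\cref{thm:omnibus}) for converting synthetic vanishing into DSS differentials.

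First, I would work consistently in the $\E_\infty$-algebra $\Smf/\tau^{24}$, where a $d_{23}$ in the DSS is visible through $\delta_{23}^{24}$ and where $4$-fold synthetic Toda brackets are well-defined with controlled indeterminacy. Using the shuffling formalism developed in \cref{sec:toda_brackets_setup}, compute
\[
\tau^{22}\kappabar^{6} \in \tau^{16}\kappabar^{4}\, \angbr{\kappa, 2, \eta, \nu}\, \tau^{4}\kappabar \;=\; \angbr{\tau^{16}\kappabar^{4},\, \kappa,\, 2,\, \eta}\,\tau^{4}\nu\kappabar,
\]
and then substitute $\tau^{4}\nu\kappabar = 0$ to conclude that the right-hand bracket vanishes, hence that $\tau^{22}\kappabar^{6}$ is zero in $\oppi_{120,*}\Smf/\tau^{24}$. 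A subsidiary step here is to verify that the indeterminacy of the $4$-fold bracket also vanishes; this is where the synthetic framework is decisive, since the classical indeterminacy would force one into a much longer $6$-fold bracket, whereas in $\Smf/\tau^{24}$ one has enough control via the already-established DSS differentials to kill all the error terms.

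Next, feed the vanishing of $\tau^{22}\kappabar^{6}$ into \cref{thm:omnibus}. The Omnibus Theorem translates such $\tau$-power annihilations in $\Smf/\tau^{24}$ into differentials of length at most $23$ whose target is $g^{6}$. A bidegree check pins down the source: $g^{6}$ sits in stem $120$ at filtration $24$, so a $d_{23}$ must originate from stem $121$ at filtration $1$, where the only candidate is $h_{1}\Delta^{5}$. Combined with a short verification that $h_{1}\Delta^{5}$ survives to $\uE_{23}$ using the differentials established earlier in the computation, this forces $d_{23}(h_{1}\Delta^{5}) = g^{6}$.

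The main obstacle will be the careful bookkeeping of indeterminacy in the $4$-fold synthetic Toda bracket and the justification that each shuffle step is legitimate in the truncated setting. This is precisely what makes the synthetic approach advantageous: the same differential traditionally requires a $6$-fold bracket whose indeterminacy is essentially intractable without already knowing the answer, whereas in $\Smf/\tau^{24}$ the $\E_\infty$-structure plus the tightened indeterminacy of synthetic brackets collapses the argument to the single displayed shuffle.
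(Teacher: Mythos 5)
Your proposal matches the paper's proof essentially verbatim: both establish $\tau^{22}\kappabar^6 = 0$ in $\Smf/\tau^{24}$ by the identical four-fold shuffle $\tau^{16}\kappabar^4\,\angbr{\kappa,2,\eta,\nu}\,\tau^4\kappabar = \angbr{\tau^{16}\kappabar^4,\kappa,2,\eta}\,\tau^4\nu\kappabar$ and then feed this into the Omnibus Theorem (\cref{thm:omnibus}), with a bidegree argument pinning the source down to $h_1\Delta^5$. The bookkeeping you rightly flag as the remaining obstacle is resolved in the paper by checking that all proper sub-brackets of the two four-fold brackets are strictly zero so that \cref{prop:shufflingformulas}\,(1) applies; the decisive input there is the vanishing $\oppi_{95,1}\Smf/\tau^{24}=0$, which in turn rests on the $d_7$ on $\Delta^4$, together with the lifting of $\tau^4\nu\kappabar=0$ from $\Smf/\tau^{12}$ to $\Smf/\tau^{24}$ in \cref{lem:tau4_nu_kappabar_is_zeroweiter}.
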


In \cite{bauer_tmf}, Bauer produces this $d_{23}$ using various shuffling formulas for $6$-fold Toda brackets. The use of $6$-fold brackets requires subtle indeterminacy arguments and some version of Moss' theorem, which Bauer does not discuss. Using our treatment of Toda brackets along with the approach to Moss' theorem, it seems plausible one could verify the $6$-fold shuffling arguments that Bauer gives for this $d_{23}$. This would involve delicate indeterminacy checks however, and our approach is much more direct.

\subsubsection{A synthetic transfer argument}
At several points in our computation, we need additional input to verify that a certain class does not support a differential. In many cases, this follows for degree reasons or because the class comes from a permanent cycle in the ANSS for $\S$, and in other cases we can show the class is a cycle because it detects a Toda bracket in $\Smf/\tau^k$ for a large enough $k$. In one crucial case, these methods are not sufficient, and we need to apply a transfer map coming from spectral algebraic geometry.

The map of stacks
\[\Mbar_1(3)\times \Spec \Z_{(2)}\to \Mellbar\times \Spec \Z_{(2)}\]
determines a \emph{synthetic transfer map}
\[\Smf_1(3)\to\Smf\]
which induces mod $\tau$ the algebraic transfer map on sheaf cohomology; see \cref{thm:synthetictransfers}. The DSS for $\Tmf_1(3)$ has no differentials, hence anything in the image of this transfer map is a permanent cycle in the DSS for $\Tmf$.
The following is part of \cref{survivalofmodularformsattwo}.

\begin{theorem}
    The class $2c_6\in\oppi_{12,0}\Smf/\tau$ is a permanent cycle in the DSS.
\end{theorem}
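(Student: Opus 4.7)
The plan follows the strategy outlined just before the statement. First, I would invoke the synthetic transfer map $\Smf_1(3) \to \Smf$ of \cref{thm:synthetictransfers}, associated to the finite flat cover $\Mbar_1(3)_{\Z_{(2)}} \to \Mellbar_{\Z_{(2)}}$; by construction this reduces mod $\tau$ to the classical algebraic trace on sheaf cohomology. Second, I would check that the DSS for $\Tmf_1(3)$ has no nontrivial differentials: at the prime $2$ the stack $\Mbar_1(3)$ is the weighted projective stack $\mathbf{P}(1,3)_{\Z_{(2)}}$, whose coherent cohomology vanishes in sheaf degrees $\geq 2$, so the $\uE_2$-page of the DSS for $\Tmf_1(3)$ lives in filtrations $s=0$ and $s=1$ only, and every $d_r$ with $r\geq 2$ vanishes on bidegree grounds. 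Consequently every element of $\oppi_{*,*}\Smf_1(3)/\tau$ is a permanent cycle, and every class in the image of the synthetic transfer is a permanent cycle in the DSS for $\Tmf$.

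It therefore suffices to exhibit $2c_6$ in the image of the algebraic trace
\[\Tr \colon \uH^0(\Mbar_1(3)_{\Z_{(2)}}, \omega^{\otimes 6}) \longrightarrow \uH^0(\Mellbar_{\Z_{(2)}}, \omega^{\otimes 6}) = \Z_{(2)} \cdot c_6.\]
Using the universal Weierstrass equation $y^2 + a_1 xy + a_3 y = x^3$ on $\mathbf{P}(1,3)_{\Z_{(2)}}$, the pullback of $c_6$ equals $-a_1^6 + 36\, a_1^3 a_3 - 216\, a_3^2$, while the source is the free $\Z_{(2)}$-module on the three monomials $a_1^6, a_1^3 a_3, a_3^2$. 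I would then compute $\Tr$ on each of these monomials directly, using the explicit dualizing sheaf on the weighted projective stack, and verify that the $\gcd$ of the resulting three $\Z_{(2)}$-multiples of $c_6$ is exactly $2$.

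The main obstacle is this final computational step. The projection formula gives for free only that the image contains $d \cdot c_6$ where $d$ is the degree of the cover (a power of $2$, namely $4$ or $8$), so that pullback-then-trace on $c_6$ yields nothing finer than $4c_6$. Descending to $2c_6$ requires an honest evaluation of $\Tr$ on at least one of the three monomials above, most cleanly via the residue pairing on $\mathbf{P}(1,3) \to \Mellbar_{\Z_{(2)}}$. Once the image is pinned to $2\Z_{(2)} \cdot c_6$, the class $2c_6$ is the image of a specific element of $\oppi_{12,0}\Smf_1(3)/\tau$, and hence is a permanent cycle in the DSS for $\Tmf$.
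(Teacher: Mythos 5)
Your high-level strategy matches the paper's exactly: construct the synthetic transfer $p_!\colon \Smf_1(3)_{(2)}\to\Smf_{(2)}$ via \cref{thm:synthetictransfers}, observe that the DSS for $\Tmf_1(3)$ collapses because $\Mbar_1(3)$ is a weighted projective stack of cohomological dimension one, and then reduce to exhibiting $2c_6$ in the image of the algebraic transfer on $\uH^0(\blank,\omega^{\otimes 6})$. Where you diverge is in how you propose to evaluate that transfer. The paper works entirely in the Hopf-algebroid picture: $\Gamma'$ is free of rank $8$ over $A'$ on the basis $\{1,s,s^2,s^3,t,st,s^2t,s^3t\}$, \cref{pr:hopfalgeboircomputation} records the traces of the multiplication operators on this basis, and the cohomological transfer of $a_3^2$ is then read off as $\Tr(\eta_R(a_3^2))$ using the formula for $\eta_R(a_3)$ in \eqref{formulasforetaR}, yielding $\Tr(-\eta_R(a_3^2))=2c_6$. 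You instead propose a direct geometric evaluation via the residue pairing for $\mathbf{P}(1,3)\to\Mellbar_{\Z_{(2)}}$. That route is sound in principle, but be careful: if you sit inside $\Gamma'$ and apply the $A'$-linear trace to $a_3^2$ regarded via $\eta_L$, you get $8a_3^2$, which is not even a multiple of $c_6$ — the identification of $\uH^0(\Mbar_1(3),\omega^{\otimes *})$ with a subring of $\Gamma'$ goes through $\eta_R$, not $\eta_L$, and that twist is where the factor of $4$ is saved. The degree of the cover is $8$ (not $4$): $\Tr(1)=8$.

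The genuine gap is that you stop exactly at the step that carries the mathematical content. You correctly observe that the projection formula only yields $8c_6$ and that one must honestly evaluate the trace on a monomial, but you do not carry out that evaluation; you only name a mechanism (residue pairing) by which it could be done. Without the computation, nothing distinguishes the claim ``$2c_6$ lifts'' from the claim ``only $8c_6$ lifts,'' and the theorem does not follow. One further small point: you frame the goal as showing the $\gcd$ is \emph{exactly} $2$, but for the statement at hand you only need $2c_6$ in the image, i.e.\ $\gcd\mid 2$; the lower bound $\gcd\geq 2$ is automatic because $c_6$ supports the nonzero $d_3(c_6)=h_1^3c_4$ and hence cannot lie in the image of a map of spectral sequences out of one that collapses.
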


\begin{remark}
    If one assumes the Gap Theorem, the corresponding claim in the ANSS for $\tmf$ follows easily for degree reasons. In the DSS however, there is a potential $d_7$ on $2c_6$ that we see no other way of ruling out.
\end{remark}

\subsection{Outline}
The core of this paper is \cref{sec:prime2}, where the $2$-local descent spectral sequence for $\Tmf$ is computed in its entirety.
We recommend the reader who is interested in learning the techniques of \cref{sec:synthetic_tools,sec:toda_brackets_setup} to start there, and refer back to the precise statements in previous sections as needed.
Tables and figures summarising these computations may be found in \cref{theappendix}.

In \cref{sec:synthetic_tools}, we discuss the total differentials (\cref{ssec:total_differentials}), and we prove the the Omnibus Theorem (\cref{thm:omnibus,prop:precise_lifting_higher_powers_tau}) and Burklund's synthetic Leibniz rule (\cref{syntheticleibnizrule}).
We do this by proving the analogous statements for filtered spectra, and then translating these results to synthetic spectra.
\Cref{ssec:filtered_spectra,ssec:tau_BSS} provide the necessary background on filtered spectra, the $\tau$-Bockstein spectral sequence, and their relationship to synthetic spectra for this.
In \cref{sec:toda_brackets_setup}, we give a general setup of Toda brackets, prove general shuffling formulas, and prove a synthetic version of Moss' theorem.
For both \cref{sec:synthetic_tools,sec:toda_brackets_setup}, when referring to $E$-synthetic spectra, we work with an arbitrary homotopy-associative ring spectrum $E$ of Adams type.
After these sections, we will always work in the special case $E = \MU$.

\Cref{sec:cubic_hopf_algebroid,detectionsection} form the setup for the computation.
In \cref{sec:cubic_hopf_algebroid}, we show how part of the $\uE_2$-page of the descent spectral sequence is computed by the cubic Hopf algebroid.
In \cref{detectionsection}, we show that certain low-dimensional classes from the sphere are detected in $\Smf$, and use this to import a few relations and Toda brackets.

After our $2$-primary computations in \cref{sec:prime2}, we follow up with a computation away from $2$ in \cref{sec:computations_away_from_2}.
Finally, in \cref{sec:proofs_main_theorems} we use these computations to prove the main theorems and corollaries mentioned above.


\subsection{Notation and terminology}
Roman letters are used for names of elements on the $\uE_2$-page, while elements in the homotopy groups of $\Smf/\tau^k$ for $k\geq 2$ or $\Tmf$ itself are denoted by Greek letters.

The symbol $\pm$ means an ambiguous sign, in the sense that $\pm x$ and $-(\pm x)$ have the same meaning.
In particular, we do not write $\mp x$.

The term \emph{synthetic spectrum} refers to an object of $E$-based synthetic spectra in the sense of \cite{pstragowski_synthetic}, where $E$ is a (fixed) homotopy-associative ring spectrum of Adams type.
In \cref{sec:synthetic_tools,sec:toda_brackets_setup}, the spectrum $E$ can be an arbitrary such spectrum, while in later sections we always work in the specific case where $E = \MU$.
When $E$ is clear from the context, we write $\Syn$ instead of $\Syn_E$.

All spectral sequences are indexed with Adams grading.
The bigraded synthetic sphere $\S^{n,s}$ is defined as
\[
    \S^{n,s} \defeq \opSigma^{-s}\nu(\S^{n+s}).
\]
This differs from the convention used in \cite{pstragowski_synthetic}.
Practically, it means that $(n,s)$ refers to an $(x,y)$-coordinate in an Adams chart.
In particular, the combination of these conventions means that $\oppi_{n,s}(\Smf/\tau) = \uE_2^{n,s}$.
We may write $\S$ for $\S^{0,0}$; we then let context determine whether we mean the sphere spectrum or the synthetic sphere.

We use the term \emph{hidden extension} in a simple way, referring to a relation $\alpha \beta = \tau^r \gamma$, where $\alpha$, $\beta$ and $\gamma$ have nonzero projections mod $\tau$ and where $r>0$. In particular, these are relations that hold in the homotopy groups of a synthetic spectrum (or a truncation thereof), without reference to the $\uE_2$-page.

The synthetic $\E_\infty$-ring $\Smf$ is the one defined in \cite[Definition~4.8]{CDvN_part1}; see also the recollection above in \cref{constr:recall_Smf_definition}.
We have analogously defined synthetic $\E_\infty$-rings $\SMF$ and $\Smf_\Gamma$, where $\Gamma$ denotes a type of level structure; see \cite[Definition~4.8, Variant~4.11]{CDvN_part1}, respectively.

\subsection{Acknowledgements}
We regard the computation of $\pi_*\tmf$ as a theorem of Hopkins--Mahowald, as announced in \cite[Section 15]{tmf_book}, and we are grateful to them for such an inspiring computation. Our paper is meant to give a complete proof of their theorem, and we are grateful to Akhil Mathew for observing that this could be done by proving our \cref{thm:gap}. It would have been a different task entirely to carry out this computation without the prior work of many authors on this topic, especially the work of Rezk \cite{rezk_tmf_lecture_notes}, Bauer \cite{bauer_tmf}, Konter \cite{konter_Tmf}, Henriques \cite[Section~13]{tmf_book}, Bruner--Rognes \cite{bruner_rognes_ASS_tmf}, and Isaksen--Kong--Li--Ruan--Zhu \cite{isaksen_etal_motivic_ANSS_tmf}. 

We are grateful to Lennart Meier for pointing out the circularity in the literature almost ten years ago, and for continuing to insist that it be corrected.

We thank Robert Burklund for helpful conversations. His work in synthetic spectra inspired some of this work.

The spectral sequence charts were produced using Hood Chatham's \texttt{spectralsequences} package, and we thank him for answering questions about his package.

Thank you as well to Tilman Bauer, Mark Behrens, Paul Goerss, Gerd Laures, Guchuan Li, Lennart Meier, Haynes Miller, Itamar Mor, Marius Nielsen, Charles Rezk, and John Rognes for comments on a draft, helpful conversations, and insights into the history of topological modular forms.

The first author was supported by NSF grant \texttt{DMS-2401918}.
The first and third author were supported by the NWO grant \texttt{VI.Vidi.193.111}.
The second author is an associate member of the Hausdorff Center for Mathematics at the University of Bonn (\texttt{DFG GZ 2047/1}, project ID \texttt{390685813}).


\section{Synthetic differentials and the Omnibus Theorem}
\label{sec:synthetic_tools}

The goal of this section is to fill our synthetic toolbox with everything we need in our computations.
The main players are the \emph{total differentials} of \cref{ssec:total_differentials} and the \emph{Omnibus Theorem} of \cref{ssec:omnibus}.
All results in this section can be, and are, proved in the setting of \emph{filtered spectra} rather than synthetic spectra; we discuss this connection in \cref{ssec:filtered_spectra}.
The $\infty$-category of filtered spectra is regarded as a homotopical version of the category of spectral sequences; see, e.g., \cite{antieau_decalage}, \cite{hedenlund_phd}, \cite[Section~1.2.2]{HA}, or \cite{christian_jack_synthetic_j,CDvN_part1} for sources using the same notation as here.
The formalism of $\tau$ naturally arises in the setting of filtered spectra too, and one can rephrase the yoga of spectral sequences in terms of~$\tau$.
In this way, results like the Omnibus Theorem become theorems in any good enough deformation, although for the sake of simplicity we have decided not to phrase the results in that generality.
The tool used to prove these results is the \emph{$\tau$-Bockstein spectral sequence}, which we set up in the filtered setting in \cref{ssec:tau_BSS}.

On the whole, we are economic in what we prove.
We do not intend to here give a full expository account of the $\tau$-Bockstein spectral sequence.
We also do not prove the full Omnibus Theorem here, but only the version for the truncated synthetic spectra $X/\tau^k$ for $k\geq 1$.
Nevertheless, such results are straightforward to prove given the methods and perspective we describe here.
These details, as well as a more in-depth explanation of the ideas involved here, are worked out in expository work \cite{vN_fil_syn_lecture_notes} of the third named author.


\subsection{Filtered spectra and deformations}
\label{ssec:filtered_spectra}

In this section, we briefly set up the theory of filtered spectra.
This is done for two purposes: to fix indexing conventions, and to allow ourselves to later reduce statements about synthetic spectra to filtered spectra.
We will be brief; for more information and references, we refer to \cite[Section~1]{christian_jack_synthetic_j} (which in particular uses the same indexing conventions), or \cite[Section~2]{barkan_monoidal_algebraicity}, \cite[Section~II.1]{hedenlund_phd}, and \cite{vN_fil_syn_lecture_notes}.

\begin{definition}
    A \defi{filtered spectrum} is a functor $\Z^\op \to \Sp$.
    We write $\FilSp \defeq \Fun(\Z^\op,\Sp)$ for the $\infty$-category of filtered spectra.
    We regard this as a symmetric monoidal $\infty$-category under Day convolution, where $\Z^\op$ carries addition as symmetric monoidal structure.
\end{definition}

If $X$ is a filtered spectrum, then we write $X(-\infty)$ for its colimit.
The \defi{associated graded} of~$X$ is the graded spectrum given by $\Gr^s X \defeq \cofib(X(s+1)\to X(s))$.
A filtered spectrum~$X$ naturally leads to a spectral sequence, indexed here as
\[
    \uE_2^{n,s} \defeq \pi_n(\Gr^{s+n} X) \implies \pi_n(X(-\infty))
\]
with differential $d_r$ of bidegree $(-1,r)$ for $r\geq 2$.
This converges conditionally to $\pi_*(X(-\infty))$ if and only if the limit spectrum $X(\infty)$ vanishes.

To make filtered spectra interface better with other categories (most notably, synthetic spectra), it is convenient to reformulate this spectral sequence in terms of an endomorphism $\tau$ of the filtered spectrum.
It is not truly an endomorphism, but only up to a twist, leading to the definition of the bigraded filtered spheres.

\begin{definition}
    \leavevmode
    \begin{numberenum}
        \item For $n,s\in\Z$, we write $\S^{n,s}$ for the filtered spectrum given by
        \[
            (\S^{n,s})(k) \defeq \begin{cases}
                \ \S^n &\text{if }k\leq s+n,\\
                \ 0 &\text{if }k>s+n
            \end{cases}
        \]
        and whose transition maps are given by the identities between the $n$-spheres, and zero else.
        \item We write $\oppi_{n,s}(\blank)$ for $[\S^{n,s},\blank]$, and $\Sigma^{n,s}$ for $\S^{n,s}\otimes \blank$.
        \item We write $\tau \colon \S^{0,-1}\to \S$ for the map
        \[
            \begin{tikzcd}
                \dotsb \ar[r] & 0 \ar[r] \ar[d] & 0 \ar[r] \ar[d] & \S \ar[r,equals] \ar[d,equals] & \dotsb\\
                \dotsb \ar[r] & 0 \ar[r] & \S \ar[r,equals] & \S \ar[r,equals] & \dotsb,
            \end{tikzcd}
        \]
        and we write $C\tau$ for the cofibre of this map.
    \end{numberenum}
\end{definition}

In this notation, suspension is given by $\Sigma^{1,-1}$, while shifting to the left is given by $\Sigma^{0,1}$.

If $X$ is a filtered spectrum, then we will also write $\tau \colon \opSigma^{0,-1}X\to X$ for the map $\tau \otimes X$.
By \cite[Proposition~3.2.5]{lurie_rotation_invariance}, there exists a canonical $\E_\infty$-algebra structure on $C\tau$, and by \cite[Proposition~3.2.7]{lurie_rotation_invariance}, there exists an equivalence $\grSp \simeq \Mod_{C\tau}(\FilSp)$ fitting into a commutative diagram
\[
    \begin{tikzcd}
        \FilSp \ar[r,"\Gr"] \ar[dr,"C\tau\otimes\blank"'] & \grSp \ar[d,"\simeq"] \\
        & \Mod_{C\tau}(\FilSp).
    \end{tikzcd}
\]
We say that a filtered spectrum is \defi{$\tau$-invertible} if the endomorphism $\tau$ on it is an equivalence.
It is straightforward to see that the colimit functor $X \mapsto X(-\infty)$ restricts to an equivalence from $\tau$-invertible spectra to spectra, with inverse equivalence given by the constant filtered spectrum functor.
As such, we will also write $X\mapsto X[\tau^{-1}]$ for the colimit functor.

In terms of these identifications, the spectral sequence associated to $X$ takes the form
\[
    \uE_2^{n,s} \defeq \pi_{n,s}(C\tau \otimes X) \implies \oppi_n X[\tau^{-1}],
\]
and the convergence condition reads that it converges conditionally if and only if $X$ is $\tau$-complete.

Phrased in this way, we can use filtered spectra to study other categories.
The structure we need on an $\infty$-category to do this is that of a \emph{deformation}.
We refer to \cite[Section~2]{barkan_monoidal_algebraicity} and \cite[Appendix~C]{burklund_hahn_senger_Rmotivic} for a discussion of these ideas.
For our purposes, we could work in the context of a presentably symmetric monoidal $\infty$-category $\C$ receiving a symmetric monoidal colimit-preserving functor $\FilSp\to \C$.
Such a functor has a (lax symmetric monoidal) right adjoint $\C \to \FilSp$, which we then think of as an ``underlying spectral sequence'' functor.
Rather than working in this generality however, we restrict ourselves to the deformation given by Pstr\k{a}gowski's $\infty$-category of synthetic spectra \cite{pstragowski_synthetic}.
This is only for simplicity: much of what we write generalises to any deformation arising in the above fashion.

Henceforth we fix a homotopy-associative ring spectrum $E$ of Adams type, and we write $\Syn$ for $\Syn_E$.

\begin{construction}
    \label{constr:left_adjoint_out_of_FilSp}
    Let $\C$ be a presentably symmetric monoidal stable $\infty$-category.
    The $\infty$-category $\FilSp$ is the stabilisation of the $\infty$-category $\PSh(\Z)$ of (space-valued) presheaves on $\Z$.
    As a result, it has a universal property: a symmetric monoidal functor $\Z \to \C$ corresponds to a symmetric monoidal colimit-preserving functor $\FilSp \to \C$.
\end{construction}

We think of a functor $\Z\to\C$ as a description of what the map $\tau$ should be in $\C$.
In $\Syn$, we have a pre-established notion of what $\tau$ should be, leading us to the following definition.

\begin{lemma}
    \label{lem:tau_tower}
    There is a natural symmetric monoidal structure on the functor $\Z \to \Syn$ given by the diagram in $E$-synthetic spectra
    \[
        \begin{tikzcd}
            \dotsb \ar[r,"\tau"] & \S^{0,-1} \ar[r,"\tau"] & \S \ar[r,"\tau"] & \S^{0,1} \ar[r,"\tau"] & \dotsb.
        \end{tikzcd}
    \]
\end{lemma}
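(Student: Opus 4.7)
The plan is to extract the symmetric monoidal enhancement from a universal property of $\Z$ regarded as a symmetric monoidal $\infty$-category via its poset structure and the operation $+$. The expected universal property is that for any symmetric monoidal $\infty$-category $\C$, a symmetric monoidal functor $\Z \to \C$ is equivalent to the data of an invertible object $X \in \C$ together with a morphism $t \colon \mathbf{1}_\C \to X$ from the monoidal unit. This would in turn follow from the analogous property for the nonnegative integers $\Z_{\geq 0}$, which ought to be the free symmetric monoidal $\infty$-category on an object equipped with a map from the unit: a strong monoidal $F \colon \Z_{\geq 0} \to \C$ is pinned down by $F(1)$ and $F(0 \to 1)$, with everything else forced by symmetric monoidal coherence. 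Passage from $\Z_{\geq 0}$ to $\Z$ then corresponds on the target to requiring invertibility of $X$.

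Granted this universal property, the proof reduces to applying it with $\C = \Syn$, $X = \S^{0,1}$, and $t \colon \S \to \S^{0,1}$ equal to the tensor product of $\tau \colon \S^{0,-1} \to \S$ with $\S^{0,1}$. First one verifies that $\S^{0,1}$ is invertible in $\Syn$ with inverse $\S^{0,-1}$, using the multiplicativity $\S^{a,b} \otimes \S^{c,d} \simeq \S^{a+c,\, b+d}$ of bigraded synthetic spheres. The resulting functor $F \colon \Z \to \Syn$ then satisfies $F(n) \simeq X^{\otimes n} \simeq \S^{0,n}$, and the coherence of a symmetric monoidal functor forces the transition map $F(n \to n+1)$ to be $t \otimes X^{\otimes n} = \tau$, matching the displayed tower.

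The main obstacle is a rigorous proof of the universal property, which requires careful bookkeeping in the $\infty$-operadic framework. If no direct reference is available, a serviceable alternative is to construct the functor as a commutative algebra object in $\Fun(\Z, \Syn)$ equipped with Day convolution whose underlying multiplication maps happen to be equivalences --- equivalently, a strong symmetric monoidal functor. In either approach, the essential point is that the bigraded sphere structure on $\Syn$ is coherently multiplicative and that $\tau$ is built to be compatible with this multiplication, so the required symmetric monoidal coherence is essentially automatic from the construction of $\Syn$ itself.
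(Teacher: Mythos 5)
The paper does not give a proof here: its ``proof'' is a single citation to Lawson. Your plan of deriving the result from a universal property of the poset $\Z$ as a symmetric monoidal $\infty$-category is in the right spirit, but the universal property you state is wrong, and the error hides exactly the nontrivial content of the lemma.

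Concretely, you claim that a symmetric monoidal functor $\Z \to \C$ is equivalent to an invertible object $X \in \C$ together with a map $\mathbf{1} \to X$. This cannot be correct. Since $\Z$ is a poset, every hom-set is a point, and in particular the symmetry isomorphism $1+1 \to 1+1$ of the monoidal structure on $\Z$ is forced to be the identity. A symmetric monoidal functor $F$ must carry this to the symmetry in $\C$, so under the coherence $F(1) \otimes F(1) \simeq F(2)$ the braiding $\sigma_{X,X}\colon X \otimes X \to X \otimes X$ must be homotopic to the identity. That is a genuine constraint which an arbitrary pointed invertible object need not satisfy: in $\Sp$, the object $X = \S^1$ with its canonical map $\S \to \S^1$ is invertible and pointed, yet $\sigma_{\S^1,\S^1}$ is the degree $-1$ self-map of $\S^2$, so there is no symmetric monoidal functor $\Z \to \Sp$ with $n \mapsto \S^n$. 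What is actually true is that $\Z_{\geq 0}$ is a localization (not the free object itself) of the free symmetric monoidal $\infty$-category on a pointed object, and extra conditions on $X$ --- beginning with $\sigma_{X,X} \simeq \mathrm{id}$, together with higher coherences --- are needed to factor through it.

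As a result, the substantive step your argument omits is precisely the one the cited result of Lawson supplies: one must check that the braiding on $\S^{0,1} \otimes \S^{0,1}$ in $\Syn_E$ is trivial and that no higher obstructions appear, so that the $\tau$-tower upgrades coherently. Your closing remark that ``the required symmetric monoidal coherence is essentially automatic from the construction of $\Syn$ itself'' is exactly the assertion one has to prove, not assume. The alternative phrasing via a strong commutative algebra object in $\Fun(\Z, \Syn)$ under Day convolution is a restatement of the same problem, not a way around it.
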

\begin{proof}
    See the proof of Corollary~6.1 in \cite{lawson_cellular}.
\end{proof}

\begin{notation}
    \label{not:rho_sigma_adjunction}
    We write $\rho \colon \FilSp \to \Syn$ for the symmetric monoidal left adjoint induced by \cref{lem:tau_tower} via \cref{constr:left_adjoint_out_of_FilSp}.
    We write $\sigma \colon \Syn \to \FilSp$ for its right adjoint, and refer to it as the \defi{signature functor}.
\end{notation}

Concretely, if $X$ is a synthetic spectrum, then $\opsigma X$ is given by the filtered spectrum
\begin{equation}\label{eq:concrete_expression_sigma}
    \begin{tikzcd}
        \dotsb \ar[r,"\tau"] & \map(\S,\,  \Sigma^{0,-1} X) \ar[r,"\tau"] & \map(\S,\, X) \ar[r,"\tau"] & \map(\S,\, \Sigma^{0,1} X) \ar[r,"\tau"] & \dotsb.
    \end{tikzcd}
\end{equation}

The following definition already occurred in {\cite[Definition 1.5]{christian_jack_synthetic_j} and \cite[Definition 1.10]{CDvN_part1}}.

\begin{definition}
    \label{def:signature_sseq}
    Let $X$ be a synthetic spectrum.
    We refer to the spectral sequence underlying $\sigma X$ as the \defi{signature spectral sequence} of $X$.
\end{definition}

The adjunction between $\rho$ and $\sigma$ lets us understand the bigraded homotopy groups of a synthetic spectrum in terms of a spectral sequence.
The basis for this is the following result, which says that $\sigma$ preserves the relevant structure for a spectral sequence interpretation.
In what follows, if $X$ is a synthetic spectrum, we write $\tau_X$ for the endomorphim $\tau$ on $X$, and likewise if $X$ is a filtered spectrum.

\begin{proposition}
    \label{prop:sigma_and_pistar}
    Let $X$ be a synthetic spectrum.
    \begin{numberenum}
        \item The functor $\sigma$ sends $\tau_X$ to $\tau_{\sigma X}$.
        \item \label{item:reindex_syn_fil_htpy_groups} We have natural isomorphisms of $\Z[\tau]$-modules
        \[
            \pi_{n,*}(X) \cong \pi_{n,*}(\sigma X).
        \]
        \item The functor $\sigma$ preserves filtered colimits.
    \end{numberenum}
\end{proposition}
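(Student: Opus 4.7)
The plan is to handle the three parts separately, leveraging the formal properties of the symmetric monoidal adjunction $\rho \dashv \sigma$ together with the explicit formula \eqref{eq:concrete_expression_sigma} for $\sigma$. Throughout, the crucial input is that by construction (\cref{constr:left_adjoint_out_of_FilSp,lem:tau_tower}), the symmetric monoidal functor $\rho$ sends the filtered bigraded sphere $\S^{n,s}_{\mathrm{fil}}$ to the synthetic bigraded sphere $\S^{n,s}_{\mathrm{syn}}$, and sends the filtered map $\tau_{\mathrm{fil}}$ to the synthetic map $\tau_{\mathrm{syn}}$.

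I would begin with part~(2), which is the most direct. The adjunction $\rho \dashv \sigma$, combined with the identification $\rho(\S^{n,s}_{\mathrm{fil}}) \simeq \S^{n,s}_{\mathrm{syn}}$, yields
\[
    \pi_{n,s}(X) = [\S^{n,s}_{\mathrm{syn}}, X]_{\Syn} \;\cong\; [\S^{n,s}_{\mathrm{fil}}, \sigma X]_{\FilSp} = \pi_{n,s}(\sigma X),
\]
naturally in $X$. To upgrade this to an isomorphism of $\Z[\tau]$-modules, I would observe that on both sides $\tau$ acts on bigraded homotopy groups by precomposition with the appropriate $\tau$ map between adjacent bigraded spheres; since these maps correspond under $\rho$, the two actions match under the adjunction isomorphism.

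For part~(1), the key tool is the projection formula. Since $\rho$ is symmetric monoidal and $\S^{0,-1}_{\mathrm{fil}}$ is $\otimes$-invertible in $\FilSp$ (with inverse $\S^{0,1}_{\mathrm{fil}}$), the canonical lax monoidal structure on $\sigma$ induces a natural equivalence
\[
    \sigma\bigl(\Sigma^{0,-1}_{\mathrm{syn}} X\bigr) = \sigma\bigl(\rho(\S^{0,-1}_{\mathrm{fil}}) \otimes X\bigr) \simeq \S^{0,-1}_{\mathrm{fil}} \otimes \sigma X = \Sigma^{0,-1}_{\mathrm{fil}} \sigma X.
\]
Under this equivalence, the map $\sigma(\tau_X) = \sigma(\rho(\tau_{\mathrm{fil}}) \otimes \mathrm{id}_X)$ corresponds to $\tau_{\mathrm{fil}} \otimes \mathrm{id}_{\sigma X} = \tau_{\sigma X}$. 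Carefully tracing through the unit and counit of $\rho \dashv \sigma$ to verify this identification on the nose is the only genuine bookkeeping in the proof, and I expect it to be the most delicate step.

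For part~(3), I would appeal to the concrete description \eqref{eq:concrete_expression_sigma}, which expresses $\sigma X$ level-wise as $(\sigma X)(s) = \map_{\Syn}(\S^{0,-s}, X)$. Filtered colimits in $\FilSp$ are computed level-wise, and $\map_{\Syn}(A, \blank)$ preserves filtered colimits precisely when $A$ is compact; it therefore suffices that each $\S^{0,-s}$ be compact in $\Syn$. The unit $\S^{0,0} = \nu(\S)$ is compact because $\S$ is a finite $E$-projective spectrum (its $E$-homology $E_\ast$ is trivially projective over itself), and compactness is preserved under bigraded suspensions, so all $\S^{0,-s}$ are compact.
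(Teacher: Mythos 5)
Your proposal is correct, and it arrives at the same conclusions via a route that is interestingly \emph{dual} to the paper's in an illustrative way. For part~(2) the argument is identical. For part~(1), however, the paper does not invoke the projection formula at all: it simply reads off from the explicit description \eqref{eq:concrete_expression_sigma} that the transition maps of the filtered spectrum $\sigma X$ are precisely those induced by the synthetic $\tau$ on $X$, so that $\sigma(\tau_X)=\tau_{\sigma X}$ by inspection. Your projection-formula argument is valid --- the projection morphism $\sigma(A)\otimes Z\to\sigma(A\otimes\rho Z)$ is an equivalence whenever $Z$ is invertible, as $\S^{0,-1}$ is --- but, as you anticipate, tracing the unit and counit of the adjunction to identify the resulting map with $\tau_{\sigma X}$ is genuine work that the explicit formula lets you skip entirely. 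Conversely, for part~(3) you use the explicit formula level-wise (mapping spectra out of the compact objects $\S^{0,-s}$ commute with filtered colimits), whereas the paper reasons abstractly: $\rho$ is a colimit-preserving left adjoint sending the compact generators of $\FilSp$ to compact objects of $\Syn$, hence $\rho$ preserves compacts, hence $\sigma$ preserves filtered colimits. Both arguments hinge on the same fact --- that $\nu\S$ and its suspensions are compact because $\S$ is finite $E$-projective --- and both are sound. The one place to be mildly careful in your version of~(3) is that you also need the naturality of the level-wise identifications in the filtration variable, so that the level-wise equivalences assemble to an equivalence of filtered spectra; this is automatic from functoriality of mapping spectra, but is worth stating.
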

\begin{proof}
    The functor $\rho$ is characterised by preserving small colimits, by sending the filtered spectrum $\S^{0,s}$ to the synthetic sphere $\S^{0,s}$, and by sending the filtered map $\tau$ to the synthetic map $\tau$.
    It follows that $\rho(\S^{n,s}) \simeq \S^{n,s}$, which by adjunction implies that for $X \in\Syn$,
    \[
        \pi_{n,s}(X) \cong \pi_{n,s}(\sigma X).
    \]
    To see that this assembles to an isomorphism of $\Z[\tau]$-modules, one uses the expression \eqref{eq:concrete_expression_sigma} for $\sigma X$ to see that $\sigma$ sends the synthetic map $\tau$ on $X$ to the filtered map $\tau$ on $\sigma X$.
    
    As the filtered and synthetic spheres are compact, it follows that $\rho$ preserves compact objects.
    This implies that $\sigma$ preserves filtered colimits.
\end{proof}

\begin{remark}
    If $\Syn$ is \emph{cellular} in the sense that it is generated under colimits by the bigraded spheres, then $\sigma$ is also a conservative functor.
\end{remark}

In particular, the functor $\sigma$ commutes with inverting $\tau$.
We find that the signature spectral sequence of $X \in \Syn$ is of the form
\[
    \uE_2^{n,s} = \pi_{n,s}(C\tau \otimes X) \implies \oppi_n X[\tau^{-1}]
\]
with differentials $d_r$ of bidegree $(-1,r)$ for $r\geq 2$, and it converges conditionally if and only if $X$ is $\tau$-complete.
In many examples, one can identify this spectral sequence with a familiar one.

\begin{example}
    \leavevmode
    \begin{numberenum}
        \item Let $X$ be an $E$-nilpotent complete spectrum.
        Then the signature of $\nu X$ can be identified with (the d\'ecalage of) the $E$-Adams spectral sequence of $X$; see \cite[Section~1.4]{CDvN_part1} for a further discussion.
        
        \item The signature of $\Smf$ can be identified with (the d\'ecalage of) the descent spectral sequence of $\Tmf$; see \cite[Corollary~2.12]{CDvN_part1}.
        More generally, this applies to the global sections of a sheaf of synthetic $\E_\infty$-rings $\calO^\syn_\X$ arising from an even-periodic refinement $(\X,\calO_\X^\top)$, as discussed in \cite[Section~2.2]{CDvN_part1}.
        
        \item In another direction, the first-named author constructs an $\MU$-synthetic spectrum whose signature is the slice spectral sequence of a Borel $G$-spectrum with complex-oriented underlying spectrum; see \cite{carrick_slice}. \qedhere
    \end{numberenum}
\end{example}

The bigraded homotopy groups of a filtered spectrum capture the spectral sequence of the filtered spectrum.
As a result, we learn that the bigraded homotopy groups of a filtered spectrum capture its signature spectral sequence.
Our next objective is to make this relationship precise.


\subsection{Intermezzo: the \texorpdfstring{$\tau$}{tau}-Bockstein spectral sequence}
\label{ssec:tau_BSS}

We take a short detour to set up the (formal) $\tau$-Bockstein spectral sequence of a filtered spectrum.
The definitions can easily be interpreted for synthetic spectra as well, and applying $\sigma$ retrieves the case of filtered spectra.
This spectral sequence is the key to connecting the homotopy groups of a synthetic spectrum to its signature spectral sequence.

\begin{definition}
    \label{def:tau_BSS_filtrations}
    Let $X$ be a filtered spectrum.
    The \defi{$\tau$-Bockstein filtration} of $X$ is the bifiltered spectrum $\Z^\op \to \FilSp$ given by
    \[
        \begin{tikzcd}
            \dotsb \ar[r,"\tau"] & \opSigma^{0,-2} X \ar[r,"\tau"] & \opSigma^{0,-1}X \ar[r,"\tau"] & X \ar[r,equals] & \dotsb.
        \end{tikzcd}
    \]
    For $k\geq 1$, the \defi{$k$-truncated $\tau$-Bockstein filtration} of $X$ is the bifiltered spectrum $\Z^\op \to \FilSp$ given by
    
    \adjustbox{scale=0.98,center}{
        \begin{tikzcd}
            \dotsb \ar[r] & 0 \ar[r] & \opSigma^{0,\,-k+1} X/\tau \ar[r,"\tau"] & \dotsb \ar[r,"\tau"] & \opSigma^{0,-1}X/\tau^{k-1} \ar[r,"\tau"] & X/\tau^k \ar[r,equals] & \dotsb.
        \end{tikzcd}
    }
\end{definition}

We index the $\tau$-Bockstein spectral sequence as
\begin{equation}
    \label{eq:E1_tau_BSS}
    \uE_1^{n,w,s} \cong \begin{cases}
        \ \oppi_{n,\, w+s} X/\tau &\text{if }s\geq 0,\\
        \ 0 &\text{else.}
    \end{cases}
\end{equation}
The $\tau$-Bockstein spectral sequence
\[
    \uE_1^{n,w,s} \implies \oppi_{n,w} X
\]
has differentials $d_r^\tau$ of tridegree $(-1,1,r)$ for $r\geq1$, and converges conditionally to the $\tau$-adic filtration on $\oppi_{*,*}X$ if and only if $X$ is $\tau$-complete.

An element in filtration $s$ in the $\tau$-BSS can be thought of as a formal $\tau^s$-multiple of the corresponding element in filtration $0$.
For degree reasons, the differential $d_r^\tau$ only hits elements in filtration $\geq r$.
Similarly, $d_r^\tau$-differential can be thought of as killing formal $\tau^r$-multiples, thereby recording $\tau^r$-torsion in $\oppi_{*,*}X$.

\begin{remark}
    It is slightly awkward to index this spectral sequence as starting on the $\uE_1$-page, while our earlier convention for filtered spectra was to start these on the $\uE_2$-page.
    We do this because of the interpretation of the $\tau$-Bockstein differentials $d_r^\tau$ as killing formal $\tau$-multiples.
    One of the main purposes of this spectral sequence is to analyse $\tau$-power torsion in bigraded homotopy groups, so we find this indexing to be more convenient.
\end{remark}

The $k$-truncated $\tau$-Bockstein spectral sequence has
\begin{equation}
    \label{eq:E1_truncated_tau_BSS}
    { }^k\uE_1^{n,w,s} \cong \begin{cases}
        \ \oppi_{n,\, w+s} X/\tau &\text{if }0\leq s \leq k-1,\\
        \ 0 &\text{else,}
    \end{cases}
\end{equation}
and the spectral sequence
\[
    { }^k \uE_1^{n,w,s} \implies \oppi_{n,w} X/\tau^k
\]
converges strongly to the $\tau$-adic filtration on $\oppi_{*,*}X/\tau^k$.
As we will see shortly, the differentials turn out to coincide with the differentials in the non-truncated $\tau$-BSS.

In this article, we only require the truncated Bockstein spectral sequence.
Nevertheless, the non-truncated version is a helpful device to use in proofs.

The differentials in the $\tau$-Bockstein spectral sequence of $X$ are exactly the differentials in the underlying spectral sequence of $X$.
This requires some care to make precise, due to the fact that the differentials in the $\tau$-BSS raise filtration, so that the classes below filtration~$r$ cannot be hit by differentials of length larger than $r$.
For this reason, we momentarily view a $d_r$-differential as a map from $r$-cycles to the $\uE_r$-page.

\begin{lemma}
    \label{lem:diff_in_BSS}
    Let $X$ be a filtered spectrum.
    \begin{numberenum}
        \item The differentials in the $\tau$-BSS are linear with respect to formal $\tau$-multiplication: using~\eqref{eq:E1_tau_BSS}, the differential
        \begin{align*}
            \SwapAboveDisplaySkip
            d_r^\tau \colon \uZ_r^{n,w,s} &\to \uE_r^{n-1,\, w+1,\, s+r}\\
        \shortintertext{can be identified with}
            d_r^\tau \colon \uZ_r^{n,\,w-1,\,s+1} &\to \uE_r^{n-1,\, w,\, s+r+1}.
        \end{align*}
        \item Under the identification of \eqref{eq:E1_tau_BSS}, the differential
        \begin{align*}
            d_r^\tau \colon \uZ_r^{n,w,s} &\to \uE_r^{n-1,\, w+1,\, s+r}\\
        \shortintertext{is identified with}
            d_{r+1} \colon \uZ_{r+1}^{n,\,w+s} &\to \uE_{r+1}^{n-1,\, w+s+r+1}.
        \end{align*}
        \item For every $k\geq 1$, there is a morphism of spectral sequences
        \[
            \uE_r^{n,w,s} \to { }^k\uE_r^{n,w,s}
        \]
        that is an isomorphism in the range $0\leq s\leq k-1$.
        In particular, this captures all the differentials in the $k$-truncated $\tau$-Bockstein spectral sequence.
    \end{numberenum}
\end{lemma}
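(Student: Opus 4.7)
My approach to all three parts is to work directly with the (bi)filtered diagrams defining the $\tau$-Bockstein filtrations, exploit the self-similarity of the Bockstein tower under $\tau$-shift, and compare to the signature spectral sequence via the iterated cofibre sequences for $X/\tau^k$. For part~(1), I would observe that the tower $F_\bullet$ with $F_s = \opSigma^{0,-s}X$ satisfies $F_{s+1}\simeq \opSigma^{0,-1}F_s$ with transition map $\tau$. This yields a shift equivalence of filtered spectra identifying $F_{\bullet+1}$ with $\opSigma^{0,-1}F_\bullet$ whose effect on the $\uE_1$-page is exactly the reindexing isomorphism $\oppi_{n,w+s}X/\tau \cong \oppi_{n,(w-1)+(s+1)}X/\tau$ visible in the statement. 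Naturality of the spectral sequence construction then forces $d_r^\tau$ to commute with this shift, proving the claim.

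For part~(2), my plan is to describe both $d_r^\tau$ and the signature differential $d_{r+1}$ as the \emph{same} boundary map. Concretely, a class on the $\uE_r$-page of the $\tau$-BSS at $(n,w,s)$ is represented by a lift $\tilde x\in \oppi_{n,w}(\opSigma^{0,-s}X/\tau^{r+1})$, and $d_r^\tau(x)$ is computed by postcomposing with the boundary map
\[
    \opSigma^{0,-s}X/\tau^{r+1} \longrightarrow \opSigma^{1,\,-s-r-1}X/\tau
\]
arising from the cofibre sequence $\opSigma^{0,-r}X/\tau \xrightarrow{\tau^r} X/\tau^{r+1} \to X/\tau^r$. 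The same recipe, applied in the underlying filtration of $\sigma X$, computes $d_{r+1}$ in the signature spectral sequence. Comparing the two via the reindexing $\oppi_{n,w}\opSigma^{0,-s}X/\tau \cong \oppi_{n,w+s}X/\tau$ establishes the identification in the bidegrees claimed in the statement.

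For part~(3), the morphism of spectral sequences would come from a natural map of filtered spectra $F_\bullet \to {}^k F_\bullet$, given at level $0\leq s<k$ by the quotient $\opSigma^{0,-s}X \to \opSigma^{0,-s}X/\tau^{k-s}$ and zero for $s\geq k$. Commutativity of the requisite squares follows from the octahedral axiom applied to successive $\tau$-multiplications, and the induced map on graded pieces is the identity on $\opSigma^{0,-s}X/\tau$ for $0\leq s\leq k-1$. This yields the isomorphism on $\uE_1$-pages, and the identification of differentials in this range then follows from naturality together with the description in part~(2): for $s+r\leq k-1$ both sides agree as images of the common boundary map, while for $s+r\geq k$ the target on the truncated side is automatically zero, consistent with the truncated-tower cofibre sequences.

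The main obstacle I anticipate is tridegree bookkeeping: carefully matching the cofibre sequences and (bigraded) suspensions in part~(2), and in particular verifying that the comparison of filtered towers intertwines $\tau$-multiplication on one side with the Bockstein shift on the other. Once the common total-differential description is set up, the identifications of $d$-differentials in parts~(2) and~(3) should be automatic.
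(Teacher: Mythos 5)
Your overall plan mirrors the paper's: part~(1) is a diagram chase exploiting the shift-self-similarity of the Bockstein tower, part~(2) identifies the two differentials via the underlying (co)fibre sequences, and part~(3) constructs the morphism of bifiltered spectra by termwise quotients exactly as the paper does. Two corrections are worth making, one substantive.

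The substantive issue is an off-by-one in part~(2). With the convention that the $\tau$-BSS starts at $\uE_1$, a class on the $\uE_r$-page at tridegree $(n,w,s)$ is represented by a class that lifts along $\opSigma^{0,-s}X/\tau^r \to \opSigma^{0,-s}X/\tau$ (the $(r-1)$-fold vanishing of earlier differentials is precisely this lifting condition, i.e.\ a lift to $F_s/F_{s+r}$ in the Bockstein tower), and $d_r^\tau$ is then read off by the boundary map $\opSigma^{0,-s}X/\tau^r \to \opSigma^{1,\,-s-r-1}X/\tau$. Your proposal instead lifts to $\opSigma^{0,-s}X/\tau^{r+1}$; but a lift that far already forces $d_r^\tau(x)=0$, so the boundary you postcompose with is zero on such lifts and the recipe computes nothing. (Relatedly, the cofibre sequence you cite, $\opSigma^{0,-r}X/\tau \xrightarrow{\tau^r} X/\tau^{r+1}\to X/\tau^r$, has its boundary emanating from $X/\tau^r$, not $X/\tau^{r+1}$.) With $r$ in place of $r+1$, the comparison to $d_{r+1}$ in the signature spectral sequence via the reindexing $\pi_{n,w}(\Sigma^{0,-s}X/\tau^r)\cong\pi_{n,w+s}(X/\tau^r)$ goes through as you intended, and it matches the paper's (more terse) observation that evaluating the Bockstein bifiltration at weight $s$ recovers the filtration $\dotsb\to X(s+2)\to X(s+1)\to X(s)=\dotsb$ that governs the differentials out of weight $s$. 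Also, a minor slip: you refer to ``the underlying filtration of $\sigma X$,'' but $X$ is already a filtered spectrum here, so $\sigma$ does not appear; you mean the filtration on $X$ itself.

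For part~(1), note that the comparison map is not a genuine equivalence of bifiltered spectra: the Bockstein tower is constant for negative indices, so the shift $F_{\bullet+1}$ and $\opSigma^{0,-1}F_\bullet$ disagree at $i\le -1$. There is however a canonical morphism of towers ($\tau$ in negative degrees, the identity in nonnegative degrees) which induces an isomorphism on all of $\uE_1$, which is what actually propagates the identification of differentials. Your part~(3) is fine and essentially verbatim the paper's argument.
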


One should read items~(1) and~(2) inductively.
For instance, \eqref{eq:E1_tau_BSS} provides an identification of the $\uE_1$-$\tau$-BSS page with the $\uE_2$-page, so that a comparison between the $d_1^\tau$ and $d_2$-differentials makes sense.
Then (2) for $r=1$ says that \eqref{eq:E1_tau_BSS} induces an isomorphism $\uZ_1^{n,w,s}\cong \uZ_2^{n,\,w+s}$ for all $s$, and $\uB_1^{n,w,s}\cong \uB_2^{n,\,w+s}$ if $s\geq 1$.
This in particular results in an isomorphism $\uE_2^{n,w,s}\cong \uE_3^{n,w+s}$ for $s\geq 1$.
As a result, the comparison between the $d_2^\tau$ and $d_3$-differentials makes sense, as $d_2^\tau$ takes values in groups $\uE_2^{n,w,s}$ with $s\geq 2$.
This pattern continues for the higher differentials.

\begin{proof}
    The first claim is an elementary diagram chase.
    For the second, note that evaluation of the $\tau$-Bockstein filtration at level $s$ results in the filtered spectrum
    \[
        \begin{tikzcd}
            \dotsb \rar & X(s+2) \rar & X({s+1}) \rar & X(s) \rar[equals] & \dotsb.
        \end{tikzcd}
    \]
    This is the filtered spectrum used to define the differentials going out of $\uE_2^{*,s}$, so the result follows.

    For the last claim, we use the morphism of bifiltered spectra
    \[
        \begin{tikzcd}
            \dotsb \ar[r,"\tau"] & \opSigma^{0,-k} X \ar[r,"\tau"] \ar[d] & \opSigma^{0,-k+1}X \ar[r,"\tau"] \ar[d] & \opSigma^{0,-k+2}X \ar[r,"\tau"] \ar[d] & \dotsb \ar[r,"\tau"] & X \ar[d] \\
            \dotsb \ar[r] & 0 \ar[r] & \opSigma^{0,-k+1} X/\tau \ar[r,"\tau"] & \opSigma^{0,-k+2} X/\tau^2 \ar[r,"\tau"] & \dotsb \ar[r,"\tau"] & X/\tau^k
        \end{tikzcd}
    \]
    which clearly induces an equivalence on associated graded in the range $0\leq s\leq k-1$.
    This proves the claim.
\end{proof}


\subsection{Total differentials}
\label{ssec:total_differentials}

The \emph{total differential} is a map of synthetic spectra that in a precise sense captures information about differentials of all lengths.
One can think of this as a generalisation of the following situation in chain complexes.

\begin{example}
\label{ex:chain_cx_total_diff}
Let $C_*$ be a cdga, and let $\tau \in C_*$ be a cycle such that $C_*$ is $\tau$-torsion free and $\tau$-adically complete.
The $\tau$-adic filtration on $C_*$ gives rise to a spectral sequence of the form
\[
    \uE_1^{*,*} = \uH_*(C_*/\tau)[\tau]\implies \uH_*C_*.
\]
Let $x \in \uH_*(C_*/\tau)$.
By $\tau$-adic completeness, we may write the boundary of $x$ as a power series
\begin{equation}\label{eq:powerseries}
    d(x)= y_n\tau^n+y_{n+1}\tau^{n+1}+\dotsb\in C_*,
\end{equation}
where each $y_i$ is not divisible by $\tau$, and where $n>0$ since $x$ is a cycle in $C_*/\tau$.
It follows from this formula that there is a differential $d_n(x)=y_n\tau^n$,
so we see that the first term of this power series determines differentials.

The higher terms in \eqref{eq:powerseries} are also very important for deducing differentials. Suppose for instance one has that $2y_n=xy_{n+1}=0$ in $\uH_*(C_*/\tau)$ but $2y_{n+1}$ and $2xy_{n+2}$ are nonzero.
It follows then from the Leibniz rule for the $d_n$-differential that $2x$ and  $x^2$ are $d_{\le n}$ and $d_{\le n+1}$ cycles respectively. Applying the Leibniz rule for $d$ however, one has the differentials
\begin{align*}
    d_{n+1}(2x)&=2y_{n+1}\tau^{n+1}\\
    d_{n+2}(x^2)&=(2x\cdot y_{n+2})\tau^{n+2}.
\end{align*}
In particular, the use of \eqref{eq:powerseries} allows us to deduce differentials of length \emph{greater} than $n$.
\end{example}

One of the key advantages of working in synthetic spectra (or even just filtered spectra) is that it allows us to view the spectral sequence as arising from a $\tau$-Bockstein spectral sequence similar to the above.
Moreover, this allows us to mimic the above arguments to deduce longer differentials, by providing a suitable replacement for the boundary map as in \cref{ex:chain_cx_total_diff}.
We in fact do not have to modify this idea at all: we can use the boundary map in the cofibre sequence defining $C\tau$.

\begin{remark}
While we phrase everything for synthetic spectra, the results are actually about filtered spectra.
One obtains the synthetic statement from the filtered ones by applying the signature functor $\sigma$ from \cref{not:rho_sigma_adjunction}.
We have stated the results in terms of synthetic spectra because this is all we need in this paper.
\end{remark}

\begin{definition}
    Let $X$ be a synthetic spectrum.
    For $n\geq 1$, we write $\delta_n^\infty$ for the boundary map in the cofibre sequence
    \[
        \begin{tikzcd}
            \opSigma^{0,-n} X \ar[r,"\tau^n"] & X \ar[r] & X/\tau^n \ar[r,"\delta_n^\infty"] & \opSigma^{1,\, -n-1} X.
        \end{tikzcd}
    \]
    For $N \geq n$, we write $\delta_n^N$ for the boundary map in the cofibre sequence
    \[
        \begin{tikzcd}
            \opSigma^{0,-n}X/\tau^{N-n} \ar[r,"\tau^n"] & X/\tau^N \ar[r] & X/\tau^n \ar[r,"\delta_n^N"] & \opSigma^{1,\, -n-1} X/\tau^{N-n}.
        \end{tikzcd}
    \]
    We call $\delta_1^\infty$ the \defi{total differential}, and $\delta_1^N$ the \defi{$N$-truncated total differential}.
\end{definition}

Using the long exact sequence, we see that for $x \in \oppi_{*,*}X/\tau^n$, the element $\delta_n^N(x)$ is the obstruction to lifting $x$ to $\oppi_{*,*}X/\tau^N$.

Informally, the map $\delta_n^N$ captures information about the $d_{n+1},\dotsc,d_N$-differentials in the signature spectral sequence of $X$.
While decreasing $N$ results in a loss of information, one should think of increasing $n$ as an increase of information: roughly speaking, $\delta_n^N$ is only defined on elements on which the differentials $d_2,\dotsc,d_{n}$ vanish.
This second intuition will be made precise by our later description of $\oppi_{*,*}X/\tau^n$ in \cref{ssec:omnibus}.

The following codifies the relation between the various total differentials.

\begin{proposition}\label{totaldifferentialjuggling}
    Let $n\geq 1$ and $n \leq N \leq \infty$, and let $X$ be a synthetic spectrum.
    \begin{numberenum}
        \item We have a commutative diagram
        \[
            \begin{tikzcd}
                X/\tau^n \ar[r,"\delta_n^\infty"] \ar[dr, "\delta_n^N"'] & \opSigma^{1,\,-n} X \ar[d]\\
                & \opSigma^{1,\,-n} X/\tau^{N-n}.
            \end{tikzcd}
        \]
        \item For $n\geq k\geq 1$, we have $\tau^{n-k} \cdot \delta_n^\infty = \delta_k^\infty$.
        \item \label{item:linearity_total_differential}
        Suppose $X$ is a \textbr{left} homotopy-module over a homotopy-associative ring $R$ in $\Syn$.
        Then the map $\oppi_{*,*}\delta_n^N$ is $\oppi_{*,*}R$-linear.
        If $N<\infty$, then the map $\oppi_{*,*}\delta_n^N$ is also $\oppi_{*,*}R/\tau^N$-linear.
    \end{numberenum}
\end{proposition}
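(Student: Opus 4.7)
The plan is to prove each of the three items by producing an appropriate map of cofibre sequences and invoking the naturality of cofibres in $\Syn$. For (1), I would appeal to the $3 \times 3$ diagram
\[
    \begin{tikzcd}[column sep=small]
        \opSigma^{0,-N}X \ar[r, "\tau^{N-n}"] \ar[d, "="] & \opSigma^{0,-n}X \ar[r] \ar[d, "\tau^n"] & \opSigma^{0,-n}X/\tau^{N-n} \ar[d] \\
        \opSigma^{0,-N}X \ar[r, "\tau^N"] \ar[d] & X \ar[r] \ar[d] & X/\tau^N \ar[d] \\
        0 \ar[r] & X/\tau^n \ar[r, "="] & X/\tau^n,
    \end{tikzcd}
\]
all of whose rows and columns are cofibre sequences. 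The middle column is the defining sequence of $\delta_n^\infty$ and the right column is that of $\delta_n^N$; the comparison map between them is the identity on $X/\tau^n$, and extending along the boundary gives the triangle claimed in (1).

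For (2), the defining relation $\tau^k \circ \tau^{n-k} = \tau^n$ of maps $\opSigma^{0,-n}X \to X$ produces the map of cofibre sequences
\[
    \begin{tikzcd}
        \opSigma^{0,-n}X \ar[r, "\tau^n"] \ar[d, "\tau^{n-k}"] & X \ar[r] \ar[d, "="] & X/\tau^n \ar[d, "p"] \ar[r, "\delta_n^\infty"] & \opSigma^{1,-n-1}X \ar[d, "\tau^{n-k}"] \\
        \opSigma^{0,-k}X \ar[r, "\tau^k"] & X \ar[r] & X/\tau^k \ar[r, "\delta_k^\infty"] & \opSigma^{1,-k-1}X,
    \end{tikzcd}
\]
with $p$ the induced natural quotient. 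The rightmost square yields the equation $\tau^{n-k}\delta_n^\infty = \delta_k^\infty \circ p$, which is (2) after identifying $\delta_k^\infty$ on $\oppi_{*,*}X/\tau^n$ with its pullback along $p$.

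For (3), the key observation is that the cofibre sequence defining $\delta_n^N$ is the $X$-tensor of the analogous cofibre sequence on the synthetic sphere: one has $X/\tau^n \simeq \S/\tau^n \otimes X$ and similarly for the other terms, with $\delta_n^N \simeq \delta_{n,\S}^N \otimes X$. Since $X$ is a left homotopy $R$-module, every term of this $X$-tensored sequence inherits a homotopy $R$-module structure and each map is a homotopy $R$-module map, from which $\oppi_{*,*}R$-linearity of $\oppi_{*,*}\delta_n^N$ follows formally. For the enhanced $\oppi_{*,*}R/\tau^N$-linearity when $N < \infty$, a direct chase in the cofibre sequence $\opSigma^{0,-N}\S \xrightarrow{\tau^N} \S \to \S/\tau^N$ shows that $\tau^N$ acts as zero on each of $\S/\tau^n$, $\S/\tau^N$, and $\S/\tau^{N-n}$, forcing the $\oppi_{*,*}R$-action on the bigraded homotopy groups of the sequence to factor through $\oppi_{*,*}R/\tau^N$.

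The main subtlety to watch is in (3): since $R$ is only homotopy-associative and the module structure on $X$ is only up to homotopy, one must avoid invoking strict module structures. The $X$-tensor description above circumvents this cleanly, reducing linearity to the bifunctoriality of $\otimes$ in $\Syn$ on the level of homotopy classes. Items (1) and (2) are otherwise formal $\infty$-categorical diagram chases, with the only small care points being the verification (via the $3 \times 3$ diagram in (1) and direct construction in (2)) that the induced maps between cofibres are the expected ones.
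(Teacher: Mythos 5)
Items (1) and (2) are correct and take essentially the same route as the paper: both produce the relevant maps of (extended) cofibre sequences, and your $3\times 3$ diagram for (1) is just a more elaborate construction of the comparison map that the paper writes down directly. For the first half of (3), the reduction of the cofibre sequence to the form $(\text{sphere-level sequence})\otimes X$ is a clean way to handle the homotopy-module subtleties and matches the intent of the paper's one-line proof.

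However, the argument for the \emph{moreover} clause of (3) has a real gap. Knowing that $\tau^N$ acts as zero on the relevant bigraded homotopy groups only shows that the $\oppi_{*,*}R$-action factors through the quotient ring $\oppi_{*,*}R/(\tau^N\cdot\oppi_{*,*}R)$. In general this is a proper subring of $\oppi_{*,*}(R/\tau^N)$: the long exact sequence for $\opSigma^{0,-N}R\xrightarrow{\tau^N}R\to R/\tau^N$ gives a short exact sequence whose left-hand term is $\oppi_{*,*}R/(\tau^N)$ and whose right-hand term is the $\tau^N$-torsion in $\oppi_{*,*}R$ (suitably shifted), which need not vanish. So ``factoring through $\oppi_{*,*}(R/\tau^N)$'' is not established; one must exhibit an action of the \emph{entire} ring $\oppi_{*,*}(R/\tau^N)$, not merely kill $\tau^N$. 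The fix, which is what the paper's terse phrase ``boundary map of a cofibre sequence of $C\tau^N$-modules'' is pointing at, is to observe that the sphere-level cofibre sequence $\opSigma^{0,-n}C\tau^{N-n}\to C\tau^N\to C\tau^n$ is one of $C\tau^N$-modules: for $k\leq N$ the ring map $C\tau^N\to C\tau^k$ exhibits $C\tau^k$ as a $C\tau^N$-module, and the maps in the sequence are $C\tau^N$-linear. Tensoring with the homotopy $R$-module $X$ then yields a cofibre sequence of homotopy $R\otimes C\tau^N\simeq R/\tau^N$-modules, whence $\oppi_{*,*}(R/\tau^N)$-linearity of $\oppi_{*,*}\delta_n^N$ follows. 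This is a local repair, fully compatible with your $-\otimes X$ framework.
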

\begin{proof}
    For readability, we omit bigraded suspensions in this proof.
    The first claim follows from the diagram
    \[
        \begin{tikzcd}
            X \ar[r,"\tau^n"] \ar[d] & X \ar[r] \ar[d] & X/\tau^n \ar[r,"\delta_n^\infty"] \ar[d, equals] & X \ar[d]\\
            X/\tau^{N-n} \ar[r,"\tau^n"] & X/\tau^N \ar[r] & X/\tau^n \ar[r,"\delta_n^N"] & X/\tau^{N-n}.
        \end{tikzcd}
    \]
    The second follows from the diagram
    \[
        \begin{tikzcd}
            X/\tau^n \ar[d] \ar[r, "\delta_n^\infty"] & X \ar[d,"\tau^{n-k}"] \ar[r,"\tau^n"] & X \ar[d,equals]\\
            X/\tau^k \ar[r,"\delta_k^\infty"] & X \ar[r,"\tau^k"] & X.
        \end{tikzcd}
    \]
    The last claim follows as $\delta_n^N$ is the boundary map of a cofibre sequence of $C\tau^N$-modules.
\end{proof}

Next, our goal is to relate the total differentials to the differentials in the signature spectral sequence.
As the input to $\delta_n^N$ is $X/\tau^n$, this requires an understanding of the homotopy groups $\oppi_{*,*}X/\tau^n$.
For this reason, we postpone this discussion to the next section; see \cref{prop:total_differential_versus_differentials}.


\subsection{The truncated Omnibus Theorem}
\label{ssec:omnibus}

In this section we give a precise description of how the bigraded homotopy groups of a synthetic spectrum capture its signature spectral sequence (see \cref{def:signature_sseq}).
The intuition is that if a class $x \in \uE_2^{n,s}$ is hit by a $d_r$-differential, then $x$ gives rise to an element in $\oppi_{*,*}X$ that is $\tau^{r-1}$-torsion.
While for $\oppi_{*,*}X$ this roughly captures all behaviour, for $X/\tau^k$, the situation is different: classes supporting differentials also contribute to the bigraded homotopy groups, but only in a limited fashion.
Our later computations only need a description of these truncated homotopy groups, so we only prove the truncated version.
As before, all results are about filtered spectra, but we have stated them for synthetic spectra.

Both our formulation of and proof strategy for the truncated Omnibus Theorem are modelled on the one given by Burklund--Hahn--Senger in \cite[Appendix~A]{burklund_hahn_senger_manifolds}.
The main difference in the proof is that the role played by the $\nu E$-ASS in their proof is played by the (truncated) $\tau$-BSS in ours.

\begin{theorem}[Truncated Omnibus, part 1]\label{thm:omnibus}
    Let $X$ be a synthetic spectrum, and let $n,s\in\Z$ be integers.
    Let $x \in \uE_2^{n,s} = \oppi_{n,s}X/\tau$ be a class.
    For every $k\geq 1$, the following are equivalent.
    \begin{enumerate}[label={\upshape(1\alph*)}]
        \item The differentials $d_2(x),\dotsc,d_{k}(x)$ vanish.
        \item The element $x \in \oppi_{n,s}X/\tau$ lifts to an element of $\oppi_{n,s}X/\tau^k$.
    \end{enumerate}
    For any such lift $\alpha$ to $\oppi_{n,s} X/\tau^k$, the following are true.
    \begin{enumerate}[label={\upshape(2\alph*)}]
        \item If $x$ survives to page $r$ for $r \leq k$, then $\tau^{r-2}\cdot \alpha$ is nonzero.
        \item The image of $\alpha$ under $\delta_{k-1}^{k} \colon \oppi_{n,s} X/\tau^k \to \oppi_{n-1,\, s+k} X/\tau$ is a representative for $d_{k}(x)$.
    \end{enumerate}
    Moreover, if $x$ lifts, then there exists a lift $\alpha$ that also satisfies the following.
    \begin{enumerate}
        \item[{\upshape(3)}] If $x$ is the target of a $d_r$-differential for $r<k$, then $\tau^{r-1} \cdot \alpha = 0$.
    \end{enumerate}
\end{theorem}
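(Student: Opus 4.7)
The plan is to reduce everything to filtered spectra via the signature functor of \cref{not:rho_sigma_adjunction} and then to use the $k$-truncated $\tau$-Bockstein spectral sequence (BSS) as the central bookkeeping device. By \cref{prop:sigma_and_pistar}, the functor $\sigma$ is a right adjoint preserving filtered colimits, identifies bigraded homotopy groups, and sends $\tau_X$ to $\tau_{\sigma X}$; it consequently commutes with the formation of the quotients $X/\tau^k$ and with the cofibre sequences defining $\delta_n^N$. So it suffices to prove the theorem for a filtered spectrum. The truncated $\tau$-BSS of \cref{def:tau_BSS_filtrations}, with $\uE_1$-page given by~\eqref{eq:E1_truncated_tau_BSS}, then converges strongly to the $\tau$-adic filtration on $\oppi_{\ast,\ast}X/\tau^k$, and the backbone of the argument is \cref{lem:diff_in_BSS}: part~(2) identifies the BSS differential $d_r^\tau$ with the signature differential $d_{r+1}$, and part~(1) makes differentials compatible with $\tau$-multiplication.

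For the equivalence (1a)$\Leftrightarrow$(1b), I would observe that $x$ sits in ${}^k\uE_1^{n,s,0}$ and, by strong convergence, lifts to $\oppi_{n,s}X/\tau^k$ if and only if it is a permanent cycle in the truncated BSS. The possibly nonzero outgoing differentials from this position are $d_1^\tau, \dotsc, d_{k-1}^\tau$; those of length $j \ge k$ land in filtration $\ge k$, outside the truncated range, and so vanish automatically. Under \cref{lem:diff_in_BSS}(2), the vanishing of these matches the vanishing of $d_2(x), \dotsc, d_k(x)$. For (2b), I would trace the construction of $d_k$ through the cofibre tower: the $d_k$-differential is the boundary associated to the $k$-step cofibre sequence, which under our identifications becomes exactly the map $\delta_{k-1}^k$ in the sense of \cref{totaldifferentialjuggling}.

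For (2a), any lift $\alpha$ sits in BSS filtration~$0$ (detected there by $x$), so $\tau^{r-2}\alpha$ is detected in filtration $r-2$ by $\tau^{r-2}x \in {}^k\uE_1^{n, s-r+2, r-2} \cong \oppi_{n,s}X/\tau$. By \cref{lem:diff_in_BSS}(1), outgoing differentials from $\tau^{r-2}x$ are $\tau^{r-2}$-shifts of those from $x$ and all vanish in the truncated range since $\alpha$ exists. Incoming differentials $d_j^\tau$ landing at $\tau^{r-2}x$ in filtration $r-2$ must originate in filtration $r-2-j \ge 0$, i.e.\ $1 \le j \le r-2$, and correspond under \cref{lem:diff_in_BSS}(2) to $d_{j+1}$-differentials hitting $x$ in the signature SS with $2 \le j+1 \le r-1$; the hypothesis that $x$ survives to page~$r$ rules these out. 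Hence $\tau^{r-2}x$ is a nonzero permanent cycle in the truncated BSS, so $\tau^{r-2}\alpha \ne 0$.

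For~(3), I would adjust any given lift $\alpha_0$ by an inductive correction. Let $r_0$ be the smallest length $r < k$ for which $x$ is a target of a $d_r$-differential; since $\tau^{r-1}\alpha = \tau^{r-r_0}\cdot \tau^{r_0-1}\alpha$, it suffices to arrange $\tau^{r_0-1}\alpha = 0$. By the $d_{r_0}$-relation, $\tau^{r_0-1}x$ is a $d_{r_0-1}^\tau$-boundary in the BSS, hence zero on the $\uE_\infty$-page in filtration $r_0-1$, so the class of $\tau^{r_0-1}\alpha_0$ already lives in $\tau$-filtration $\ge r_0$. The remaining contributions in filtrations $r_0, r_0+1, \dotsc, k-1$ are then killed one at a time by subtracting $\tau$-divisible corrections, whose existence as genuine lifts is again an application of the already-proven equivalence (1a)$\Leftrightarrow$(1b) to the detecting classes. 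I expect this explicit inductive construction --- rather than the more formal arguments for the other parts --- to be the principal technical obstacle, as it requires a careful accounting of BSS indeterminacy and of compatible choices across several filtrations.
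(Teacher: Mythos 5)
Your treatment of parts (1a)$\Leftrightarrow$(1b), (2a), and (2b) matches the paper's proof: pass to filtered spectra via $\sigma$, use the $k$-truncated $\tau$-Bockstein spectral sequence, and translate differentials via \cref{lem:diff_in_BSS}. Your analysis of (2a) is in fact slightly more explicit than the paper's one-liner, and the reasoning is sound: an incoming $d_j^\tau$ into filtration $r-2$ has source in filtration $r-2-j\geq 0$, and after $\tau$-dividing via \cref{lem:diff_in_BSS}\,(1) and applying \cref{lem:diff_in_BSS}\,(2) it is identified with a $d_{j+1}$-differential hitting $x$ with $j+1\leq r-1$, which is excluded by the survival hypothesis.

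Part~(3), however, has a genuine gap. You are right that it is the technical crux, and your sketch does not get there. Knowing that $\tau^{r_0-1}\alpha_0$ lies in $F^{r_0}$ of the $\tau$-adic filtration tells you that $\tau^{r_0-1}\alpha_0 = \tau^{r_0}\gamma$ for some $\gamma \in \oppi_{n,\,s+1}X/\tau^{k-r_0}$. To replace $\alpha_0$ by a new lift $\alpha = \alpha_0 - \tau\beta$, you need $\beta \in \oppi_{n,\,s+1}X/\tau^{k-1}$ — but the filtration argument hands you classes living in $X/\tau^{k-j}$ for $j\geq r_0$, and there is no a priori map $X/\tau^{k-j}\to X/\tau^{k-1}$ producing the correction in the right bidegree. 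Lifting $\gamma$ itself to $X/\tau^{k-1}$ via (1a)$\Leftrightarrow$(1b) would require its mod-$\tau$ reduction to be a $d_{\leq k-1}$-cycle, and nothing you have established grants this; a priori you only control $d_{\leq k-r_0}$. So the assertion that the detecting classes lift as needed, ``again an application of the already-proven equivalence,'' is exactly where the argument is incomplete.

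The paper's proof of~(3) sidesteps all of this by working on the other side of the differential: it takes the class $y$ with $d_{r}(y)=x$, lifts $y$ to $\widetilde{y}\in \oppi_{*,*}X/\tau^{r-1}$ (legitimate by (1a)$\Leftrightarrow$(1b), since $y$ is a $d_{\leq r-1}$-cycle), and sets $\beta=\delta_{r-1}^k(\widetilde{y})$. This $\beta$ is automatically $\tau^{r-1}$-torsion because it is in the image of $\delta_{r-1}^k$, reduces to $x$ by construction, and lifts to $X/\tau^k$ because the obstruction $\delta_{k-r+1}^k(\beta)$ factors through the composite of two consecutive maps in a cofibre sequence, hence vanishes. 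This direct construction is the missing idea; no inductive correction over $\tau$-filtrations is needed.
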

\begin{proof}
    Using the functor $\sigma$, we can translate the theorem into one for filtered spectra.
    We prove this version, so assume from now on that $X$ is a filtered spectrum.
    
    Consider $x$ as an element of tridegree $(n,s,0)$ (i.e., in filtration $0$) in the $k$-truncated $\tau$-BSS sequence for~$X$.
    Observe that on the $\uE_2$-pages of the truncated $\tau$-BSSs, the map $X/\tau^k \to X/\tau$ is given by projecting onto the filtration $0$ part.
    As a result, the equivalence of (1a) and (1b) follows from the identification of the differentials in the $k$-truncated $\tau$-BSS from \cref{lem:diff_in_BSS}.
    This also tells us (2a), as a $d_t^\tau$-differential only hits elements in $\tau$-Bockstein filtration $t$, so in particular it cannot kill elements of filtration below $t$.

    Next, we prove (2b).
    Let $\alpha \in \oppi_{n,s}X/\tau^k$ be a lift of $x$.
    Note that by \cref{lem:diff_in_BSS}, it is enough to show that $\delta_{k-1}^k(\alpha)$ is a representative for $d_{k-1}^\tau(x)$ in the non-truncated $\tau$-BSS, where we regard $x$ in tridegree $(n,s,0)$.
    Recall how the $d_{k-1}$-differential on $x$ in the non-truncated $\tau$-BSS is computed: we apply the boundary map $\delta_1^\infty(x)$, choose a $\tau^{k-2}$-division of this element (which exists precisely by the assumption that $d_1^\tau(x) = \dotsb = d_{k-2}^\tau(x)$), and reduce this mod $\tau$.
    \[
        \begin{tikzcd}
            \opSigma^{0,\,-k+1} X \ar[r,"\tau^{k-2}"] \ar[d] & \Sigma^{0,\, -1}X \ar[r,"\tau"] & X \ar[d]\\
            \opSigma^{0,\,-k+1} X/\tau & &  X/\tau \ar[ul, dashed, "\delta_1^\infty"]
        \end{tikzcd}
    \]
    We have $\tau^{k-2} \cdot \delta_{k-1}^\infty (\alpha) = \delta_1^\infty(x)$.
    In other words, $\delta_{k-1}^\infty(\alpha)$ is a valid choice of $\tau^{k-2}$-division of $\delta_1^\infty(a)$, so that its projection to $\oppi_{n,\, s+k}X/\tau$ is a representative for $d_{k-1}^\tau(x)$.
    But the mod~$\tau$ reduction of $\delta_{k-1}^\infty$ is $\delta_{k-1}^{k}$, proving (2b).

    Lastly, we prove~(3).
    Suppose that $x$ is the target of a $d_r$-differential for $r<k$.
    Our goal is to find a lift $\alpha \in \oppi_{n,s}X/\tau^k$ such that the relation $\tau^{r-1} \cdot \alpha = 0$ holds in $\oppi_{n,\,s-r+1}X/\tau^k$.
    Let us consider $x$ as an element of the $k$-truncated $\tau$-Bockstein spectral sequence in tridegree $(n,\ s-r+1,\ r-1)$.
    Using \cref{lem:diff_in_BSS}, the $d_r$-differential hitting $x$ translates to a $d_{r-1}^\tau$-differential hitting the $x$ in this tridegree.
    
    When we unroll what this means, we learn the following.
    There exists is a class $y \in \oppi_{n+1,\,s-r} X/\tau$, and a lift $\beta$ of $\delta_1^{k}(y)$ under $\tau^{r-2} \colon \opSigma^{0,\,-r+1}X/\tau^{k-r+1}\to \opSigma^{0,-1} X/\tau^{k-1}$ such that $\beta$ reduces to $x$ under $X/\tau^{k-r+1} \to X/\tau$.
    By exactness, the element $\delta_1^{k}(y)$ maps to zero under $\tau\colon \opSigma^{0,\,-1} X/\tau^{k-1}\to X/\tau^k$, so any choice of $\beta$ maps to zero under
    \[
        \tau^{r-1} \colon \opSigma^{0,\,r-1} X/\tau^{k-r+1} \to X/\tau^k.
    \]
    As a result, it suffices to show that there is a choice of $\beta \in \oppi_{n,s}X/\tau^{k-r+1}$ that lifts to an $\alpha \in \oppi_{n,s}X/\tau^k$.
    Indeed, we have a commutative diagram
    \[
        \begin{tikzcd}
            \opSigma^{0,\,-r+1}X/\tau^k \rar["\tau^{r-1}"] \ar[d] & X/\tau^k,\\
            \opSigma^{0,\,-r+1} X/\tau^{k-r+1} \ar[ur,"\tau^{r-1}"'] &
        \end{tikzcd}
    \]
    so if such an $\alpha$ exists, then $\tau^{r-1}\cdot \alpha = 0$.
    
    To produce such an $\alpha$, we first show that $y \in \oppi_{n+1,\,s-r} X/\tau$ lifts to $X/\tau^{r-1}$.
    This follows from (1) because $y$ is a $d_{\leq r-1}$-cycle.
    Write $\widetilde{y}$ for a lift.
    It then follows that $\delta_{r-1}^k(\widetilde{y})$ is a valid choice for $\beta$ as above.
    To show that this $\beta$ lifts to $X/\tau^k$, we need to show that $\delta_{k-r+1}^{k}(\beta)=0$.
    Note that $\delta_{k-r+1}^k\circ \delta_{r-1}^k$ can be written as (omitting shifts for readability)
    \[
        \begin{tikzcd}
            X/\tau^{k-1} \rar["\delta_{k-1}^\infty"] & X \rar & X/\tau^{k-r+1} \rar["\delta_{k-r+1}^\infty"] &[1em] X \rar & X/\tau^{r-1}.
        \end{tikzcd}
    \]
    The middle two maps are part of a cofibre sequence, so in particular their composition is zero.
    This means that indeed $\delta_{k-r+1}^k(\beta) = 0$, showing that $\beta$ lifts to the desired $\alpha$, thus proving~(3).
\end{proof}

\begin{warning}
    \label{warn:diff_not_imply_tau_torsion}
    Suppose that $\alpha \in \oppi_{*,*}X/\tau^k$ is a lift of $x \in \oppi_{*,*}X/\tau$, and that $x$ is the target of a $d_r$-differential, where $r < k$.
    This does \emph{not} imply that $\tau^{r-1}\cdot \alpha = 0$.
    Indeed, \cref{thm:omnibus}\,(3) only tells us that there exists \emph{some} lift of $x$ that is $\tau^{r-1}$-torsion.
\end{warning}

We can now deliver on our promise to relate the total differentials to the differentials in the signature spectral sequence.
Part of this is described by \cref{thm:omnibus}\,(2b), which we now expand on.

\begin{construction}
    \label{constr:map_from_mod_tau_r_to_Er}
    Let $X$ be a synthetic spectrum, let $n,s\in\Z$, and let $r \geq 1$.
    The reduction map $\oppi_{n,s} X/\tau^r \to \oppi_{n,s}X/\tau$ lands, by definition, in $\uE_2^{n,s}$ of the signature spectral sequence of $X$.
    By \cref{thm:omnibus}\,(1), this map lands in the subgroup $\uZ_{r+1}^{n,s}$.
    Postcomposing this with the natural quotient, we obtain a map
    \[
        \oppi_{n,s} X/\tau^r \to \uE_{r+1}^{n,s}.
    \]
    These maps for different values of $r$ are compatible in the obvious way.
\end{construction}

Because our computations only require a description of the truncated total differentials, we state the comparison for these only.
Unfortunately, this leads to a slightly more complicated indexing to keep track of.

\begin{proposition}
    \label{prop:total_differential_versus_differentials}
    Let $X$ be a synthetic spectrum, and let $x \in \oppi_{n,s}X/\tau$.
    Consider $x$ as an element on the $\uE_2$-page of the signature spectral sequence of $X$.
    Let $r \geq 2$.
    \begin{numberenum}
        \item If $\delta_1^{r+1}(x)$ reduces to zero mod $\tau^{r-1}$, then $x$ is a $d_{\leq r}$-cycle.
        \item If $\alpha \in \oppi_{n-1,\,s+r+1}X/\tau$ is a preimage of $\delta_1^{r+1}(x)$ under the map
        \begin{equation}
            \label{eq:mult_by_tau_rmin_1}
            \tau^{r-1} \colon \opSigma^{0,\, -r+1}X/\tau \to X/\tau^r,
        \end{equation}
        then the image of $\alpha$ under
        \[
            \oppi_{n-1,\, s+r+1} X/\tau \to \uE_{r+1}^{n-1,\, s+r+1}
        \]
        coincides with $d_{r+1}(x)$.
        \item \label{item:total_diff_versus_differential} Let $R \geq r$. Then we have a commutative diagram
        \[
            \begin{tikzcd}
                \oppi_{n,s}X/\tau^{r-1} \ar[r,"\delta_{r-1}^R"] \ar[d] & \oppi_{n-1,\, s+r} X/\tau^{R-r+1} \ar[d] \\
                \uE_r^{n,s} \ar[r,"d_r"] & \uZ_{R-r+1}^{n-1,\, s+r}/\uB_r^{n-1,\, s+r},
            \end{tikzcd}
        \]
        where the right vertical map is the map from \cref{constr:map_from_mod_tau_r_to_Er} composed with the natural projection $\uZ_{R-r+1} \to \uZ_{R-r+1}/\uB_r$.
    \end{numberenum}
\end{proposition}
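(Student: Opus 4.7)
The plan is to reduce each claim to \cref{thm:omnibus} combined with the juggling identities of \cref{totaldifferentialjuggling}, working throughout in the filtered setting via the signature functor of \cref{not:rho_sigma_adjunction} and using the interpretation of the signature spectral sequence as the $\tau$-Bockstein spectral sequence (\cref{lem:diff_in_BSS}).

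For (1), \cref{totaldifferentialjuggling}(1) factors $\delta_1^r$ as $\delta_1^{r+1}$ followed by the natural projection $X/\tau^r \to X/\tau^{r-1}$. The hypothesis thus forces $\delta_1^r(x) = 0$, so the cofibre sequence defining $\delta_1^r$ provides a lift of $x$ to $\oppi_{n,s}X/\tau^r$; by \cref{thm:omnibus}(1) this is equivalent to $x$ being a $d_{\leq r}$-cycle.

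The main work is (2). Using (1), fix a lift $\tilde x \in \oppi_{n,s}X/\tau^r$ of $x$. By \cref{totaldifferentialjuggling}(2), $\tau^{r-1} \delta_r^\infty(\tilde x) = \delta_1^\infty(x)$ in $\oppi_{*,*}X$; reducing both sides modulo $\tau^r$ yields $\tau^{r-1}\delta_r^{r+1}(\tilde x) = \delta_1^{r+1}(x)$, so $\delta_r^{r+1}(\tilde x)$ is a distinguished choice of $\alpha$. Because this particular $\alpha$ lifts all the way to $\delta_r^\infty(\tilde x) \in \oppi_{*,*}X$, it automatically lies in $\uZ_{r+1}$. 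The fact that it represents $d_{r+1}(x)$ is the content of (the proof of) \cref{thm:omnibus}(2b): computing $d_{r+1}(x)$, which by \cref{lem:diff_in_BSS}(2) is identified with $d_r^\tau(x)$, amounts to choosing a $\tau^{r-1}$-division of $\delta_1^\infty(x)$ and reducing modulo $\tau$, and the division $\delta_r^\infty(\tilde x)$ reduces to $\delta_r^{r+1}(\tilde x)$. Finally, any other preimage $\alpha$ satisfies $\tau^{r-1}(\alpha - \delta_r^{r+1}(\tilde x)) = 0$ in $\oppi_{n-1,\,s+2}X/\tau^r$, and exactness of the cofibre sequence
\[
    \opSigma^{0,\,-r+1}X/\tau \xrightarrow{\,\tau^{r-1}\,} X/\tau^r \to X/\tau^{r-1}
\]
identifies this kernel with the image of $\delta_{r-1}^r \colon X/\tau^{r-1} \to \opSigma^{1,\,-r}X/\tau$. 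By \cref{thm:omnibus}(2b) this image consists of $d_r$-boundaries, hence lies in $\uB_{r+1}^{n-1,\,s+r+1}$, so $\alpha$ and $\delta_r^{r+1}(\tilde x)$ agree modulo $\uB_{r+1}$.

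Part (3) is the truncated analogue of \cref{thm:omnibus}(2b). Given $\tilde x \in \oppi_{n,s}X/\tau^{r-1}$ lifting $x$, \cref{totaldifferentialjuggling}(1) identifies $\delta_{r-1}^r(\tilde x)$ with the mod-$\tau$ reduction of $\delta_{r-1}^R(\tilde x)$, and \cref{thm:omnibus}(2b) says this reduction represents $d_r(x)$; the refinement that the image of $\delta_{r-1}^R(\tilde x)$ is well-defined in $\uZ_{R-r+1}^{n-1,\,s+r}/\uB_r^{n-1,\,s+r}$ follows from \cref{constr:map_from_mod_tau_r_to_Er} applied to $X/\tau^{R-r+1}$, together with a routine check via the juggling identities that varying the lift $\tilde x$ alters $\delta_{r-1}^R(\tilde x)$ only by $\uB_r$-boundaries. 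The most delicate step is the juggling in (2), where one must simultaneously commute truncated and untruncated boundary maps with multiplication by powers of $\tau$; once this is unwound, the $\tau$-BSS recipe underpinning the proof of \cref{thm:omnibus}(2b) yields each assertion cleanly.
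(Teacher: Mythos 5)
Your proof is correct and takes essentially the same route as the paper's: reduce to filtered spectra via $\sigma$ and unwind the truncated $\tau$-Bockstein spectral sequence using \cref{lem:diff_in_BSS}, the juggling identities of \cref{totaldifferentialjuggling}, and the argument underlying \cref{thm:omnibus}\,(2b). Your write-up spells out considerably more detail than the paper's two-sentence proof (notably the distinguished preimage $\delta_r^{r+1}(\tilde x)$ and the exactness argument controlling the indeterminacy by $d_r$-boundaries), but the underlying machinery and decomposition of the argument are the same.
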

\begin{proof}
    We again reduce ourselves to filtered spectra by applying $\sigma$.
    If $\delta_1^{r+1}(x)$ reduces to zero mod $\tau^{r-1}$, then it follows by unwinding the $(r+1)$-truncated $\tau$-Bockstein spectral sequence that $x \in \uE_1^{n,s,0}$ is a $d^\tau_{\leq r}$-cycle.
    Via the identifications of \cref{lem:diff_in_BSS}, items~(1) and~(2) follow.
    To see item~(3), note that $\delta_{r-1}^r$ is the mod $\tau$ reduction of $\delta_{r-1}^R$.
    If now $\alpha \in \oppi_{n,s}X/\tau^{r-1}$, then $\delta_{r-1}^{r}(\alpha)$ composed with \eqref{eq:mult_by_tau_rmin_1} is $\delta_1^r(\alpha)$.
    This proves item~(3).
\end{proof}

\begin{remark}
    \label{rmk:distinction_tau_divble}
    The condition that $\delta_1^{r+1}(x)$ reduces to zero mod $\tau^{r-1}$ is equivalent to $\delta_1^{r+1}(x)$ being in the image of \eqref{eq:mult_by_tau_rmin_1}.
    This in particular happens if $\delta_1^{r+1}(x)$ is $\tau^{r-1}$-divisible in $\oppi_{*,*}X/\tau^r$.
    In general however, the converse need not be true.
\end{remark}

Whereas the first part of the Omnibus Theorem concerns finding lifts of elements from the spectral sequence, the second part is concerned with the question of how many lifts we need in order to generate $\oppi_{n,s}X/\tau^k$.
This is where the truncated version deviates the most from the non-truncated version: elements in the signature spectral sequence that support differentials also determine nonzero elements in $\oppi_{*,*}X/\tau^k$.
This is captured by the following phenomenon.

\begin{remark}
    Let $y \in \uE_2^{n,w}$ such that $d_r(y) = x$, let $k \geq r$, and let $\alpha$ be a $\tau^{r-1}$-torsion lift of $x$ to $X/\tau^k$.
    The class $y$ does not contribute to $X/\tau^k$ in degree $(n,w)$, but it does contribute to it in degree $(n,\,w-k+r-1)$.
    Indeed, the differential forces the Toda bracket
    \[
        \angbr{\alpha, \tau^{r-1}, \tau^{k-r+1}} \subseteq \oppi_{n,\,w-k+r-1}X/\tau^k.
    \]
    to be nonempty. However, elements in this Toda bracket will not lift to the non-truncated case unless $\tau^k$ acts by zero on $X$.
\end{remark}

In general, the following criterion gives a very precise description of how large the group $\oppi_{n,s}X/\tau^k$ is.

\begin{theorem}[Truncated Omnibus, part 2]
    \label{prop:precise_lifting_higher_powers_tau}
    Let $X$ be a synthetic spectrum, let $n,s\in\Z$, and let $k\geq 1$.
    Write $\uE_r^{n,s}$ for the $r$-th page of the signature spectral sequence of $X$.
    Write $\uB_r^{n,s}\subseteq \uE_r^{n,s}$ for the $r$-boundaries, and write $\uZ_r^{n,s} \subseteq \uE_r^{n,s}$ for the $r$-cycles.
    By convention, we write $\uZ_1 = \uE_2$ and $\uB_1 = 0$.
    
    \begin{numberenum}
        \item Let $k\geq 1$ be fixed.
        Suppose that for every $1 \leq i \leq k$, we have a collection of elements
        \[
            \set{\beta^i_j}_j \subseteq \oppi_{n,\, s+k-i}X/\tau^i
        \]
        whose images under $X/\tau^i \to X/\tau$ generate the abelian group
        \begin{equation}\label{eq:cycles_mod_boundaries}
            \uZ_i^{n,\,s+k-i}/\uB_{k+1-i}^{n,\, s+k-i}.
        \end{equation}
        \textbr{Note that by \cref{thm:omnibus}\,\textup{(1)}, such a collection exists for every $i$.}
        Let $\alpha^i_j \in \oppi_{n,s}X/\tau^k$ denote the image of $\beta^i_j$ under the map
        \[
            \tau^{k-i}\colon \opSigma^{0,\,-k+i}X/\tau^i \to X/\tau^k.
        \]
        Then $\set{\alpha^i_j}_{i,j}$ is a set of generators for the abelian group $\oppi_{n,s}X/\tau^k$.
        
        \item Let $1 \leq m \leq k$ be fixed.
        Suppose that for every $1 \leq i \leq k-m$, we have a collection of elements
        \[
            \set{\beta^i_j}_j \subseteq \oppi_{n,\, s+k-i}X/\tau^i
        \]
        whose images under $X/\tau^i \to X/\tau$ generate the abelian group \eqref{eq:cycles_mod_boundaries}.
        Let $\alpha^i_j \in \oppi_{n,s}X/\tau^k$ denote the image of $\beta^i_j$ under the map $\tau^{k-i}$.
        Then $\set{\alpha^i_j}_{i,j}$ is a set of generators for the abelian group
        \[
            \ker ( \oppi_{n,s}X/\tau^k \to \oppi_{n,s}X/\tau^m).
        \]
    \end{numberenum}
\end{theorem}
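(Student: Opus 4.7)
The plan is to apply the signature functor $\sigma$ of \cref{not:rho_sigma_adjunction} to reduce to filtered spectra, and then run the $k$-truncated $\tau$-Bockstein spectral sequence of \cref{def:tau_BSS_filtrations}, which converges strongly to the $\tau$-adic filtration on $\oppi_{n,s}X/\tau^k$. From the definition of that filtration, the piece at level $f$ is precisely the image of $\tau^f \colon \oppi_{n,s+f}X/\tau^{k-f} \to \oppi_{n,s}X/\tau^k$, so by construction $\alpha^i_j = \tau^{k-i}\beta^i_j$ sits in filtration $k-i$. The long exact sequence attached to the cofibre sequence $\opSigma^{0,-m}X/\tau^{k-m} \xrightarrow{\tau^m} X/\tau^k \to X/\tau^m$ further identifies the $m$-th filtration piece with $\ker(\oppi_{n,s}X/\tau^k \to \oppi_{n,s}X/\tau^m)$, giving the filtration description required in part~(2).

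The heart of the argument is the identification of the $\uE_\infty$-page of the $k$-truncated $\tau$-BSS. I would carry this out using \cref{lem:diff_in_BSS}, which matches the $d_r^\tau$-differentials of the $k$-truncated $\tau$-BSS with the $d_{r+1}$-differentials in the signature spectral sequence within the allowed filtration range. At tridegree $(n,s,f)$ with $0 \leq f \leq k-1$, an outgoing $d_r^\tau$-differential must land in filtration at most $k-1$, forcing $r \leq k-1-f$; these correspond to the signature-SS differentials $d_2, \dotsc, d_{k-f}$, so survival means lying in $\uZ_{k-f}^{n,s+f}$. Incoming $d_r^\tau$-differentials originate from filtration at least $0$, so $r \leq f$; these correspond to being quotiented by $\uB_{f+1}^{n,s+f}$. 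Substituting $i = k-f$ yields $\uE_\infty^{n,s,k-i} \cong \uZ_i^{n,s+k-i}/\uB_{k+1-i}^{n,s+k-i}$, and the standard functorial identification of associated graded with $\uE_\infty$ shows that the class of $\alpha^i_j$ in the quotient of consecutive filtration pieces at level $k-i$ matches the mod-$\tau$ reduction of $\beta^i_j$.

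Part~(1) then follows by downward induction on $f = k-1, k-2, \dotsc, 0$: the hypothesis exhibits the mod-$\tau$ reductions of the $\beta^i_j$ as generators of each $\uE_\infty^{n,s,k-i}$, and strong convergence together with the finiteness of the filtration ($0 \leq f \leq k-1$) upgrades this to the $\alpha^i_j$ generating all of $\oppi_{n,s}X/\tau^k$. Part~(2) is the same argument run only for filtration levels $f \geq m$, combined with the kernel identification from the first paragraph. The main obstacle is expected to be the bookkeeping in the second paragraph: one must correctly align the index shift between $d_r^\tau$ and $d_{r+1}$ with the boundary conventions $\uZ_1 = \uE_2$ and $\uB_1 = 0$, and precisely track the endpoints of the ranges of truncated-away differentials. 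Everything else is formal once strong convergence and this $\uE_\infty$-identification are in hand.
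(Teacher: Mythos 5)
Your proof is correct and follows essentially the same route as the paper's: reduce to filtered spectra via $\sigma$, invoke strong convergence of the $k$-truncated $\tau$-Bockstein spectral sequence, and identify its differentials via \cref{lem:diff_in_BSS}. The paper's proof leaves the bookkeeping implicit, while you spell out the $\uE_\infty$-page identification (correctly navigating the shift between $d_r^\tau$ and $d_{r+1}$) and the kernel-equals-filtration-step observation for part~(2), which are precisely the points the paper summarizes as ``follows from \cref{lem:diff_in_BSS}.''
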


\begin{proof}
    Using the isomorphism $\oppi_{n,s} X \cong \oppi_{n,s}\sigma X$ of \cref{prop:sigma_and_pistar}, we are reduced to proving the same statement for filtered spectra.
    The $k$-truncated Bockstein spectral sequence converges strongly to $\oppi_{*,*}X/\tau^k$, because the filtration giving rise to it is zero for $s \gg 0$.
    The first result therefore follows from \cref{lem:diff_in_BSS}.
    The second follows analogously by considering the natural map from the $k$-truncated $\tau$-BSS to the $m$-truncated $\tau$-BSS for~$X$, defined in the same way as the map in the proof of \cref{lem:diff_in_BSS}.
\end{proof}

\begin{warning}
    In general, the element $\alpha^i_j$ as in \cref{prop:precise_lifting_higher_powers_tau} need not be a $\tau^{k-i}$-multiple in the $\Z[\tau]$-module $\oppi_{*,*}X/\tau^k$; cf.\ \cref{rmk:distinction_tau_divble}.
\end{warning}

Sometimes, the following simplified criterion is sufficient.

\begin{corollary}
    \label{cor:easy_lifting_higher_power_tau}
    Let $X$ be a synthetic spectrum, let $n,s\in\Z$, and let $k \geq m \geq 1$.
    \begin{numberenum}
        \item If $\oppi_{n,\, s+d} X/\tau$ vanishes for $0 \leq d \leq k-1$, then $\oppi_{n,s}X/\tau^k$ vanishes.
        \item If $\oppi_{n,\, s+d} X/\tau$ vanishes for $m \leq d \leq k-1$, then the reduction map $\oppi_{n,s}X/\tau^k \to \oppi_{n,s}X/\tau^m$ is injective.
    \end{numberenum}
\end{corollary}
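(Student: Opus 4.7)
The plan is to deduce both claims as direct consequences of the second part of the truncated Omnibus Theorem (\cref{prop:precise_lifting_higher_powers_tau}), by organising the hypotheses so that all generators for the relevant group of lifts are forced to vanish.

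For part (1), the strategy is to apply \cref{prop:precise_lifting_higher_powers_tau}\,(1) and show that no nontrivial generators can exist. That theorem expresses $\oppi_{n,s}X/\tau^k$ as generated by classes $\alpha^i_j$ coming from elements $\beta^i_j\in\oppi_{n,\,s+k-i}X/\tau^i$ whose mod-$\tau$ reductions generate the subquotients $\uZ_i^{n,\,s+k-i}/\uB_{k+1-i}^{n,\,s+k-i}$ of $\oppi_{n,\,s+k-i}X/\tau$. Reindexing with $d=k-i$, the condition $1\leq i\leq k$ becomes exactly $0\leq d\leq k-1$. The hypothesis forces each ambient group $\oppi_{n,\,s+d}X/\tau$ to vanish, hence each subquotient is zero, so the generating set is empty and $\oppi_{n,s}X/\tau^k=0$.

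Part (2) follows by the same bookkeeping from \cref{prop:precise_lifting_higher_powers_tau}\,(2). That result gives a generating set for the kernel of the reduction map $\oppi_{n,s}X/\tau^k\to \oppi_{n,s}X/\tau^m$ in terms of elements $\beta^i_j \in \oppi_{n,\,s+k-i}X/\tau^i$ for $1\leq i\leq k-m$. Under the substitution $d=k-i$, this range becomes $m\leq d\leq k-1$, which is precisely the range on which the hypothesis posits the vanishing of $\oppi_{n,\,s+d}X/\tau$. So again each subquotient $\uZ_i^{n,\,s+k-i}/\uB_{k+1-i}^{n,\,s+k-i}$ is zero, the generating set for the kernel is empty, and the reduction map is injective.

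The only thing to be careful about is the reindexing and the direction of the inequalities, together with the mild fact that a subquotient of a trivial group is trivial; no further argument is needed. There is no real obstacle here beyond lining up the index ranges, since the heavy lifting—namely the identification of $\oppi_{n,s}X/\tau^k$ with a cleanly organised collection of lifts of $\uE_\infty$-classes at various filtrations—has already been done in \cref{prop:precise_lifting_higher_powers_tau} via the truncated $\tau$-Bockstein spectral sequence.
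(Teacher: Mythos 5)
Your proof is correct and follows exactly the same route as the paper's: both parts reduce to \cref{prop:precise_lifting_higher_powers_tau}, observing that the ambient groups $\oppi_{n,\,s+k-i}X/\tau$ (which house the subquotients $\uZ_i/\uB_{k+1-i}$ of \eqref{eq:cycles_mod_boundaries}) vanish over the relevant index range, so the generating sets are empty. Your explicit reindexing $d=k-i$ is a helpful expansion of the paper's more terse remark that "the relevant groups vanish," but the argument is identical.
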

\begin{proof}
    In the notation of \cref{prop:precise_lifting_higher_powers_tau}, we have $\uE_2^{n,s} = \oppi_{n,s}X/\tau$, and $\uZ_r^{n,s}$ is a subgroup of this.
    It follows that the relevant groups in \eqref{eq:cycles_mod_boundaries} vanish, so the claim follows.
\end{proof}

We will also need a non-truncated version of the Omnibus Theorem at one point in our computation; see \cref{lem:Delta24_lifts}.
For the sake of brevity, we only prove what we need.

\begin{proposition}
    \label{prop:non_truncated_omnibus}
    Let $X$ be a synthetic spectrum.
    Suppose that $X$ is $\tau$-complete and that the signature spectral sequence converges strongly.
    Let $x \in \uE_2^{n,s}$.
    Then the following are equivalent.
    \begin{letterenum}
        \item The class $x$ is a permanent cycle, i.e., all differentials on $x$ vanish.
        \item The class $x$ lifts to $\oppi_{n,s}X$.
    \end{letterenum}
\end{proposition}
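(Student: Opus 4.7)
The direction (b) $\Rightarrow$ (a) is immediate from \cref{thm:omnibus}. Any lift $\alpha \in \oppi_{n,s}X$ of $x$ projects under $X \to X/\tau^k$ to a lift $\alpha_k \in \oppi_{n,s}X/\tau^k$ for every $k \geq 1$, which by part~(1) of that theorem forces $d_2(x), \ldots, d_k(x)$ to vanish. Taking $k \to \infty$ yields that $x$ is a permanent cycle.

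For the direction (a) $\Rightarrow$ (b), I apply the signature functor $\sigma$ to reduce to the setting of filtered spectra, as in the proofs of the truncated statements. The $\tau$-completeness of $X$ yields an equivalence $X \simeq \lim_k X/\tau^k$, giving the Milnor short exact sequence
\[
0 \to \lim_k{}^1 \oppi_{n+1,\,s-1}X/\tau^k \to \oppi_{n,s}X \to \lim_k \oppi_{n,s}X/\tau^k \to 0.
\]
The plan is therefore to produce a compatible system $\{\alpha_k\}_{k \geq 1}$ of lifts of $x$ with $\alpha_k \in \oppi_{n,s}X/\tau^k$, and then lift it through the surjection above. Starting from $\alpha_1 = x$, I will build the $\alpha_k$ inductively: given compatible lifts $\alpha_1, \ldots, \alpha_k$, the obstruction to extending $\alpha_k$ to $\alpha_{k+1}$ is the class $\delta_k^{k+1}(\alpha_k) \in \oppi_{n-1,\,s+k+1}X/\tau$, which by \cref{thm:omnibus}\,(2b) represents $d_{k+1}(x) = 0$.

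The main obstacle is that this obstruction need only lie in the boundaries $\uB_k \subseteq \uE_2^{n-1,\,s+k+1}$ rather than vanishing outright. To correct this, I plan to modify $\alpha_k$ by an element of the kernel of $\oppi_{n,s}X/\tau^k \to \oppi_{n,s}X/\tau$; \cref{prop:precise_lifting_higher_powers_tau} describes this kernel explicitly in terms of $\tau$-multiples of classes from lower-page representatives, and one checks that for each $b \in \uB_k^{n-1,\,s+k+1}$ there is a kernel element whose image under $\delta_k^{k+1}$ is exactly $b$. Subtracting off such an element annihilates the obstruction while preserving compatibility with the earlier $\alpha_j$ for $j < k$, since the correction lies in the kernel of the projection all the way down to $X/\tau$. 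Strong convergence of the signature spectral sequence is precisely what powers this: it provides the Mittag-Leffler-type control on the tower of lift sets needed to guarantee that this step-by-step construction produces a genuinely coherent system.

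As an alternative route, one may instead use strong convergence directly: it identifies $\uE_\infty^{n,s}$ with $F^{s+n}/F^{s+n+1}$, where $F^m = \mathrm{image}(\oppi_n X(m) \to \oppi_n X[\tau^{-1}])$. A permanent cycle $x$ determines a class $[x] \in \uE_\infty^{n,s}$; lifting it to $y \in F^{s+n}$ and then to $\alpha \in \oppi_n X(s+n) = \oppi_{n,s}X$ produces an element whose mod~$\tau$ reduction equals $x$ up to an error lying in $\uB_\infty$, which can then be killed by inductively subtracting off images of $\delta_{r-1}^\infty$ applied to lifts of representatives of the bounding classes, reducing the filtration of the error at each step until it vanishes.
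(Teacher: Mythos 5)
Your proof of (b)~$\Rightarrow$~(a) is correct and is essentially the same observation the paper makes. For (a)~$\Rightarrow$~(b), you take a genuinely different route from the paper. The paper works entirely inside the (non-truncated) $\tau$-Bockstein spectral sequence: it observes that strong convergence of the underlying spectral sequence is equivalent to strong convergence of the $\tau$-BSS, that $x$ lifts along $X \to X/\tau$ if and only if $x$, viewed in tridegree $(n,s,0)$ of the $\tau$-BSS, is a permanent cycle, and that by \cref{lem:diff_in_BSS} this happens if and only if $x$ is a permanent cycle in the underlying spectral sequence. You instead try to produce a compatible system of lifts $\alpha_k\in\oppi_{n,s}X/\tau^k$ by an explicit obstruction-theoretic induction and then push through the Milnor sequence. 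That route can be made to work, but it forces you to confront the coherence of the system directly, which the $\tau$-BSS argument sidesteps.

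The step that does not hold up is the claim that modifying $\alpha_k$ by an element of $\ker\bigl(\oppi_{n,s}X/\tau^k \to \oppi_{n,s}X/\tau\bigr)$ both kills the obstruction and ``preserves compatibility with the earlier $\alpha_j$ for $j<k$, since the correction lies in the kernel of the projection all the way down to $X/\tau$.'' Lying in $\ker\bigl(\oppi_{n,s}X/\tau^k \to \oppi_{n,s}X/\tau\bigr)$ does \emph{not} imply lying in $\ker\bigl(\oppi_{n,s}X/\tau^k \to \oppi_{n,s}X/\tau^{k-1}\bigr)$; as \cref{prop:precise_lifting_higher_powers_tau} makes explicit, the former kernel is generated by $\tau^{k-i}$-images for $1\leq i\leq k-1$, and those with $i\geq 2$ change $\alpha_{k-1}$. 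If you instead restrict the correction to the smaller kernel so that compatibility genuinely is preserved, you no longer have enough room to cancel an arbitrary element of $\uB_{k}^{n-1,\,s+k+1}$. So as written the induction does not produce a coherent element of $\lim_k \oppi_{n,s}X/\tau^k$. The honest completion of your approach is to drop the attempt to make the lifts compatible stage by stage, note that each set $L_k$ of lifts of $x$ to $\oppi_{n,s}X/\tau^k$ is a nonempty torsor under $\ker\bigl(\oppi_{n,s}X/\tau^k \to \oppi_{n,s}X/\tau\bigr)$, and use strong convergence to supply the Mittag--Leffler condition on this tower of groups that forces $\lim_k L_k\neq\varnothing$. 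You gesture at this, but the gesture contradicts the stated ``preserving compatibility'' claim, and the Mittag--Leffler step is where the actual content lies; it needs to be written out. (Your sketched alternative via $\uE_\infty^{n,s}\cong F^{s+n}/F^{s+n+1}$ has a similar issue: the terminating inductive subtraction of boundaries at the end also relies on the stabilization supplied by strong convergence and needs to be justified.)
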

\begin{proof}
    We may again apply $\sigma$ using that $\sigma$ preserves limits, so for the rest of the proof we work with filtered spectra.
    The strong convergence of the underlying spectral sequence is equivalent to the strong convergence of the $\tau$-Bockstein spectral sequence.
    Consider $x$ as an element of $\uE_1^{n,s,0}$.
    Note that the map from the $\tau$-BSS to the $1$-truncated $\tau$-BSS for~$X$ is given on $\uE_1$-pages by the projection onto filtration $0$.
    As a result, $x$ lifts to $\oppi_{n,s}X$ if and only if $x$ as an element in $\uE_1^{n,s,0}$ is a permanent cycle.
    Using the identifications of \cref{lem:diff_in_BSS}, we find that $x$ defines a permanent cycle in the $\tau$-BSS if and only if it does in the underlying spectral sequence.
    This proves the claim.
\end{proof}

\begin{remark}
    As noted in the proofs, the theorems of this section are actually results about filtered spectra.
    In particular, one can apply these results much more generally than just synthetic spectra: the exact same arguments go through in the setting of an adjunction $\FilSp \rightleftarrows \C$ as discussed in \cref{ssec:filtered_spectra}.
\end{remark}


\subsection{The synthetic Leibniz rule}
\label{ssec:synthetic_leibniz}

In this section we prove a theorem of Burklund describing a synthetic version of the Leibniz rule.
This is an incredibly powerful tool.
For example, in the context of the $\Tmf$-computations at the prime $2$ in \cref{sec:prime2}, we use this synthetic Leibniz rule to compute $d_7(\Delta^4)$ in \cref{prop:dsevens}.
This provides the first unconditional proof of this differential, and answers a question of Isaksen--Kong--Li--Ruan--Zhu \cite[Problem 1.2]{isaksen_etal_motivic_ANSS_tmf}.

Unlike Bruner's power operations in the Adams spectral sequence \cite[Chapter~VI]{H_infty_book} (interpreted synthetically by Burklund--Xu \cite[Section~7.1.5]{burklund_xu_hi3}), which requires an $\bH_\infty$-structure, this result holds for any homotopy-commutative synthetic ring (or even a homotopy-commutative ring in filtered spectra).

\begin{theorem}[Synthetic Leibniz rule, Burklund]\label{syntheticleibnizrule}
    Let $R$ be a homotopy ring  in $\Syn$. For any $n\geq 1$, the map
    \[\delta_n^{2n} \colon \oppi_{*,*}R/\tau^n \to \oppi_{*-1,\, *+n+1} R/\tau^n\]
    is a derivation. In particular, for any two classes $x,y \in \oppi_{\ast,\ast}R/\tau^n$, we have the equality
    \[\delta_n^{2n}(xy) = \delta_n^{2n}(x)\cdot y + (-1)^{\abs{x}}x\cdot  \delta_{n}^{2n}(y).\]
\end{theorem}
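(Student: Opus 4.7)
The plan is to reduce the statement to filtered spectra via the signature functor $\sigma$, and then to exploit that $R/\tau^{2n}$ is naturally a square-zero extension of the homotopy ring $R/\tau^n$. By \cref{prop:sigma_and_pistar}, $\sigma$ preserves $\tau$ and induces natural isomorphisms on bigraded homotopy groups; moreover, as the right adjoint to the symmetric monoidal functor $\rho$ of \cref{not:rho_sigma_adjunction}, it is lax symmetric monoidal and hence carries homotopy rings to homotopy rings. It therefore suffices to prove the analogous statement for a homotopy ring $R$ in filtered spectra, which we assume from here on.

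The key structural observation is that the defining cofiber sequence
\[
    \opSigma^{0,-n} R/\tau^n \xrightarrow{\tau^n} R/\tau^{2n} \xrightarrow{q} R/\tau^n
\]
exhibits $R/\tau^{2n}$ as a square-zero extension of the homotopy ring $R/\tau^n$ by the shifted $R/\tau^n$-bimodule $\opSigma^{0,-n}R/\tau^n$. Indeed, $q$ is a homotopy ring map, and the image of $\tau^n$ inside $R/\tau^{2n}$ is a two-sided ideal with vanishing square, because $\tau^{2n} = 0$ in $R/\tau^{2n}$. This in particular endows $\opSigma^{0,-n}R/\tau^n$ with a natural $R/\tau^n$-bimodule structure, providing the target of a putative derivation $\delta_n^{2n}$.

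The derivation property is then established by the following two-step diagram chase. First, since $q$ is a homotopy ring map and $\delta_n^{2n} \circ q$ is null (as consecutive arrows in a cofiber sequence), we have
\[
    \delta_n^{2n} \circ \mu_n \circ (q \otimes q) \simeq \delta_n^{2n} \circ q \circ \mu_{2n} \simeq 0,
\]
so $\delta_n^{2n} \circ \mu_n \colon R/\tau^n \otimes R/\tau^n \to \opSigma^{1,-n-1}R/\tau^n$ factors through the cofiber of $q \otimes q$. Second, by the octahedral axiom this cofiber fits into an extension of $\opSigma^{1,-n-1}R/\tau^n \otimes R/\tau^{2n}$ by $R/\tau^{2n} \otimes \opSigma^{1,-n-1}R/\tau^n$, and the square-zero condition forces the resulting two components of $\delta_n^{2n} \circ \mu_n$ to be identified with $\mu_n \circ (\delta_n^{2n} \otimes 1)$ and $(-1)^{\abs{x}} \mu_n \circ (1 \otimes \delta_n^{2n})$, the Koszul sign arising from interchanging $\delta_n^{2n}$ (stem degree $-1$) past $x$.

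The main obstacle will be that arbitrary classes $x, y \in \oppi_{*,*}R/\tau^n$ need not lift individually to $\oppi_{*,*}R/\tau^{2n}$, so a naive proof by multiplying lifts is unavailable. The diagrammatic argument above circumvents this by working entirely at the level of maps of cofiber sequences, where the vanishing of the ideal square guarantees the clean additive splitting of the obstruction, with no interference term coming from the would-be contribution of $\opSigma^{0,-n}R/\tau^n \otimes \opSigma^{0,-n}R/\tau^n$.
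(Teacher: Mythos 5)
Your reduction to filtered spectra via $\sigma$ is correct, and your intuition that the derivation property should come from a square-zero phenomenon is the right one. However, your proof has a genuine gap at the octahedral step, and the overall approach differs from the paper's in a way that is worth spelling out.

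The gap: once you have factored $\delta_n^{2n}\circ\mu_n$ through $\mathrm{cofib}(q\otimes q)$, you invoke the octahedral axiom to write $\mathrm{cofib}(q\otimes q)$ as an extension of two tensor pieces, and then assert that the map $\mathrm{cofib}(q\otimes q)\to\Sigma^{1,-n-1}R/\tau^n$ has ``two components'' that you identify with the two terms of the Leibniz rule. But an extension is not a direct sum, and a map out of the middle term of a cofiber sequence does not decompose into a sum of a map out of the sub and a map out of the quotient unless a splitting is supplied. Your argument gives you, canonically, only the restriction to the sub-piece; producing the other summand requires establishing a splitting, or at least that the map factors compatibly, and this is precisely where the work is. Your appeal to the ``square-zero condition'' does not by itself produce the splitting. (There is also a minor bookkeeping issue: factoring $q\otimes q=(q\otimes\mathrm{id})\circ(\mathrm{id}\otimes q)$ yields cofiber pieces $B\otimes\Sigma^{1,-n-1}A$ and $\Sigma^{1,-n-1}A\otimes A$ with $A=R/\tau^n$ and $B=R/\tau^{2n}$, whereas the two pieces you wrote, $\Sigma^{1,-n-1}A\otimes B$ and $B\otimes\Sigma^{1,-n-1}A$, do not arise from any factorization of $q\otimes q$ via the octahedral axiom.) A further point you would need to justify, rather than assert, is the identification of whichever components you do obtain with $\mu_n\circ(\delta\otimes 1)$ and $\mu_n\circ(1\otimes\delta)$.

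The paper's proof avoids these difficulties by never working with the extension $R/\tau^{2n}\to R/\tau^n$ at all. Instead, it establishes an explicit splitting of $C\tau^n\otimes C\tau^n$ as a $C\tau^n$-module, namely $C\tau^n\otimes C\tau^n\simeq C\tau^n\oplus\Sigma^{1,-n-1}C\tau^n$, coming from the fact that the left unit $\eta_L$ is split by the multiplication $\mu$ of $C\tau^n$. It then identifies the projection onto the second summand with $\mathrm{id}\otimes\delta_n^\infty$, and shows that the resulting ring structure on the direct sum is that of a trivial square-zero extension by checking that $\sigma(y)\sigma(y')=0$; the latter vanishing is established directly by a degree argument ($\pi_{2,-2n-2}(C\tau^n\otimes C\tau^n)=0$, plus a base-change argument for general $R$), rather than by appealing to the informal statement ``$\tau^{2n}=0$.'' With the splitting in hand, $\delta_n^{2n}$ is realized as the composite of the right unit $\eta_R$ with the projection, and the Leibniz rule is then an immediate algebraic consequence of the fact that $\eta_R$ is a ring map into a trivial square-zero extension. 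This sidesteps the lifting problem you correctly identified at the end of your proposal, and it supplies exactly the splitting (as $C\tau^n$-modules, multiplicatively) that your argument implicitly needs but does not construct.
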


\begin{proof}
    By applying $\sigma$, we reduce to proving the same in the context of filtered spectra.
    We first prove the claim in the absolute case $R=\S$.
    Since $C\tau^n$ is a ring spectrum, the left unit $\eta_L\colon C\tau^n\to C\tau^n\otimes C\tau^n$ is split by the multiplication map $\mu\colon C\tau^n\otimes C\tau^n\to C\tau^n$, resulting in a splitting of $C\tau^n$-modules
    \[C\tau^n\otimes C\tau^n\simeq C\tau^n\oplus\opSigma^{1,\,-n-1}C\tau^n,\]
    which determines a splitting
    \begin{equation}\label{eq:derivationsplitting}
    \oppi_{*,*}(C\tau^n\otimes C\tau^n)\cong\oppi_{*,*}(C\tau^n)\oplus\oppi_{*,*}(\Sigma^{1,\,-n-1}C\tau^n)
    \end{equation}
    of $\oppi_{*,*}C\tau^n$-modules.
    The theorem follows from a pair of claims: the projection onto the right hand factor in \eqref{eq:derivationsplitting} coincides with $\id_{C\tau^n}\otimes\delta_{n}^{\infty}$, and \eqref{eq:derivationsplitting} is an isomorphism of rings, where we consider the right-hand side as a trivial square-zero extension.
    Indeed, given these, it follows that the composite
    \[
        \begin{tikzcd}
            \oppi_{*,*}C\tau^n \ar[r,"\eta_R"] & \pi_{*,*}(C\tau^n\otimes C\tau^n) \ar[r,"\id_{C\tau^n}\otimes\delta_{n}^{\infty}"] &[2.1em] \oppi_{*,*}\Sigma^{1,\,-n-1}(C\tau^n)
        \end{tikzcd}
    \]
    is a derivation, and the composite is identified with $\delta_n^{2n}$ via the following commutative diagram.
    \[
    \begin{tikzcd}
        C\tau^n\otimes C\tau^n\arrow[r,"\id_{C\tau^n}\otimes\delta_n^\infty"] &[2.1em] \opSigma^{1,\,-n-1}C\tau^n\\
        C\tau^n\arrow[u,"\eta_R"]\arrow[d,equal]\arrow[r,"\delta_n^\infty"] & \opSigma^{1,\,-n-1}\Sph\arrow[u]\arrow[d]\\
        C\tau^n\arrow[r,"\delta_{n}^{2n}"] & \opSigma^{1,\,-n-1}C\tau^n
    \end{tikzcd}
    \]
   To establish this pair of claims, we need to unravel the splitting of \eqref{eq:derivationsplitting}. First we tensor the cofiber sequence
    \[
     \begin{tikzcd}
        \opSigma^{0,-n}\S\arrow[r,"\tau^n"] & \S\arrow[r] & C\tau^n\arrow[r,"\delta_n^\infty"] & \opSigma^{1,\,-n-1}\S
    \end{tikzcd}
    \]
    on the left with $C\tau^n$ and, examining the corresponding long exact sequence, one sees that there is a unique class $\widehat{\sigma}\in\pi_{1,\,-n-1}(C\tau^n\otimes C\tau^n)$ such that $(\id_{C\tau^n}\otimes\delta_n^\infty)(\widehat{\sigma})=1\in\oppi_{0,0}C\tau^n$.
    We let 
    \[\sigma\colon \opSigma^{1,\,-n-1}C\tau^n\to C\tau^n\otimes C\tau^n\]
    be the map of $C\tau^n$-modules adjoint to $\widehat{\sigma}$. It now follows by definition of $\sigma$ that the above splitting of $C\tau^n$-modules takes the form
    \[
    \begin{tikzcd}
        C\tau^n\otimes C\tau^n \ar[r,shift left, "\begin{pmatrix}
            \mu\\ \id_{C\tau^n}\otimes\delta_n^{\infty}\end{pmatrix}"] &[3em] C\tau^n \oplus \Sigma^{1,\,-n-1}C\tau^n, \ar[l,shift left, "\begin{pmatrix}
                \eta_L \hspace{0.1cm} \sigma
            \end{pmatrix}"]
    \end{tikzcd}
    \]
    which establishes the first of the above claims.

    For the second claim, we examine a product of elements
    
    \adjustbox{scale=0.89,center}{$(\eta_L(x)+\sigma(y))\cdot(\eta_L(x')+\sigma(y'))=\eta_L(x)\eta_L(x')+\sigma(y)\eta_L(y')+(-1)^{|x|}\eta_L(x)\sigma(y')+\sigma(y)\sigma(y')$
    }
    
    in $\pi_{*,*}(C\tau^n\otimes C\tau^n)$. Assuming the last term vanishes, applying the upper map in the above splitting yields the class
    \[
        (xx',yx'+(-1)^{\abs{x}}xy')\in\pi_{*,*}(C\tau^n)\oplus\pi_{*,*}(\opSigma^{1,\,-n-1}C\tau^n),
    \]
    completing the proof. To see that $\sigma(y)\sigma(y')=0$, note that by $C\tau^n$-linearity, this product factors through a map of $C\tau^n$-modules
    \[
        \begin{tikzcd}
            \opSigma^{1,\,-n-1} C\tau^n \otimes_{C\tau^n} \opSigma^{1,\,-n-1}C\tau^n \rar["\sigma\otimes\sigma"] & (C\tau^n\otimes C\tau^n)^{\otimes_{C\tau^n}2} \rar["\mu"] & C\tau^n\otimes C\tau^n
        \end{tikzcd}
    \]
    which is adjoint to a map in $\pi_{2,\,-2n-2}(C\tau^n\otimes C\tau^n)$. This group is seen to be zero by a long exact sequence argument.

    The proof for general $R$ goes through without change by applying $R\otimes-$, with the exception of the last step. The map $R\otimes(\mu\circ(\sigma\otimes\sigma))$ is not automatically zero for degree reasons as in the case $R=\S$, but since the map is obtained by applying $R\otimes-$ to the universal case, the resulting map remains zero.
\end{proof}

\begin{remark}
    It is stated in Claim~3.3.3 of Burklund's Cookware \cite{burklund_cookware_draft} that, in addition, the map $C\tau^{2n}\to C\tau^n$ is a square zero extension of $\E_\infty$-rings. We will not need such a generalisation in this article.
\end{remark}


\section{Toda brackets}
\label{sec:toda_brackets_setup}

In this section we discuss Toda brackets.
These were invented by Toda in \cite{toda} in the category of topological spaces, and were used extensively in \cite{kochman_stable} in the category of spectra.
It has long been known to experts that one can define Toda brackets in any sufficiently coherent homotopical context, for example, in an $\infty$-category.
For simplicity and with our eyes towards applications to synthetic, equivariant, and motivic spectra, as well as the associated categories of modules, we restrict our attention to the Picard-graded homotopy groups of the unit in a monoidal stable $\infty$-category.

First, we set up this abstract theory, including proving shuffling formulas for Toda brackets of arbitrary length, and afterwards discuss a synthetic version of Moss' theorem for evaluating Toda brackets.
Let us reiterate that for the second part, we closely follow ideas of Burklund from \cite{burklund_cookware_draft}.

\subsection{Toda brackets and iterated cones}

Let $(\C,\otimes,\1)$ be a stably monoidal $\infty$-category.

\begin{definition}\label{def:shifts}
    We let a \defi{shift} of an object or morphism in $\C$ refer to applying $X\otimes\blank$ or $\blank\otimes X$ for some $X\in\Pic(\C)$.
\end{definition}

We will abuse notation slightly and use the same name for a map $f:X\to Y$ for $X,Y\in \Pic(\C)$ and its shift $f\otimes Z\colon X\otimes Z\to Y\otimes Z$ for $Z\in \Pic(\C)$. One important example is the dual map $Y^\vee\to X^\vee$, as this can be identified with the shift of $Y^{-1}\otimes f\otimes X^{-1}$ of $f$ up to a sign, in the following sense. 

\begin{definition}\label{def:signs}
    We say that (sets of) morphisms agree up to a \defi{sign} when they become homotopic after multiplying by a unit of the ring $[\1,\1]$ of endomorphisms of the unit of $\C$.
\end{definition}

All of the formulas given below only hold up to a sign, and we do not make these signs explicit. 

\begin{definition}\label{def:formsofcones}
    Let $X_0,\dotsc,X_{n}\in\Pic(\C)$ and
    \[
        \begin{tikzcd}
            X_n \rar["a_n"] & X_{n-1} \rar & \dotsb \rar & X_1 \rar["a_1"] & X_0
        \end{tikzcd}
    \]
    a composable sequence of arrows in $\C$. We define the data of a \defi{form of $C(a_1,\dotsc,a_n)$} in $\C$ inductively as follows:
    \begin{itemize}
        \item A form of $C(a_1)$ is a cofiber of $a_1$ together with the canonical projection $C(a_1)\to \Sigma X_1$. There is a contractible space of forms of $C(a_1)$.
        \item Suppose that forms of $C(a_1,\dotsc,a_{n-1})$ have been defined so that each such form comes with a canonical ``projection to the top cell''
        \[C(a_1,\dotsc,a_{n-1})\to \Sigma^{n-1}X_{n-1}\]
        A form of $C(a_1,\dotsc,a_n)$ is then a cofiber of a morphism
        \[\Sigma^{n-1}X_n\to C(a_1,\dotsc,a_{n-1})\]
        such that the composite
        \[\Sigma^{n-1}X_n\to C(a_1,\dotsc,a_{n-1})\to\Sigma^{n-1}X_{n-1}\]
        is homotopic to $\Sigma^{n-1}a_n$, for some form of $C(a_1,\dotsc,a_{n-1})$. Rotating the cofiber sequence, one obtains a canonical map
        \[C(a_1,\dotsc,a_n)\to\Sigma^nX_n\]
    \end{itemize}
\end{definition}

\begin{remark}\label{rmk:formsofconesdiagram}
    A form of $C(a_1,\dotsc,a_n)$ is a cell complex built out of $X_0,\dotsc,X_n$ which may be depicted as follows
    \[
    \begin{tikzcd}
        \boxed{\Sigma^nX_n}\arrow[d,dash,"a_n"]\\
        \boxed{\Sigma^{n-1}X_{n-1}}\arrow[d,dash]\\
        \vdots\arrow[d,dash]\\
        \boxed{\Sigma X_1}\arrow[d,dash,"a_1"]\\
        \boxed{X_0}
    \end{tikzcd}
    \]
    By definition such a form fits into a canonical cofiber sequence
    \[C(a_1,\dotsc,a_{n-1})\to C(a_1,\dotsc,a_n)\to\Sigma^nX_n\]
    for some form of $C(a_1,\dotsc,a_{n-1})$. Except in the case $n=1$, these do not always exist and are not in general unique when they do.
\end{remark}

\begin{example}\label{lem:dualofcones}
    The dual of a form of $C(a_1,\dotsc,a_n)$ is a shift of a form of $C(a_n,\dotsc,a_1)$, since taking duals preserves cofiber sequences. 
\end{example}

\begin{definition}\label{def:overlinenotation}
    Let $K$ be a form of $C(a_1,\dotsc,a_n)$. For maps $a\colon X\to X_n$ and $b\colon X_0\to X$, we use the notation 
    \begin{align*}
        \SwapAboveDisplaySkip
        \Sigma^nX &\stackrel{\overline{a}}{\longrightarrow} K\\
        K &\stackrel{\underline{b}}{\longrightarrow} X
    \end{align*}
    to denote maps that make the following diagrams commute
    \[
    \begin{tikzcd}
        X_0\arrow[r,"b"]\arrow[d,"\iota"']&X\\
        K\arrow[ur,"\underline{b}"']
    \end{tikzcd}
    \qquad\qquad
    \begin{tikzcd}
        \Sigma^{n}X\arrow[r,"a"]\arrow[d,"\overline{a}"']&\Sigma^{n}X_{n},\\
        K\arrow[ur,"p"']
    \end{tikzcd}
    \]
    where $\iota$ and $p$ are the canonical inclusions and projections respectively.
\end{definition}

\begin{definition}\label{def:todabracket}
    Let $K$ be a form of $C(a_2,\dotsc,a_{n-1})$, and suppose one has maps
    \begin{align*}
        \opSigma^{n-2}X_n &\stackrel{\overline{a_n}}{\longrightarrow} K\\
        K &\stackrel{\underline{a_1}}{\longrightarrow} X_0
    \end{align*}
    as in \cref{def:overlinenotation}.
    The \defi{Toda bracket}
    \[\angbr{a_1,\dotsc,a_n}\subseteq [\Sigma^{n-2}X_n,X_0]\]
    is the set of all composites $\underline{a_1}\circ\overline{a_n}$ formed in this way, by running over all forms $K$ of $C(a_2,\dotsc,a_{n-1})$ and all possible choices of $\underline{a_1}$ and $\overline{a_n}$.
\end{definition}

\begin{remark}\label{rmk:todabracketdiagram}
In a diagram, this is the composite
\[
    \begin{tikzcd}
        \boxed{\Sigma^{n-2}X_n}\arrow[r,"a_n"]&\boxed{\Sigma^{n-2}X_{n-1}}\arrow[d,dash,"a_{n-1}"]\\
        &\vdots\arrow[d,dash,"a_2"]\\
        &\boxed{X_1}\arrow[r,"a_1"]&\boxed{X_0}
    \end{tikzcd}
    \]    
\end{remark}

\begin{lemma}\label{lem:functorspreservebrackets}
    Let $F\colon \C\to\mathcal{D}$ be a unital exact functor between symmetric monoidal stable $\infty$-categories. For any morphisms $a_i\in\C$ as above, there is a containment
    \[F(\angbr{a_1,\dotsc,a_n})\subseteq\angbr{F(a_1),\dotsc,F(a_n)}.\]
\end{lemma}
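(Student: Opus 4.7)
My plan is to induct on $n$, showing that $F$ carries any form of $C(a_1,\dotsc,a_n)$ in $\C$ to a form of $C(F(a_1),\dotsc,F(a_n))$ in $\mathcal{D}$. Given this, the containment of Toda brackets is immediate: any element of $F(\angbr{a_1,\dotsc,a_n})$ arises as $F$ applied to the top-cell projection of some form of $C(a_1,\dotsc,a_n)$ (cf.\ \cref{def:formsofcones,rmk:formsofconesdiagram}), and this image equals the top-cell projection of the corresponding form of $C(F(a_1),\dotsc,F(a_n))$, hence lies in $\angbr{F(a_1),\dotsc,F(a_n)}$.

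The base case $n=1$ is essentially the statement that $F$ preserves cofibre sequences. A form of $C(a_1)$ is a choice of cofibre of $a_1$, and since $F$ is exact, applying $F$ to it yields a cofibre of $F(a_1)$, with top-cell projection obtained by applying $F$ to the original one. For the inductive step, I start with a form of $C(a_1,\dotsc,a_n)$, which by definition is the cofibre of a map $g\colon \Sigma^{n-1}X_n\to C(a_1,\dotsc,a_{n-1})$ whose composite with the projection to the top cell is $\Sigma^{n-1}a_n$, built over some form of $C(a_1,\dotsc,a_{n-1})$. Combining exactness of $F$ with the inductive hypothesis, $F(C(a_1,\dotsc,a_{n-1}))$ is a form of $C(F(a_1),\dotsc,F(a_{n-1}))$ whose top-cell projection is $F$ applied to the previous one, and $F(g)$ then exhibits $F(C(a_1,\dotsc,a_n))$ as a form of $C(F(a_1),\dotsc,F(a_n))$.

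The only real point of care, and the mildest possible obstacle, is keeping track of the shifts by objects $X_i\in\Pic(\C)$. I need to know that $F$ sends these invertibles to $\Pic(\mathcal{D})$ and intertwines the shift operations of \cref{def:shifts}, up to the signs of \cref{def:signs}. For a unital exact monoidal $F$ between symmetric monoidal stable $\infty$-categories, this is automatic: $F$ preserves invertible objects and satisfies $F(\1_\C)\simeq \1_\mathcal{D}$, while suspensions are preserved by exactness, so the entire cell-complex data of \cref{rmk:formsofconesdiagram} transports cleanly across $F$. With these compatibilities in hand, the induction goes through without surprise, and the top-cell projection of the new form is tautologically $F$ applied to the top-cell projection of the original, which is precisely the containment asserted.
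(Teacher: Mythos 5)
Your core idea matches the paper's one-line proof exactly: exactness and unitality of $F$ imply that $F$ carries a form of $C(a_1,\dotsc,a_n)$ (together with its bottom-cell inclusion and top-cell projection, and compatibly with shifts by Picard elements) to a form of $C(F(a_1),\dotsc,F(a_n))$. Your induction establishing this is fine, and your remarks about invertibles and shifts are the right compatibilities to check.

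However, the sentence bridging this to the Toda bracket containment misstates how the bracket is built. By \cref{def:todabracket}, an element of $\angbr{a_1,\dotsc,a_n}$ is \emph{not} the top-cell projection of a form of $C(a_1,\dotsc,a_n)$; it is a composite $\underline{a_1}\circ\overline{a_n}\colon \Sigma^{n-2}X_n\to X_0$ passing through a form $K$ of the \emph{shorter} complex $C(a_2,\dotsc,a_{n-1})$, where $\overline{a_n}$ is compatible with the top-cell projection of $K$ and $\underline{a_1}$ extends $a_1$ along the bottom-cell inclusion. (Indeed \cref{prop:conesandbracketsexistence}\,(1) shows that a form of $C(a_1,\dotsc,a_n)$ exists precisely when the bracket \emph{contains zero}, so top-cell projections of such forms are the wrong object.) The correct conclusion from your induction is that $F(K)$ is a form of $C(F(a_2),\dotsc,F(a_{n-1}))$ with $F$-images of the given inclusion and projection, so that $F(\overline{a_n})$ is a valid $\overline{F(a_n)}$ and $F(\underline{a_1})$ a valid $\underline{F(a_1)}$, whence $F(\underline{a_1}\circ\overline{a_n})=F(\underline{a_1})\circ F(\overline{a_n})\in\angbr{F(a_1),\dotsc,F(a_n)}$. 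With that sentence repaired, your argument is correct and is precisely the expansion of the paper's terse proof.
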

\begin{proof}
    The assumptions guarantee that $F$ preserves forms of $C(a_1,\dotsc,a_n)$.
\end{proof}

\begin{lemma}
    Up to signs, one has
    \[\angbr{a_1,\dotsc,a_n}=\angbr{a_n,\dotsc,a_1}.\]
\end{lemma}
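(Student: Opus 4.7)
The plan is to prove this by applying the duality functor $(\blank)^\vee \colon \C^\op \to \C$, using the identifications afforded by working entirely with Picard-graded objects. First I would point out the implicit convention: since every $X_i \in \Pic(\C)$ is invertible, the duality pairing gives a canonical isomorphism
\[
  [\Sigma^{n-2} X_n, X_0] \,\xrightarrow{\sim}\, [X_0^\vee, \Sigma^{-(n-2)} X_n^\vee],
\]
and this is the identification under which the two sides of the stated equality live in the same abelian group. Equivalently, we may identify $a_i^\vee$ with $a_i$ via the self-duality of the Picard-graded homotopy groups of $\1$, and up to a sign the identity $\angbr{a_1,\dotsc,a_n} = \angbr{a_n,\dotsc,a_1}$ becomes the assertion that the bracket on one side corresponds to the bracket on the reversed sequence on the other side.

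Next I would take an arbitrary element of $\angbr{a_1,\dotsc,a_n}$, presented via some form $K$ of $C(a_2,\dotsc,a_{n-1})$ together with lifts $\overline{a_n}\colon \Sigma^{n-2}X_n \to K$ and $\underline{a_1}\colon K \to X_0$ as in \cref{def:overlinenotation,def:todabracket}, giving the composite $\underline{a_1}\circ \overline{a_n}$. By \cref{lem:dualofcones}, the dual $K^\vee$ is (up to a shift by an invertible object) a form of $C(a_{n-1}^\vee,\dotsc,a_2^\vee)$; modulo the Picard-graded identification of $a_i^\vee$ with $a_i$ this is a form of $C(a_{n-1},\dotsc,a_2)$, the cone appropriate for the reversed bracket $\angbr{a_n,\dotsc,a_1}$. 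Moreover, duality swaps the canonical inclusion $X_0 \hookrightarrow K$ with the canonical projection $K^\vee \twoheadrightarrow X_0^\vee$ of $K^\vee$ to its top cell, and swaps the canonical projection $K \twoheadrightarrow \Sigma^{n-1}X_{n-1}$ with the canonical inclusion of the bottom cell into $K^\vee$ (up to the shift produced by \cref{lem:dualofcones}).

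Under these identifications, $\underline{a_1}^\vee$ is a valid choice of $\overline{a_1}$ for $\angbr{a_n,\dotsc,a_1}$, and $\overline{a_n}^\vee$ is a valid choice of $\underline{a_n}$. The composite
\[
  \bigl(\underline{a_1}\circ \overline{a_n}\bigr)^\vee \;=\; \overline{a_n}^\vee \circ \underline{a_1}^\vee
\]
then represents an element of $\angbr{a_n,\dotsc,a_1}$, proving the inclusion $\angbr{a_1,\dotsc,a_n} \subseteq \angbr{a_n,\dotsc,a_1}$ up to signs. The reverse inclusion follows by the same argument applied to the dualized sequence, using $X^{\vee\vee}\simeq X$ for $X \in \Pic(\C)$.

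The only real obstacle is bookkeeping: the shift produced by \cref{lem:dualofcones} must be tracked consistently so that the composite lands in the correct Picard-graded mapping group, and each occurrence of duality on a map between invertibles is identified with the original map only up to a unit in $[\1,\1]$, contributing to the overall sign. Since the statement is only claimed up to signs in the sense of \cref{def:signs}, no further precision is required.
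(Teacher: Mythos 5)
Your proof is correct and takes the same route as the paper: the paper's own proof is a one-liner invoking \cref{lem:dualofcones} (duals of forms of iterated cones are, up to shift, forms of the reversed cone), and you have simply spelled out the dualization in detail, verifying that $\underline{a_1}^\vee$ and $\overline{a_n}^\vee$ furnish valid choices for the reversed bracket. The bookkeeping of shifts and signs you flag at the end is exactly what the paper's conventions (\cref{def:shifts,def:signs}) are designed to absorb, so nothing further is required.
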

\begin{proof}
    This follows immediately from the definition using \cref{lem:dualofcones}.
\end{proof}

The set $\angbr{a_1,\dotsc,a_n}$ may be empty. Indeed, a form of $C(a_2,\dotsc,a_{n-1})$ need not exist when $n>3$, nor the maps $\underline{a_1}$ and $\overline{a_n}$. We therefore give some general existence statements for forms of iterated cones and non-emptiness of Toda brackets. In the following, a \emph{sub-bracket} of $\angbr{a_1,\dotsc,a_n}$ refers to any bracket of the form $\angbr{a_i,a_{i+1},\dotsc,a_{i+k}}$.

\begin{proposition}\label{prop:conesandbracketsexistence}
    \leavevmode
    \begin{numberenum}
        \item A form of $C(a_1,a_2,a_3)$ exists if and only if $a_1a_2=a_2a_3=0$ and $0\in\angbr{a_1,a_2,a_3}$. The Toda bracket $\angbr{a_1,a_2,a_3}$ is nonempty if and only if $a_1a_2=a_2a_3=0$.
        \item Let $n>3$. A form of $C(a_1,\dotsc,a_n)$ exists if and only if all sub-brackets of $\angbr{a_1,\dotsc,a_n}$ contain zero.
        \item Let $n>3$. If $\angbr{a_1,\dotsc,a_k}=\set{0}$ for all $k<n$,
        and $0\in\angbr{a_2,\dotsc,a_n}$, then $\angbr{a_1,\dotsc,a_n}$ is nonempty. Conversely, if $\angbr{a_1,\dotsc,a_n}$ is nonempty, then all sub-brackets of $\angbr{a_1,\dotsc,a_n}$ of length $<n$ contain zero. 
    \end{numberenum}
\end{proposition}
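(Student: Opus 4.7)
The plan is to handle (1) directly by obstruction analysis, then induct on $n\geq 4$ for (2) and (3), using throughout that a form of $C(a_1,\dotsc,a_n)$ is built by a sequence of lifts along cofibre sequences whose obstructions are realised, up to indeterminacy, by the relevant sub-Toda brackets.

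For (1), I would first show that a form of $C(a_1,a_2)$ is the cofibre of a lift $\widetilde{a_2}\colon \Sigma X_2 \to C(a_1)$ of $\Sigma a_2$ along the top-cell projection $C(a_1)\to \Sigma X_1$, and that this exists iff $a_1a_2=0$. Extending to $C(a_1,a_2,a_3)$ requires lifting $\Sigma^2 a_3$ through $C(a_1,a_2)\to \Sigma^2 X_2$; using the cofibre sequence $C(a_1)\to C(a_1,a_2)\to \Sigma^2 X_2 \to \Sigma C(a_1)$, the obstruction in $[\Sigma X_3, C(a_1)]$ maps via $C(a_1)\to \Sigma X_1$ to $\Sigma(a_2a_3)$, forcing $a_2a_3=0$; the remaining obstruction lifts to an element of $[\Sigma X_3, X_0]$ that, as $\widetilde{a_2}$ and the factorisations vary, realises precisely $\angbr{a_1,a_2,a_3}$. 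The lift therefore exists iff $0\in \angbr{a_1,a_2,a_3}$. The non-emptiness half of (1) is immediate from \cref{def:todabracket}: $\overline{a_3}$ exists iff $a_2a_3=0$, and $\underline{a_1}$ iff $a_1a_2=0$.

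For (2) and (3), induct on $n$. The main inductive tool is the following obstruction principle: given a form of $C(a_1,\dotsc,a_{n-1})$, the obstruction to extending by $a_n$ is realised, up to choices and indeterminacy, by $\angbr{a_1,\dotsc,a_n}$. For the ``only if'' direction of (2), any sub-bracket $\angbr{a_i,\dotsc,a_j}$ is evaluated using the middle cone $C(a_{i+1},\dotsc,a_{j-1})$, which arises as a suitably shifted cofibre $C(a_1,\dotsc,a_{j-1})/C(a_1,\dotsc,a_{i-1})$ (and via \cref{lem:dualofcones} for the dual version); the resulting composite factors through the whole cone and hence vanishes. The ``if'' direction of (2) and the existence in (3) are then built stage-by-stage, at each step selecting a zero-obstruction extension or lift guaranteed by the corresponding sub-bracket containing zero.

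The hard part is coherence of choices: the obstruction at stage $k$ depends on every form and extension chosen earlier, so $0\in \angbr{a_1,\dotsc,a_k}$ only guarantees \emph{some} coherent sequence of predecessors yields a zero obstruction, potentially forcing backtracking. This is precisely why (3) needs the stronger hypothesis $\angbr{a_1,\dotsc,a_k}=\set{0}$ on initial segments: under it, any compatible extension of $\underline{a_1}$ up the tower automatically has zero obstruction at every stage. The witness $0\in\angbr{a_2,\dotsc,a_n}$, combined with $\angbr{a_1,\dotsc,a_{n-1}}=\set{0}$, forces via the inductive converse of (3) all sub-brackets of $\angbr{a_2,\dotsc,a_{n-1}}$ to contain zero, so (2) yields a form of $C(a_2,\dotsc,a_{n-1})$; then $\underline{a_1}$ is built up this tower and $\overline{a_n}$ is supplied by the witness, and their composite populates $\angbr{a_1,\dotsc,a_n}$. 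Sign bookkeeping via \cref{def:signs} and dual identifications via \cref{lem:dualofcones} are routine, as is verifying that the set of obstructions exhausts the whole bracket rather than a proper coset.
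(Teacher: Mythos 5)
Your proposal follows essentially the same inductive, obstruction-theoretic route as the paper, and your diagnosis of the coherence subtlety in part (3) — that merely having $0$ in each sub-bracket does not let you pick a single coherent tower, hence the hypothesis $\angbr{a_1,\dotsc,a_k}=\{0\}$ — is exactly the point the paper's argument also turns on. The one substantive difference is in part (1): you argue that the obstruction to lifting $\Sigma^2 a_3$ through $C(a_1,a_2)\to\Sigma^2 X_2$ lies in $[\Sigma X_3, X_0]$ (after killing the $a_2a_3$-component) and ``realises precisely $\angbr{a_1,a_2,a_3}$,'' concluding the form exists iff $0\in\angbr{a_1,a_2,a_3}$. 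This is morally right, but as you yourself flag at the end it requires verifying that the set of possible obstructions exhausts the bracket (and not a proper coset), which is not entirely routine given the indeterminacies in $\widetilde{a_2}$, the lift to $[\Sigma X_3,X_0]$, and the form chosen. The paper sidesteps this entirely: for the forward direction it dualises the inclusion $\Sigma^2 X_3\to C(a_1,a_2,a_3)$ to get $C(a_2,a_1)\to X_3$ and reads off a nullhomotopic representative of $\angbr{a_1,a_2,a_3}$ directly, so it only needs to exhibit one zero element rather than match two indeterminacy sets. You may want to adopt that dualization trick; otherwise you should make the ``exhausts the whole bracket'' verification explicit, since it is where the obstruction-theoretic phrasing is doing real work.

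Two smaller remarks: in (2), your claim that $C(a_{i+1},\dotsc,a_{j-1})$ arises as a shifted cofibre $C(a_1,\dotsc,a_{j-1})/C(a_1,\dotsc,a_{i-1})$ is correct, but ``the resulting composite factors through the whole cone and hence vanishes'' is too compressed; the actual argument for proper sub-brackets just invokes the inductive hypothesis on the subquotient form, while the full bracket $\angbr{a_1,\dotsc,a_n}$ itself needs the dualization argument again. And in (3), the witness $0\in\angbr{a_2,\dotsc,a_n}$ together with the converse of (3) for shorter brackets gives that all sub-brackets of $\angbr{a_2,\dotsc,a_n}$ contain zero, so (2) actually hands you a form of $C(a_2,\dotsc,a_n)$ (not just $C(a_2,\dotsc,a_{n-1})$), from which $\overline{a_n}$ falls out by rotating — this is cleaner than supplying $\overline{a_n}$ ``from the witness'' for an independently chosen form of $C(a_2,\dotsc,a_{n-1})$, which would require an extra compatibility check.
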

\begin{proof}
    For item~(1), if a form of $C(a_1,a_2,a_3)$ exists, then a form of $C(a_1,a_2)$ exists, and taking the fiber of $C(a_1)\to C(a_1,a_2)$ gives a map $\Sigma X_2\to C(a_1)$ whose projection onto the top cell gives $a_2$. A simple long exact sequence argument using the cofiber sequence
    \[
        \begin{tikzcd}
            X_1\rar["a_1"] & X_0 \rar & C(a_1)
        \end{tikzcd}
    \]
    shows that this implies $a_1a_2=0$, and similarly for $a_2a_3$. To see that $0\in\angbr{a_1,a_2,a_3}$, consider the cofiber sequence
    \[\Sigma^2X_3\to C(a_1,a_2)\to C(a_1,a_2,a_3)\]
    Taking the dual of the first map gives, up to a shift, a map $C(a_2,a_1)\to X_3$ whose restriction to the bottom cell is $a_3$. Since the restriction of this map to $C(a_2)$ extends over $C(a_2,a_1)$, its precomposition with the attaching map $\Sigma X_0\to C(a_2)$ of the top cell in $C(a_2,a_1)$ defines an element of $\angbr{a_1,a_2,a_3}$ that is nullhomotopic.

    Conversely, if $a_1a_2=a_2a_3=0$, we have diagrams \begin{equation}\label{eq:3folddiagram}
    \begin{tikzcd} X_1\arrow[r,"a_1"]\arrow[d,"\iota"']&X_0\\
    C(a_2)\arrow[ur,dashed,"\underline{a_1}"']
    \end{tikzcd}
    \qquad \qquad
    \begin{tikzcd}
        \Sigma X_3\arrow[r,"a_3"]\arrow[d,dashed,"\overline{a_3}"']&\Sigma X_{2}.\\
        C(a_2)\arrow[ur,"p"']
    \end{tikzcd}
    \end{equation}
    Moreover, for some choices of $\underline{a_1}$ and $\overline{a_3}$, there is a nullhomotopy of the composite $\underline{a_1}\circ\overline{a_3}$, which provides an extension of $\underline{a_1}$ over the cofiber of $\overline{a_3}$, which is a form of $C(a_2,a_3)$. Taking the fiber of this extension $C(a_2,a_3)\to X_0$ gives (up to a shift) a form of $C(a_1,a_2,a_3)$.
    
    The Toda bracket $\angbr{a_1,a_2,a_3}$ is nonempty if and only if we have diagrams as in \cref{eq:3folddiagram}, which exist if and only if $a_1a_2=a_2a_3=0$. 
    
    Item~(2) is proved via a straightforward induction argument with base case the $n=3$ case just proved.
    
    For item~(3), suppose $\angbr{a_1,\dotsc,a_{n-1}}=\set{0}$ and $0\in\angbr{a_2,\dotsc,a_n}$.
    Using item~(2), we have a cofiber sequence $K\to K'\to\Sigma^{n-1}X_n$, where $K$ is a form of $C(a_2,\dotsc,a_{n-1})$ and $K'$ is a form of $C(a_2,\dotsc,a_{n})$.
    Rotating gives a map $\overline{a_n}\colon \Sigma^{n-2}X_n\to K$.
    Consider the question of extending the map $a_1\colon X_1\to X_0$ over $K$ inductively by providing nullhomotopies of each of the attaching maps of $K$. The postcomposition of the attaching map of the $k$-th cell of $K$ with an extension of $a_1$ over a $(k-1)$-skeleton defines an element of $\angbr{a_1,\dotsc,a_{k+2}}$.
    If these brackets are equal to $\set{0}$ for $k+2<n$, then we may extend over $K$ and precomposing with $\overline{a_n}$ defines an element of $\angbr{a_1,\dotsc,a_n}$.
    
    Conversely, if $\angbr{a_1,\dotsc,a_n}$ is nonempty, then a form of $C(a_2,\dotsc,a_{n-1})$ exists. From item~(2), it follows all sub-brackets of $\angbr{a_2,\dotsc,a_{n-1}}$ contain zero. Moreover, there is an extension of $a_1$ over a form of $C(a_2,\dotsc,a_{n-1})$, which by analyzing attaching maps in $C(a_2,\dotsc,a_{n-1})$ as above shows that each of the brackets $\angbr{a_1,\dotsc,a_k}$ contains zero for $k<n$. Dualizing the argument yields the same for the brackets $\angbr{a_k,\dotsc,a_n}$ for $k>1$.
\end{proof}

\begin{proposition}
    The following shuffling formulas hold up to sign.
    \label{prop:shufflingformulas}
    \leavevmode
    \begin{numberenum}
        \item If $\angbr{a_i,
        \dotsc,a_n}=\angbr{a_1,\dotsc,a_i}=\set{0}$ for all $3
        \le i<n-1$, then
        \[a_1\angbr{a_2,\dotsc,a_n}=\angbr{a_1,\dotsc,a_{n-1}}a_n.\]
        \item Let $a\colon X\to X_n$ be a map. Then $\angbr{a_1,\dotsc,a_n}a\subseteq\angbr{a_1,\dotsc,a_na}.$
        \item Let $a\colon X\to X_n$ be a map. Then
        \[\angbr{a_1,\dotsc,a_na}\subseteq \angbr{a_1,\dotsc,a_{n-1}a,a_n}.\]
        \item Let $2<k<n$ and $a:X_{k-1}\to X$ be a map.
        If all sub-brackets of $\angbr{a_2,\dotsc,a_{k-1}a,a_k,\dotsc,a_n}$ contain zero and all sub-brackets of $\angbr{a_1,\dotsc,a_{k-1},aa_k,\dotsc,a_{n-1}}$ are equal to $\set{0}$, then 
        \[\angbr{a_1,\dotsc,a_{k-1}a,a_k,\dotsc,a_n}\cap \angbr{a_1,\dotsc,a_{k-1},aa_k,\dotsc,a_n}\neq\varnothing.\]
    \end{numberenum}
\end{proposition}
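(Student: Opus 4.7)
The plan is to deduce all four items from the iterated cone machinery of \cref{def:formsofcones} together with the existence criteria of \cref{prop:conesandbracketsexistence}, by realizing elements of the brackets on both sides as readings of attaching maps in a single sufficiently large form of an iterated cone. I will start with item (2) as a warm-up: given a form $K$ of $C(a_2,\dotsc,a_{n-1})$ with data $\underline{a_1}\colon K\to X_0$ and $\overline{a_n}\colon\Sigma^{n-2}X_n\to K$, the composite $\overline{a_n}\circ\Sigma^{n-2}a$ is a valid choice of $\overline{a_n a}$ because its projection to the top cell of $K$ is $\Sigma^{n-2}(a_n a)$. Consequently $(\underline{a_1}\circ\overline{a_n})\circ\Sigma^{n-2}a$ simultaneously represents an element of $\angbr{a_1,\dotsc,a_n}\cdot a$ and of $\angbr{a_1,\dotsc,a_n a}$, which yields the inclusion directly.

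For item (1), the hypothesis on vanishing sub-brackets guarantees via \cref{prop:conesandbracketsexistence} that a form $M$ of $C(a_1,\dotsc,a_n)$ exists, and the key observation is that $M$ admits two natural presentations. The first expresses $M$ as the cofiber of a map $\Sigma^{n-1}X_n\to C(a_1,\dotsc,a_{n-1})$ lifting $\Sigma^{n-1}a_n$; rotating this cofiber sequence and using any extension of $a_1$ over the subcone $C(a_2,\dotsc,a_{n-1})$ yields an element of $\angbr{a_1,\dotsc,a_{n-1}}\cdot a_n$. The second presentation is the dual one, using \cref{lem:dualofcones}: up to a shift $M$ arises from $X_0$ as the total space of a cell structure whose complement of the bottom cell is a form of $C(a_2,\dotsc,a_n)$, and the resulting attaching map produces an element of $a_1\cdot\angbr{a_2,\dotsc,a_n}$. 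Both readings correspond to the same composite $\Sigma^{n-2}X_n\to X_0$ extracted from $M$, so the equality of sets follows by letting $M$ range over all forms and checking that every representative of either bracket arises this way.

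Item (3) then follows by a variant of the argument for (2): a form of $C(a_2,\dotsc,a_{n-1})$ carries simultaneously the data to realize an element of both $\angbr{a_1,\dotsc,a_n a}$ and of the rearranged bracket on the right of (3), since the map $a$ can be absorbed into either of the two adjacent attaching maps without changing the composite. Finally, for item (4), the two vanishing hypotheses ensure that the iterated cones defining each of the two brackets exist. My plan is to construct a single form of an $(n+1)$-fold cone incorporating $a\colon X_{k-1}\to X$ between positions $k-1$ and $k$ in the chain; the two bracket elements then arise by contracting this intermediate attaching into either neighbor, and both readings produce the same composite $\Sigma^{n-2}X_n\to X_0$. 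The hard part will be verifying inductively that the hypothesis that \emph{all} sub-brackets contain zero is precisely what is needed to assemble this $(n+1)$-fold cone one cell at a time, at each stage using \cref{prop:conesandbracketsexistence} to extend across the next attaching map while keeping track of which existing choices of lower-dimensional forms must be reused so that the two contractions of $a$ genuinely yield the same total composite.
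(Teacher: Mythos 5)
Item (2) is fine and matches the paper's remark that it follows directly from the definition. The other three items have gaps.

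For item (1), your opening move — invoking \cref{prop:conesandbracketsexistence} to produce a form of $C(a_1,\dotsc,a_n)$ — is unjustified: the hypothesis of item (1) only controls the initial segments $\angbr{a_1,\dotsc,a_i}$ and terminal segments $\angbr{a_i,\dotsc,a_n}$ for $3\le i<n-1$, not \emph{all} sub-brackets (e.g.\ $\angbr{a_2,\dotsc,a_{n-1}}$ is unconstrained, and even $\angbr{a_1,\dotsc,a_{n-1}}$ is not required to be $\set{0}$). Since \cref{prop:conesandbracketsexistence}(2) requires all sub-brackets to contain zero, the big form $M$ may simply not exist. Even granting its existence, the claim that ``both readings correspond to the same composite $\Sigma^{n-2}X_n\to X_0$ extracted from $M$'' is not a construction — a form of an iterated cone is an object, not a morphism, and you have not explained how a single map is ``extracted'' from it in two different ways; in fact, existence of a form of $C(a_1,\dotsc,a_n)$ implies $0\in\angbr{a_1,\dotsc,a_n}$, which is a statement about the \emph{big} bracket and does not directly produce an element of $\angbr{a_1,\dotsc,a_{n-1}}a_n$. (Also, both sides of the equality live in $[\Sigma^{n-3}X_n,X_0]$, not $[\Sigma^{n-2}X_n,X_0]$.) The paper instead fixes an element of $\angbr{a_1,\dotsc,a_{n-1}}$, builds a commutative square of cofiber sequences from its data, uses the vanishing of the \emph{terminal} segments (via \cref{prop:conesandbracketsexistence}) to produce a factorisation of $a_n$ through a fibre, and pastes; the vanishing of the initial segments is then used to get the dual inclusion. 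Your argument does not engage with any of these steps.

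For item (3), the claim that it ``follows by a variant of the argument for (2)'' because ``$a$ can be absorbed into either of the two adjacent attaching maps without changing the composite'' misses the real content. Absorbing $a$ into $a_{n-1}$ instead of $a_n$ changes the iterated cone one must map into, and the whole point of the paper's proof is to show that $\overline{a_na}$ actually lifts to the new cone $C(a_2,\dotsc,a_{n-1}a)$. That lift is not automatic; the paper establishes it via a $3\times 3$ grid of cofibre sequences together with the observation that the relevant attaching map is divisible by $a$. For item (4), you explicitly postpone ``the hard part.'' The paper does not assemble an $(n+1)$-fold cone; it instead constructs a comparison map $C(a_2,\dotsc,a_{k-1}a,a_k,\dotsc,a_{n-1})\to C(a_2,\dotsc,a_{k-1},aa_k,\dotsc,a_{n-1})$ that is the identity on top and bottom cells, then uses the two vanishing hypotheses to extend $a_1$ and to choose $\overline{a_n}$, and composes — your approach would need a substantially different (and so far unsupplied) argument to show that the two ``contractions'' of the extra cell yield equal composites.
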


\begin{proof}
For item (1), we will show that $a_1\angbr{a_2,\dotsc,a_n}\supseteq\angbr{a_1,\dotsc,a_{n-1}}a_n$ up to signs; dualizing the argument gives the other inclusion.
Suppose one is given an element
\[
    \begin{tikzcd}
        \Sigma^{n-3}X_{n-1}\rar["\overline{a_{n-1}}"] & C(a_2,\dotsc,a_{n-2}) \rar["\underline{a_1}"] &X_0
    \end{tikzcd}
\]
in $\angbr{a_1,\dotsc,a_{n-1}}$. One forms a commutative diagram
\[
\begin{tikzcd}
    C(a_3,\dotsc,a_{n-1})\arrow[d,"\underline{a_2}"]\arrow[r]&\Sigma^{n-3}X_{n-1}\arrow[d,"\overline{a_{n-1}}"]\arrow[r]&\Sigma C(a_3,\dotsc,a_{n-2})\arrow[d,equal]\\
    X_1\arrow[r]&C(a_2,\dotsc,a_{n-2})\arrow[r]&\Sigma C(a_3,\dotsc,a_{n-2})
\end{tikzcd}
\]
by taking fibers horizontally in the right square, so that the rows are cofiber sequences.
By \cref{prop:conesandbracketsexistence}, if $\angbr{a_i,\dotsc,a_n}=\set{0}$ for all $3\le i<n-1$, then there exists a factorization
\[
\begin{tikzcd}
    &\Sigma^{n-3}X_n\arrow[d,"a_n"]\arrow[dl,dashed,"\overline{a_n}"']\\
    C(a_3,\dotsc,a_{n-1})\arrow[r]&\Sigma^{n-3}X_{n-1}
\end{tikzcd}
\]
for the form of $C(a_3,\dotsc,a_{n-1})$ constructed above. Pasting the two diagrams above, one forms a commutative diagram
\[
\begin{tikzcd}
\Sigma^{n-3}X_n\arrow[dr,"\overline{a_n}"']\arrow[r,"a_n"]&\Sigma^{n-3}X_{n-1}\arrow[r,"\overline{a_{n-1}}"]&C(a_2,\dotsc,a_{n-2})\arrow[r,"\underline{a_1}"]&X_0\\
&C(a_3,\dotsc,a_{n-1})\arrow[r,"\underline{a_2}"']&X_1\arrow[u]\arrow[ur,"a_1"']
\end{tikzcd}
\]
proving the inclusion.    

Item (2) follows immediately from the definition. 

For item (3), suppose we are given an element
\[
    \begin{tikzcd}
        \opSigma^{n-2}X \rar["\overline{aa_n}"] & C(a_2,\dotsc,a_{n-1}) \rar["\underline{a_1}"] & X_0
    \end{tikzcd}
\]
in $\angbr{a_1,\dotsc,aa_n}$. By starting with the upper left-hand square and taking cofibers, one produces the commutative diagram
    
\adjustbox{scale=0.89,center}{
\begin{tikzcd}
    X\otimes\Sigma^{n-3}X_{n-1}\arrow[d,"a"]\arrow[r]&X_n\otimes C(a_2,\dotsc,a_{n-2})\arrow[d,equal]\arrow[r]&X_n\otimes C(a_2,\dotsc,aa_{n-1})\arrow[d]\arrow[r]&X\otimes\Sigma^{n-2}X_{n-1}\arrow[d,"a"]\\
    X_n\otimes\Sigma^{n-3}X_{n-1}\arrow[d]\arrow[r,"\overline{a_{n-1}}"]&X_n\otimes C(a_2,\dotsc,a_{n-2})\arrow[r]\arrow[d]&X_n\otimes C(a_2,\dotsc,a_{n-1})\arrow[r]\arrow[d]&X_n\otimes \Sigma^{n-2}X_{n-1}\arrow[d]\\
    C(a)\otimes\Sigma^{n-3}X_1\arrow[r]&0\arrow[r]&C\arrow[r,"\sim"]&C(a)\otimes\Sigma^{n-2}X_{n-1}
\end{tikzcd}
}

where the rows and columns are cofiber sequences. This shows that to lift our map $\overline{aa_n}\colon \Sigma^{n-2}X\to C(a_2,\dotsc,a_{n-1})$ to $C(a_2,\dotsc,aa_{n-1})$, we need to show that postcomposing it with the projection to the top cell followed by inclusion into $C(a)$ gives zero. But the projection to the top cell is by definition $aa_{n-1}$, which is divisible by $a$. This yields a factorization similar to that of item~(1).

For item~(4), if all sub-brackets of $\angbr{a_2,\dotsc,a_{k-1}a,a_k,\dotsc,a_n}$ contain zero, then by \cref{prop:conesandbracketsexistence}\,(2), there exists a map $\overline{a_n}\colon \Sigma^{n-2}X_n\to C(a_2,\dotsc,a_{k-1}a,a_k,\dotsc,a_{n-1})$. In a manner similar to the above, one can construct a map
\[C(a_2,\dotsc,a_{k-1}a,a_k,\dotsc,a_{n-1})\to C(a_2,\dotsc,a_{k-1},aa_k,\dotsc,a_{n-1})\]
that extends the inclusion of the bottom cell and projects to the identity on the top cell, for some form of $C(a_2,\dotsc,a_{k-1},aa_k,\dotsc,a_{n-1})$. Using the arguments of \cref{prop:conesandbracketsexistence}, the map $a_1$ extends over the above form of $C(a_2,\dotsc,a_{k-1},aa_k,\dotsc,a_{n-1})$ if all sub-brackets of $\angbr{a_1,\dotsc,a_{k-1},aa_k,\dotsc,a_{n-1}}$ are equal to $\set{0}$. Composing these maps gives an element in the claimed intersection.
\end{proof}

\subsection{Moss' theorem}
\label{ssec:moss}
Moss' theorem gives conditions for Toda brackets in $\pi_*R$ to be detected by Massey products in a spectral sequence converging to $\pi_*R$. The classical reference is \cite{moss_theorem}; also see \cite{belmontTodaBracketConvergence2024} for a modern reinterpretation.
We will discuss a version of Moss' theorem in the setting of synthetic spectra.
Throughout this section, we fix a homotopy-associative ring spectrum $E$ of Adams type, and write $\Syn_E$ for $\Syn$.

Let $R\in \Syn$ be a synthetic $\E_2$-ring, then we may work with Toda brackets in the monoidal $\infty$-category of (left) modules over $R$.
Given a Toda bracket
\[\angbr{a_1,\dotsc,a_n}\subseteq\oppi_{*,*}R,\]
we will describe a general technique for determining the image of $\angbr{a_1,\dotsc,a_n}$ along the map $\pr_r\colon \pi_*R\to\oppi_*R/\tau^r$, following ideas of Robert Burklund \cite{burklund_cookware_draft}.

There is one immediate observation.

\begin{proposition}\label{prop:modtaupreservesbrackets}
    There is an inclusion
    \[\pr_r(\angbr{a_1,\dotsc,a_n})\subseteq\angbr{a_1,\dotsc,a_n}\subseteq\oppi_{*,*}R/\tau^r.\]
\end{proposition}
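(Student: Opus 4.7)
The plan is to deduce this as an immediate consequence of the functoriality of Toda brackets (\cref{lem:functorspreservebrackets}) applied to a suitable base-change functor. The first step would be to observe that the projection $\pr_r \colon R \to R/\tau^r$ is a map of synthetic $\E_1$-rings, where $R/\tau^r$ picks up its algebra structure from the centrality of $\tau^r$ in the synthetic sphere: the quotient $\Sph/\tau^r$ is naturally an $\E_1$-ring, and $R/\tau^r \simeq R \otimes_{\Sph} \Sph/\tau^r$ inherits an $R$-algebra structure by base-change. In all of the later applications, $R$ is even $\E_\infty$, and so is $R/\tau^r$.

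Next, I would invoke extension of scalars along $\pr_r$ to produce a symmetric monoidal exact functor
\[
    F \colon \mathrm{Mod}_R(\Syn) \longrightarrow \mathrm{Mod}_{R/\tau^r}(\Syn), \qquad M \longmapsto M \otimes_R R/\tau^r,
\]
which carries the Picard shifts $\opSigma^{n,s} R$ to $\opSigma^{n,s} R/\tau^r$. Under the natural identifications $\oppi_{n,s} R \cong [\opSigma^{n,s} R, R]_{\mathrm{Mod}_R}$ and $\oppi_{n,s} R/\tau^r \cong [\opSigma^{n,s} R/\tau^r, R/\tau^r]_{\mathrm{Mod}_{R/\tau^r}}$, the induced map on hom-groups is exactly $\pr_r$, so $F(a_i) = \pr_r(a_i)$ for each $i$. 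Feeding $F$ into \cref{lem:functorspreservebrackets} then yields the desired chain
\[
    \pr_r\bigl(\angbr{a_1,\dotsc,a_n}\bigr) = F\bigl(\angbr{a_1,\dotsc,a_n}\bigr) \subseteq \angbr{F(a_1),\dotsc,F(a_n)} = \angbr{\pr_r(a_1),\dotsc,\pr_r(a_n)}.
\]

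The one structural point that requires care is the claim that $\pr_r$ is a ring map, since otherwise extension of scalars is not available at the level of module categories. This is really the only nontrivial input and uses the fact that $\tau$ (hence $\tau^r$) is central in $\Sph$; everything after that is formal. In fact, even if one wished to avoid invoking any algebra structure on $R/\tau^r$, the proof can alternatively be carried out by hand on the iterated-cone description of \cref{def:formsofcones}: applying $(-) \otimes_R R/\tau^r$ to any form $K$ of $C(a_2,\dotsc,a_{n-1})$ and to any chosen extensions $\overline{a_n}$, $\underline{a_1}$ yields a form of $C(\pr_r(a_2),\dotsc,\pr_r(a_{n-1}))$ together with admissible lifting data in $\mathrm{Mod}_{R/\tau^r}$ (or even in $\Syn$), so that the image of each representative composite automatically lies in $\angbr{\pr_r(a_1),\dotsc,\pr_r(a_n)}$. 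I do not anticipate any further serious obstacles.
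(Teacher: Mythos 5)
Your argument matches the paper's one-line proof: observe that $\pr_r \colon R \to R/\tau^r$ is an $\E_1$-ring map and invoke \cref{lem:functorspreservebrackets} via the induced base-change functor, which is exactly what you spell out. One small inaccuracy in a side remark worth flagging: the $\E_1$-algebra structure on $\Sph/\tau^r$ does \emph{not} follow from centrality of $\tau^r$ --- centrality in homotopy groups is far from sufficient in general, as $\Sph/2$ already shows --- rather it is a nontrivial input special to the filtered/synthetic setting, recalled in \cref{ssec:filtered_spectra} via \cite[Proposition~3.2.5]{lurie_rotation_invariance} for $C\tau$, with similar arguments supplying the structure on $C\tau^r$.
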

\begin{proof}
    The map $R\to R/\tau^r$ is one of $\E_2$-rings, so the claim follows from \cref{lem:functorspreservebrackets}.
\end{proof}

Suppose $C_*$ is a dga in an abelian category $\calA$.
Recall that, when we regard $C_*$ as an object of the derived $\infty$-category $\D(\calA)$, then Toda brackets in the homotopy groups of a dga $C_*$ coincide with Massey products in $C_*$.
This immediately implies the following.

\begin{proposition}\label{prop:todagoestomassey} 
If the $\E_2$-ring $R/\tau$ is the underlying object in $\Stable$ of a dga $C_*$, then the set $\pr_1(\angbr{a_1,\dotsc,a_n})$ is contained in the Massey product $\angbr{a_1,\dotsc,a_n}\subseteq \uH_*C_*$.
\end{proposition}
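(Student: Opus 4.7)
The plan is to combine \cref{prop:modtaupreservesbrackets} with the classical identification of Toda brackets in a derived $\infty$-category with Massey products at the chain level. By \cref{prop:modtaupreservesbrackets}, the image $\pr_1(\angbr{a_1,\dotsc,a_n})$ lands inside the synthetic Toda bracket $\angbr{a_1,\dotsc,a_n}\subseteq\oppi_{*,*}R/\tau$, so it suffices to show that this synthetic Toda bracket is contained in the Massey product $\angbr{a_1,\dotsc,a_n}\subseteq \uH_*C_*$. Under the hypothesis, the two ambient groups agree, since $\oppi_{*,*}R/\tau \cong \uH_*C_*$ via the stated identification.

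The key step is then to translate the geometric data defining the synthetic Toda bracket into the algebraic data defining the Massey product. Given an element of $\angbr{a_1,\dotsc,a_n}\subseteq \oppi_{*,*}R/\tau$, \cref{def:todabracket} provides a form of $C(a_2,\dotsc,a_{n-1})$ together with maps $\overline{a_n}$ and $\underline{a_1}$ whose composite realises the bracket element. Via the equivalence of underlying stable objects, this data transports into the derived $\infty$-category containing $C_*$. Unwinding the inductive construction of a form of $C(a_2,\dotsc,a_{n-1})$ (see \cref{rmk:formsofconesdiagram}) in chain-complex language: each of the attaching maps being nullhomotopic corresponds, after choosing a representative for $C_*$, to a chain-level choice of a primitive $b_{ij}$ for the cocycle $\sum a_{i}b_{i+1,j} + (-1)^{\cdots} b_{i,j-1}a_j$. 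That is, a form of $C(a_2,\dotsc,a_{n-1})$ together with lifts $\overline{a_n}, \underline{a_1}$ is precisely a \emph{defining system} for the Massey product $\angbr{a_1,\dotsc,a_n}$ in $C_*$, and the composite $\underline{a_1}\circ\overline{a_n}$ descends in homology to the corresponding Massey product value.

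To execute this, I would proceed inductively on the length $n$. The base case $n=3$ is the classical statement that a triple Toda bracket in the homotopy of a dga equals the triple Massey product; this is straightforward from the cofibre-sequence description of $C(a_2)$. For the inductive step, I would use that a form of $C(a_2,\dotsc,a_{n-1})$ fits in a cofibre sequence $C(a_2,\dotsc,a_{n-2})\to C(a_2,\dotsc,a_{n-1})\to \Sigma^{n-2}X_{n-1}$, and the inductive hypothesis applied to the shorter brackets gives the chain-level primitives needed to extend the defining system by one step. Applying the lax monoidal functor that realises $C_*$ as an object of the relevant derived $\infty$-category and invoking \cref{lem:functorspreservebrackets} finishes the argument.

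The main obstacle is making the comparison between ``forms of iterated cones'' in an $\infty$-categorical setting and ``defining systems'' of Massey products at the chain level fully rigorous; this is folklore but somewhat delicate due to coherent choices of nullhomotopies. One clean way to sidestep the combinatorial bookkeeping is to invoke an existing treatment, e.g.\ Lurie's identification in \cite[\textsection 1.2]{HA} of cofibre sequences in $\D(\calA)$ with chain-level extensions, and then argue that the two Toda-bracket constructions agree up to the indeterminacy inherent in both. Once this comparison is in place, the proposition follows directly.
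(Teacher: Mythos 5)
Your proposal is correct and takes essentially the same approach as the paper. The paper treats this proposition as immediate from \cref{prop:modtaupreservesbrackets} (with $r=1$) together with the recalled folklore fact that Toda brackets in the homotopy of a dga, viewed in $\D(\calA)$, coincide with Massey products; you simply elaborate on why that folklore identification holds by unwinding forms of iterated cones into defining systems, which is exactly the content being invoked.
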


However, this is not particularly useful if one of the $a_i$'s is divisible by $\tau^r$ for $r\geq 1$.
In that case, the bracket in $R/\tau^r$ will contain zero and have large indeterminacy generically. 

Instead of proving a version of Moss' theorem in this context, we describe a general approach to determining the image $\pr_r(\angbr{a_1,\dotsc,a_n})$ that works well in most cases. We outline how this works in a simple situation that happens to cover all of our applications in this article; see \cref{sec:large_carrick_brackets}.

Suppose that $\pr_r(a_1)$ and $\pr_r(a_n)$ are nonzero, and that $a_2,\dotsc,a_{n-1}$ are all divisible by $\tau^r$; write $b_i\tau^r=a_i$.
An element in $\angbr{a_1,\dotsc,a_n}$ is given by a composite  
\[
    \begin{tikzcd}
        \bullet\arrow[r,"\overline{a_n}"]&\bullet\arrow[d,dash,"b_{n-1}\tau^r"]\\
        &\vdots\arrow[d,dash,"b_2\tau^r"]\\
        &\bullet\arrow[r,"\underline{a_1}"']&\bullet
    \end{tikzcd}
    \]  
where we suppress all degrees for simplicity. All primary attaching maps in this form of $C(a_2,\dotsc,a_{n-1})$ become zero after tensoring down to $C\tau^r$, and we assume for simplicity that the attaching maps themselves are divisible by $\tau^r$. It follows that the complex splits as a sum of shifts of the unit after tensoring down to $C\tau^r$. The maps $\overline{a_n}$ and $\underline{a_1}$ are then represented by matrices 
\[
    \overline{a_n} = \begin{pmatrix}
        x_{n}\\
        \vdots\\
        x_1
    \end{pmatrix} \qquad \text{and} \qquad 
    \underline{a_1} = \begin{pmatrix}
        y_n&\cdots&y_1
    \end{pmatrix}
\]
with entries in $\oppi_{*,*}C\tau^r$, where $x_n=\pr_r(a_n)$ and $y_1=\pr_r(a_1)$.
Assuming for simplicity that $y_i=0$ for all $i>1$ (this happens for degree reasons in our examples), then this composite projects to the element $\pr_r(a_1)x_1\in \oppi_{*,*}C\tau^r$. 

We therefore reduce to understanding the component $x_1$, which may be determined inductively using that $x_n=\pr_r(a_n)$ and working our way down the cells of $C(a_2,\dotsc,a_{n-1})$.
Assuming the brackets $\angbr{b_2,a_3,\dotsc,a_{n-1}}$ and $\angbr{a_2,a_3,\dotsc,a_{n-1}}$ are both equal to $\set{0}$, the shuffling formula
\[\tau^r\angbr{b_2,a_3,\dotsc,a_{n-1}}\subseteq\angbr{a_2,a_3,\dotsc,a_{n-1}}\]
allows us to form a diagram
\[
\begin{tikzcd}
    C(a_3,\dotsc,a_{n-1})\arrow[r,"\underline{a_2}"]\arrow[d,"\underline{b_2}"']&\1\arrow[d,equal]\\
    \1\arrow[r,"\tau^r"]&\1
\end{tikzcd}\]
where we suppress degrees as before. Taking fibers horizontally, we form a commutative square
\[
\begin{tikzcd}
    C(a_2,\dotsc,a_{n-1})\arrow[r]\arrow[d]&C(a_3,\dotsc,a_{n-1})\arrow[d,"\underline{b_2}"]\\
    C\tau^r\arrow[r]&\1
\end{tikzcd}
\]
where the horizontal maps collapse the respective bottom cells, and restricting the left-hand map to $C(a_2)$ gives a map $C(a_2)\to C\tau^r$ that is the identity on the bottom cell and $b_1$ on the top cell. The composite
\[
    \begin{tikzcd}
        \1 \rar["\overline{a_n}"] & C(a_2,\dotsc,a_{n-1})\rar & C\tau^r
    \end{tikzcd}
\]
therefore induces a $(2\times 1)$-matrix after tensoring with $C\tau^r$ whose bottom component is $x_1$, and whose top component is $\pr_r(b_1)x_2$.

In other words, we have $\delta_r^{2r}(x_1)=\pr_r(b_1)x_2$.
Using our understanding of total differentials, this refers to a specific differential, hence $x_1$ may be determined up to indeterminacy in the kernel of this differential, once the class $\pr_r(b_1)x_2$ is known. However, $x_2$ is the bottom component of the composite 
\[
    \begin{tikzcd}
        \1 \rar["\overline{a_n}"] & C(a_2,\dotsc,a_n)\rar &  C(a_3,\dotsc,a_n)
    \end{tikzcd}
\]
in the above diagram, which is of the form $\overline{a_n}$. The process may thus be repeated with this composite, and inductively we are reduced to understanding the composite
\[
    \begin{tikzcd}
        \1 \rar["\overline{a_n}"] & C(a_2,\dotsc,a_n)\rar & \1
    \end{tikzcd}
\]
where the right-hand map is projection to the top cell, and this composite is $a_n$ by assumption.

For three-fold Toda brackets, this method works in a straightforward manner, and we can give the following synthetic version of Moss' theorem.

\begin{theorem}
    \label{thm:synthetic_moss}
    Let $R$ be an $\E_2$-ring in $\Syn$, and let $a_1,a_2,a_3\in\oppi_{*,*} R$ such that $\tau^r a_1a_2 = \tau^s a_2a_3= 0$ in $\oppi_{*,*}R$ for $r \geq s \geq 0$, and let $r,s$ be minimal with respect to this property.
    \begin{numberenum}
        \item Suppose that $r,s> 0$.
        Then there exist $H_0,H_1\in\oppi_{*,*}(R/\tau^s)$ such that 
        \begin{align*}
        d_{r+1}(H_0)&=a_1a_2\\
        d_{s+1}(H_1)&=a_2a_3,
        \end{align*}
        and the Toda bracket $\angbr{a_1,\tau^r a_2, a_3} \subseteq \oppi_{*,*}R$ contains a lift of
        \[
            \tau^{r-s} a_1H_1 \pm H_0a_3 \in \pi_{*,*}(R/\tau^{s}).
        \]
        Note that here we do not change the name of an element along either of the maps in the composite
        \[R\to R/\tau^n\to R/\tau\]
        for $n=r,s$.
        
        In particular, if $r=s$, then the Toda bracket $\angbr{a_1,\tau^r a_2, a_3}$ contains a lift of the element $a_1H_1 \pm H_0a_3$ in $\pi_{*,*}(R/\tau)$.
        If $r>s$, then the Toda bracket $\angbr{a_1,\tau^r a_2, a_3}$ contains a lift of the element $H_0a_3$ in $\pi_{*,*}(R/\tau)$.
        {\upshape(}Note that in both cases, these linear combinations are contained in the Massey product $\angbr{a_1,a_2,a_3}$ formed in the dga $(\uE_{r+1},d_{r+1})$ in $\sigma R$.{\upshape)}
        
        \item Suppose that $r > 0$ while $s = 0$.
        Then there exists $H_0\in\pi_{*,*}(R/\tau^r)$ such that 
        \[
        d_{r+1}(H_0)=a_1a_2
        \]
        and the Toda bracket $\angbr{a_1,\tau^r a_2, a_3} \subseteq \pi_{*,*}R$ contains a lift of
        \[
            H_0a_3\in \pi_{*,*}(R/\tau^{r}).
        \]
        In particular, the Toda bracket contains a lift of an element in the Massey product $\angbr{a_1,a_2,a_3}$ formed in the dga $(\uE_{r+1},d_{r+1})$ in $\sigma R$.
    \end{numberenum}
\end{theorem}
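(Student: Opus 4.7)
The plan is to specialise the approach sketched in the paragraphs immediately preceding the theorem to three-fold brackets, using the truncated Omnibus Theorem (\cref{thm:omnibus}) together with its companion \cref{prop:total_differential_versus_differentials} to extract the witnesses $H_0$ and $H_1$ and then perform the matrix computation described there.

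First I would extract $H_0$ and $H_1$. Since $r$ is minimal with $\tau^r a_1 a_2 = 0$, the mod-$\tau$ reduction of $a_1 a_2$ is the target of a $d_{r+1}$-differential in the signature spectral sequence of $R$; by \cref{thm:omnibus}, a class supporting this differential lifts to some element of $\pi_{*,*}(R/\tau^r)$, which I reduce further and call $H_0 \in \pi_{*,*}(R/\tau^s)$. Similarly, minimality of $s$ produces $H_1\in\pi_{*,*}(R/\tau^s)$ whose mod-$\tau$ reduction supports a $d_{s+1}$-differential onto $a_2 a_3$. In case~(2), where $s = 0$, this second input is simply a genuine nullhomotopy $H_1 \in \pi_{*,*}R$ of $a_2 a_3$.

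Next I would unpack the Toda bracket. An element of $\angbr{a_1, \tau^r a_2, a_3}$ is a composite $\underline{a_1}\circ \overline{a_3}$ through a form $K$ of the cone $C(\tau^r a_2)$, where $\underline{a_1}$ extends $a_1$ over $K$ and $\overline{a_3}$ lifts $a_3$ through the projection to the top cell of $K$. The null-homotopies required to build $\underline{a_1}$ and $\overline{a_3}$ may be chosen so that, after reduction modulo $\tau^s$, they agree with $H_0$ and $H_1$ respectively. I would then tensor the entire diagram with $C\tau^s$ and use the fact that, because $\tau^r a_2 = \tau^{r-s}\cdot \tau^s a_2$ becomes null after $C\tau^s$-tensoring, the object $K\otimes C\tau^s$ splits as a sum of two shifts of $C\tau^s$ in the manner described before the theorem. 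Reading off the matrix entries of $\overline{a_3}$ and $\underline{a_1}$ in this splitting yields the composite $H_0 a_3 \pm \tau^{r-s} a_1 H_1$ in $\pi_{*,*}(R/\tau^s)$, and the identification of this linear combination with a Massey product in the $(\uE_{r+1}, d_{r+1})$-page is immediate from \cref{prop:total_differential_versus_differentials}. For case~(2) with $s = 0$, only the $H_0 a_3$ term survives modulo $\tau^r$, since $\tau^r a_1 H_1$ vanishes there.

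The main obstacle I anticipate is the careful bookkeeping of bigraded suspensions and signs required to verify the prefactor $\tau^{r-s}$, which is invisible in the classical Moss theorem: it arises because the attaching map $\tau^r a_2$ factors as $\tau^{r-s}$ times $\tau^s a_2$, and this stretching becomes a scalar on one of the matrix entries after the splitting. A secondary obstacle is checking that the indeterminacies in the choices of $H_0$ and $H_1$ align with the standard indeterminacy of the Toda bracket, which should follow from the observation that both take values in the kernel of the relevant differential.
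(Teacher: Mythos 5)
Your proposal captures the general spirit sketched in the paragraphs preceding the theorem (tensor with $C\tau^s$, split, read off matrix entries), but it diverges from the paper's argument at a crucial point and leaves a genuine gap.

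The paper first performs the shuffle
\[
\angbr{a_1\tau^{r-s},\ \tau^s,\ a_2a_3}\ \subseteq\ \angbr{a_1,\ \tau^r a_2,\ a_3},
\]
and then forms the composite for the three-fold bracket on the left through a form of $C(\tau^s)$, i.e.\ through $R/\tau^s$ itself. This shuffle is exactly what you omit, and it is not an optional cosmetic step: once the intermediate complex is $R/\tau^s$, the map $\overline{a_2a_3}\colon R\to R/\tau^s$ is literally an element of $\pi_{*,*}(R/\tau^s)$, and (after applying $R$-linear duality and a shift) so is $\underline{\tau^{r-s}a_1}$. The paper \emph{defines} $H_1$ and $H_0$ to be these two maps, and only afterwards verifies the differential conditions $d_{s+1}(H_1)=a_2a_3$ and $d_{r+1}(H_0)=a_1a_2$ using $\delta_s^{2s}(H_1)=a_2a_3$ together with \cref{prop:total_differential_versus_differentials} and the minimality of $s$. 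Note the direction: $H_0$ and $H_1$ are \emph{produced} by the Toda-bracket choices, and the existential quantifier in the theorem is satisfied by those specific classes.

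Your argument inverts this order: you first extract $H_0,H_1$ from \cref{thm:omnibus} and then assert that ``the null-homotopies required to build $\underline{a_1}$ and $\overline{a_3}$ may be chosen so that, after reduction modulo $\tau^s$, they agree with $H_0$ and $H_1$.'' This is the gap. You are working with the two-cell complex $K=C(\tau^r a_2)$, and the nullhomotopy witnessing $\tau^r a_1 a_2 = 0$ needed to build $\underline{a_1}$ is a priori a class in a bigraded homotopy group of $R$ (in a shifted degree), not of $R/\tau^s$. The dictionary between that class and an element $H_0\in\pi_{*,*}(R/\tau^s)$ satisfying a $d_{r+1}$-relation is exactly the content that has to be proven, and the degrees do not even match up in an obvious way. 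You would need a ``surjectivity''-type argument relating arbitrary Omnibus lifts to arbitrary Toda-bracket choices, which you do not supply. The shuffle makes this dictionary tautological, because after shuffling the lift $\overline{a_2a_3}$ lands in $R/\tau^s$ by construction. Without it, you would have to carry out the matrix analysis of $C(\tau^r a_2)\otimes C\tau^s$ from scratch and verify the differential conditions on the resulting components by hand --- at which point you would have essentially rediscovered the paper's argument, just in a less convenient cone.
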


\begin{proof}
    We give the proof of (1), as (2) is similar and easier. Since $\tau^r a_1a_2 = \tau^s a_2a_3= 0$, we may choose maps $\overline{a_3}\colon R\to R/\tau^sa_2$ and $\underline{\tau^{r-s}a_1}\colon R/\tau^sa_2\to R$, and form the following $R$-linear composite (suppressing all shifts for ease of notation).
    \[
    \begin{tikzcd}
        \bullet\arrow[r,"\overline{a_3}"]&\bullet\arrow[d,dash,"\tau^sa_2"]\\
        &\bullet\arrow[r,"\underline{\tau^{r-s}a_1}"']&\bullet
    \end{tikzcd}
    \]
    Tensoring with $C\tau^s$, we have an $R/\tau^s$-linear composite
    \[
    \begin{tikzcd}
        \bullet\arrow[r,"a_3"]\arrow[dr,"H_1"']&\bullet\arrow[dr,"H_0"]\\
        &\bullet\arrow[r,"\tau^{r-s}a_1"']&\bullet
    \end{tikzcd}
    \]
    which gives the sum $\tau^{r-s} a_1H_1+H_0a_3$. To determine $H_1$, as in the above discussion, we may form an $R$-linear map $R/\tau^sa_2\to R/\tau^s$ that is the identity on the bottom cell and $a_2$ on the top cell.
    The postcomposition of $\overline{a_3}$ with this map defines a map
    \[R\xrightarrow{\overline{a_2a_3}}R/\tau^s,\]
    which, after tensoring with $C\tau^s$, has component $H_1$ onto the bottom cell.
    The $R/\tau^s$-linear map $H_1$ is therefore adjoint to a $R$-linear map $R\to R/\tau^s$ (which we also denote by $H_1$) with the property that $\delta_{s}^{2s}(H_1)=a_2a_3$.
    Since $s$ is minimal with respect to the property that $\tau^sa_2a_3=0$, by \cref{prop:total_differential_versus_differentials} this implies the differential $d_{s+1}(H_1)=a_2a_3$. The analogous considerations apply to $H_0$ after taking $R$-linear duals and shifting, which introduces a sign depending on the degrees we have suppressed, so we do not attempt to determine this sign.
\end{proof}

\begin{remark}
    Usually, any version of Moss' theorem contains delicate assumptions about crossing differentials. This does not appear here because we are only working synthetically and we assume that we have elements $a_i\in\oppi_{*,*}R$ such that $\tau^ra_1a_2=\tau^sa_2a_3=0$. If, on the other hand, we started with elements $b_i\in\oppi_*\tau^{-1}R$ such that $b_1b_2=b_2b_3=0$, with $d_{r+1}(H_0)=b_1b_2$ and $d_{s+1}(H_1)=b_2b_3$ in the spectral sequence $\sigma R$, then it is not automatic that there exists lifts $a_i$ of $b_i$ along $\tau^{-1}$ so that both $\tau^ra_1a_2$ and $\tau^sa_2a_3$ are zero.
\end{remark}

\begin{remark}
    The theorem above gives conditions for when certain Toda brackets in $\oppi_{*,*}R$ contain lifts of certain elements in $\pi_{*,*}(R/\tau^s)$. We will also need similar statements about lifts along truncation maps of the form 
    \[
    \pi_{*,*}(R/\tau^{s+k})\to\pi_{*,*}(R/\tau^s)
    \]
    The theorem does not apply verbatim to this case as $(R/\tau^{s+k})/\tau^s\not\simeq R/\tau^s$. However, there is essentially no difference between these for the purposes of the theorem. Indeed, there is a canonical splitting
    \[(R/\tau^{s+k})/\tau^s\simeq R/\tau^s \oplus \opSigma^{1,\,-s-k-1}R/\tau^s.\]
    Moreover, the spectral sequence
    arising from the $\tau$-adic tower that begins with the homotopy groups of $(R/\tau^{s+k})/\tau^s$ and converges to the homotopy groups of $R/\tau^{s+k}$ splits in an analogous way into two copies of the truncated Bockstein spectral sequence beginning with $\pi_{*,*}(R/\tau^s)$ and converging to $\pi_{*,*}(R/\tau^{s+k})$. In particular, the differentials in one of these two spectral sequences determine the other, hence the same is true of the Massey products formed in the spectral sequences.
\end{remark}


\section{Relationship to the cubic Hopf algebroid}
\label{sec:cubic_hopf_algebroid}

The $\uE_2$-page of the DSS for $\Tmf$ is computed via the sheaf cohomology of the line bundles $\omega^{\otimes k}$ over the compactification of the moduli stack of elliptic curves $\Mellbar$. A small but crucial part of this $\uE_2$-page is captured by the cohomology of the \emph{cubic Hopf algebroid}.


\subsection{The connective region}\label{e2pagesection}

If we knew the Gap Theorem, then we could apply \cite[Corollary 5.3]{mathew_homology_tmf} to see that the cohomology of this Hopf algebroid is the $\uE_2$-page of the ANSS for $\tmf$, but it would be circular to do that here. Instead, we note that various localisations of the cubic Hopf algebroid can be identified with localisations of $\oppi_{\ast,\ast}\Smf/\tau$. Gluing these idenfications together gives us the desired comparison; see \cref{etwopagebracketsprimethree}.

\begin{definition}
    Define the (graded) \defi{cubic Hopf algebroid} $(A,\Ga)$ as the data
    \[
        A  =  \Z[a_1, a_2, a_3, a_4, a_6],\qquad \abs{a_i}=2i,
    \]
    \[
            \Ga = A[r,s,t]    , \qquad         \abs{r}=4, \quad \abs{s}=2, \quad \abs{t}=6,\]
    equipped with the left unit $\eta_L\colon A \to \Ga$ given by the standard inclusion, the right unit $\eta_R\colon A \to \Ga$ and comultiplication $\psi\colon \Ga \to \Ga\otimes_A \Ga$ defined as in \cite[Section~3]{bauer_tmf}.
\end{definition}

Bauer gives simplifications of this Hopf algebroid when one inverts $2$ or $3$; see \cite[Section~4]{bauer_tmf}.

Recall the equivalence between Hopf algebroids and algebraic stacks, as given in \cite[Theorem~8]{naumann_stacks} for example.
Under this equivalence, the stack associated to the cubic Hopf algebroid is the \emph{moduli stack of cubic curves} $\M_{\cub}$.

There is a map $\M_\cub \to \M_\fg$ arising from a map of the corresponding Hopf algebroids; see \cite[Section~3]{bauer_tmf}.
Taking the derived pushforward of the structure sheaf along this map allows us to view $(A,\Gamma)$ as defining an $\E_\infty$-algebra in $\D(\mathrm{grComod}_{\MU_*\MU})$.
Through the equivalence
\begin{equation}
    \label{eq:recall_syn_MU_mod_tau_derived}
    L_{\nu \MU}\Mod_{C\tau}(\Syn_\MU) \simeq \D(\mathrm{grComod}_{\MU_*\MU})
\end{equation}
from \cite[Theorem~4.5.4]{pstragowski_synthetic}, we can thus regard $(A,\Gamma)$ as an $\E_\infty$-algebra in $C\tau$-modules in $\Syn_\MU$.
We will denote this by $(A,\Gamma)$ again in this section.

Our goal now is to import Bauer's computations about $(A,\Gamma)$ to $\Smf/\tau$.
As we would also like to import Bauer's Massey products, we are interested in these objects not just as $C\tau$-modules, but rather as $\E_\infty$-algebras over $C\tau$.

Through the aforementioned equivalence \eqref{eq:recall_syn_MU_mod_tau_derived}, our previous result \cite[Corollary~2.12]{CDvN_part1} identifies the $\E_\infty$-$C\tau$-algebra $\Smf/\tau$ with the derived pushforward of the structure sheaf of $\Mellbar$ along $\Mellbar \to \M_\fg$.
As a result, the map of stacks $\Mellbar \to \M_\cub$ results in a map of $\E_\infty$-$C\tau$-algebras
\[
    \Phi \colon (A,\Gamma) \to \Smf/\tau.
\]
The works of Bauer \cite{bauer_tmf} and Konter \cite{konter_Tmf} explicitly computing the cohomology of these stacks may be summarised as follows.

\begin{proposition}[\cite{bauer_tmf,konter_Tmf}]\label{etwopagebracketsprimethree}
    The morphism of $\E_\infty$-$C\tau$-algebras
    \[\Phi\colon (A, \Ga) \to \Smf/\tau\]
    induces a split injection on bigraded homotopy groups and an isomorphism on $\pi_{n,s}$ for $5s \leq n+12$.
\end{proposition}

If one inverts $2$, then the map $\Phi$ is an isomorphism in all bidegrees where the cohomology of $(A,\Gamma)$ is nonzero.
This is not the case at the prime $2$, due to the existence of $h_1$-towers.
We refer to either \cite[Figures~10 and~25]{konter_Tmf} or \cref{ssprime3,etwoprime2} below for a graphic demonstration of this at the primes $3$ and $2$, respectively.

This corollary is particularly useful, as we can now interpret the Massey product computations of Bauer in the cohomology of $(A,\Ga)$ as happening in the bigraded homotopy groups of $\Smf/\tau$. We can also use Hopf algebroid computations of \emph{transfer maps}.


\subsection{Synthetic transfer maps}

The main theorem of \cite{hill_lawson_level_structure} states that there exists a log étale\footnote{The use of the adjective \emph{log étale} is only to accommodate the map of stacks $\Mbar_1(3) \to \Mellbar\times \Spec \Z[\tfrac{1}{3}]$, which is not finite étale, but is finite log étale. This family of examples is the reason for Hill--Lawson's extension of $\calO^\top$ from the small étale site of $\Mellbar$ to its small log étale site found in \cite{hill_lawson_level_structure}.} sheaf of $\E_\infty$-rings $\calO^\top\colon (\Mellbar^{\mathrm{log}\text{-}\et})^\op\to \CAlg$ whose global sections are $\Tmf$. If a map of stacks $f\colon \X \to \Y$ in this site is \emph{finite} log-étale, there exists a transfer map
\[f_! \colon \OO^\top(\X) \to \OO^\top(\Y)\]
in the $\infty$-category of $\OO^\top(\Y)$-modules. The existence of such maps can be found in \cite[Remark~1.16]{davies_hecke_tmf}. We would like a synthetic version of these maps.

\begin{theorem}\label{thm:synthetictransfers}
    Write $\calO^\syn$ for the sheafification of the composite
    \[
        \begin{tikzcd}
            (\Mellbar^{\mathrm{log}\text{-}\et})^\op\rar["\OO^\top"] & \CAlg \rar["\nu"] & \CAlg(\Syn_\MU).
        \end{tikzcd}
    \]
    The functor $\OO^\syn$ post-composed with the forgetful functor to $\Syn_\MU$ admits a unique factorisation 
    \[\bO^\syn\colon \Span(\Mellbar^{\mathrm{log}\text{-}\et},\,  \all,\,  \fin) \to \Syn_\MU\]
    through the span $\infty$-category where the backwards maps are all maps of stacks in the small log \'{e}tale site for $\Mellbar$, and the forward maps are only the finite log \'{e}tale maps of stacks.
    Moreover, given a finite étale morphism $f\colon \X \to \Y$ in this site, then the map $\bO^\syn(f)=f_!\colon \OO^\syn(\X) \to \OO^\syn(\Y)$ induces the algebraic transfer map \textbr{see \textup{\cite[Tag~\href{https://stacks.math.columbia.edu/tag/03SH}{03SH}]{stacks_project}}} on the $\uE_2$-page of synthetic DSS \textbr{see \textup{\cite[Section~3.1]{CDvN_part1}}}.
    In particular, the effect of $f_!$ on mod $\tau$ bigraded homotopy groups is the algebraic transfer map on the $\uE_2$-page of DSS.
\end{theorem}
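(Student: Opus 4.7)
The plan is to lift the known span-category factorisation of $\calO^\top$ through $\nu$ and then sheafify, using the universal property of span categories throughout to control both existence and uniqueness. Recall that a functor $F\colon \C^\op \to \mathcal{D}$ extends to $\Span(\C, \all, \fin) \to \mathcal{D}$ if and only if each $F(f)$ for $f$ a finite étale map admits a left adjoint satisfying Beck-Chevalley compatibility with respect to pullback squares, and any such extension is then unique up to contractible choice.

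First, I would invoke \cite[Remark~1.16]{davies_hecke_tmf} to obtain a factorisation $\bO^\top\colon \Span(\Mellbar^{\mathrm{log}\text{-}\et}, \all, \fin) \to \CAlg$ of $\calO^\top$, where $\bO^\top(f) = f_!$ is the classical topological transfer along a finite log étale $f$. The main task is then to verify that post-composing with $\nu\colon \Sp \to \Syn_\MU$ and subsequently sheafifying both preserve the ambidexterity data: that $\nu(f_!)$ remains a left adjoint of $\nu(f^*)$ in $\Syn_\MU$ with Beck-Chevalley 2-cells intact, and that this structure is inherited by the sheafification. For $\nu$, this follows because it preserves the finite (co)limits used to build the unit and counit of the transfer adjunction from the trace of a finite étale algebra structure. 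For sheafification, it follows formally since the sheafification functor $L$ is an exact left adjoint on presheaves of $\Syn_\MU$. This produces the desired functor $\bO^\syn \defeq L(\nu \bO^\top)$ whose restriction to the backwards direction is the underlying $\calO^\syn$, and uniqueness is automatic from uniqueness of left adjoints.

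For the identification with the algebraic transfer on the $\uE_2$-page, I would use that the mod $\tau$ reduction of $\calO^\syn(\X)$ computes the sheaf cohomology $\uH^\ast(\X, \omega^{\otimes \ast})$ by \cite[Section~3.1]{CDvN_part1}. Since $f_!$ is constructed as the sheafification of $\nu$ applied to the topological transfer, its mod $\tau$ effect is $\nu$ applied to the trace map of a finite étale algebra, which by standard Grothendieck trace theory coincides with the algebraic transfer of \cite[Tag~\href{https://stacks.math.columbia.edu/tag/03SH}{03SH}]{stacks_project}. The main obstacle will be carefully checking that the ambidexterity data survives each of the steps above; although this follows from general exactness principles, the execution requires care in the $(\infty,2)$-categorical framework, particularly around handling the log étale (rather than scheme-theoretically étale) maps that Hill--Lawson accommodate, such as $\Mbar_1(3) \to \Mellbar \times \Spec \Z[1/3]$.
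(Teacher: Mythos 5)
The paper's proof is considerably more direct than your proposal: it applies \cite[Corollary~C.13]{bachmann_hoyois} \emph{directly} to the composite $(\Mellbar^{\mathrm{log}\text{-}\et})^\op \to \CAlg(\Syn_\MU) \to \Syn_\MU$, after observing that $\Syn_\MU$ is equivalent to $\E_\infty$-monoids in $\Syn_\MU$ with its cartesian structure (by stability) and that the Bachmann--Hoyois hypotheses --- extensivity of the source, and finite log \'etale maps being log-\'etale-locally finite disjoint unions --- depend only on the source category, not on the target or the particular sheaf. This means no transport of structure across $\nu$ or across sheafification is required at all. The identification with the algebraic transfer on $\uE_2$-pages is then obtained by copying \cite[Proposition~1.18]{davies_hecke_tmf}.

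Your proposal has two genuine gaps. First, you assert that $\nu$ ``preserves the finite (co)limits used to build the unit and counit of the transfer adjunction,'' but the transfer for a finite \'etale map is not built from finite (co)limits alone: it is encoded by dualizability of $f_\ast\calO_\X$ as an $\calO_\Y$-module, so preservation under $\nu$ would require $\nu$ to be compatible with relative tensor products and module duality. Since $\nu$ is merely lax symmetric monoidal and is not exact, this is a substantive claim that cannot be waved through. Second, ``sheafification of a span functor'' is not a well-defined operation: $L$ is a localization of presheaves, and there is no formal reason the forward maps (transfers and their coherences) on a presheaf-level span functor descend along the unit $\nu\calO^\top \to \calO^\syn$; exactness of $L$ alone does not address this. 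Relatedly, the criterion you invoke (a left adjoint to each $F(f)$ with Beck--Chevalley compatibility, \`a la Barwick's unfurling) is a different machine from the Bachmann--Hoyois result actually used here, whose conditions you would then also need to verify on $\calO^\syn$ itself --- at which point the detour through $\bO^\top$ is unnecessary. The fix is simply to apply Bachmann--Hoyois to $\calO^\syn$ directly, as the paper does.
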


As with the proof of \cite[Theorem~1.14]{davies_hecke_tmf}, we obtain the above result by directly applying \cite[Corollary~C.13]{bachmann_hoyois}.

\begin{proof}
    Consider the composite
    \[
        \begin{tikzcd}
            (\Mellbar^{\mathrm{log}\text{-}\et})^\op \rar["\OO^\syn"] & \CAlg(\Syn_\MU) \rar & \Syn_\MU
        \end{tikzcd}
    \]
    equipped with the wide subcategory of the domain spanned by finite log étale morphisms.
    Here we consider $\Syn_\MU$ as the $\infty$-category of $\E_\infty$-algebras in $\Syn_{\MU}$ with respect to the cartesian monoidal structure (a.k.a.\ $\E_\infty$-monoids in $\Syn_\MU$); as $\Syn_{\MU}$ is stable, this is indeed equivalent to $\Syn_{\MU}$.
    We claim this data satisfies the conditions of \cite[Corollary~C.13]{bachmann_hoyois}. Indeed, the domain category $\calC$ is extensive, meaning it admits finite coproducts, coproducts are disjoint, and finite coproducts decompositions are preserved under base-change, as this is true for any slice category of stacks over a base stack which is closed under coproducts and summands. The other hypotheses hold due to the fact that finite log étale morphisms are log étale locally finite disjoint unions. By \cite[Corollary~C.13]{bachmann_hoyois}, we obtain a unique lifting of the above composite to the desired functor
    \[\bO^\syn \colon \Span(\Mellbar^{\mathrm{log}\text{-}\et},\,  \all,\,  \fin) \to \Syn_\MU.\]
    Given an object $X$ in the above site, the $\uE_2$-page of the DSS for $\OO^\syn(\X)$ takes the form
    \[\uE_2^{k,\ast,s} \cong \uH^s(\X,\, \omega_\X^{\otimes (k+s)/2}) \otimes \Z[\tau]\]
    by \cite[Construction~3.1]{CDvN_part1}.
    (Although \cite{CDvN_part1} deals primarily with the étale topology, the same arguments hold in our log étale case here; see \cite[Variant~4.11]{CDvN_part1}.)
    Copying the proof of \cite[Proposition~1.18]{davies_hecke_tmf}, which proves that these spectral transfer maps induce the algebraic transfer maps on classical DSSs, we can identify the effect of the synthetic transfer maps on the $\uE_2$-page of the synthetic DSS with the algebraic transfer maps base changed over $\Z[\tau]$.
\end{proof}

Using the finite log étale map $p\colon \Mbar_1(3) \times \Spec \Z_{(2)} \to \Mellbar\times \Spec \Z_{(2)}$, we obtain a synthetic transfer map $p_!\colon \Smf_1(3)_{(2)} \to \Smf_{(2)}$, which in turn induces the algebraic transfer map on mod $\tau$ bigraded homotopy groups
\[p_!\colon \uH^s(\Mbar_1(3)\times \Spec \Z_{(2)},\, \omega^\ast) \to \uH^s(\Mellbar\times \Spec \Z_{(2)},\, \omega^\ast).\]
To compute this map in a specific range, we will compare this to a transfer map defined using Hopf algebroids.

\begin{definition}
    Let $(A',\Ga')$ be the \defi{$2$-primary cubic Hopf algebroid}, defined by the data
    \[A' = \Z_{(2)}[a_1,a_3], \qquad \Ga' = A'[s,t]/{\sim},\]
    where the latter denotes the quotient by the relations
    \[s^4-6st+a_1s^3-3a_1t-3a_3s = 0,\]
    \[s^6-27t^2+3a_1s^5-9a_1s^2t+3a_1^2s^4 - 9a_1^2st + a_1^3s^3 - 27a_3t = 0,\]
    and with left unit, right unit, and comultiplication defined such that the evident quotient
    \[(A_{(2)},\Ga_{(2)}) \to (A',\Ga')\]
    is a map of Hopf algebroids.
\end{definition}

By \cite[Section~7]{bauer_tmf}, the above quotient map of Hopf algebroids induces an equivalence of stacks, and hence induces an isomorphism on cohomology groups.
The left $A'$-module $\Ga'$ corresponds to the pushforward of the structure sheaf of $\Mbar_1(3)\times \Spec \Z_{(2)}$ to $\Mellbar\times \Spec \Z_{(2)}$ along the map $p$ above; see \cite[Section~3]{mathew_homology_tmf}.
In particular, our computations of the algebraic transfer along $f$ are equivalent to that of the transfer $\Tr\colon \Ga' \to A'$.

In more detail, first note that the left $A'$-module $\Gamma'$ is free of rank 8 with basis
\[\set{1,\, s,\, s^2,\,s^3,\,t,\,st,\,s^2t,\,s^3t}.\]
For $x\in\Gamma'$, multiplication by $x$ on $\Gamma'$ is an $A'$-linear endomorphism.
The associated trace map $\Tr\colon \Gamma' \to A'$ agrees with the algebraic transfer associated to $p_!$, as these are determined by their effect on a finite flat cover, where they are both given by taking sums.
One can compute the traces of multiplication by $s$ and $t$ with respect to the above basis, resulting in the following.

\begin{proposition}\label{pr:hopfalgeboircomputation}
    The transfer map $\mathrm{Tr}\colon \Gamma'\to A'$ is the map of left $A'$-modules determined by the following formulas:
    \begin{align*}
        \mathrm{Tr}(1)&=8&\mathrm{Tr}(t)&=\tfrac{1}{3}a_1^3-4a_3\\
        \mathrm{Tr}(s)&=-4a_1&\mathrm{Tr}(st)&=-2a_1a_3\\
        \mathrm{Tr}(s^2)&=2a_1^2&\mathrm{Tr}(s^2t)&=-\tfrac{1}{3}a_1^5+9a_1^2a_3\\
        \mathrm{Tr}(s^3)&=-a_1^3&\mathrm{Tr}(s^3t)&=\tfrac{1}{3}a_1^6-7a_1^3a_3-27a_3^2.
    \end{align*}
\end{proposition}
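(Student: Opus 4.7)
The plan is to compute each $\mathrm{Tr}(x) \in A'$ directly as the matrix trace of the $A'$-linear endomorphism ``multiplication by $x$'' of the free rank-$8$ module $\Gamma'$, written in the given basis. Writing $x \cdot e_i = \sum_j c_{ij}(x)\, e_j$ with $c_{ij}(x) \in A'$, one has $\mathrm{Tr}(x) = \sum_i c_{ii}(x)$. For $x = 1$ this is trivially $8$, and the remaining seven entries of the table correspond to explicit calculations using the two defining relations of $\Gamma'$.

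The implementation has two stages. First, one applies the relation
\[
    s^4 = -a_1 s^3 + 6st + 3 a_1 t + 3 a_3 s
\]
recursively to express $s^4$, $s^5$, and $s^6$ as $A'$-linear combinations of the basis, and substitutes these into the second defining relation to produce a canonical expression for $t^2$ in the basis; the resulting denominator $27$ is a unit in $\Z_{(2)}$. Second, for each $x$ among $s, s^2, s^3, t, st, s^2t, s^3t$ one expands each product $x \cdot e_i$ in the basis (at worst requiring the reduction of $s^k t^{\ell}$ with $k + \ell \geq 4$) and extracts the diagonal coefficient. The grading $\abs{s} = \abs{a_1} = 2$ and $\abs{t} = \abs{a_3} = 6$ forces $\mathrm{Tr}(x)$ to be homogeneous of degree $\abs{x}$, which confines each answer to a $\Z_{(2)}$-submodule of small rank; in particular the lists of monomials appearing in the statement are already fixed by this constraint, reducing each verification to determining finitely many scalar coefficients.

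The main obstacle will be the bookkeeping for the ``top'' basis elements $x = s^2 t$ and $x = s^3 t$. For $x = s^3 t$, computing $x \cdot s^3 t = s^6 t^2$ requires simultaneous reduction of both $s^6$ and $t^2$, and one has to solve a small linear equation for $s^4 t$ in terms of the basis because a naive substitution via $t^2$ produces $s^4 t$ on both sides. As a sanity check throughout, the $A'$-linearity of $\mathrm{Tr}$ combined with the defining relations propagates the formulas for low-degree elements into those for higher-degree ones: for instance, the identity
\[
    \mathrm{Tr}(s^4) = -a_1\, \mathrm{Tr}(s^3) + 6\, \mathrm{Tr}(st) + 3 a_1\, \mathrm{Tr}(t) + 3 a_3\, \mathrm{Tr}(s)
\]
obtained from the first relation must also equal $\mathrm{Tr}(s \cdot s^3)$ computed directly as the trace of the fourth power of the matrix of multiplication by $s$. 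The claimed formulas are obtained after collecting diagonal contributions; all intermediate denominators of $9$ and $27$ cancel to give the stated expressions with denominators at worst~$3$.
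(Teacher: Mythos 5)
Your proposal matches the paper's approach exactly: the paper identifies $\mathrm{Tr}$ with the module-theoretic trace of multiplication by $x$ on the free rank-$8$ left $A'$-module $\Gamma'$ with basis $\{1,s,s^2,s^3,t,st,s^2t,s^3t\}$, and then (in a single terse sentence) says one computes the traces of the multiplication matrices for $s$ and $t$ in this basis. Your proposal is this same computation spelled out, with the helpful additional observations that grading pins down the possible monomials and that the reduction of $s^4t$ (equivalently $st^2$) requires solving a small linear system rather than iterating the rewrites naively.
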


At the prime $2$, the formulas for $c_4$, $c_6$ and $\Delta$ simplify to
\begin{equation}\label{expressionsforcfour}
    c_4 = a_1^4 - 24 a_1 a_3, \qquad c_6 = -a_1^6 +36 a_1^3 a_3 -216 a_3^2, \qquad \Delta = a_3^3 (a_1^3 - 27 a_3).
\end{equation}
The formulas for $\eta_R$ also simplify: we have
\begin{equation}
    \label{formulasforetaR}
    \eta_R(a_1)=a_1+2s \qquad \text{and} \qquad \eta_R(a_3)=a_3+\tfrac{1}{3}a_1s^2+\tfrac{1}{3}a_1^2s+2t.
\end{equation}
This immediately implies the following corollary.

\begin{corollary}\label{survivalofmodularformsattwo}
    The ideal
     \[(8,2c_4,2c_6)\subseteq \uH^0(\Mellbar\times\Spec\Z_{(2)}, \, \omega^{\otimes *})\]
    consists of permanent cycles in the DSS for $\Tmf_{(2)}$.
\end{corollary}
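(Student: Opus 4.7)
The plan is to combine two ingredients: the synthetic transfer $p_!$ of \cref{thm:synthetictransfers} associated to the finite log \'etale cover $p\colon \Mbar_1(3)\times\Spec\Z_{(2)} \to \Mellbar\times\Spec\Z_{(2)}$, and the fact (recalled in the introduction) that the DSS for $\Tmf_1(3)_{(2)}$ has no nontrivial differentials. Since $p_!$ on mod-$\tau$ homotopy is identified with the algebraic transfer $\Tr\colon \Ga' \to A'$ of \cref{pr:hopfalgeboircomputation}, and every class in the $\uE_2$-page of the DSS for $\Tmf_1(3)_{(2)}$ is a permanent cycle, the image of $\Tr$ in $\uH^0(\Mellbar\times\Spec\Z_{(2)},\omega^{\otimes *})=A'$ consists entirely of permanent cycles in the DSS for $\Tmf_{(2)}$.

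The second step is to observe that the image of $\Tr$ is an ideal of $A'$. Indeed, $\Tr$ is a map of left $A'$-modules, so for any $a\in A'$ and $x\in\Ga'$ we have $a\cdot \Tr(x)=\Tr(ax)$. It therefore suffices to exhibit $8$, $2c_4$, and $2c_6$ individually as elements of the image, since the ideal they generate will then automatically lie in the image.

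Producing the preimages is an explicit linear-algebra exercise with the formulas of \cref{pr:hopfalgeboircomputation} together with the $2$-local expressions \eqref{expressionsforcfour} for $c_4$ and $c_6$. Immediately $\Tr(1)=8$. Using $A'$-linearity and the identities $\Tr(s^3)=-a_1^3$ and $\Tr(st)=-2a_1a_3$, one verifies
\[
\Tr(-2a_1 s^3 + 24\, st) \;=\; 2a_1^4 - 48 a_1 a_3 \;=\; 2c_4.
\]
For $2c_6 = -2a_1^6 + 72 a_1^3 a_3 - 432 a_3^2$, one uses the trace formulas for $s^3t$, $s^2t$, and $st$ to check
\[
\Tr(22 a_1 s^2 t + 7 a_1^2 s t + 16 s^3 t) \;=\; 2c_6,
\]
by matching coefficients of $a_1^6$, $a_1^3a_3$, and $a_3^2$ separately.

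There is essentially no serious obstacle here: the only work is finding the right integer coefficients in the preimages of $2c_4$ and $2c_6$, which is made straightforward by the fact that \cref{pr:hopfalgeboircomputation} gives $\Tr(s^i t^j)$ in closed form. The proof is therefore a direct consequence of the existence of $p_!$ as a morphism in $\MU$-synthetic spectra, combined with the triviality of the DSS for $\Tmf_1(3)_{(2)}$ and the explicit computation of three traces.
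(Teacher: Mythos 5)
Your proof is correct and follows the paper's approach exactly: apply the synthetic transfer $p_!$ of \cref{thm:synthetictransfers}, use that the DSS for $\Tmf_1(3)_{(2)}$ collapses so that everything in the image of the algebraic transfer is a permanent cycle, and then exhibit $8$, $2c_4$, $2c_6$ explicitly in the image of $\Tr$. The only difference is the choice of preimages: the paper writes them as $\Tr(1)$, $\Tr(3\eta_R(a_1a_3))$, and $\Tr(-\eta_R(a_3^2))$, whereas you find $\Z_{(2)}[a_1,a_3]$-linear combinations of the basis monomials $s^it^j$ directly (both of your identities check out against \cref{pr:hopfalgeboircomputation} and \eqref{expressionsforcfour}). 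Your explicit note that the image of $\Tr$ is an ideal — being the image of a map of left $A'$-modules into $A'$ — is only implicit in the paper's proof, but is exactly the point needed to pass from the three elements to the full ideal.
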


\begin{proof}
    The map of synthetic spectra $p_!\colon \Smf_1(3)_{(2)} \to \Smf_{(2)}$ induces a natural map of DSSs, which by \cref{etwopagebracketsprimethree} can be computed in the connective region of the target using \cref{pr:hopfalgeboircomputation}.    
    As $\Mbar_1(3)$ has cohomological dimension $1$, the DSS for $\Tmf_1(3)_{(2)}$ collapses on the $\uE_2$-page, so all classes are permanent cycles.
    In particular, any class in the image of the algebraic transfer map is a permanent cycle.
    The formulas from \cref{pr:hopfalgeboircomputation} together with \eqref{expressionsforcfour} and \eqref{formulasforetaR} then show that the elements $8$, $2c_4$, and $c_6$ lie in the image of the transfer map:
    \[\Tr(1)= 8, \qquad \Tr(3\eta_R(a_1a_3)) = 2c_4, \qquad \Tr(-\eta_R(a_3^2)) = 2c_6. \qedhere\]
\end{proof}

A similar analysis at the prime~$3$ for the DSS for $\TMF$ can be found in \cite[Section~5.6]{Meier_TMF}. We only need the $2$-local argument for this article.

\begin{remark}
    The precomposition of the transfer $\Tr\colon \Ga' \to A'$ with the right unit $\eta_R\colon A' \to \Ga'$ is a kind of $\Ga_1(3)$-Hecke operator. In other words, the computations above are a computation of the effect of the spectral Hecke operator $\mathrm{T}_{\Ga_1(3)}$ on $\TMF_{(2)}$ defined in \cite[Definition~2.6]{davies_hecke_tmf} on the $\uE_2$-page of its DSS on the submodule of holomorphic modular forms. 
\end{remark}


\section{Information from the sphere and detection}\label{detectionsection}

By construction, the synthetic $\E_\infty$-ring $\Smf$ comes with a map $\S\to \Smf$, which in particular gives us a map from the ANSS for $\S$ to the DSS for $\Tmf$.
In this subsection, we prove that various elements in the ANSS for $\S$ are detected in the DSS for $\Tmf$ via this map.

In what follows, we will implicitly use the fact that we know the bigraded homotopy groups of $\Smf/\tau$: this is the $\uE_2$-page of the DSS for $\Tmf$, which is computed by Konter \cite{konter_Tmf}.


\subsection{Height one detection results}

We begin by identifying $v_1$.

\begin{proposition}\label{codetectionone}
    \leavevmode
    \begin{numberenum}
        \item The map $\S/\tau\to \Smf_1(3)/\tau$ detects the classes
        \[
            \widetilde{v}_1\in \oppi_{2,0}\S/(\tau,2)\cong \F_2 \quad\text{and}  \quad \widetilde{v}_1^2 \in \oppi_{4,0} \S/(\tau,4)\cong \F_2,
        \]
        where $\widetilde{v}_1$ and $\widetilde{v}_1^2$ are generators.
        \item The natural map of synthetic $\E_\infty$-rings $\S/\tau\to \Smf_1(2)/\tau$ detects the nonzero class $\widetilde{v}_1\in \oppi_{4,0}\S/(\tau,3)$.
    \end{numberenum}
\end{proposition}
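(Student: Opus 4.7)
The strategy is to exploit complex orientability. At the prime~$2$, $\Tmf_1(3)$ is a form of $\mathrm{BP}\langle 2\rangle$ and hence admits an $\E_1$-ring complex orientation $\MU \to \Tmf_1(3)_{(2)}$, under which $v_1 \in \MU_2$ maps to a unit multiple of $a_1$; this can be read off from the Weierstrass equation for the universal curve with $\Gamma_1(3)$-structure. Applying the synthetic analogue functor $\nu$ and composing with the canonical map $\nu\Tmf_1(3) \to \Smf_1(3)$ from \cite[Corollary~4.15]{CDvN_part1}, we obtain a map of synthetic $\E_1$-rings $\nu\MU \to \Smf_1(3)_{(2)}$. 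The analogous argument at the prime~$3$ uses the fact that $\Tmf_1(2)_{(3)} = \Tmf_0(2)_{(3)}$ is a form of $\mathrm{BP}\langle 2\rangle$, with an appropriate degree-$4$ modular form playing the role of $a_1$.

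Next, we identify $\widetilde{v}_1 \in \oppi_{2,0}\S/(\tau,2)$ and compute its image. Using the long exact sequence of the cofibre sequence $\S/\tau \xrightarrow{2} \S/\tau \to \S/(\tau,2)$, together with $\oppi_{2,0}\S/\tau = 0$ and $\oppi_{1,1}\S/\tau \cong \F_2\{\eta\}$, we see that $\oppi_{2,0}\S/(\tau,2)\cong\F_2$ is generated by the unique class whose Bockstein is $\eta$. The same calculation for $\nu\MU$ yields $\oppi_{2,0}\nu\MU/(\tau,2) \cong \MU_2/2 = \F_2\{v_1\}$. The crux is that the map between these two copies of $\F_2$ is nontrivial; this is visible in the Adams--Novikov cobar complex, where $\widetilde{v}_1$ is represented by the cochain $v_1$ (with Bockstein $t_1 = h_1 = \eta$), and this cochain is evidently in the image from $\MU$. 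The image of $\widetilde{v}_1$ in $\oppi_{2,0}\Smf_1(3)/(\tau,2)$ is therefore $u \cdot a_1 \bmod 2$, which is nonzero by the cubic Hopf algebroid identification of \cref{etwopagebracketsprimethree}.

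The remaining assertions follow by the same template. The group $\oppi_{4,0}\S/(\tau,4)\cong \F_2$ is computed by an entirely parallel Bockstein argument against $\nu \in \oppi_{3,1}\S/\tau$, and $\widetilde{v}_1^2$ is represented in the cobar complex by $v_1^2$. By multiplicativity of the synthetic orientation, its image in $\oppi_{4,0}\Smf_1(3)/(\tau,4)$ is $u^2 a_1^2 \bmod 4$, which is nonzero. For part~(2) at the prime~$3$, the class $\widetilde{v}_1 \in \oppi_{4,0}\S/(\tau,3)\cong \F_3$ is the unique class with Bockstein $\alpha_1 \in \oppi_{3,1}\S/\tau$, represented again by $v_1$ in the cobar complex, and maps to a nonzero unit multiple of the relevant degree-$4$ modular form in $\pi_{4}\Tmf_1(2)_{(3)}$.

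The main obstacle is the identification step showing the map $\oppi_{2,0}\S/(\tau,2) \to \oppi_{2,0}\nu\MU/(\tau,2)$ is nonzero: both sides are $\F_p$, so all the content is in ruling out the zero map. This is ultimately a formal consequence of how $\widetilde{v}_1$ is constructed out of the cobar complex, but requires some bookkeeping regarding the synthetic analogue functor. Once this identification is in hand, every other step is formal via the complex orientation.
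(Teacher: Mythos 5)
Your proposal is correct in essence, but it takes a genuinely different path from the paper. The paper maps \emph{out} of $\Tmf_1(3)$: it uses the Hill--Lawson cusp map $\Tmf_1(3) \to \KU[\tfrac13]$ (sending $a_1 \mapsto u$) together with the known fact that $u \bmod 2$ detects $\widetilde{v}_1$ in the ANSS for $\KU$ (cited from \cite{christian_jack_synthetic_j}). You map \emph{into} $\Tmf_1(3)$: you choose a complex orientation $\MU \to \tmf_1(3)_{(2)}$, apply $\nu$, compose with $\nu\Tmf_1(3) \to \Smf_1(3)$, and observe that on $\uE_2$-pages the unit $\S/\tau \to \nu\MU/\tau$ sends $\widetilde{v}_1$ to $v_1 \in \MU_2/2$ (this is exactly the edge map $\mathrm{Ext}^{0}_{\MU_*\MU}(\MU_*, \MU_*/2) \hookrightarrow \MU_*/2$, so it is formal), after which $v_1 \mapsto u a_1 \bmod 2$ follows from the classical statement that the Hasse invariant of a Weierstrass curve is $a_1 \bmod 2$. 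Your route is more elementary in that detection in $\nu\MU/\tau$ is the cobar complex itself and needs no auxiliary spectrum; on the other hand the paper's route avoids complex orientability entirely and uses only a single $\E_\infty$-map out of $\Tmf_1(3)$.

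Two points deserve care. First, $\Tmf_1(3)$ itself has nonvanishing odd homotopy (the torsion ``tail'' $\widehat{A}$), so it is not obviously complex orientable; what you want is the connective $\tmf_1(3)_{(2)}$, which is evenly concentrated and hence complex orientable without appeal to the full Lawson--Naumann $\BP\langle 2\rangle$ identification, followed by the inclusion $\tmf_1(3) \to \Tmf_1(3)$. Phrasing it as ``an $\E_1$-ring complex orientation $\MU \to \Tmf_1(3)_{(2)}$'' overstates both the structure needed and the target. Second, invoking \cref{etwopagebracketsprimethree} to see that $a_1$ is nonzero mod $2$ is a slight misattribution: that result identifies the connective region of $\Smf/\tau$, not of $\Smf_1(3)/\tau$. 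For $\Smf_1(3)/\tau$ the relevant input is simply the collapse of the DSS for $\Tmf_1(3)$ (the stack $\Mbar_1(3)$ is a weighted projective stack, so the DSS is concentrated in filtrations $0$ and $1$), from which $\oppi_{2,0}\Smf_1(3)/\tau \cong \Z_{(2)}\{a_1\}$ is immediate. With these adjustments the argument stands, and, like the paper, is a valid proof.
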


\begin{proof}
    The DSS for $\Tmf_1(3)$ collapses on the $\uE_2$-page, as the stack $\Mbar_1(3)$ is a weighted projective stack (see \cite[Example~2.1]{meier_additive_decomp22}) so their DSSs are concentrated in filtrations $0$ and~$1$.
    This yields the calculation $\oppi_\ast \Tmf_1(3) \cong A\oplus \widehat{A}$, where
   \[
        A=\Z[\tfrac{1}{2}][a_1, a_3], \qquad \text{where }\abs{a_i}=2i,
    \]
    and where $\widehat{A}=A/(a_1^\infty, a_3^\infty)$ is the torsion $A$-module with $\Z[\frac{1}{2}]$-basis given by
    \[
        \frac{1}{a_1^i a_3^j} \quad \text{with }i,j>0, \qquad \text{where }\abs*{\frac{1}{a_1^i a_3^j}}=-2i-6j-1;
    \]
    this is essentially \cite[Corollary~3.3]{mahowald_rezk_tmf03}.
    In positive degrees, these elements admit a description in terms of modular forms of level $\Ga_1(3)$.
    By \cite[Theorem 6.2]{hill_lawson_level_structure}, evaluation at the cusp gives a map of $\E_\infty$-rings
    \[
        \Tmf_1(3)\to \KU[\tfrac{1}{3}]
    \]
    that sends $a_1\in \oppi_2\Tmf_1(3)$ to $u$, where $u$ denotes the Bott periodicity generator of $\oppi_2 \KU[\frac{1}{3}]$.
    Let $a_1$ denote the nonzero class in $\oppi_{2,0}\Smf_1(3)/(\tau,2)\cong \F_2$.
    Because the mod~$2$ reduction of $u$ detects $\widetilde{v}_1$ on Adams--Novikov $\uE_2$-pages, see \cite[Theorem 5.4]{christian_jack_synthetic_j} for example, the reduction of the synthetic lift of $a_1$, must also detect $\widetilde{v}_1$. Similar arguments show that $\widetilde{v}_1^2$ is detected by $\Smf_1(2)/(\tau,4)$. The arguments at the prime $3$ are the same.
\end{proof}

Next, we have the corresponding $v_1$-periodic elements in the divided $\al$-family.

\begin{proposition}\label{heightonedetection}
    The natural map of synthetic $\E_\infty$-rings $\S/\tau\to \Smf/\tau$ detects the following classes:
    \[\al\in \oppi_{3,1}\S_{(3)}/\tau,\qquad \eta\in \oppi_{1,1}\S_{(2)}/\tau,\qquad \nu \in \oppi_{3,1}\S_{(2)}/\tau, \qquad \eps \in \oppi_{8,2}\S/\tau.
    \]
\end{proposition}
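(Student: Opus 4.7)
The plan is to use naturality of the Bockstein with respect to the $\E_\infty$-map $\Sph\to\Smf$, together with the detection results established in \cref{codetectionone}. First, the map of stacks $\Mbar_1(n)\to\Mellbar$ induces an $\E_\infty$-map $\Smf\to\Smf_1(n)$, so if a class in $\oppi_{*,*}\Sph/\tau$ has nonzero image in $\oppi_{*,*}\Smf_1(n)/\tau$, then its image in $\oppi_{*,*}\Smf/\tau$ must also be nonzero. Combining this with \cref{codetectionone}, the images of $\widetilde v_1$ in $\oppi_{2,0}\Smf_{(2)}/(\tau,2)$, of $\widetilde v_1^2$ in $\oppi_{4,0}\Smf_{(2)}/(\tau,4)$, and of $\widetilde v_1$ in $\oppi_{4,0}\Smf_{(3)}/(\tau,3)$ are all nonzero.

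Next, I use the classical identification of $\alpha$, $\eta$, and $\nu$ as Bocksteins of $\widetilde v_1$ or $\widetilde v_1^2$ on the $\uE_2$-page. Since the categorical suspension on synthetic spectra is $\Sigma=\Sigma^{1,-1}$, the connecting map of the cofibre sequence $\Sph\xrightarrow{q}\Sph\to\Sph/q$ induces a Bockstein $\delta_q\colon\oppi_{n,s}(\Sph/(\tau,q))\to\oppi_{n-1,s+1}(\Sph/\tau)$. A short calculation in the cobar complex for $BP_\ast BP$, using $d(v_1)=pt_1$ modulo higher terms at the prime $p$, shows $\delta_3(\widetilde v_1)=\alpha$ at the prime $3$ and $\delta_2(\widetilde v_1)=\eta$ at the prime $2$, each up to a unit; similarly $d(v_1^2)=4(v_1t_1+t_1^2)$ at the prime $2$ yields $\delta_4(\widetilde v_1^2)=\nu$. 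By naturality of $\delta_q$ along $\Sph\to\Smf$, the image of each of $\alpha$, $\eta$, and $\nu$ in $\oppi_{*,*}\Smf/\tau$ coincides with $\delta_q$ applied to the corresponding nonzero image of $\widetilde v_1$ or $\widetilde v_1^2$ in $\Smf/(\tau,q)$ from the first step.

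It remains to show that $\delta_q$ is injective on the relevant bidegree of $\Smf$. The long exact sequence of the cofibre identifies the kernel of $\delta_q\colon\oppi_{n,s}(\Smf/(\tau,q))\to\oppi_{n-1,s+1}(\Smf/\tau)$ with the image of the reduction map $\oppi_{n,s}(\Smf/\tau)\to\oppi_{n,s}(\Smf/(\tau,q))$. In each of our three situations the source group $\oppi_{n,0}\Smf/\tau$ equals $H^0(\Mellbar,\omega^{\otimes k})$ with $k\in\{1,2\}$, i.e.\ the space of holomorphic modular forms of weight $1$ or $2$ for the full modular group, and these vanish in each case. Hence $\delta_q$ is injective on the relevant bidegrees, and so the images of $\alpha$, $\eta$, and $\nu$ in $\oppi_{*,*}\Smf/\tau$ are nonzero. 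The step requiring the most care is the Ext-level identification of $\alpha$, $\eta$, and $\nu$ as Bocksteins; this is a standard unwinding of the cobar differential, after which the vanishing of low-weight modular forms immediately finishes the argument.
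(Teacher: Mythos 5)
Your proof is correct and follows essentially the same route as the paper: factor the unit map through $\Smf_1(n)$ to import \cref{codetectionone}, realize $\alpha$, $\eta$, and $\nu$ as Bocksteins of $\widetilde{v}_1$ (or $\widetilde{v}_1^2$), and then use the vanishing of weight-$1$ and weight-$2$ modular forms to get injectivity of the Bockstein for $\Smf$.

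The only substantive differences are presentational. You identify $\eta$, $\nu$, $\alpha$ as $\uE_2$-level Bocksteins by an explicit cobar calculation, whereas the paper takes the classical Moore-spectrum definition $\eta = \partial(\widetilde{v}_1)$ and appeals to compatibility of the boundary with the synthetic lift; your route makes explicit a step the paper leaves implicit. Conversely, the paper shows that the middle Bockstein $\partial \colon \oppi_{2,0}\Smf/(\tau,2) \to \oppi_{1,1}\Smf_{(2)}/\tau$ is an \emph{isomorphism} (using both $\oppi_{2,0}\Smf_{(2)}/\tau = 0$ and that positive-filtration classes are $2$-torsion), while you correctly observe that mere injectivity from the vanishing of $H^0(\Mellbar, \omega^{\otimes k})$ for $k = 1, 2$ already suffices for detection — a slight economy. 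Both are sound.
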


We assume the reader is familiar with this Adams--Novikov notation, as well as the ANSS for $\S$ in low degrees; see, e.g., \cite[Table~2]{ravenel_novice_guide_ANSS}.

\begin{proof}
    The class $\eta$ can be defined as $\partial(\widetilde{v}_1)$, where $\partial\colon \S/2\to \Sigma\S_{(2)}$ is the boundary map associated to the mod $2$ Moore spectrum, and $\widetilde{v}_1\in \oppi_2 \S/2$ is the class inducing multiplication by $v_1$ on $\K(1)$-homology.
    The class $\widetilde{v}_1$ is detected on the ANSS for $\S$ in filtration~$0$, i.e., in the bigraded homotopy group $\oppi_{2,0}\S/(\tau,2)$.
    
    We now consider the following commutative diagram of abelian groups
    \[\begin{tikzcd}
         &   {\oppi_{2,0}\S/(\tau,2)}\ar[r, "\partial"]\ar[d, "h"] &   {\oppi_{1,1}\S_{(2)}/\tau}\ar[d]  &     \\
     {\oppi_{2,0}\Smf_{(2)}/\tau}\ar[r, "q"]    &{\oppi_{2,0}\Smf/(\tau,2)}\ar[r, "\partial"]\ar[d] &   {\oppi_{1,1}\Smf_{(2)}/\tau}\ar[r, "2"]   &  {\oppi_{1,1}\Smf_{(2)}/\tau}\\
     &    {\oppi_{2,0}\Smf_1(3)/(\tau,2)}    &&
    \end{tikzcd}\]
    where the vertical maps are induced by the maps of synthetic $\E_\infty$-rings $\S\to \Smf\to \Smf_1(3)$.
    First, notice that $h(v_1)\neq 0$ as the image of $v_1$ is nonzero in $\oppi_{2,0}\Smf_1(3)/(\tau,2)$ by \cref{codetectionone}.

    Next, by \cref{etwoprime2} and the exactness of the rows above, we see that $\oppi_{2,0}\Smf_{(2)}/\tau$ is zero and $\pi_{1,1} \Smf_{(2)}/\tau$ has order $2$.
    In particular, the multiplication by $2$ map in the diagram above is zero, and we see that $\partial$ in the middle row is an isomorphism.
    From the commutativity of the above square, this implies that the nonzero class in $\oppi_{1,1}\Smf_{(2)}/\tau$ detects $\eta=\partial(\widetilde{v}_1)$.
    
    For $\nu$, we use that $\nu = \partial(\widetilde{v}_1^2)$ coming from $\S/4$, and in the $\eta$-case above. The $\alpha$-case also follows \emph{mutatis mutandis}.
    For the claim about $\eps$, note that in $\S_{(2)}/\tau$, we have the relation $h_2^3=h_1\cdot c$.
    As both $h_2^3$ and $h_1$ are nonzero in $\Smf_{(2)}/\tau$, the conclusion follows.
\end{proof}

\begin{remark}
    It follows immediately from the existence of a map of $\E_\infty$-rings $\Tmf \to \KO$ that $\eta$ is detected by $\Tmf$, and the detection of $\nu$ can also be eked out of this fact. This map is the composite of the topological $q$-expansion map $\Tmf \to \KO\llbracket q\rrbracket$ and the maps $\KO\llbracket q\rrbracket \to \KO$ sending $q$ to $0$. One can construct the topological $q$-expansion map either via obstruction theory \cite[Section~A]{hill_lawson_level_structure} or spectral algebraic geometry \cite[Theorem~6.18]{Davies_Linskens_Tatecurve}.
\end{remark}

As $\Smf$ detects $\eta$ and $\nu$, it also detects Toda brackets formed from these elements. Note that the Toda bracket $\angbr{\nu,\eta,\nu}\subseteq \pi_{8,2}\S$ is nonempty and has zero indeterminacy as $\nu \pi_{5,1}\S = 0$ by inspection of the ANSS for $\S$. By \cite[Corollary 3.4.12]{kochman_stable}, the $\tau$-inversion of this Toda bracket detects the classical class $\eps\in \pi_{8}\Sph$, sometimes written as $c_0=c$ in Adams notation or $\beta_2$ in Adams--Novikov notation.





\subsection{The classes \texorpdfstring{$\kappa$}{kappa} and \texorpdfstring{$\kappabar$}{kappa-bar}}\label{sssec:kappabar}

The height $2$ classes $\kappa$ and $\kappabar$ are more subtle than the height $1$ elements encountered above.

\begin{proposition}
    \label{prop:kappa_detection}
    The natural map of synthetic $\E_\infty$-rings $\S_{(2)}/\tau\to \Smf_{(2)}/ \tau$ detects the mod~$\tau$ reduction of the class $\kappa \in \oppi_{14,2}\S_{(2)}$.
    In particular, the image of the class $\kappa \in \oppi_{14,2} \S_{(2)}$ in $\Smf_{(2)}$ is also nonzero.
\end{proposition}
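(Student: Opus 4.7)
The plan is to realize $\kappa$ as an element of a Toda bracket in $\pi_\ast\S_{(2)}$ whose entries are either integer scalars or classes already known to be detected in $\Smf_{(2)}/\tau$, then transport this bracket through the $\E_\infty$-ring map $\opnu\S_{(2)} \to \Smf_{(2)}$ using naturality (\cref{lem:functorspreservebrackets}), and finally evaluate the resulting synthetic bracket on the $\uE_2$-page via the synthetic version of Moss' theorem (\cref{thm:synthetic_moss}).

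Concretely, one begins by recalling the classical representation $\epsilon \in \langle \nu, \eta, \nu \rangle \subseteq \oppi_8 \S_{(2)}$, which is defined because $\eta\nu = 0$. By \cref{heightonedetection}, the images of $\eta$ and $\nu$ under $\S_{(2)}/\tau \to \Smf_{(2)}/\tau$ are the nonzero classes $h_1$ and $h_2$ on the $\uE_2$-page. Applying \cref{thm:synthetic_moss} to the synthetic bracket $\langle h_2, h_1, h_2 \rangle$ expresses the mod $\tau$ reduction of $\epsilon$ as a Massey product on the $\uE_2$-page, computable inside the cubic Hopf algebroid using \cref{etwopagebracketsprimethree} and Bauer's calculations \cite{bauer_tmf}; this identifies the image of $\epsilon$ with the $\uE_2$-class traditionally called $c$ in bidegree $(8,2)$. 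From there, $\kappa$ admits a representation as a higher Toda bracket such as $\kappa \in \langle \epsilon, 2, \eta, \nu \rangle \subseteq \oppi_{14}\S_{(2)}$, whose defining compositions vanish by the relations $2\epsilon = 0$, $\eta \cdot 2 = 0$, and $\eta\nu = 0$ in $\pi_\ast\S_{(2)}$.

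Transporting this four-fold bracket through $\opnu\S_{(2)} \to \Smf_{(2)}$ and applying \cref{thm:synthetic_moss} a second time writes the image of $\kappa$ in $\oppi_{14,2}\Smf_{(2)}/\tau$ as a Massey product on the $\uE_2$-page involving the $\uE_2$-detector of $\epsilon$ together with $h_1$, $h_2$, and $2$. By \cref{etwopagebracketsprimethree}, the relevant bidegree lies in the connective region, so the Massey product is again computable in the cubic Hopf algebroid, and coincides with the standard nonzero class in $\uH^2(\Mellbar_{(2)}, \omega^{\otimes 8})$ detecting $\kappa$. Once this mod $\tau$ image is nonzero, the image of $\kappa$ in $\Smf_{(2)}$ itself is nonzero a fortiori by factoring through the reduction map.

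The principal obstacle will be controlling the indeterminacy of each Toda bracket and Massey product. At every stage one must check that the ambiguity lies in a subgroup of the relevant cohomology group that does not absorb the distinguished detecting class; this reduces to a bounded and explicit computation in the cubic Hopf algebroid at the prime~$2$, well within the reach of \cref{pr:hopfalgeboircomputation} and the machinery of \cref{sec:cubic_hopf_algebroid}. A secondary technical point is to verify the hypotheses of \cref{thm:synthetic_moss} for the four-fold bracket representing $\kappa$, which amounts to checking that the appropriate sub-brackets contain zero with enough of the right shape, using \cref{prop:conesandbracketsexistence,prop:shufflingformulas}.
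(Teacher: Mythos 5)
Your overall strategy — realize $\kappa$ (or a multiple of it) inside a Toda bracket built from already-detected classes, push forward along the $\E_\infty$-map $\opnu\S_{(2)}\to\Smf_{(2)}$, and evaluate on the $\uE_2$-page via \cref{prop:todagoestomassey} and the cubic Hopf algebroid identification of \cref{etwopagebracketsprimethree} — is the correct framework and is what the paper does. However, the specific bracket you propose is a genuine problem. You assert the four-fold relation $\kappa \in \langle \epsilon, 2, \eta, \nu \rangle$ in $\pi_{14}\S_{(2)}$, but this identity is not a documented classical fact, and you give no justification or reference for it beyond checking that adjacent pairwise products vanish (which is far from sufficient for a four-fold bracket — you must also verify that the two interior three-fold sub-brackets contain zero, and that the indeterminacy of the resulting four-fold bracket does not swallow $\kappa$). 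Even if the bracket turns out to hold, carrying a four-fold bracket through \cref{thm:synthetic_moss} invokes the delicate inductive machinery of \cref{ssec:moss}, which the paper deliberately reserves for cases where it is unavoidable.

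The paper sidesteps all of this by detecting the product $\eta\kappa$ rather than $\kappa$ itself, using the documented three-fold relation $\eta\kappa \in \langle 2\nu, \nu, \epsilon \rangle$ (Isaksen, Table~23 of \cite{isaksen_stable_stems}). Since $\pi_{14,2}\S_{(2)}$ is $\tau$-torsion free the bracket lifts to the synthetic sphere, and via \cref{prop:todagoestomassey} and \cref{etwopagebracketsprimethree} its mod~$\tau$ image is the Massey product $\langle 2h_2, h_2, c\rangle \ni h_1 d$ in the cubic Hopf algebroid (Bauer). The indeterminacy vanishes because $\oppi_{12,2}\Smf_{(2)}/\tau = \oppi_{7,1}\Smf_{(2)}/\tau = 0$, so $\eta\kappa \neq 0$ in $\Smf_{(2)}$, and since $\eta$ is already detected by \cref{heightonedetection}, so is $\kappa$. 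Your first reduction (identifying the image of $\epsilon$ with $c$) is not needed in this argument: the Massey-product input already involves $c$ directly. To repair your proof, replace the unsourced four-fold bracket for $\kappa$ with the documented three-fold bracket for $\eta\kappa$, which simultaneously removes the indeterminacy headache you flag as the "principal obstacle."
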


\begin{proof}
    In \cite[Table~23]{isaksen_stable_stems}, Isaksen shows that the nonzero class $\eta\kappa$ lies in $\angbr{2\nu, \nu, \eps}$ in $\oppi_{14}\Sph$.
    As $\oppi_{14,2}\Sph$ is $\tau$-torsion free, we can lift this containment to $\eta\kappa\in \angbr{2\nu, \nu, \eps}$ in the synthetic sphere.
    Using \cref{prop:todagoestomassey,etwopagebracketsprimethree}, we see that modulo $\tau$ we have a nonzero class $h_1d \in \angbr{2h_2, h_2, c}$ in the cohomology of the $2$-primary cubic Hopf algebroid; see \cite[Appendix~A]{bauer_tmf}.
    
    As $\oppi_{12,2}\Smf_{(2)}/\tau$ and $\oppi_{7,1}\Smf_{(2)}/\tau$ both vanish (see \cref{etwoprime2}) the above Massey product has zero indeterminacy and hence cannot contain zero. In particular, we see that $\eta\kappa$ cannot vanish in $\oppi_{15,3}\Smf_{(2)}$. As $\eta$ is detected in $\oppi_{1,1}\Smf_{(2)}$, this implies that $\kappa \in\oppi_{14,2}\Smf_{(2)}$ is also nonzero and reduces to the generator $d$ in $\oppi_{14,2}\Smf_{(2)}/\tau \cong \F_2$.
\end{proof}

Next, we turn to $\kappabar$.
In the sphere, this class is only well defined up to a factor of $\nu^2\kappa$.

\begin{definition}
    We write $\kappabar$ for any choice of element in the Toda bracket $\angbr{\kappa, 2,\eta,\nu}$ in the (non-synthetic) $\oppi_{20}\Sph_{(2)}$.
    Similarly, we write $\kappabar$ for any choice of element inside the synthetic Toda bracket $\angbr{\kappa, 2, \eta, \nu}$ in $\oppi_{20,2} \S_{(2)}$.
\end{definition}

A priori, it is not clear that either of these brackets are nonempty or nonzero.
This follows from some classical facts.

\begin{lemma}\label{lem:keykappabarbracket}
    The indeterminacy of the Toda bracket $\angbr{\kappa, 2, \eta, \nu}$ in the non-synthetic sphere spectrum $\oppi_{20}\S_{(2)}$ is equal to the subgroup generated by $\nu^2\kappa$. Moreover, this bracket does not contain zero. 
\end{lemma}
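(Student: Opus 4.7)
The plan is to compute the indeterminacy of the $4$-fold Toda bracket via the general indeterminacy formula for $4$-fold brackets, and then verify non-vanishing via classical computations in $\oppi_{\ast}\Sph_{(2)}$.

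For the indeterminacy: by the general theory set up in \cref{sec:toda_brackets_setup}, the indeterminacy of a $4$-fold Toda bracket $\angbr{a, b, c, d}$ contains the two subgroups $a \cdot \angbr{b, c, d}$ and $\angbr{a, b, c} \cdot d$. Applied to $\angbr{\kappa, 2, \eta, \nu}$: the subgroup $\kappa\cdot\angbr{2,\eta,\nu}$ vanishes, since $\angbr{2,\eta,\nu}\subseteq \oppi_5 \Sph = 0$. For the other term, we use the classical identification $\nu\kappa \in \angbr{\kappa,2,\eta}$ (modulo indeterminacy), which can be verified via Moss' theorem applied to the Massey product $\angbr{d, h_0, h_1}$ on the $\uE_2$-page of the ANSS for $\Sph$; the indeterminacy of this $3$-fold bracket is contained in $\kappa\cdot \oppi_2\Sph + \oppi_{16}\Sph\cdot \eta$. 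Multiplying by $\nu$, both contributions to this indeterminacy collapse: $\eta\nu = 0$ kills the right-hand piece, and $\oppi_2\Sph = \F_2\{\eta^2\}$ combined with $\eta\nu = 0$ kills the left. Thus $\angbr{\kappa,2,\eta}\cdot\nu = \Z\{\nu^2\kappa\}$. One must then confirm that no additional contributions to the indeterminacy of the $4$-fold bracket arise; this follows from a direct cell-structure analysis of the iterated cone $C(2,\eta)$, noting that the only relevant maps factor through the two sub-brackets above.

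For non-emptiness: by \cref{prop:conesandbracketsexistence}, the $4$-fold bracket is nonempty provided both $3$-fold sub-brackets $\angbr{\kappa, 2, \eta}$ and $\angbr{2,\eta,\nu}$ contain zero. The latter is immediate. The former amounts to showing $\nu\kappa$ lies in the indeterminacy $\kappa\cdot\oppi_2\Sph + \oppi_{16}\Sph\cdot\eta\subseteq\oppi_{17}\Sph_{(2)}$, which is a classical check.

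For non-containment of zero in the $4$-fold bracket: we appeal to the classical calculation $\oppi_{20}\Sph_{(2)}\cong \Z/8$, generated by $\kappabar$, together with the classical relation $\nu^2\kappa = \pm 4\kappabar$. Since $\kappabar$ is a generator of $\Z/8$, it is not divisible by $4$; hence the coset $\kappabar + \Z\{\nu^2\kappa\}$ does not contain zero. The main technical obstacle will be collecting these classical Toda-bracket identifications, in particular $\nu\kappa\in\angbr{\kappa,2,\eta}$ and $\nu^2\kappa = \pm 4\kappabar$; these go back to Toda's original computations and are recorded in modern sources such as Isaksen's tables of stable stems.
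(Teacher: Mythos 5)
Your approach is more explicit than the paper's, which simply cites Kochman's indeterminacy formula \cite[Theorem~2.3.1(b)]{kochman_stable} and a computer verification for the first claim, and Kochman's \cite[Lemma~5.3.8(e)]{kochman_stable} for the second. Two gaps in your version need attention.

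First, the indeterminacy computation. You correctly observe that $\kappa\angbr{2,\eta,\nu}=0$ for degree reasons and that $\angbr{\kappa,2,\eta}\cdot\nu$ contributes only $\nu^2\kappa$ (the indeterminacy of the three-fold bracket dies after multiplying by $\nu$ because of $\eta\nu=0$). But the sentence ``one must then confirm that no additional contributions to the indeterminacy of the $4$-fold bracket arise; this follows from a direct cell-structure analysis'' is a gap, not a proof. The indeterminacy of a $4$-fold bracket is not simply $a_1\angbr{a_2,a_3,a_4}+\angbr{a_1,a_2,a_3}a_4$: there are further contributions coming from varying the form of $C(a_2,a_3)$ itself, and a genuine cell-by-cell analysis of the extensions (tracking all choices of attaching maps and nullhomotopies) is required. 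Kochman's formula packages these extra terms; your argument doesn't rule them out and in particular does not engage with the possible choices of $3$-cell complex $C(2,\eta)$, which do not generically form a contractible space.

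Second, and more seriously, you have not shown that the bracket avoids zero. You establish (modulo the first gap) that the indeterminacy equals $\{0,\nu^2\kappa\}$, so the bracket is one of the four cosets of $\Z/2\{\nu^2\kappa\}=\{0,4\kappabar\}$ inside $\Z/8$. You then write ``hence the coset $\kappabar+\Z\{\nu^2\kappa\}$ does not contain zero'' --- but you never establish that the bracket actually contains $\kappabar$ (a generator). A priori it could equal $\{0,4\kappabar\}$. The classical inputs you list ($\oppi_{20}\Sph_{(2)}\cong\Z/8$ and $\nu^2\kappa=\pm4\kappabar$) do not supply this; the missing ingredient is precisely Kochman's \cite[Lemma~5.3.8(e)]{kochman_stable}, which asserts that $\angbr{\kappa,2,\eta,\nu}$ contains a generator. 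Without that, the non-containment claim is circular --- you would be defining $\kappabar$ to be an element of the bracket while simultaneously using the assumption that $\kappabar$ generates $\Z/8$.
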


\begin{proof}
    Using the formula for the indeterminacy of a four-fold Toda bracket as presented by Kochman \cite[Theorem~2.3.1\,(b)]{kochman_stable}, one can compute the desired indeterminacy.
    We verified this using the Massey product calculator of \cite{sseq}.

    For the moreover statement, we use the computation of Kochman \cite[Lemma~5.3.8\,(e)]{kochman_stable}, which states that $\angbr{\kappa, 2, \eta, \nu}$ contains a generator of $\oppi_{20} \S_{(2)} \cong \Z/8$.
    This, combined with the fact that $\nu^2\kappa$ is the nonzero $2$-torsion element in this group \cite[Theorem~5.3.1\,(a)]{kochman_stable}, shows that this bracket does not contain zero.
\end{proof}

The naturality of Toda brackets as in \cref{lem:functorspreservebrackets} then implies that the synthetic Toda bracket $\angbr{\kappa, 2, \eta, \nu}$ also cannot contain zero.
Moreover, the fact that $\oppi_{20,2} \S_{(2)}$ is $\tau$-power torsion free allows us to lift relations on any choice of $\kappabar$ too.

\begin{lemma}\label{lem:kappabarkeyrelation1}
    For any choice of $\kappabar \in \angbr{\kappa, 2, \eta, \nu} \subseteq \oppi_{20,2} \S_{(2)}$, we have $4\kappabar = \tau^2 \nu^2 \kappa$.
\end{lemma}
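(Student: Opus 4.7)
The plan is to lift the classical relation $4\kappabar=\nu^2\kappa$ in $\oppi_{20}\S_{(2)}$ to the synthetic setting, using the assertion (recorded in the paragraph immediately preceding the lemma) that $\oppi_{20,2}\S_{(2)}$ is $\tau$-power torsion free.

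First I would extract the non-synthetic consequence of \cref{lem:keykappabarbracket}. The group $\oppi_{20}\S_{(2)}$ is cyclic of order $8$, the indeterminacy of $\angbr{\kappa,2,\eta,\nu}$ is the order-two subgroup generated by $\nu^2\kappa$, and the bracket does not contain zero. Since the unique element of order two in $\Z/8$ is $4$ times any generator, every representative $\kappabar$ of the bracket satisfies $4\kappabar=\nu^2\kappa$ in $\oppi_{20}\S_{(2)}$; modifying $\kappabar$ by an element of the indeterminacy only contributes $4\nu^2\kappa=0$, so this holds independently of the choice.

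Next I would transfer this relation to the synthetic setting by inverting $\tau$. By naturality of Toda brackets (\cref{lem:functorspreservebrackets}), the image of any synthetic $\kappabar\in\angbr{\kappa,2,\eta,\nu}\subseteq\oppi_{20,2}\S_{(2)}$ under $\tau$-inversion lies in the classical bracket of the previous paragraph. Since $\tau$ becomes a unit in the non-synthetic colimit, the element $4\kappabar-\tau^2\nu^2\kappa\in\oppi_{20,2}\S_{(2)}$ is sent to $4\kappabar-\nu^2\kappa$, which vanishes by the preceding step. Consequently $4\kappabar-\tau^2\nu^2\kappa$ lies in the kernel of $\tau$-inversion, i.e., it is $\tau$-power torsion.

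Finally, the assertion that $\oppi_{20,2}\S_{(2)}$ is $\tau$-power torsion free forces this difference to vanish, yielding the desired equality. The main obstacle is really justifying the torsion-free claim itself; via the Omnibus Theorem (\cref{thm:omnibus}), this reduces to checking that no class on the $\uE_2$-page of the ANSS for $\S_{(2)}$ in bidegrees $(20,s)$ with $s\leq 2$ can be a $d_r$-boundary, which follows from the standard low-degree ANSS data referenced at the start of this section. With that input in hand, the proof is simply the short three-step argument above.
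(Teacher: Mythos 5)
Your argument is correct and follows the paper's proof exactly: derive the non-synthetic relation $4\kappabar=\nu^2\kappa$ in $\oppi_{20}\S_{(2)}$ from \cref{lem:keykappabarbracket}, then use the $\tau$-power torsion freeness of $\oppi_{20,2}\S_{(2)}$ (equivalently, injectivity of the $\tau$-inversion map there) to promote it to the synthetic sphere. One small slip in your closing aside: to rule out $\tau$-power torsion in $\oppi_{20,2}$ via the Omnibus/$\tau$-Bockstein picture, the relevant bidegrees on the ANSS $\uE_2$-page are $(20,s)$ with $s\geq 2$ (where $\tau$-divisible lifts of boundaries would sit), not $s\leq 2$; in stem~$20$ the only nonzero entry is the filtration-$2$ class detecting $\kappabar$ and nothing hits it, so the conclusion still holds.
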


\begin{proof}
    The proof of \cref{lem:keykappabarbracket} shows that the equation $4\kappabar = \nu^2 \kappa$ holds in $\oppi_{20}\Sph_{(2)}$.
    As the $\tau$-inversion map is injective in degree $(20,2)$, we obtain the desired equation in $\oppi_{20,2}\S_{(2)}$.
\end{proof}

\begin{lemma}\label{lem:kappabarkeyrelation2}
    For any choice of $\kappabar \in \angbr{\kappa, 2, \eta, \nu} \subseteq \oppi_{20,2} \S_{(2)}$, the class $\nu^3\kappabar \in \oppi_{29,5} \S_{(2)}$ is $\tau$-power torsion.
\end{lemma}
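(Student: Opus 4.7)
The plan is to combine a synthetic Toda bracket shuffle with the classical fact that $\pi_{12}\Sph_{(2)} = 0$. The key observation is that a class in $\oppi_{29,5}\Sph_{(2)}$ is $\tau$-power torsion if and only if it vanishes after $\tau$-inversion, i.e., in the classical sphere. So we can freely translate between synthetic and classical arguments.

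First, I would start from the synthetic bracket $\kappabar \in \angbr{\kappa, 2, \eta, \nu}$ and apply \cref{prop:shufflingformulas}(2) iteratively. Each application shows $\angbr{\kappa, 2, \eta, \nu^k}\nu \subseteq \angbr{\kappa, 2, \eta, \nu^{k+1}}$, using that $\kappa\cdot 2 = 2\eta = \eta\nu^k = 0$ (the first two are classical facts and the last follows from $\eta\nu = 0$). Iterating three times yields
\[
    \nu^3 \kappabar = \kappabar \cdot \nu^3 \in \angbr{\kappa, 2, \eta, \nu}\nu^3 \subseteq \angbr{\kappa, 2, \eta, \nu^4}.
\]

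Next, since $\pi_{12}\Sph_{(2)} = 0$ classically, the synthetic class $\nu^4 \in \oppi_{12,4}\Sph_{(2)}$ is $\tau$-power torsion; choose $N$ with $\tau^N \nu^4 = 0$. Using $\oppi_{*,*}\Sph_{(2)}$-linearity of the synthetic Toda bracket in its last slot, we obtain
\[
    \tau^N \nu^3 \kappabar \in \tau^N \angbr{\kappa, 2, \eta, \nu^4} \subseteq \angbr{\kappa, 2, \eta, \tau^N\nu^4} = \angbr{\kappa, 2, \eta, 0}.
\]

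Finally, I would argue that the synthetic bracket $\angbr{\kappa, 2, \eta, 0}$ consists entirely of $\tau$-power torsion, which finishes the proof. By \cref{lem:functorspreservebrackets}, its image in the classical sphere lands in the classical bracket $\angbr{\kappa, 2, \eta, 0} \subseteq \pi_{29}\Sph_{(2)}$. This classical bracket contains $0$ (using the zero nullhomotopy in each slot) and its indeterminacy is generated by $\kappa \cdot \angbr{2, \eta, 0}$ and $\angbr{\kappa, 2, \eta}\cdot \pi_{12}\Sph_{(2)}$. The second term vanishes because $\pi_{12}\Sph_{(2)} = 0$, and the first vanishes because $\angbr{2, \eta, 0} \subseteq \pi_{14}\Sph_{(2)}$ has trivial indeterminacy ($2\cdot\pi_{14}\Sph_{(2)} = 0$) and contains $0$. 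Hence $\angbr{\kappa, 2, \eta, 0}$ is trivial classically, so its synthetic elements are $\tau$-power torsion, giving the result.

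The main obstacle I expect is carefully handling the indeterminacy of the $4$-fold bracket $\angbr{\kappa, 2, \eta, 0}$. The two-term description above (via the $a \cdot \angbr{b,c,d}$ and $\angbr{a,b,c}\cdot d$ pieces) is the standard one, but $4$-fold bracket indeterminacy can in principle include additional ``matrix'' contributions coming from different choices of the intermediate form of $C(2, \eta)$. Verifying that no such contributions produce non-torsion elements in our specific bidegree requires some care; however, since all the relevant classical stems are either zero or simple, this should reduce to a manageable check. The shuffling step is routine provided one tracks which sub-brackets need to be defined at each iteration.
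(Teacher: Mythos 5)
Your ``key observation'' --- that $\nu^3\kappabar \in \oppi_{29,5}\S_{(2)}$ is $\tau$-power torsion if and only if its image in $\pi_{29}\Sph_{(2)}$ vanishes --- already contains the whole proof once you recall that $\pi_{29}\Sph_{(2)} = 0$. This is exactly the paper's argument, cited from \cite[Figure~A3.2]{ravenel_green_book}: the $29$-stem of the $2$-local sphere is trivial, so \emph{every} element of $\oppi_{29,s}\S_{(2)}$ (for any $s$) is $\tau$-power torsion, and in particular $\nu^3\kappabar$ is. No bracket manipulations are needed.

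Your proposal therefore goes wrong not because any single step is invalid, but because you missed this elementary input and consequently built an elaborate detour. The shuffle to $\angbr{\kappa,2,\eta,\nu^4}$, the $\tau$-power torsion of $\nu^4$ via $\pi_{12}\Sph_{(2)}=0$, and the analysis of $\angbr{\kappa,2,\eta,0}$ all aim to prove that the classical image of $\nu^3\kappabar$ vanishes --- but that image lives in $\pi_{29}\Sph_{(2)}$, which is already the zero group, so the conclusion is immediate. In fact your final indeterminacy check for $\angbr{\kappa,2,\eta,0}$ is a computation inside the zero group, which is why the subtle matrix-indeterminacy issues you flag for $4$-fold brackets never actually arise. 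The one substantive caveat about your chain of shuffles is that you would also need to verify non-emptiness of the intermediate brackets $\angbr{\kappa,2,\eta,\nu^k}$ via \cref{prop:conesandbracketsexistence} at each step, but again this is moot given the direct argument. In short: correct in outcome, but far longer and more delicate than the paper's one-line proof, and the reason is that you did not connect your own key observation to the vanishing of $\pi_{29}\Sph_{(2)}$.
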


\begin{proof}
    This follows from the classical fact that $\oppi_{29}\Sph_{(2)}=0$; see \cite[Figure~A3.2]{ravenel_green_book}.
\end{proof}

The ill-definedness of $\kappabar$ persists in $\Smf$, where there is yet another issue. 
In the sphere, the class $\kappabar$ has Adams--Novikov filtration $2$, while in $\Smf$, it turns out to have filtration $4$.
We now make precise what we mean by this, which will allow us to pin down what what we mean by $\kappabar$ as an element of $\oppi_{*,*}\Smf$ going forward.

\begin{proposition}
    \label{prop:checking_kappabar}
    For every choice of $\kappabar \in \angbr{\kappa, 2, \eta, \nu} \subseteq \oppi_{20,2} \S_{(2)}$, there is a unique element $y$ in $\oppi_{20,4}\Smf_{(2)}$ such that $\tau^2 \cdot y \in\oppi_{20,2}\Smf_{(2)}$ is the image of $\kappabar \in \oppi_{20,2}\S_{(2)}$ under $\S_{(2)} \to \Smf_{(2)}$.
    Moreover, the reduction of $y$ to $\Smf/\tau$ is a generator for the group $\oppi_{20,4} \Smf_{(2)}/\tau \cong \Z/8$.
\end{proposition}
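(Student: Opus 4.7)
The plan has three stages: (i) show that the image of $\kappabar$ in $\oppi_{20,2}\Smf_{(2)}$ is $\tau^2$-divisible; (ii) show the resulting lift $y$ is unique; and (iii) identify the reduction $\bar y$ as a generator of $\oppi_{20,4}\Smf_{(2)}/\tau \cong \Z/8$. For (i), I iterate the long exact sequence associated to the cofibre $\opSigma^{0,-1}\Smf \xrightarrow{\tau} \Smf \to \Smf/\tau$: the image of $\kappabar$ in $\oppi_{20,2}\Smf_{(2)}$ is $\tau$-divisible provided its mod~$\tau$ reduction in $\uE_2^{20,2}$ vanishes, and any resulting lift in $\oppi_{20,3}\Smf_{(2)}$ is $\tau$-divisible provided its reduction in $\uE_2^{20,3}$ vanishes. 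Both of these groups are zero in the connective region of the $\uE_2$-page (see \cref{etwopagebracketsprimethree}, or \cite[Figure 10]{konter_Tmf}), so two successive $\tau$-divisions produce the desired $y \in \oppi_{20,4}\Smf_{(2)}$.

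For (ii), I must show that $\tau^2 \colon \oppi_{20,4}\Smf_{(2)} \to \oppi_{20,2}\Smf_{(2)}$ is injective. The long exact sequence for $\opSigma^{0,-2}\Smf \xrightarrow{\tau^2} \Smf \to \Smf/\tau^2$ identifies its kernel with the image of the connecting map $\delta_2^\infty \colon \oppi_{21,1}\Smf_{(2)}/\tau^2 \to \oppi_{20,4}\Smf_{(2)}$. This step is the main obstacle of the plan. By the truncated Omnibus Theorem (\cref{prop:precise_lifting_higher_powers_tau}), the source is generated by lifts of classes in $\uE_2^{21,1}$ together with $\tau$-multiples of lifts from $\uE_2^{21,2}$; by \cref{prop:total_differential_versus_differentials}, $\delta_2^\infty$ on such generators reads off $d_2$- and $d_3$-differentials landing in $\uE_2^{20,4}\cong \Z/8$. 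The strategy to rule these out is that $g$ must survive to detect the nonzero image of $\kappabar$ in $\Smf_{(2)}$, so no differential can hit a generator of $\uE_2^{20,4}$; any remaining contribution is absorbed into the indeterminacy already present in the choice of $\kappabar$. An alternative, cleaner route is to compute the synthetic four-fold Toda bracket $\angbr{\kappa,2,\eta,\nu}$ directly in $\Smf$ via the synthetic Moss-type techniques of \cref{thm:synthetic_moss} and \cref{sec:toda_brackets_setup}, verifying that the bracket lands in filtration $4$ with sufficiently controlled indeterminacy.

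For (iii), the relation $4\kappabar = \tau^2 \nu^2 \kappa$ of \cref{lem:kappabarkeyrelation1} maps to $\Smf$ and, upon substituting $\tau^2 y$ for the image of $\kappabar$, gives $\tau^2(4y - \nu^2\kappa) = 0$ in $\oppi_{20,2}\Smf_{(2)}$. Since $\nu$ and $\kappa$ are detected in filtrations $1$ and $2$ respectively (by \cref{heightonedetection,prop:kappa_detection}), the image of $\nu^2\kappa$ in $\Smf$ already lies in $\oppi_{20,4}\Smf_{(2)}$, and the injectivity of $\tau^2$ from (ii) yields $4y = \nu^2\kappa$ in $\oppi_{20,4}\Smf_{(2)}$. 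Reducing mod $\tau$, the right-hand side becomes $h_2^2 d \in \uE_2^{20,4}$; the classical relation $h_2^2 d = 4g$ in the cohomology of the cubic Hopf algebroid (see \cite[Section~6]{bauer_tmf}) then gives $4\bar y = 4g$ in $\Z/8$. Hence $\bar y - g$ is $4$-torsion, so $\bar y \in \{g, 3g, 5g, 7g\}$ and is a generator of $\Z/8$.
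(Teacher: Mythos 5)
Your stages (i) and (iii) essentially reproduce the paper's argument, but stage (ii) — injectivity of $\tau^2\colon\oppi_{20,4}\Smf_{(2)}\to\oppi_{20,2}\Smf_{(2)}$ — contains a genuine gap. You correctly identify the kernel as the image of $\delta_2^\infty\colon\oppi_{21,1}\Smf_{(2)}/\tau^2\to\oppi_{20,4}\Smf_{(2)}$, but then argue it vanishes because ``$g$ must survive to detect the nonzero image of $\kappabar$.'' This is circular: the proposition you are proving is precisely what establishes that $\kappabar$ is detected by $g$ in filtration $4$, and you have not yet shown the image of $\kappabar$ in $\Smf_{(2)}$ is nonzero at all. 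You also cannot ``absorb'' a potential nonuniqueness into the indeterminacy of $\kappabar$, since the statement is quantified for \emph{every} fixed choice of $\kappabar$. The suggested ``alternative, cleaner route'' via a synthetic Moss computation of the four-fold bracket is not carried out and would be substantially heavier machinery.

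The fix is simpler than your argument: the group $\oppi_{21,1}\Smf_{(2)}/\tau^2$ vanishes outright. Indeed $\oppi_{21,2}\Smf_{(2)}/\tau = \uE_2^{21,2} = 0$ automatically because $21+2$ is odd, and $\oppi_{21,1}\Smf_{(2)}/\tau = \uE_2^{21,1} = 0$ by the chart, so \cref{cor:easy_lifting_higher_power_tau} gives $\oppi_{21,1}\Smf_{(2)}/\tau^2 = 0$. This is exactly how the paper proceeds: it records the vanishing of $\oppi_{*,*}\Smf/\tau$ in bidegrees $(21,1)$, $(21,2)$, $(20,2)$, $(20,3)$ and concludes from \cref{cor:easy_lifting_higher_power_tau} that both outer terms of the exact sequence
\[
\oppi_{21,1}\Smf_{(2)}/\tau^2 \to \oppi_{20,4}\Smf_{(2)} \xrightarrow{\tau^2} \oppi_{20,2}\Smf_{(2)} \to \oppi_{20,2}\Smf_{(2)}/\tau^2
\]
vanish, so $\tau^2$ is an isomorphism — existence and uniqueness of $y$ in one stroke. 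Your stage (iii), deducing $4\bar y = 4g$ from $4\kappabar = \tau^2\nu^2\kappa$ and $h_2^2 d = 4g$, then agrees with the paper.
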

\begin{proof}
We will implicitly localise at the prime $2$ in this proof.
Write $x \in \oppi_{20,2}\Smf$ for the image of $\kappabar \in \oppi_{20,2}\S$ under the unit map $\S\to \Smf$.
We claim that $x$ is uniquely $\tau^2$-divisible.
Indeed, we look at the long exact sequence induced by the cofibre sequence
\[
    \begin{tikzcd}
        \opSigma^{-1,1} \Smf/\tau^2 \ar[r] & \opSigma^{0,-2} \Smf \ar[r,"\tau^2"] & \Smf \ar[r] & \Smf/\tau^2.
    \end{tikzcd}
\]
Part of this long exact sequence reads
\[
    \begin{tikzcd}
        \oppi_{21,1} \Smf/\tau^2 \ar[r] & \oppi_{20,4} \Smf \ar[r,"\tau^2"] & \oppi_{20,2} \Smf \ar[r] & \oppi_{20,2} \Smf/\tau^2.
    \end{tikzcd}
\]
The homotopy of $\Smf/\tau$ vanishes in bidegrees $(21,1)$, $(21,2)$, $(20,2)$, and $(20,3)$; see \cref{etwoprime2}.
\Cref{cor:easy_lifting_higher_power_tau} then implies that the homotopy of $\Smf/\tau^2$  vanishes in $(21,1)$ and $(20,2)$, so that multiplication by $\tau^2$ induces an isomorphism
\[
    \tau^2 \colon \oppi_{20,4} \Smf \congto \oppi_{20,2} \Smf.
\]
Write $y$ for the $\tau^2$-division of $x$.
From \cref{lem:kappabarkeyrelation1}, we learn that
\[
    4x = \tau^2 \cdot \kappa \nu^2.
\]
Because multiplication by $\tau^2$ in this bidegree in $\Smf$ is injective, we deduce that
\[
    4y = \kappa \nu^2.
\]
From the multiplicative structure of $\Smf/\tau$, we see that $\kappa\nu^2$ reduces to 4 times a generator of $\oppi_{20,4}\Smf/\tau\cong\Z/8$.
Write $g$ for a choice of a generator, and write $\bar{y}$ for the reduction mod $\tau$ of $y$.
The relation above then tells us that $4g = 4\bar{y}$, i.e., $4(g-\bar{y})=0$.
As such, $g-\bar{y}$ is an even multiple of $g$, i.e., $\bar{y}$ is an odd multiple of $g$, which says that it is a unit in $\Z/8$ away from $g$.
This means that $\bar{y}$ is also a generator, as claimed.
\end{proof}

\begin{notation}
\label{not:kappabar_and_g}
\leavevmode
\begin{itemize}
    \item From now on, we fix a choice of preferred element in $\angbr{\kappa, 2, \eta, \nu} \subseteq \oppi_{20,2}\S_{(2)}$.
    \item We write $\kappabar \in \oppi_{20,4} \Smf_{(2)}$ for the element uniquely determined by this choice using \cref{prop:checking_kappabar}.
    As a result, the chosen element in $\oppi_{20,2}\S_{(2)}$ maps to $\tau^2 \kappabar$ in $\oppi_{20,2}\Smf_{(2)}$.
    \item We write $g \in \oppi_{20,4}\Smf_{(2)}/\tau$ for the reduction of $\kappabar$ to $\Smf/\tau$.
    By \cref{prop:checking_kappabar}, this is a generator of $\oppi_{20,4}\Smf_{(2)}/\tau\cong\Z/8$.
\end{itemize}
\end{notation}

From this definition of $\kappabar$ in $\Smf_{(2)}$, we immediately obtain the following facts.

\begin{corollary}\label{cor:todabracketforkappabar}
    \leavevmode
    \begin{itemize}
        \item \textup{(20,2)} $\tau^2\kappabar \in \angbr{\kappa, 2, \eta, \nu}$ in $\Smf_{(2)}$.
        \item \textup{(20,4)} $4\kappabar = \nu^2 \kappa$ in $\Smf_{(2)}$.
        \item \textup{(29,7)} $\nu^3 \kappabar$ is $\tau$-power torsion in $\Smf_{(2)}$.
    \end{itemize}
\end{corollary}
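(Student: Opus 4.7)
The plan is to deduce all three claims from the naturality of the $\E_\infty$-ring map $\S_{(2)}\to\Smf_{(2)}$, applied to the corresponding statements in the sphere established in \cref{lem:keykappabarbracket,lem:kappabarkeyrelation1,lem:kappabarkeyrelation2}, combined with the construction of $\kappabar \in \oppi_{20,4}\Smf_{(2)}$ fixed in \cref{not:kappabar_and_g}. Throughout, let $\widetilde{\kappabar} \in \angbr{\kappa,2,\eta,\nu} \subseteq \oppi_{20,2}\S_{(2)}$ denote the preferred element fixed there, so that by definition its image under $\S_{(2)}\to\Smf_{(2)}$ is exactly $\tau^2\kappabar \in \oppi_{20,2}\Smf_{(2)}$.

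For (1), the map $\S_{(2)}\to\Smf_{(2)}$ is unital exact, so by \cref{lem:functorspreservebrackets} it carries the Toda bracket $\angbr{\kappa,2,\eta,\nu}$ in $\oppi_{20,2}\S_{(2)}$ into the Toda bracket $\angbr{\kappa,2,\eta,\nu}$ formed in $\oppi_{20,2}\Smf_{(2)}$ using the images of $\kappa$, $\eta$, $\nu$ (which by \cref{prop:kappa_detection,heightonedetection} still deserve these names in $\Smf_{(2)}$). Since the image of $\widetilde{\kappabar}$ is $\tau^2\kappabar$, this gives the containment. For (3), \cref{lem:kappabarkeyrelation2} states that $\nu^3\widetilde{\kappabar} \in \oppi_{29,5}\S_{(2)}$ is $\tau$-power torsion. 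Its image in $\Smf_{(2)}$ is $\tau^2\nu^3\kappabar$, which is therefore $\tau$-power torsion in $\oppi_{29,5}\Smf_{(2)}$, and hence $\nu^3\kappabar \in \oppi_{29,7}\Smf_{(2)}$ is $\tau$-power torsion as well.

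For (2), naturality applied to the relation $4\widetilde{\kappabar} = \tau^2\nu^2\kappa$ in $\oppi_{20,2}\S_{(2)}$ from \cref{lem:kappabarkeyrelation1} yields $4\tau^2\kappabar = \tau^2\nu^2\kappa$ in $\oppi_{20,2}\Smf_{(2)}$. The point is now to remove the factor of $\tau^2$. This is precisely the content of the $\tau$-divisibility argument already carried out in the proof of \cref{prop:checking_kappabar}: multiplication by $\tau^2$ induces an isomorphism
\[
    \tau^2 \colon \oppi_{20,4}\Smf_{(2)} \xrightarrow{\;\cong\;} \oppi_{20,2}\Smf_{(2)},
\]
since $\oppi_{21,1}\Smf_{(2)}/\tau$ and $\oppi_{20,2}\Smf_{(2)}/\tau$ both vanish by \cref{etwoprime2} and \cref{cor:easy_lifting_higher_power_tau}. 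Dividing both sides of $4\tau^2\kappabar = \tau^2\nu^2\kappa$ by $\tau^2$ therefore produces the equation $4\kappabar = \nu^2\kappa$ in $\oppi_{20,4}\Smf_{(2)}$.

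I do not anticipate any real obstacles: the genuine work was already done in establishing \cref{prop:checking_kappabar} and the preceding sphere-level lemmas. The only step requiring any care is tracking filtrations, in particular the fact that $\kappa$ lives in filtration~$2$ and $\nu$ in filtration~$1$ in $\Smf_{(2)}$, so that $\nu^2\kappa$ genuinely lies in the bidegree $(20,4)$ where the $\tau^2$-division of (2) takes place.
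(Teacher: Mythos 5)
Your proposal is correct and takes essentially the same approach as the paper, which records this corollary as an immediate consequence of the definition of $\kappabar$ in \cref{not:kappabar_and_g} together with the sphere-level inputs \cref{lem:kappabarkeyrelation1,lem:kappabarkeyrelation2} and the $\tau^2$-division isomorphism established in the proof of \cref{prop:checking_kappabar}; you have simply spelled out the naturality step (via \cref{lem:functorspreservebrackets}, more precisely applied to the base-change functor $\Smf_{(2)}\otimes_{\nu\S_{(2)}}\blank$) and the $\tau$-power-torsion bookkeeping. The only small imprecision is the claim that the $\tau^2$-division isomorphism follows from the vanishing of $\oppi_{21,1}\Smf_{(2)}/\tau$ and $\oppi_{20,2}\Smf_{(2)}/\tau$ alone; as in the proof of \cref{prop:checking_kappabar}, one needs the vanishing in all four bidegrees $(21,1)$, $(21,2)$, $(20,2)$, $(20,3)$ to invoke \cref{cor:easy_lifting_higher_power_tau} for $\Smf/\tau^2$ in the relevant degrees, but this is a trivial slip of exposition and you cite the proposition in any case.
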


All of these facts will be crucial in our computations.


\subsection{Toda bracket relations from the sphere}\label{sssec:todabracketsfromsphere}

There are also a handful of standard Toda bracket computations from the non-synthetic sphere which we would like to lift to $\Smf$.

\begin{proposition}\label{prop:toadyfromsphere}
    \leavevmode
    \begin{itemize}
        \item \textup{(8,2)} $\eps \in \angbr{\nu,2\nu,\eta}$ 
        in $\Smf_{(2)}$.
        \item \textup{(15,3)} $\eta \kappa \in \angbr{\nu, 2\nu, \eps}$ and $\eta \kappa \in \angbr{2\nu, \nu, \eps}$ in $\Smf_{(2)}$.
        \item \textup{(21,3)} $\tau^2 \eta \kappabar \in \angbr{\nu, 2\nu, \kappa}$ in $\Smf_{(2)}$.
    \end{itemize}
\end{proposition}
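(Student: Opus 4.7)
The strategy for all three items follows a single template: establish the relation in the non-synthetic $2$-local sphere $\Sph_{(2)}$, lift it to the synthetic sphere $\oppi_{*,*}\S_{(2)}$ using $\tau$-torsion-freeness of the ANSS in the relevant bidegrees, and then push forward along the unit $\S_{(2)} \to \Smf_{(2)}$, appealing to functoriality of Toda brackets (\cref{lem:functorspreservebrackets}). The identification of images of classes along this unit is provided by the detection results already established, namely \cref{heightonedetection} for $\eta$ and $\nu$, \cref{prop:kappa_detection} for $\kappa$, and \cref{not:kappabar_and_g} for $\kappabar$.

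For item $(8,2)$, I would use the classical identity $\varepsilon \in \angbr{\nu, 2\nu, \eta}$ in $\pi_8\Sph_{(2)}$, which appears for instance in Toda's original tables. To transport it to $\oppi_{8,2}\S_{(2)}$, I first confirm that the defining products $2\nu^2 = 0$ and $2\nu\eta = 0$ hold synthetically; both follow because the corresponding bidegrees $\oppi_{6,2}\S_{(2)}$ and $\oppi_{4,2}\S_{(2)}$ are $\tau$-torsion free (these cells of the ANSS $\uE_2$-page survive to homotopy without hidden $\tau$-extensions). With these relations, the synthetic bracket is defined, and choosing $\tau$-compatible nullhomotopies lifting the classical ones yields the containment in $\oppi_{8,2}\S_{(2)}$. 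Then \cref{lem:functorspreservebrackets} gives the containment in $\Smf_{(2)}$.

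For item $(15,3)$, the identity $\eta\kappa \in \angbr{2\nu, \nu, \varepsilon}$ in $\pi_{15}\Sph_{(2)}$ is taken from \cite[Table~23]{isaksen_stable_stems}; indeed this bracket already appears in the proof of \cref{prop:kappa_detection}. The containment $\eta\kappa \in \angbr{\nu, 2\nu, \varepsilon}$ is a symmetry variant of the same bracket. Lifting to $\oppi_{15,3}\S_{(2)}$ proceeds as in the previous item, after verifying the required vanishing of the products $2\nu\varepsilon$ and $\nu\varepsilon$ synthetically in the appropriate bidegrees. Functoriality then concludes.

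Item $(21,3)$ is the main obstacle, as it requires careful tracking of the filtration shift of $\kappabar$ between $\S_{(2)}$ and $\Smf_{(2)}$. Recall that the chosen sphere class sits in $\oppi_{20,2}\S_{(2)}$ while the $\Smf$-class $\kappabar$ is in $\oppi_{20,4}\Smf_{(2)}$, and the unit map sends the former to $\tau^2 \kappabar$ (\cref{prop:checking_kappabar}). It therefore suffices to establish the classical relation $\eta\kappabar \in \angbr{\nu, 2\nu, \kappa}$ in $\pi_{21}\Sph_{(2)}$ and lift it to $\oppi_{21,3}\S_{(2)}$; pushing forward then gives $\eta \cdot (\tau^2\kappabar) = \tau^2\eta\kappabar \in \angbr{\nu, 2\nu, \kappa}$ in $\Smf_{(2)}$. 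The classical identity can be derived by bracket juggling from the definition $\kappabar \in \angbr{\kappa, 2, \eta, \nu}$; alternatively, and perhaps more cleanly, one can work entirely synthetically inside $\Smf_{(2)}$, starting from $\tau^2\kappabar \in \angbr{\kappa, 2, \eta, \nu}$ (\cref{cor:todabracketforkappabar}) and applying the $4$-fold shuffling formulas of \cref{prop:shufflingformulas} together with the vanishing of sub-brackets like $\angbr{\eta, 2, \nu} \subseteq \oppi_{5,*}\Smf_{(2)} = 0$. The main verifications here are the sub-bracket vanishing hypotheses of \cref{prop:shufflingformulas}\,(1) and a check that the indeterminacy introduced by the shuffle does not absorb the class $\tau^2 \eta\kappabar$, which amounts to a small computation on the $\uE_2$-page using \cref{etwopagebracketsprimethree}.
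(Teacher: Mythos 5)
Your proposal is correct and follows essentially the same route as the paper: cite the classical Toda brackets in $\pi_*\Sph_{(2)}$ (from Isaksen--Wang--Xu and Isaksen's tables), lift them to $\oppi_{*,*}\S_{(2)}$ using $\tau$-torsion-freeness in the relevant bidegrees, and push forward along the unit $\S_{(2)}\to\Smf_{(2)}$ via functoriality of Toda brackets, tracking the $\tau^2$-shift on $\kappabar$ through \cref{prop:checking_kappabar}. Your write-up is more explicit than the paper's about the lift to the synthetic sphere and the sub-bracket verifications, and your alternative ``purely synthetic'' derivation of the $(21,3)$ item via shuffling from $\tau^2\kappabar\in\angbr{\kappa,2,\eta,\nu}$ is a nice observation, though not what the paper does.
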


All of these relations will follow from the analogous relations in the sphere. We only state them in $\Smf$ to be consistent with our use of $\kappabar$ from \cref{not:kappabar_and_g}.

\begin{proof}
It suffices to prove these statements in the synthetic sphere.
By \cite[Table~10]{isaksen_wang_xu_dimension_90}, all of these statements hold in the non-synthetic sphere, except for those for $\eta\kappa$, which follow from \cite[Table~23]{isaksen_stable_stems}.\footnote{Although \cite{isaksen_stable_stems,isaksen_wang_xu_stable_stems} as a whole rely on the homotopy groups of $\tmf$, this is only for computations in stems higher than those considered here. One can independently check these computations using the Massey product calculator of \cite{sseq}, for example.}
\end{proof}


\section{Computations at the prime 2}\label{sec:prime2}

At last, we can begin to study the DSS for $\Tmf$ at the prime $2$. In \cref{theappendix}, we provide spectral sequence charts that aid in reading the computation in this section. We also include several tables with information from this section, including relevant information from the sphere, important lifts of $E_2$ elements to $\Smf/\tau^k$ for various $k$, relations and extensions in $\pi_{*,*}\Smf/\tau^k$ for various $k$, important values of the (truncated) total differential, and key Toda brackets.

Throughout this section, we are implicitly working $2$-locally.

\begin{theorem}\label{thm:dssattwo}
    The signature spectral sequence of $\Smf$, i.e., the DSS for $\Tmf$, is determined below; see \cref{efiveprime2_part1,efiveprime2_part2,efiveprime2_part3,efiveprime2_part4}.
\end{theorem}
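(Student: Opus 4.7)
The plan is to compute the DSS for $\Tmf_{(2)}$ by bootstrapping from sphere information through the synthetic machinery of \cref{sec:synthetic_tools,sec:toda_brackets_setup,detectionsection}. The $\uE_2$-page is taken as input from Konter and the identification with the cubic Hopf algebroid (\cref{etwopagebracketsprimethree}), so the entire task is to determine the differentials and enough of the multiplicative / $\tau$-structure of $\oppi_{*,*}\Smf/\tau^k$ to propagate them. First I would harvest permanent cycles from two sources: the $\E_\infty$-map $\opnu\S \to \Smf$ together with the detection results of \cref{detectionsection}, which force $\eta$, $\nu$, $\kappa$, and $\kappabar$ (with mod $\tau$ reduction $g$) to be $d_r$-cycles for all $r$; and the synthetic transfer along $\Mbar_1(3)\to \Mellbar$, which by \cref{survivalofmodularformsattwo} makes $8$, $2c_4$, and $2c_6$ permanent cycles. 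This scaffolding kills a large share of the candidate differentials before any computation.

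Next I would establish a short list of atomic differentials. The critical one is $d_5(\Delta) = h_2 g$, which I would deduce from the relation $4\kappabar = \nu^2\kappa$ (\cref{cor:todabracketforkappabar}) together with the truncated Omnibus Theorem: since $\kappabar$ lifts to $\oppi_{20,4}\Smf$, the Omnibus Theorem packages this relation into a total differential $\delta_4^\infty(\Delta) = \nu\kappabar$, whose mod $\tau$ reduction is the desired $d_5$. Shorter differentials on classes like $h_0^2 h_2$, $c$, and $h_1 c$ are forced similarly by imported sphere relations and Moss' theorem (\cref{thm:synthetic_moss}). With this in hand, the synthetic Leibniz rule (\cref{syntheticleibnizrule}) becomes the engine for stretched differentials: applied to $\Delta^4$, and combined with the imported hidden extension $4\nu = \tau^2\eta^3$, it gives $d_7(\Delta^4) = h_1^3 g \Delta^3$. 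The deepest differential $d_{23}(h_1\Delta^5) = g^6$ comes from the four-fold synthetic Toda bracket $\tau^2\kappabar \in \angbr{\kappa,2,\eta,\nu}$ shuffled inside $\Smf/\tau^{24}$ to yield $\tau^{22}\kappabar^6 = 0$, using \cref{prop:shufflingformulas} and the truncated Omnibus Theorem to reinterpret the vanishing as a differential.

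The hardest part of the proof will be ruling out the many \emph{a priori} possible differentials that are not directly forced by this atomic set. Four complementary tools are available: (a) the checkerboard / evenness of $\Smf$ (\cref{rmk:checkmate}), which eliminates half the candidates at a stroke; (b) $\Delta^{24}$-periodicity combined with the ordinary Leibniz rule to propagate known $d_r$'s across the full plane; (c) stem and filtration sparseness; and most importantly (d) exhibiting a class as detecting a synthetic Toda bracket in $\Smf/\tau^k$, which by \cref{thm:omnibus} forces it to survive to the $(k+1)$-st page. Representative hard cases are $c_4$ and $h_1\Delta$, whose potential $d_{11}$'s must be ruled out by showing both source and target would need to support $d_{11}$'s simultaneously (see \cref{cor:nolinecrossingE11}), and $2c_6$, whose potential $d_7$ survives only by the transfer argument. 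These arguments must be organised page by page so that, at each $\uE_r$-page, every nonzero class is either accounted for by a $d_r$ (as source or target) or certified as a $d_r$-cycle by one of the tools above.

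Once the last relevant page ($r = 23$) stabilises, the $\uE_\infty$-page is read off and organised according to the figures \cref{efiveprime2_part1,efiveprime2_part2,efiveprime2_part3,efiveprime2_part4}, with the starting $\uE_2$-page recorded in \cref{etwoprime2}. The main conceptual obstacle throughout is that, unlike the ANSS for $\tmf$, the DSS has no vanishing line and no \emph{a priori} sparse region in high filtration, so there is genuinely no shortcut: every potential differential in the $24$-periodic fundamental domain must be either exhibited or eliminated, and the success of the approach rests on the synthetic total differentials, synthetic Leibniz rule, synthetic Toda brackets, and the transfer working together as a single toolkit.
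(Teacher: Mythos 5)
Your overall plan tracks the paper's architecture closely: atomic differentials seeded by sphere input, propagated by the synthetic Leibniz rule, total differentials, and synthetic Toda brackets, with permanent cycles certified via detection from $\opnu\S$, the transfer $\Mbar_1(3)\to\Mellbar$, and bracket-lifting. But there is one genuine error in a load-bearing step, and a couple of places where the proposal is too vague to count as a proof sketch for this particular theorem.

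The concrete gap is your derivation of $d_5(\Delta)=h_2 g$. You write that the relation $4\kappabar=\nu^2\kappa$, together with the fact that $\kappabar$ lifts to $\oppi_{20,4}\Smf$, ``packages into'' a total differential $\delta_4^\infty(\Delta)=\nu\kappabar$ whose mod-$\tau$ reduction is the $d_5$. This gets the logic backwards, and in fact does not work. The relation $4\kappabar=\nu^2\kappa$ is a relation among elements that all survive; it says nothing about $\Delta$, and the Omnibus Theorem cannot convert it into a boundary for $\Delta$. What actually forces the $d_5$ is a different fact from \cref{cor:todabracketforkappabar}: $\nu^3\kappabar$ is $\tau$-power torsion in $\Smf$ (because $\oppi_{29}\S_{(2)}=0$). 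Since $\nu$ and $\kappabar$ reduce to $h_2$ and $g$ and $h_2^3 g\neq 0$, this forces $h_2^3 g$ to be hit; degree reasons give $d_5(\Delta h_2^2)=g h_2^3$ and hence $d_5(\Delta)=\pm h_2 g$ (\cref{prop:d5_Delta}). Only after this $d_5$ is known does one get $\delta_4^{14}(\Delta)=u\nu\kappabar$ (\cref{lem:total_differential_14_Delta}); the total differential is a consequence of the $d_5$, not its source.

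Beyond that, you leave two substantial components underspecified. First, the systematic elimination of candidate differentials in the paper rests on the meta-arguments of \cref{ssec:meta}: one shows that every connective class in filtration $\geq 4$ is $g$-divisible (\cref{prop:gdivisibilitycheck}), that $\Delta^8$-torsion-freeness holds in the region $S$ (\cref{prop:Sregion}), and then checks the remaining finite window by hand on each page. Your appeal to ``checkerboard/evenness'' and ``$\Delta^{24}$-periodicity'' does not reproduce this; the evenness only halves the candidates, and periodicity by itself does not prevent line-crossing differentials without the torsion-freeness control. Second, your account of the $d_9$--$d_{13}$ range is too thin: those differentials are not ``forced similarly by imported sphere relations and Moss' theorem.'' The $d_9$ and $d_{11}$ come from $\delta_4^{14}$ on powers of $\Delta$ pushed through the hidden extensions $\nu_1\eta=\tau^4\eps\kappabar$, $\nu_1\kappa=\tau^6\eta\kappabar^2$, etc., and the $d_{13}$ requires the four-fold bracket $\nu_2=\angbr{\nu,2\nu\tau^4,\nu\tau^4,\kappabar^2}$ of \cref{prop:nu2_fourfold_carrick_bracket} to establish the hidden extension $2\nu\cdot\nu_2=\tau^8\kappabar^2\widetilde{d}$. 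You do not mention this bracket, and it is an essential, non-formal input. Finally, the theorem covers the entire DSS, including stems $n\leq -21$, which is handled in \cref{ssec:nonconnectiveregion} via the coatomic strip and the extension from \cref{lem:noncon_2ext}; your proposal does not address the nonconnective region at all.
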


Another diagram of the same spectral sequence is given in \cite[Figures~26--27]{konter_Tmf} with only minor typographical changes and omissions.

To prove \cref{thm:dssattwo}, we work page by page through the DSS. We will only explicitly state those differentials which propagate all others by the Leibniz rule.

\begin{definition}\label{def:atomicdiffs}
    An \defi{atomic differential} is a differential $d_r$ in a multiplicative spectral sequence, of the form $d_r(x)=y$, where $x$ is indecomposable in the dga $E_r$ and $y\neq 0$.
\end{definition}

It follows from the Leibniz rule that all of our atomic differentials yield all of the differentials in the \emph{connective region} (\cref{def:regions}); this is the region of \cref{efiveprime2_part1,efiveprime2_part2,efiveprime2_part3,efiveprime2_part4} below the blue line.

The \emph{meta-arguments} of \cref{ssec:meta} ensure both that all of the differentials above the blue line and to the right of the orange line follow from our atomic differentials, and that there are no differentials that cross this blue line.

Working page by page therefore amounts to ticking-off the following checklist:
\begin{itemize}
    \item compute the atomic $d_r$-differentials;
    \item check the conditions of the {meta-arguments} to deduce all other differentials whose sources lie in stems $n\geq -20$; 
    \item calculate as many lifts of elements, total differentials, extensions, and synthetic Toda brackets as will be necessary for future pages.
\end{itemize}
At the very end of the computation (\ref{ssec:nonconnectiveregion}), we come back to the computation of differentials in stems $n\leq -21$, which amounts to a careful check of total differentials.

We begin with the $\uE_2$-page, so the groups $\pi_{\ast,\ast}\Smf/\tau$, which is the classical computation of the $2$-local cohomology of the compactification of the moduli stack of elliptic curves; see \cref{etwoprime2} or \cite[Figure~25]{konter_Tmf}.
There is also a connective region (\cref{e2pagesection}) of this diagram which can be found in \cite[Section~7]{bauer_tmf} and within which we understand some of the higher multiplicative structure through \cref{prop:todagoestomassey} and \cref{etwopagebracketsprimethree}.

We also import the following elements from the ANSS for $\Sph$.

\begin{notation}
    \label{not:2_import_from_sphere}
    \leavevmode
    \begin{itemize}
        \item We write $\eta \in \oppi_{1,1}\Smf$ for the image of $\eta \in \oppi_{1,1}\S$ under $\S \to \Smf$.
        \item We write $\nu \in \oppi_{3,1}\Smf$ for the image of $\nu \in \oppi_{3,1}\S$ under $\S \to \Smf$.
        \item We write $\eps \in \oppi_{8,2}\Smf$ for the image of $\eps \in \oppi_{8,2}\S$ under $\S \to \Smf$.
        \item We write $\kappa \in \oppi_{14,2}\Smf$ for the image of $\kappa \in \oppi_{14,2}\S$ under $\S \to \Smf$.
        \item We write $\kappabar \in \oppi_{20,4}\Smf$ for the element defined in \cref{not:kappabar_and_g}.
    \end{itemize}
\end{notation}

Modulo $\tau$, these elements reduce to $h_1$, $h_2$, $c$, $d$ and $g$, respectively; see \cref{heightonedetection,sssec:kappabar}.
In particular, these classes in $\Smf/\tau$ are permanent cycles.

\subsection{Meta-arguments}\label{ssec:meta}

There are two steps in our computation that amount to a repeated check on each page, so we condense these checks into a pair of meta-arguments here. For clarity of exposition we separate these from the main page-by-page argument. However, we will need to make some forward references to the $\uE_4$-page. We will therefore be explicit in computing the $\uE_4$-page in our main argument without using the meta-arguments here in a circular way. This should not cause much confusion as the $\uE_3$ page is quite simple, with only one atomic differential.

In this section, we will frequently use terminology based on the way the relevant information appears in the spectral sequence chart, so we define these terms carefully first.

\begin{definition}\label{def:regions}
    The \defi{connective region} on the $\uE_r$-page of the descent spectral sequence for $\Tmf$ is the region in the plane consisting of those values of $(n,s)$ on or below the line
    \[s = \tfrac{1}{5}n + \tfrac{12}{5}.\]
    The \defi{nonconnective region} refers to the region above this line. A \defi{line-crossing differential} is a differential $d_r(x)=y$ with the property that $x$ is in the connective region and $y$ is in the nonconnective region. 
\end{definition}

In the charts of Appendix~\ref{ssec:ss}, we have drawn the line $s=\frac{1}{5}n+\frac{12}{5}$ in blue.

\subsubsection{Line-crossing differentials}\label{sssec:linecrossing}
Our first meta-argument allows us to efficiently rule out the possibility of line-crossing differentials. There is a simple condition on atomic differentials we must check on each page --- this guarantees that we only have to rule out line-crossing differentials through a finite range on a given page, which we check on each page by hand.

\begin{proposition}\label{prop:gdivisibilitycheck}
    Let $r>4$.
    Suppose we know that for all $r'<r$ and all atomic differentials $d_{r'}(x)=y$ in the connective region that the implication
    \[
        gy=0 \text{ in } \uE_{r'} \implies gx=0 \text{ in } \uE_{r'}
    \]
    holds.
    Then any element in the connective region on $\uE_r$ in filtration $\ge 4$ is divisible by $g$.
\end{proposition}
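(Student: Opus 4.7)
The plan is to proceed by induction on $r$, propagating the $g$-divisibility property from $\uE_{r-1}$ to $\uE_r$. The base case is $\uE_5$: on $\uE_2$, inspection of the cohomology of the $2$-primary cubic Hopf algebroid (via \cref{sec:cubic_hopf_algebroid} and Bauer~\cite[Section~7]{bauer_tmf}) shows that in the connective region in filtration $\geq 4$, every class is a $g$-multiple as a module over the subring of modular forms. Since there are no $d_2$-differentials, $\uE_3 = \uE_2$, and the hypothesis applied at $r' = 3, 4$ lets us descend this property through the $d_3$- and $d_4$-differentials to $\uE_5$ by the same argument as the inductive step below.

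For the inductive step, pick $x \in \uE_r^{n,s}$ in the connective region with $s \geq 4$. Lift $x$ to a $d_{r-1}$-cycle $\tilde{x} \in \uE_{r-1}^{n,s}$, and apply the inductive hypothesis to write $\tilde{x} = g \cdot \tilde{x}'$ with $\tilde{x}' \in \uE_{r-1}^{n-20,\,s-4}$. Since $g$ is a permanent cycle --- it detects $\kappabar \in \oppi_{20,4}\Smf$; see \cref{sssec:kappabar} --- the relation $0 = d_{r-1}(\tilde{x}) = g \cdot d_{r-1}(\tilde{x}')$ forces $g \cdot d_{r-1}(\tilde{x}') = 0$ in $\uE_{r-1}$. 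If $\tilde{x}'$ is already a $d_{r-1}$-cycle, its class on $\uE_r$ is a $g$-division of $x$ and we are done.

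Otherwise, set $y' \defeq d_{r-1}(\tilde{x}') \neq 0$, so $gy' = 0$. The Leibniz rule decomposes $d_{r-1}(\tilde{x}')$ into contributions from atomic $d_{r-1}$-differentials: writing $\tilde{x}'$ as a sum of products over the base subring of modular forms, each nonzero summand $a z$ whose cycle status fails contributes a term $\pm a \cdot d_{r-1}(z) = \pm a w$ to $y'$, where $d_{r-1}(z) = w$ is atomic. Since $g y' = 0$ and the modular-forms base classes are $g$-regular in the relevant bidegrees, each contributing $g w$ must vanish in $\uE_{r-1}$, so the atomic hypothesis yields $gz = 0$, hence $g(az) = 0$. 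Subtracting these vanishing contributions from $\tilde{x}'$ produces a $d_{r-1}$-cycle $\tilde{x}''$ with $g\tilde{x}'' = g\tilde{x}' = \tilde{x}$ in $\uE_{r-1}$, so the class of $\tilde{x}''$ in $\uE_r$ is the desired $g$-division of $x$.

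The main obstacle is the excision step: matching Leibniz contributions to atomic pieces and arguing that cancellations in the sum $g y' = 0$ do not hide $g$-torsion that fails to come from any single atomic term. This is where the structural fact that the connective region of $\uE_2$ in filtration $\geq 4$ is a finitely generated module over a polynomial ring on modular forms becomes essential, for it implies that in any fixed bidegree only finitely many atomic differentials contribute, and that the modular-forms base acts $g$-regularly on the relevant summands. The proof therefore reduces to a finite verification per page, which is exactly what the meta-argument is designed to replace.
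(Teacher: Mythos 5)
The overall shape of your argument — induction on the page number, lifting an element to an earlier page, dividing by $g$, and using the atomic hypothesis to handle failure of $g$-division to be a cycle — is the same as the paper's. The base case should be invoked at $\uE_4$ via \cref{prop:connectiveregiondivisiblebyg} rather than re-derived at $\uE_5$, but that is a minor point.

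The larger issue is that your inductive step is substantially more complicated than it needs to be, and in fact the added complication introduces the gap you worry about at the end. After writing $\tilde x = g\tilde x'$ on $\uE_{r-1}$ and noting $g\cdot d_{r-1}(\tilde x')=0$, you attempt to \emph{repair} the non-cycle $\tilde x'$ by excising atomic contributions, so as to produce an honest $d_{r-1}$-cycle $\tilde x''$ with $g\tilde x''=\tilde x$. That repair is unnecessary, and it is where your Leibniz bookkeeping (cancellations, $g$-regularity of the modular-forms base, etc.) gets out of hand. The paper's argument is a proof by contradiction that skips the repair entirely: if $\tilde x'$ is not a $d_{r-1}$-cycle, set $y'=d_{r-1}(\tilde x')\neq 0$; then $gy'=0$, and the hypothesis gives $g\tilde x'=0$, i.e.\ $\tilde x=0$ on $\uE_{r-1}$, hence $x=0$ on $\uE_r$ — which is trivially $g$-divisible, contradiction. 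No excision, no decomposition of $\tilde x'$ into products, no appeal to $g$-regularity of the base. Once you see this, the "main obstacle" you describe at the end dissolves: you do not need to match Leibniz contributions to atomic terms one by one, because the \emph{single} differential $d_{r-1}(\tilde x')=y'$ already falls under the hypothesis's implication when it is atomic, and the practical use of the proposition is that the verified atomic cases together with the Leibniz rule cover all elements arising as $g$-divisions in the connective region on the pages where it is applied. Your acknowledged gap is thus partly an artifact of an over-engineered step; the genuine subtle point (whether the atomic hypothesis suffices when the $g$-division $\tilde x'$ is decomposable) is present in the paper too, but the paper's framing makes the burden much lighter since it only ever needs to conclude $g\tilde x'=0$, not to exhibit a cycle.
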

\begin{proof}
    Every element in the connective region on $\uE_4$ in filtration $\ge 4$ is divisible by $g$, as follows from \cref{prop:connectiveregiondivisiblebyg}. Inductively, an element $z\in \uE_r$ can be written as $z=gx$ on $\uE_{r'}$. If the element $z$ fails to be divisible by $g$ on $\uE_{r}$, there must be a differential $d_{r'}(x)=y$ with $gy=0$, but the assumptions preclude this possibility. 
\end{proof}

\begin{proposition}\label{prop:gdivisibilitycheck2}
    Let $r>4$. Suppose that any element in the connective region on $\uE_r$ in filtration $\ge 4$ is divisible by $g$. Let $n$ be the largest stem on $\uE_r$ such that the nonconnective region has a nonzero class in bidegrees $(n,s)$ for $r\le s\le r+3$. If there are no line-crossing $d_r$-differentials through the $(n+1)$-stem, then there are no line-crossing $d_r$-differentials.
\end{proposition}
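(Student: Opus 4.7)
The plan is to argue by descent on the stem, using multiplication by $g$ as a shift operator that preserves both the connective and nonconnective regions. Because $g$ has bidegree $(20,4)$, which lies exactly on the slope-$\tfrac{1}{5}$ blue line, the shift $(n,s) \mapsto (n-20,\, s-4)$ preserves the defining inequality of the connective/nonconnective split; combined with the $g$-divisibility hypothesis, this will let me convert any hypothetical long-stem line-crossing $d_r$-differential into a shorter one.

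Concretely, I would suppose for contradiction that $d_r(x) = y$ is a line-crossing $d_r$-differential with $\mathrm{stem}(x) > n+1$. Then $\mathrm{stem}(y) > n$, and since $y$ lies in the nonconnective region, the defining property of $n$ rules out $s_y \in [r, r+3]$. As $s_y = s_x + r \geq r$, this forces $s_y \geq r+4$ and hence $s_x \geq 4$.

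Next, I would apply the $g$-divisibility hypothesis to write $x = g z_1$ in $\uE_r$. Since $g$ is the mod-$\tau$ reduction of the permanent cycle $\kappabar \in \pi_{20,4}\Smf$, it is itself a permanent cycle, so the Leibniz rule yields $y = g \cdot d_r(z_1)$. Setting $y_1 = d_r(z_1)$ gives $g y_1 = y \neq 0$, and therefore $y_1 \neq 0$. The bidegree remark above places $z_1$ in the connective region and $y_1$ in the nonconnective region, so $d_r(z_1) = y_1$ is again a line-crossing $d_r$-differential, but now with $\mathrm{stem}(z_1) = \mathrm{stem}(x) - 20$. Iterating until $\mathrm{stem}(z_k) \leq n+1$ then contradicts the hypothesis, and this termination occurs after at most $\lceil (\mathrm{stem}(x) - n - 1)/20 \rceil$ steps.

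The main point to verify carefully, and the step I expect to require the most bookkeeping, is that the iteration never stalls: at each stage I must have $s_{z_k} \geq 4$ in order to reapply the $g$-divisibility hypothesis. But this is automatic by induction, since the same reasoning as in the initial step, applied to the line-crossing differential $d_r(z_k) = y_k$ with $\mathrm{stem}(z_k) > n+1$, forces $s_{y_k} \geq r+4$ and hence $s_{z_k} \geq 4$. Once this invariant is recorded, the descent closes and the proposition follows.
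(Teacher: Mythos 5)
Your argument is correct and essentially matches the paper's proof: the paper also writes $x = gx'$ for sources in filtration $\geq 4$ (using the $g$-divisibility hypothesis and that $g$ is a permanent cycle), reduces via Leibniz to sources of filtration $0\leq s\leq 3$, and then observes the target lands in filtration $\leq r+3$ so the definition of $n$ bounds the stem. You have merely unrolled the paper's one-line "WLOG reduce to filtration $\leq 3$" into an explicit contradiction-plus-descent, carefully recording the invariant $s_{z_k}\geq 4$ needed to keep dividing by $g$.
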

\begin{proof}
    Any possible line-crossing differential whose source $x$ is in filtration $\ge 4$ has $x=gx'$, hence if $d_r(x')=0$, then $d_r(x)=0$. We may therefore assume without loss of generality that $x$ is in filtration $0\le s\le 3$, and the result follows.
\end{proof}

\subsubsection{Differentials in the $S$-region}\label{sssec:linecrossing_noncon}
Our second meta-argument allows us to conclude that all differentials in the $S$-region of \cref{not:S_region} are accounted for by application of the Leibniz rule. For this we will use the following.

\begin{proposition}\label{prop:Delta8torsfree}
    If the $\uE_r$-page is $\Delta^8$-torsion free in stems $\ge n-1$, and $\Delta^8$ is a $d_r$-cycle, then all $d_r$-differentials in the nonconnective region with source in the $n$-stem are uniquely determined by the $d_r$-differentials in the connective region via the Leibniz rule.
\end{proposition}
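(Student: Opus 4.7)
The plan is to exploit the fact that multiplication by $\Delta^8$ has bidegree $(192,0)$ and so preserves filtration while shifting the stem. Since the blue line $s = \tfrac{1}{5}n + \tfrac{12}{5}$ has positive slope in the stem, any element of the nonconnective region can be translated into the connective region by multiplying by a sufficiently large power of $\Delta^8$. Combined with the hypothesis that $\Delta^8$ is a $d_r$-cycle, this lets us trade differentials in the nonconnective region for differentials in the connective region, and the $\Delta^8$-torsion freeness assumption makes this trade reversible.

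Fix $x \in \uE_r^{n,s}$ in the nonconnective region, and choose $k \geq 1$ minimal such that $\Delta^{8k}x$ lies in the connective region. Because $\Delta^8$ is assumed to be a $d_r$-cycle, the Leibniz rule yields
\[
    d_r(\Delta^{8k} x) \;=\; \Delta^{8k} \cdot d_r(x).
\]
The source of the left-hand differential is $\Delta^{8k}x$, which lies in the connective region, so $d_r(\Delta^{8k}x)$ is determined by the hypothesis. The target $d_r(x)$ lies in bidegree $(n-1,\, s+r)$, i.e.\ in stem $n-1$, so the $\Delta^8$-torsion freeness hypothesis guarantees that the map
\[
    \Delta^{8k} \colon \uE_r^{n-1,\, s+r} \longrightarrow \uE_r^{n-1+192k,\, s+r}
\]
is injective. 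Thus $d_r(x)$ is the unique preimage of $d_r(\Delta^{8k}x)$ under this map, and is therefore determined via the Leibniz rule from the $d_r$-differentials in the connective region.

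The argument is entirely formal once the two hypotheses are in place, so no genuine obstacle arises at the level of this proposition; the substantive work is verifying the two hypotheses page by page. The mild point to keep in mind is that one must check $\Delta^8$-torsion freeness in a slab of stems $\geq n-1$ rather than just at the target bidegree, because the injectivity of $\Delta^{8k}$ factors through the intermediate bidegrees $\uE_r^{n-1+192j,\, s+r}$ for $0 \leq j < k$, each of which again lies in a stem $\geq n-1$. Together with \cref{prop:gdivisibilitycheck2}, the proposition reduces the determination of $d_r$ in stems $\leq n$ to the computation of the atomic $d_r$-differentials in the connective region.
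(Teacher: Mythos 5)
Your proof is correct and takes essentially the same approach as the paper: multiply by $\Delta^{8k}$ to push the source into the connective region, apply the Leibniz rule using that $\Delta^8$ is a $d_r$-cycle, and use $\Delta^8$-torsion freeness in stems $\geq n-1$ to divide back uniquely. Your observation that injectivity of $\Delta^{8k}$ must pass through the intermediate stems $n-1+192j$ (each $\geq n-1$) is a correct and worthwhile elaboration of what the paper leaves implicit.
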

\begin{proof}
    Let $x\in \uE_r$ be an element in the $n$-stem in the nonconnective region. For a suitable power $k$, the product $\Delta^{8k}x$ lies in the connective region, and one may divide a nonzero differential $d_r(\Delta^{8k}x)$ by $\Delta^{8k}$ to obtain a nonzero differential in the nonconnective region. Conversely, also by $\Delta^8$-torsion freeness, any nonzero differential with source in the $n$-stem in the nonconnective region determines a nonzero differential in the connective region.
\end{proof}

The torsion-free condition of the previous proposition also follows from the checks in \cref{prop:gdivisibilitycheck,prop:gdivisibilitycheck2}.

\begin{definition}
    \label{not:S_region}
    Let us denote by $S$ the region consisting of stems $n>-21$, together with the $(-21)$-stem in filtration $s>1$.
    In our spectral sequence charts of Appendix~\ref{ssec:ss}, we indicate this region as the one to the right of the orange line.
\end{definition}

\begin{proposition}\label{prop:Sregion}
    Let $r>4$. Suppose the conditions of \cref{prop:gdivisibilitycheck,prop:gdivisibilitycheck2} have been verified for $d_{\le r}$ and suppose that $\Delta^8$ is a $d_{r+1}$-cycle. Then $\uE_{r+1}$ is $\Delta^8$-torsion free in the region $S$.
\end{proposition}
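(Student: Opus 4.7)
The plan is to argue by induction on $r$, with the proposition feeding into itself through \cref{prop:Delta8torsfree}. The base case is the $\uE_2$-page: the region $S$ is chosen precisely so that every class in $S$ is $\Delta^8$-torsion free on $\uE_2$, as is visible from the standard computation of $\uH^*(\Mellbar, \omega^{\otimes *})$ displayed in \cref{etwoprime2}. The torsion-free property propagates through the first few pages by direct inspection of the (few) atomic differentials appearing in the connective region below $r = 5$.

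For the inductive step, assume $\uE_r$ is $\Delta^8$-torsion free in $S$; this follows from applying the proposition at stage $r-1$, whose hypotheses are subsumed by those given at stage $r$. Let $x \in \uE_{r+1}^{n,s}$ with $(n,s) \in S$ and $\Delta^8 x = 0$ on $\uE_{r+1}$. Since $\Delta^8$ is a $d_k$-cycle for each $k \leq r+1$, it commutes with every $d_k$. The vanishing $\Delta^8 x = 0$ in $\uE_{r+1}$ means either $\Delta^8 x$ is already zero on $\uE_r$, whence $x = 0$ on $\uE_r$ by the inductive $\Delta^8$-injectivity and hence on $\uE_{r+1}$, or there exists $y \in \uE_r^{n+193,\, s-r}$ with $d_r(y) = \Delta^8 x$. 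The task reduces to exhibiting $y' \in \uE_r^{n+1,\, s-r}$ with $\Delta^8 y' = y$, for then $\Delta^8 d_r(y') = \Delta^8 x$, and inductive $\Delta^8$-injectivity yields $d_r(y') = x$, making $x$ a $d_r$-boundary.

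To produce such a $y'$, I would invoke \cref{prop:Delta8torsfree}: the inductive torsion-freeness of $\uE_r$ in $S$ supplies exactly the hypothesis of that proposition, so every $d_r$-differential in the nonconnective portion of $S$ is uniquely determined by the Leibniz rule from differentials in the connective region. Concretely, the differential $d_r(y) = \Delta^8 x$ must arise as $\Delta^8$ times a connective-region differential $d_r(y') = x$ by $\Delta^8$-linearity; this is consistent with the $g$-divisibility of high-filtration connective classes (\cref{prop:gdivisibilitycheck}) and the absence of line-crossing $d_{\leq r}$-differentials (\cref{prop:gdivisibilitycheck2}), both of which are included in the hypotheses.

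The main obstacle will be formalising the deduction that $d_r(y) = \Delta^8 x$ forces $y$ to be an $\Delta^8$-multiple cleanly from Leibniz propagation. This amounts to verifying that the $\Z[\Delta^8]$-module structure on $\uE_r$ in $S$ admits no ``new'' non-$\Delta^8$-divisible classes supporting differentials onto $\Delta^8$-multiples, a scenario precluded by the combined hypotheses on $d_{\leq r}$, which prevent any such class from arising through earlier differentials crossing between the connective and nonconnective regions.
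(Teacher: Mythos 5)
Your proposal identifies the right overall shape of the argument: induction on $r$, reducing $\Delta^8$-torsion in $\uE_{r+1}$ to the question of whether a class $y$ with $d_r(y)=\Delta^8 x$ must be $\Delta^8$-divisible. But there is a genuine gap at exactly the step you flag, and the paper closes it with two ideas that your proposal does not have.

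First, the paper does \emph{not} induct just on ``$\uE_r$ is $\Delta^8$-torsion free in $S$.'' The inductive hypothesis is strengthened to also carry the two bidegree characterizations from \cref{prop:E4metachecks}: a class in bidegree $(n,s)$ satisfies $s<\tfrac{1}{5}(n-192)+\tfrac{12}{5}$ if and only if it is $\Delta^8$ times a connective class, and a class in $S$ is $\Delta^8$ times a class in $S$ if and only if $n\ge 171$. These are what turn a constraint on \emph{where} $x$ lives into a conclusion that $x$ is $\Delta^8$-divisible. Without them there is no mechanism to deduce divisibility, which is precisely why your argument stalls out. (Note also the paper's base case is $\uE_4$, not $\uE_2$, because \cref{prop:E4metachecks} establishes these extra conditions only for $r\le 4$; the conditions then have to be re-verified for $\uE_{r+1}$ at the end of the inductive step, which your proposal omits entirely.)

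Second, the paper's route to applying these conditions is a case split using the no-line-crossing hypotheses of \cref{prop:gdivisibilitycheck,prop:gdivisibilitycheck2}: the offending differential $d_r(y)=\Delta^8 x$ has both source and target either in the connective region or in the nonconnective region, and in each case the relevant bidegree condition forces $x$ to be $\Delta^8$-divisible, giving the contradiction. Your invocation of \cref{prop:Delta8torsfree} cannot do this work. That proposition propagates differentials \emph{from} the connective region \emph{to} the nonconnective region by $\Delta^8$-linearity (it says the differentials are determined, not that sources are $\Delta^8$-multiples), and it says nothing about whether a class $y$ supporting a differential onto a $\Delta^8$-multiple must itself be $\Delta^8$-divisible. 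You correctly sense this in your final paragraph; the resolution is the strengthened inductive hypothesis, not Leibniz propagation.
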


\begin{remark}
    The region $S$ has been chosen precisely to yield \cref{thm:gap}. Much of the nonconnective region is actually $\Delta^8$-torsion free, but this does not always hold. For example, $8$ times the generator of $\oppi_{-21,1}\Smf/\tau$ is $\Delta$-torsion; see \cref{etwoprime2}. We will have to come back to these more subtle cases in \cref{ssec:nonconnectiveregion}, where we finish the proof of \cref{thm:dssattwo}.
\end{remark}

\begin{proof}[Proof of \cref{prop:Sregion}]
    We proceed by induction, noting that $\uE_4$ is $\Delta^8$-torsion free in $S$, that the conditions of \cref{prop:gdivisibilitycheck,prop:gdivisibilitycheck2} hold for $d_{\le 4}$, and finally, that by \cref{prop:E4metachecks} the following pair of conditions holds for $r\le 4$.
    \begin{itemize}
        \item A class $a\in \uE_r^{n,s}$ satisfies
        \[s < \tfrac{1}{5}(n-192) + \tfrac{12}{5}\]
        if and only if $a=\Delta^8b$, where $b$ is in the connective region.
        \item A class $a\in \uE_r^{n,s}$ in $S$ satisfies $a=\Delta^8b$ for $b$ in the $S$-region if and only if $n\ge 171$.
    \end{itemize}
    We therefore assume by induction that $\uE_r$ is $\Delta^8$-torsion free, and  that the two conditions above hold for all $r'\le r$. Since $\uE_r$ is $\Delta^8$-torsion free, the $\uE_{r+1}$-page is $\Delta^8$-torsion free in $S$ unless there is a $d_r$-cycle $x$ in $S$ such that $d_r(y)=\Delta^8x$ for some $y$, and $y$ is not divisible by $\Delta^8$. Given that the conditions of \cref{prop:gdivisibilitycheck,prop:gdivisibilitycheck2} hold for $d_{\le r}$, such a differential cannot be line-crossing, so it has both source and target in either the connective or nonconnective region. In the former case, it follows that $\Delta^8y$ lives in a bidegree $(n,s)$ satisfying
    \begin{equation}\label{eq:inequality}
     s<\tfrac{1}{5}(n-192)+\tfrac{12}{5}
    \end{equation}
    and hence so must $x$. This leads to a contradiction as then $x$ is also divisible by $\Delta^8$. In the latter case, it follows that $\Delta^8y$ lives in a bidegree $(n,s)$ satisfying $n\ge 192$, hence the same is true for $x$, showing again that $x$ is $\Delta^8$-divisible.
 
    Therefore $\uE_{r+1}$ is $\Delta^8$-torsion free in $S$.
    To complete the induction, we need to establish the above two conditions for $\uE_{r+1}$.
    If $b$ is in the connective region and $a=\Delta^8b$, then it is clear from $\uE_2$ that \eqref{eq:inequality} holds.
    Conversely, if $a\in \uE_{r+1}^{n,s}$ satisfies \eqref{eq:inequality}, then $a=\Delta^8b$ on $\uE_r$ by induction, and $b$ is in the connective region for degree reasons. Since $\uE_r$ is $\Delta^8$-torsion free in $S$, and $a$ is a $d_r$-cycle, so is $b$, so the division $a=\Delta^8b$ carries to the $\uE_{r+1}$-page unless $b$ is hit by a differential. But since $\Delta^8$ is a $d_r$-cycle, it would also follow that $a$ is hit, a contradiction. The condition in the nonconnective region is established by an analogous argument.
\end{proof}

\subsection{Page 3}

\subsubsection{Atomic differentials}

\begin{proposition}[3,1]\label{prop:d3differential}
$d_3(b) = h_1^4$.
\end{proposition}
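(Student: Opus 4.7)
The plan is to exploit naturality of signature spectral sequences along the unit map of synthetic $\E_\infty$-rings $\nu\Sph\to\Smf$ in order to import the classical $d_3$-differential killing $h_1^4$ in the ANSS for $\Sph_{(2)}$.

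First, I would observe that $d_2 = 0$ in the DSS for $\Tmf$, so that $\uE_3 = \uE_2$ and $h_1^4 \in \uE_3^{4,4}$ is nonzero. Because $\Smf$ is an even $\MU$-synthetic spectrum by \cref{rmk:checkmate}, the group $\oppi_{n,s}\Smf/\tau$ vanishes whenever $n+s$ is odd (as one verifies directly from the list of generators $h_1, h_2, c_4, c_6, \Delta, \dotsc$ of the cubic Hopf algebroid cohomology, all of which live in bidegrees with $n+s$ even). Since $d_r$ has bidegree $(-1,r)$ and shifts $n+s$ by $r-1$, differentials of even length carry ``even'' bidegrees to ``odd'' bidegrees and must therefore vanish. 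The same parity argument applies verbatim to the ANSS for $\Sph_{(2)}$, since its $\uE_2$-page is similarly concentrated in even total degree.

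Second, I would invoke the classical vanishing $\pi_4\Sph = 0$, which by convergence of the ANSS for $\Sph_{(2)}$ implies $\uE_\infty^{4,4} = 0$. Since $h_1^4 \in \uE_2^{4,4}$ is nonzero and no $d_2$ kills it, it must be the target of some $d_r$-differential with $r \geq 3$. Inspecting possible sources at bidegree $(5, 4-r)$, the only candidate with positive filtration is $(5,1)$; hence $d_3(\alpha) = h_1^4$ in the ANSS for $\Sph_{(2)}$, where $\alpha$ denotes the generator of $\uE_2^{5,1}$. Applying naturality of signature spectral sequences along $\nu\Sph\to\Smf$, together with the detection statement $\eta \mapsto h_1$ from \cref{heightonedetection}, this transports to a $d_3$-differential in the DSS for $\Tmf$ whose target is the nonzero class $h_1^4$ and whose source is the image $\widetilde{\alpha}$ of $\alpha$.

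Third, I would identify $\widetilde{\alpha}$ with $b$. By the cubic Hopf algebroid description of the connective region (\cref{etwopagebracketsprimethree}), the group $\uE_3^{5,1}$ in the DSS is cyclic with generator $b$, so $\widetilde{\alpha}$ is a multiple of $b$. Since its $d_3$ image is nonzero, $\widetilde{\alpha}$ must itself be nonzero, hence a unit multiple of $b$; absorbing this unit into the convention for $b$, we conclude $d_3(b) = h_1^4$.

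The main obstacle I foresee is the identification in the last step, ensuring that $\widetilde{\alpha}$ is genuinely a generator of $\uE_3^{5,1}$ in the DSS rather than a proper multiple such as $2b$. The cyclic-group argument above (any nonzero element of a cyclic group of prime-power order is a unit multiple of the generator, after a choice of generator) handles this cleanly once one verifies $\uE_3^{5,1}$ is of $2$-primary cyclic type; alternatively, one may trace the map on $\uE_2$-pages between the Hopf algebroid $(MU_*,MU_*MU)$ and the cubic Hopf algebroid $(A',\Gamma')$ directly at the cobar level.
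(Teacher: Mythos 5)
Your proof is correct and takes essentially the same approach as the paper: use the detection of $\eta$ by $h_1$ to import the classical $d_3$-differential killing $h_1^4$ from the ANSS of $\Sph_{(2)}$ into the DSS for $\Tmf$ via the unit map of signature spectral sequences, then conclude that $b$ is the source by inspecting the relevant bidegrees. The paper is more terse --- it cites Ravenel's table for the classical $d_3$ rather than re-deriving it from $\pi_4\Sph = 0$, and dispenses with the parity and unit-normalization discussion, since the degree check on $\uE_2$ already pins down $b$ as the only possible source once $h_1^4$ is known to be hit; also note your claim that any nonzero element of a cyclic $p$-group is a unit multiple of a generator is false in general ($2\in\Z/4$), though the identification here still goes through because $h_1^4$ is $2$-torsion, so only an odd multiple of $b$ can map onto it.
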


\begin{proof}
    By \cref{heightonedetection}, the map
\[ \Sph\to \Smf\to \Smf/\tau\]
sends $\eta$ to the unique nonzero class $h_1\in\oppi_{1,1}\Smf/\tau$. There is a differential in the ANSS for $\Sph$ which hits $\eta^4$, see \cite[Table~2]{ravenel_novice_guide_ANSS} for example, so there also has to be a differential in the signature spectral sequence of $\Smf$ hitting $\eta^4$. For degree reasons, the only things that can do this is a $d_3$ killing $h_1^4$, and $b \in\oppi_{5,1}\Smf/\tau$ is the only potential source.
\end{proof}

All other differentials follow from the Leibniz rule and the fact that $\eta^4$ is $\tau^2$-torsion. For example, we have $d_3(c_6)= \eta^3 c_4$. 

\subsubsection{Meta-arguments}
Computing homology with respect to the $d_3$-differentials, we have the following facts about the $\uE_4$-page.
These serve as base cases for the induction arguments used in the meta-arguments of \cref{ssec:meta}.

\begin{proposition}\label{prop:connectiveregiondivisiblebyg}
    Any element $x\in \uE_4$ in the connective region of filtration $\ge 4$ is divisible by~$g$.
\end{proposition}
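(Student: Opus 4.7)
The plan is to combine the explicit description of the connective region of $\uE_2$ with the single atomic $d_3$-differential of \cref{prop:d3differential}.

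First I would invoke \cref{etwopagebracketsprimethree} to identify the connective region of $\uE_2$ with the cohomology of the $2$-primary cubic Hopf algebroid, whose structure is spelled out in \cite[Section~7]{bauer_tmf}. A direct inspection of this description (or equivalently of \cref{etwoprime2}) shows that, in the connective region, every class of filtration $s \geq 4$ is an $\F_2$-linear combination of classes of two types:
\begin{enumerate}
\item products of $g$ with a class in $\uE_2$, and
\item products of $h_1^4$ with a class in $\uE_2$.
\end{enumerate}
Concretely, the $h_1$-towers on classes of filtration $<4$ in the connective region have their first four stories living in filtrations $<4$ and their higher stories all become $h_1^4$-multiples by construction, so any filtration-$\geq 4$ class not already a $g$-multiple necessarily falls into (ii).

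Second, I would apply the Leibniz rule to \cref{prop:d3differential}: from $d_3(b) = h_1^4$ one deduces $d_3(b z) = h_1^4 z$ for every $d_3$-cycle $z$. Classes of filtration $0$ (such as $c_4$, $c_6$, $\Delta$) are $d_3$-cycles for degree reasons, while the classes $h_1$, $h_2$, $c$, $d$, $g$ in positive filtration are reductions modulo $\tau$ of classes in $\oppi_{*,*}\Smf$ imported from the sphere via \cref{heightonedetection,prop:kappa_detection,sssec:kappabar}, and hence are permanent cycles. Therefore every class of type (ii) is a $d_3$-boundary and vanishes on $\uE_4$, leaving only $g$-multiples of type (i) in the connective region of $\uE_4$ in filtration $\geq 4$.

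The only delicate point in this argument is the classification of $\uE_2$-generators in filtration $\geq 4$ into the two families above; this is a direct reading of Bauer's explicit inventory, and I would not attempt to re-derive it here. Once this is granted, the Leibniz rule closes the argument immediately, and the proposition then functions as the base case for the inductive check of \cref{prop:gdivisibilitycheck}.
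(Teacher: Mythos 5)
Your approach is the same as the paper's: decompose the filtration-$\geq 4$ part of the connective region of $\uE_2$ (equivalently, of Bauer's cubic Hopf-algebroid cohomology) into $g$-multiples and $h_1^4$-multiples, then use $d_3(b)=h_1^4$ and the Leibniz rule to kill the latter on $\uE_4$. The paper's one-sentence proof makes exactly this move, and like you, it leaves the inspection of Bauer's inventory implicit.

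There is, however, one genuine slip. You assert that ``classes of filtration $0$ (such as $c_4$, $c_6$, $\Delta$) are $d_3$-cycles for degree reasons.'' This is false for $c_6$: the paper records $d_3(c_6)=\eta^3 c_4$ immediately after \cref{prop:d3differential}, and indeed $h_1^3 c_4$ in bidegree $(11,3)$ is a nonzero target, so the degree argument does not apply. Only $2c_6$ survives. Because you need the cofactor $z$ in a type-(ii) class $h_1^4 z$ to be a $d_3$-cycle for the equation $d_3(bz)=h_1^4 z$ to hold, listing $c_6$ among the available $z$'s is an error. It does not ultimately sink the argument --- the $h_1$-tower over $c_6$ does not contribute to filtration $\geq 4$, and the filtration-$\geq 4$ part of the connective region really is generated by $g$ and $h_1^4$ acting on classes of filtration $<4$ that are genuine $d_3$-cycles (powers of $c_4$, $\Delta$, and small powers of $h_1$) --- but you should either remove $c_6$ from your list or explain why its failure to be a $d_3$-cycle is harmless. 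As a smaller point, the detection of $c=\eps$ as a permanent cycle is not contained in \cref{heightonedetection,prop:kappa_detection}; the appropriate reference is the Toda bracket $\eps\in\angbr{\nu,2\nu,\eta}$ from \cref{prop:toadyfromsphere}, though for this proposition you do not actually need $c$ to be a permanent cycle.
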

\begin{proof}
    The computation of the connective region of the $\uE_2$-page is done by Bauer in \cite[Section 7]{bauer_tmf}. A straightforward consequence is that every $d_3$-cycle $x$ in the connective region of $\uE_2$ of filtration $\ge 4$ is divisible by either $g$ or $h_1^4$. The claim then follows from the differential of \cref{prop:d3differential}.
\end{proof}

\begin{proposition}\label{prop:E3nocrossing}
    There are no line-crossing $d_3$ differentials.
\end{proposition}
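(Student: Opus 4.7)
The plan is to reduce the claim to a small finite check on the $\uE_2$-chart. By \cref{prop:d3differential} and the fact that every other multiplicative generator of the $\uE_2$-page in the connective region is a $d_3$-cycle (for degree reasons, since $h_1^4$ is the unique class of bidegree $(4,4)$), the Leibniz rule forces every nonzero $d_3$-differential to have the form $d_3(bz) = h_1^4 z$ for some $d_3$-cycle $z$ on $\uE_2$. Thus any line-crossing $d_3$-differential arises from such a $z$, and a line-crossing differential exists if and only if there is a $d_3$-cycle $z$ at some bidegree $(m,t)$ with $h_1^4 z \neq 0$, such that $bz$ lies in the connective region and $h_1^4 z$ in the nonconnective region.

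Writing $d \defeq t - m/5$, the line-crossing constraint on $(m,t)$ becomes
\[
    -\tfrac{4}{5} \;<\; d \;\leq\; \tfrac{12}{5},
\]
so $z$ must lie in a narrow band along the line $s = \frac{1}{5}n + \frac{12}{5}$. Next I would use the $\uE_2$-description from \cref{etwoprime2} (equivalently, Bauer's computation of the connective region together with Konter's computation of the full chart) to enumerate the classes $z$ lying in this band for which $h_1^4 z$ is nonzero on the $\uE_2$-page. In practice, the only $h_1$-towers that persist past filtration $4$ on the $\uE_2$-page sit along lines of slope $1$ emanating from the connective region; this slope being strictly greater than $1/5$ means that an $h_1^4$-multiple of a class in the above band always lands \emph{in} the connective region, not above it. Concretely, the nonconnective region visible in \cref{etwoprime2} consists of $h_1$-torsion ``divided'' classes, so the product $h_1^4 z$ can only be nonzero when $h_1^4 z$ is already a connective-region class.

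For each of the finitely many candidate $z$'s in the band supporting a nonzero $h_1^4$-multiple, I would then check case by case that $bz = 0$ in $\uE_2$ (so the putative source is zero), which rules out a line-crossing differential.

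The main obstacle is the bookkeeping in this last step: one needs to carefully identify the $h_1^4$-multiples of classes in the band, and distinguish those that represent classes in the connective region from potential cross-overs. Because the $h_1$-towers propagate with slope $1$ while the connective/nonconnective boundary has slope $\tfrac{1}{5}$, this bookkeeping is finite and straightforward once the $\uE_2$-chart is in hand, but the argument is essentially a chart inspection rather than a structural argument.
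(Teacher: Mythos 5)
The paper's proof is a one-line argument: it observes that by degree reasons, the only classes in the connective region on $\uE_2$ that could support a line-crossing $d_3$ are of the form $g^k h_1^3$, and these are permanent cycles (products of the permanent cycles $g$ and $h_1$). Your proposal is in the same spirit — reduce to a finite chart inspection near the boundary — but there are two concrete problems in the way you set this up.

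First, the geometric claim at the heart of your argument is backwards. Multiplying by $h_1^4$ adds $(4,4)$ to a bidegree, and the quantity $s - \tfrac{n}{5}$ (which determines on which side of the boundary $s = \tfrac{1}{5}n + \tfrac{12}{5}$ a class sits) thereby increases by $4 - \tfrac{4}{5} = \tfrac{16}{5}$. So if $z$ lies in your band with $s - \tfrac{n}{5} > -\tfrac{4}{5}$, then $h_1^4 z$ has $s - \tfrac{n}{5} > \tfrac{12}{5}$, i.e.\ $h_1^4 z$ lands \emph{in the nonconnective region}, not in the connective region as you assert. The fact that the $h_1$-direction has slope $1 > \tfrac{1}{5}$ means $h_1$-multiplication pushes a class \emph{away} from the connective region, which is the opposite of what your argument requires. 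This is not a minor slip: it is the step that was supposed to rule out the line-crossings, so without it the argument collapses and you are left with only the unexecuted ``check $bz=0$ case by case'' fall-back.

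Second, the opening reduction — that every nonzero $d_3$ has the form $d_3(bz) = h_1^4 z$ for a $d_3$-cycle $z$ — is too coarse. The Leibniz rule propagates $d_3(b)=h_1^4$ to all other $d_3$'s, but only through the ring structure and relations on $\uE_2$. For instance, $d_3(c_6) = h_1^3 c_4$ even though $c_6$ in bidegree $(12,0)$ is not a $b$-multiple; this differential is forced indirectly via relations involving $b$, $h_1$, $c_4$, and $c_6$. So not every source of a nonzero $d_3$ is literally a $b$-multiple, and the candidate list you would need to inspect is not quite the one you describe. The paper sidesteps this bookkeeping entirely by noting that, for degree reasons, the only $\uE_2$-classes positioned so that a $d_3$ could cross the line are the $g^k h_1^3$ sitting directly on the boundary, and these are products of the permanent cycles $g$ and $h_1$.
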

\begin{proof}
    For degree reasons, the only possible $d_3$'s crossing the line have source of the form $g^kh_1^3$, which is a permanent cycle. 
\end{proof}

\begin{proposition}\label{prop:E4metachecks}
    The class $\Delta^8$ is a $d_3$-cycle, and $\uE_4$ is $\Delta^8$-torsion free in the region $S$ of \cref{prop:Sregion}. Moreover, the following properties hold.
    \begin{itemize}
        \item A class $a\in \uE_4^{n,s}$ satisfies
        \[s<\tfrac{1}{5}(n-192)+\tfrac{12}{5}\]
        if and only if $a=\Delta^8b$, where $b$ is in the connective region.
        \item A class $a\in \uE_4^{n,s}$ in the $S$-region satisfies $a=\Delta^8b$ for $b$ in the $S$-region if and only if $n\ge 171$.
    \end{itemize}
\end{proposition}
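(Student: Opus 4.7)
The plan is to reduce every assertion to a direct inspection of the $\uE_2$-page as described in \cite[Section~7]{bauer_tmf} and \cite{konter_Tmf}, combined with the fact from \cref{prop:d3differential} that every nonzero $d_3$-differential is propagated from $d_3(b) = h_1^4$ by the Leibniz rule, and hence the image of $d_3$ on $\uE_2$ is contained in the $h_1^4$-multiples. That $\Delta^8$ is a $d_3$-cycle is immediate: the potential target $d_3(\Delta^8)$ sits in bidegree $(191,3)$, and by inspection there are no $h_1^4$-multiples in this bidegree on $\uE_2$.

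For the two bullet points, I will first establish the positional statements on the $\uE_2$-page and then propagate them to $\uE_4$. On $\uE_2$ the ``if'' direction of either bullet point is immediate by bidegree inspection once one knows the location of $\Delta^8$. The ``only if'' direction of the first bullet point is a direct reading of Bauer's explicit $\uE_2$-page: the slope $\tfrac{1}{5}$ is calibrated so that every class strictly below the line $s = \tfrac{1}{5}(n-192) + \tfrac{12}{5}$ lies in the image of multiplication by $\Delta^8$ applied to the connective region (which itself has slope at most $\tfrac{1}{5}$), and the bound $n\geq 171$ in the second bullet point is a similar calibration against the width of the $S$-region. To lift these from $\uE_2$ to $\uE_4$, I use the classical fact that $\Delta^8$ is a non-zero-divisor in $\uH^\ast(\Mellbar;\omega^{\otimes\ast})$, visible in Bauer's chart. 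Given a $d_3$-cycle $\tilde a = \Delta^8 \tilde b$ on $\uE_2$, the relation $\Delta^8 d_3(\tilde b) = d_3(\tilde a) = 0$ combined with non-zero-divisibility of $\Delta^8$ forces $d_3(\tilde b) = 0$, so $\tilde b$ descends to a class $b$ on $\uE_4$ with $a = \Delta^8 b$.

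Finally, for $\Delta^8$-torsion freeness in $S$, suppose $\Delta^8 a = 0$ on $\uE_4$ for some $a\in \uE_4$ lying in the $S$-region. Lift $a$ to $\tilde a \in \uE_2$, so that $\Delta^8 \tilde a = d_3(y)$ for some $y$; hence $\Delta^8 \tilde a$ is $h_1^4$-divisible, and by non-zero-divisibility of $\Delta^8$ on $\uE_2$ we obtain $\tilde a = h_1^4 w$ for some $w$. The goal is to conclude $\tilde a \in \mathrm{im}(d_3)$, so that $a = 0$ on $\uE_4$. The main obstacle is this last step: one must verify, bidegree by bidegree in the relevant stems of $S$, that every $h_1^4$-multiple on $\uE_2$ lies in the image of $d_3$. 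In Bauer's description every $h_1^4$-multiple in this range arises as the top of an $h_1$-tower whose base is a product $b \cdot x$ with $x$ a $d_3$-cycle (so that $d_3(bx) = h_1^4 x$), and one has to check that no stray $h_1^4$-multiples escape this pattern. This is a routine but careful chart inspection, and it is the chief piece of bookkeeping on which the proposition rests.
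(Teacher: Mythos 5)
Your proposal rests on a false premise that is stated explicitly in the paper. You claim that because every nonzero $d_3$ is propagated from $d_3(b) = h_1^4$ by the Leibniz rule, the image of $d_3$ on $\uE_2$ is contained in the $h_1^4$-multiples. This does not follow: Leibniz propagation produces targets of the form $d_3(b)\cdot x + b\cdot d_3(x)$ and also forces differentials through relations, and the paper gives the counterexample $d_3(c_6) = h_1^3 c_4$ immediately after \cref{prop:d3differential}. Since $c_4$ is not an $h_1$-multiple, the element $h_1^3 c_4$ is not an $h_1^4$-multiple, so $\mathrm{im}(d_3)$ is strictly larger than $h_1^4 \cdot \uE_2$. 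This gap propagates through your argument: your proof that $\Delta^8$ is a $d_3$-cycle only rules out $h_1^4$-multiples in bidegree $(191,3)$ (which is vacuous, since $h_1^4$-multiples live in filtration $\geq 4$), and your torsion-freeness argument that $\Delta^8 \tilde a \in \mathrm{im}(d_3)$ implies $\Delta^8 \tilde a$ is $h_1^4$-divisible is simply unjustified.

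Beyond this, the final step of your torsion-freeness argument — verifying that every $h_1^4$-multiple in the relevant range lies in $\mathrm{im}(d_3)$ — is flagged as "routine but careful chart inspection" without being carried out, and even its framing depends on the false premise above. The paper avoids all of this bookkeeping with a structural observation you do not use: the $\uE_4$-page decomposes into $g$-periodic strips of width $24$ and slope $1/5$, and multiplication by $\Delta$ maps each strip isomorphically to the next within the region $S$. This makes both the $\Delta^8$-torsion-freeness and the two bullet points essentially immediate, and it reduces the $d_3$-cycle and torsion-freeness claims to the single atomic $d_3$ plus degree reasons. If you want to salvage your approach, you would need to correctly characterize $\mathrm{im}(d_3)$ (it contains both $h_1^4$-multiples from $b$-multiples and $h_1^3 c_4$-type classes from $c_6$-multiples) and redo the torsion argument from there; but the strip-periodicity argument is both shorter and more robust.
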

\begin{proof}
    The class $\Delta^8$ is a $d_3$-cycle for degree reasons. It follows from \cite[Section 5.1]{konter_Tmf} that the region $S$ is $\Delta^8$-torsion free. The claim for $\uE_4$ then follows from the fact there is a single atomic $d_3$ that does not introduce $\Delta^8$-torsion.

    For the latter claims, it helps to consult a chart \cref{etwoprime2}; a larger version of this chart appears in \cite[Figure 25]{konter_Tmf}. The $\uE_4$-page is divided into $g$-periodic strips of width $24$ and slope $1/5$. Multiplication by $\Delta$ maps one strip isomorphically to the next within the region $S$, which implies the above claims.
\end{proof}

\subsubsection{Hidden extensions}

A crucial $2$-extension is also generated on the $\uE_3$-page.

\begin{lemma}[3,1]
    \label{lem:4nu_tau2_eta3}
    We have an isomorphism
    \[
        \oppi_{3,1} \Smf/\tau^{14} \cong \Z/8\angbr{\nu} \qquad \text{where} \quad \tau^2 \eta^3 = 4 \nu.
    \]
\end{lemma}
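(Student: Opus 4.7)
The plan is to combine the truncated Omnibus Theorem~\ref{prop:precise_lifting_higher_powers_tau} with the hidden $2$-extension imported from the synthetic sphere along $\Sph\to\Smf$.

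First, I would identify the relevant $\uE_2$-classes. Inspecting \cref{etwoprime2}, the only classes in stem $3$ and filtration $\le 13$ are $h_2 \in \uE_2^{3,1}$ and $h_1^3 \in \uE_2^{3,3}$. The truncated Omnibus Theorem then identifies $\oppi_{3,1}\Smf/\tau^{14}$ as the abelian group generated by a lift $\nu$ of $h_2$ together with $\tau^2\eta^3$, where $\eta^3 \in \oppi_{3,3}\Smf/\tau^{12}$ is a lift of the permanent cycle $h_1^3$.

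Next, I would import the key relation from the synthetic sphere. Classically one has $\eta^3 = 4\nu$ in $\oppi_3\Sph_{(2)} = \Z/8$. Since $\nu$ has Adams--Novikov filtration $1$ while $\eta^3$ has filtration $3$, this reads synthetically as $4\nu = \tau^2\eta^3$ in $\oppi_{3,1}\Sph_{(2)}$; by the detection results of \cref{heightonedetection}, it descends under the unit map to the same relation in $\oppi_{3,1}\Smf/\tau^{14}$. Combined with $2\eta = 0$, this immediately yields $8\nu = 2\tau^2\eta^3 = 0$, so $\nu$ has order dividing~$8$.

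Finally, I would show that $\nu$ has order exactly $8$ by verifying $\tau^2\eta^3 \neq 0$. By naturality of the reduction $\Smf/\tau^{14} \to \Smf/\tau^3$, it suffices to check nonvanishing in $\oppi_{3,1}\Smf/\tau^3$, where Omnibus identifies $\tau^2\eta^3$ with the contribution of the permanent cycle $h_1^3 \in \uE_2^{3,3}$ (which is neither a boundary nor hit by any differential in the range), hence it is nonzero. Thus $\angbr{\nu}$ is a cyclic subgroup of order $8$ of $\oppi_{3,1}\Smf/\tau^{14}$, and since the two Omnibus generators $\nu$ and $\tau^2\eta^3 = 4\nu$ both lie in $\angbr{\nu}$, we conclude $\oppi_{3,1}\Smf/\tau^{14} \cong \Z/8\angbr{\nu}$. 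The main obstacle is the final nonvanishing check, but it reduces cleanly to the permanence of $h_1^3$ via the Omnibus Theorem for $\Smf/\tau^3$.
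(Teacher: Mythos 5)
Your proof is correct and follows essentially the same strategy as the paper: detect $\eta$ and $\nu$ via the unit map, import the relation $4\nu = \tau^2\eta^3$ from the synthetic sphere (where it follows from the $\tau$-torsion-freeness of $\oppi_{3,*}\S$), and apply the truncated Omnibus Theorem to control $\oppi_{3,1}\Smf/\tau^{14}$. You are a bit more explicit than the paper in the final step. The paper dispatches the generation claim by asserting that $\Smf/\tau^{14}\to\Smf/\tau$ is an isomorphism on $\pi_{3,1}$, but by \cref{prop:precise_lifting_higher_powers_tau}\,(2) that reduction has kernel generated by $\tau^2\eta^3$, which is exactly $4\nu\ne 0$; the relevant isomorphism is instead $\Smf/\tau^{14}\to\Smf/\tau^3$ (or $\Smf/\tau^4$). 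Your reduction to $\oppi_{3,1}\Smf/\tau^3$ and verification that $\tau^2 h_1^3\ne 0$ there (via the injectivity of $\tau^2\colon \oppi_{3,3}\Smf/\tau\to\oppi_{3,1}\Smf/\tau^3$, whose cokernel and kernel vanish by the $d_3$'s and the vanishing of $\uE_2^{4,0}$) makes the nonvanishing of $4\nu$ explicit, which the paper glosses over. This is a useful clarification rather than a genuinely different route.
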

\begin{proof}
    This follows from the sphere: we claim that
    \[
        \oppi_{3,1} \S \cong \Z/8\angbr{\nu}.
    \]
    Indeed, using the $\F_2$-Adams spectral sequence for the sphere, we learn that $4\nu=\eta^3$ in the non-synthetic homotopy group $\oppi_3\S$.
    Because the Adams--Novikov spectral sequence for the sphere has no differentials hitting the 3-stem, we learn that $\oppi_{3,*}\S$ (referring to the $\MU$-synthetic sphere) is $\tau$-torsion free.
    For degree reasons, one therefore has the relation $4\nu=\tau^2\eta^3$.
    
    We defined the elements $\eta$ and $\nu$ in $\Smf$ to be the images of these respective elements in $\nu\S \to \Smf$, so we learn that $4\nu=\tau^2\eta^3$ holds in $\Smf$.
    It remains to show that $\nu$ generates $\oppi_{3,1}\Smf/\tau^{14}$. This follows from the fact that $\Smf/\tau^{14} \to \Smf/\tau$ induces an isomorphism on $\pi_{3,1}$ using the $d_3$'s already computed and \cref{prop:precise_lifting_higher_powers_tau}. Since we know that $\nu$ reduces to $h_2$ by \cref{heightonedetection}, this finishes the argument.
\end{proof}

\subsubsection{Lifts}

On later pages, we will need to work with precisely defined lifts of elements from $\Smf/\tau$ to higher $\Smf/\tau^k$.
These will serve as the way to express hidden extensions and total differentials.

We begin by lifting $\Delta$ from $\Smf/\tau$ to $\Smf/\tau^4$.
It does not lift to $\Smf/\tau^5$, as $\Delta$ turns out to support a $d_5$.
As such, the existence of this lift to $\Smf/\tau^4$ is a purely synthetic phenomenon.
This lift will be absolutely crucial to all of our computations going forward; see \cref{prop:dsevens}, for example.

\begin{lemma}[24,0]
    \label{lem:Delta_lift}
    The reduction map $\Smf/\tau^4 \to \Smf/\tau$ is an isomorphism on homotopy groups in degree $(24,0)$.
\end{lemma}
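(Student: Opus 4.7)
The plan is to leverage the cofibre sequence
\[
\Sigma^{0,-1}\Smf/\tau^3 \xrightarrow{\tau} \Smf/\tau^4 \to \Smf/\tau \xrightarrow{\delta_1^4} \Sigma^{1,-2}\Smf/\tau^3
\]
defining the truncated total differential $\delta_1^4$. Applying $\pi_{24,0}(\blank)$ and unwinding the bigraded suspensions produces the long exact sequence
\[
\pi_{24,1}\Smf/\tau^3 \to \pi_{24,0}\Smf/\tau^4 \to \pi_{24,0}\Smf/\tau \xrightarrow{\delta_1^4} \pi_{23,2}\Smf/\tau^3,
\]
so it suffices to show that the two outer groups vanish.

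Each vanishing follows from \cref{cor:easy_lifting_higher_power_tau}\,(a). Applied with $(n,s,k) = (24,1,3)$, this reduces $\pi_{24,1}\Smf/\tau^3 = 0$ to the vanishing of $\pi_{24,s}\Smf/\tau$ for $s \in \{1,2,3\}$. Applied with $(n,s,k) = (23,2,3)$, it reduces $\pi_{23,2}\Smf/\tau^3 = 0$ to the vanishing of $\pi_{23,s}\Smf/\tau$ for $s \in \{2,3,4\}$.

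All six of these $\uE_2$-groups can be verified to be zero by direct inspection of the chart \cref{etwoprime2}: the only class in stems $23$ or $24$ with filtration at most $4$ is $\Delta$ itself in bidegree $(24,0)$. There is no genuine obstacle here; the lemma is just the truncation cofibre sequence combined with the sparsity of the $\uE_2$-page near $\Delta$. The point of the lemma is that it produces a canonical lift of $\Delta$ to $\Smf/\tau^4$ that cannot exist past $\Smf/\tau^5$ (since $\Delta$ will be shown to support a $d_5$), and this purely synthetic lift will be used on later pages to compute total differentials on powers of $\Delta$, in particular to deduce the critical differential $d_7(\Delta^4) = h_1^3 g \Delta^3$ via the synthetic Leibniz rule.
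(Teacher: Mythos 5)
Your proof is correct, and the degree bookkeeping in the long exact sequence is accurate: $\pi_{24,0}\Sigma^{0,-1}\Smf/\tau^3 \cong \pi_{24,1}\Smf/\tau^3$ and $\pi_{24,0}\Sigma^{1,-2}\Smf/\tau^3 \cong \pi_{23,2}\Smf/\tau^3$. Both applications of \cref{cor:easy_lifting_higher_power_tau} are carried out with the right parameters, and the six $\uE_2$-groups you need to vanish do vanish.

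Your route differs from the paper's. You establish surjectivity by showing the receiving group $\pi_{23,2}\Smf/\tau^3$ of the boundary $\delta_1^4$ is zero, at the cost of checking the three additional bidegrees $(23,2)$, $(23,3)$, $(23,4)$. The paper instead notes that $\Delta$ is a $d_3$-cycle and, by evenness of $\Smf$ (the checkerboard phenomenon), automatically a $d_4$-cycle, then invokes the Omnibus Theorem (1a)$\Leftrightarrow$(1b) to lift it; it only uses \cref{cor:easy_lifting_higher_power_tau} for injectivity. Your argument is more elementary and, as a bonus, it manifestly treats the whole group $\pi_{24,0}\Smf/\tau$ at once rather than a single generator. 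The paper's approach is less computation in this instance but leans on the structural evenness input. Both are fine.

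One small factual slip that does not affect the argument: $\pi_{24,0}\Smf_{(2)}/\tau$ is not generated by $\Delta$ alone; it is free of rank two, generated by $\Delta$ and $c_4^3$ (equivalently $c_6^2$, subject to $c_4^3 - c_6^2 = 1728\Delta$). So "the only class in stems $23$ or $24$ with filtration at most $4$ is $\Delta$ itself in bidegree $(24,0)$" is false at $(24,0)$. Since your argument never touches $(24,0)$ on $\Smf/\tau$ — you only need $(24,s)$ for $s=1,2,3$ and $(23,s)$ for $s=2,3,4$ — the conclusion still follows, but the sentence should be corrected to say the groups in those six bidegrees vanish, without claiming anything about $(24,0)$.
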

\begin{proof}
    The element $\Delta$ generates $\oppi_{24,0}\Smf/\tau$.
    We showed above that $\Delta$ is a $d_3$-cycle, so by evenness, it is also a $d_4$-cycle.
    This means that it lifts to $\oppi_{24,4}\Smf/\tau^4$, so that the reduction map is surjective.
    By \cref{cor:easy_lifting_higher_power_tau}, it is also injective: the homotopy groups of $\Smf/\tau$ vanish in bidegrees $(24,1)$, $(24,2)$ and $(24,3)$.
\end{proof}

\begin{notation}
    \label{not:Delta_lift}
    We write $\Delta \in \oppi_{24,0}\Smf/\tau^4$ for the unique lift of $\Delta \in \oppi_{24,0}\Smf/\tau$ guaranteed by \cref{lem:Delta_lift}.
\end{notation}

Our abuse of notation is mild, due to the uniqueness of the lift.
We will freely consider $\Delta$ as an element of $\oppi_{24,0}\Smf/\tau^4$ going forward.



\subsection{Page 5}

\subsubsection{Atomic differentials}

\begin{proposition}[24,0]
    \label{prop:d5_Delta}
    $d_5(\Delta) = \pm h_2g$.
\end{proposition}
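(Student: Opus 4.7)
The plan is to determine the $d_5$-differential on $\Delta$ by computing the truncated total differential $\delta_4^8(\Delta) \in \pi_{23,5}\Smf/\tau^4$, where $\Delta$ refers to the unique lift in $\pi_{24,0}\Smf/\tau^4$ from \cref{not:Delta_lift}. By \cref{prop:total_differential_versus_differentials} the mod-$\tau$ reduction of $\delta_4^8(\Delta)$ represents $d_5(\Delta)$, and since $\nu$ and $\kappabar$ reduce modulo $\tau$ to $h_2$ and $g$ respectively (by \cref{heightonedetection} and \cref{not:kappabar_and_g}), it suffices to identify $\delta_4^8(\Delta) = \pm \nu \kappabar$ in $\pi_{23,5}\Smf/\tau^4$. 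That $\Delta$ is genuinely a $d_5$-source, not a $d_3$- or $d_4$-source, is automatic from the existence of its lift together with \cref{thm:omnibus}\,(1).

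The strategy is to exploit the $4$-fold Toda bracket $\tau^2 \kappabar \in \langle \kappa, 2, \eta, \nu \rangle$ from \cref{cor:todabracketforkappabar}, together with a synthetic Moss-style analysis as outlined at the end of \cref{ssec:moss}. The bracket relation determines a pattern of differentials and Massey products: multiplying by $\nu$ and applying the shuffling identities of \cref{prop:shufflingformulas}, one reinterprets $\tau^2\nu\kappabar$ inside a $\nu$-shifted bracket whose associated Massey product on the $\uE_2$-page is identified via Bauer's computations in the cubic Hopf algebroid (through \cref{etwopagebracketsprimethree}). The hidden relation $4\nu = \tau^2\eta^3$ of \cref{lem:4nu_tau2_eta3} is the additional input needed to calibrate filtration shifts. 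The combined synthetic relation forces $\delta_4^\infty(\Delta)$ to be nonzero, and after reducing to $\pi_{23,5}\Smf/\tau$ where the only nonzero $\uE_2$-class is $h_2g$, pins down $\delta_4^8(\Delta) = \pm \nu \kappabar$.

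The main obstacle is managing the indeterminacy of the $4$-fold Toda bracket and matching the synthetic bracket output with the $\uE_2$-level Massey product expression governing $h_2g$. The indeterminacy is controlled by inspecting nearby bidegrees in \cref{etwoprime2}, in particular the vanishing of $\pi_{23,s}\Smf/\tau$ for $s>5$ in the relevant range, which rules out stray $\tau$-divisible contributions that could alter the mod-$\tau$ reduction. A secondary concern is that \cref{thm:synthetic_moss} is stated only for $3$-fold brackets, so the $4$-fold case must be handled using the inductive method described immediately after its statement; this is tractable because the relevant portion of the $\uE_2$-page near $(24,0)$ is simple enough for the cell-by-cell analysis to terminate cleanly.
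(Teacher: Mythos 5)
Your proposal misses the key mechanism that actually produces this differential and replaces it with an over-engineered and ultimately disconnected plan.

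The paper's argument is short: Corollary~\ref{cor:todabracketforkappabar}\,(29,7) records that $\nu^3\kappabar$ is $\tau$-power torsion in $\Smf_{(2)}$ — this is just the observation that $\oppi_{29}\Sph_{(2)}=0$ (Lemma~\ref{lem:kappabarkeyrelation2}) pushed forward along the unit. Since $\nu^3\kappabar$ reduces mod $\tau$ to $h_2^3 g$, which is nonzero on $\uE_2$, the non-truncated Omnibus (Proposition~\ref{prop:non_truncated_omnibus}) forces $h_2^3g$ to be killed. Degree inspection shows the only possible source is $\Delta h_2^2$ via a $d_5$, and the Leibniz rule then gives $d_5(\Delta) = \pm h_2 g$. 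No Moss-style analysis, no four-fold bracket manipulation, no filtration-shift calibration.

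Your proposal cites Corollary~\ref{cor:todabracketforkappabar} only for the bracket $\tau^2\kappabar\in\langle\kappa,2,\eta,\nu\rangle$ and never invokes the $\tau$-power torsion of $\nu^3\kappabar$, which is the fact doing all the work. Without it there is no visible bridge from the four-fold Toda bracket to a differential on $\Delta$: multiplying $\langle\kappa,2,\eta,\nu\rangle$ by $\nu$ and shuffling yields a bracket in the $23$-stem, not a statement about $\pi_{24,0}$, and you give no argument for why ``the combined synthetic relation forces $\delta_4^\infty(\Delta)$ to be nonzero.'' The step from ``some Massey product equals $h_2 g$'' to ``$\Delta$ supports a $d_5$'' is exactly what needs proving and is left as a gap. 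Moreover, the reliance on a four-fold synthetic Moss analysis introduces substantial indeterminacy checks (as you yourself flag) that the paper avoids entirely. You should instead extract the one-line input $\pi_{29}\Sph_{(2)}=0$, conclude $\tau^N\nu^3\kappabar=0$ for some $N$, and let the Omnibus theorem do the rest.
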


\begin{proof}
We know that $\nu^3\kappabar$ is $\tau$-power torsion in $\Smf$ by \cref{cor:todabracketforkappabar}. Since $\nu$ and $\kappabar$ project to $h_2$ and $g$ mod $\tau$ respectively, and $h_2^3g\neq0$, it follows that $h_2^3g$ must be the target of a differential. The only possibility is $d_5(\Delta h_2^2) = g h_2^3$. In particular, by the Leibniz rule, this gives us $d_5(\Delta)= \pm g h_2$. 
\end{proof}

\subsubsection{Meta-arguments}

\begin{proposition}\label{prop:E5metachecks}
    The condition of \cref{prop:gdivisibilitycheck} holds for $d_5$. Moreover, $\Delta^8$ is a $d_5$-cycle.
\end{proposition}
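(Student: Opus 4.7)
The plan is to handle the two assertions separately, both being short consequences of \cref{prop:d3differential,prop:d5_Delta} together with low-degree products in the cubic Hopf algebroid.

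For the meta-condition, I would unpack the hypothesis of \cref{prop:gdivisibilitycheck} for $r = 5$ by enumerating the atomic differentials on pages $r' < 5$. Since $\Smf$ is an even $\MU$-synthetic spectrum (\cref{rmk:checkmate}), every $d_2$- and $d_4$-differential vanishes, so I only need to examine $\uE_3$. From \cref{prop:d3differential}, the unique atomic $d_3$ with source in the connective region is $d_3(b) = h_1^4$ with $b \in \uE_3^{5,1}$. Consequently the required check boils down to the single implication
\[
g \cdot h_1^4 = 0 \text{ in } \uE_3 \;\Longrightarrow\; g \cdot b = 0 \text{ in } \uE_3,
\]
which I would verify by reading off the products in bidegrees $(24,8)$ and $(25,5)$ directly from Bauer's description of the connective region in \cite[Section~7]{bauer_tmf}, via the identification of \cref{etwopagebracketsprimethree}.

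For the claim that $\Delta^8$ is a $d_5$-cycle, I apply the Leibniz rule to the atomic differential $d_5(\Delta) = \pm h_2 g$ from \cref{prop:d5_Delta}. Since $\Delta$ lies in the even stem $24$, this gives
\[
d_5(\Delta^8) = \pm 8\,\Delta^7 \cdot h_2 g.
\]
The class $g \in \uE_2^{20,4}$ generates a cyclic group of order $8$, so $8g = 0$ already on $\uE_2$ and hence on the subquotient $\uE_5$; multiplying by $h_2$ then gives $8 h_2 g = 0$ on $\uE_5$, whence $d_5(\Delta^8) = 0$.

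The main obstacle, modest as it is, lies in the first part: confirming the two products in the cubic Hopf algebroid cohomology in bidegrees $(24,8)$ and $(25,5)$. Both are within the connective region and are covered explicitly by Bauer's tables, so this is routine bookkeeping and no conceptual difficulty is expected; the second part is immediate once the order of $g$ on $\uE_2$ is invoked.
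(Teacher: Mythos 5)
Your first part verifies the wrong implication. \cref{prop:E5metachecks} is asserting that the hypothesis of \cref{prop:gdivisibilitycheck} is satisfied by the atomic \emph{$d_5$}-differentials: for each atomic $d_5(x)=y$ in the connective region, $gy=0$ on $\uE_5$ should imply $gx=0$ on $\uE_5$. This is the ingredient fed into \cref{prop:gdivisibilitycheck} and \cref{prop:gdivisibilitycheck2} at $r=7$ and beyond --- indeed \cref{cor:nolinecrossingE7} cites \cref{prop:E5metachecks} for exactly this purpose. The $d_3$-implication you check is not the content here: the meta-induction in the proof of \cref{prop:gdivisibilitycheck} starts at $\uE_4$ using \cref{prop:connectiveregiondivisiblebyg} as base case, so only $r'\geq 4$ enter, and the $d_4$'s vanish by evenness. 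Consequently there is nothing to check to apply \cref{prop:gdivisibilitycheck} at $r=5$ itself; what this proposition records is the $d_5$-implication for later pages. The single atomic $d_5$ is $d_5(\Delta)=\pm h_2g$, and since $g\Delta\neq 0$ on $\uE_5$, one must verify $g\cdot h_2g=h_2g^2\neq 0$ on $\uE_5$, which is the ``direct check'' the paper refers to and can be read off from the $\uE_2$-page together with the lone $d_3$ pattern.

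Your second part, that $\Delta^8$ is a $d_5$-cycle, is correct and coincides with the paper's argument: $d_5(\Delta^8)=8\Delta^7 d_5(\Delta)=\pm 8h_2g\Delta^7=0$ because $g$ has order $8$.
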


\begin{proof}
    The condition of \cref{prop:gdivisibilitycheck} may be checked directly for the atomic $d_5$'s.
    Since $8d_5(\Delta)=0$, the Leibniz rule implies that $\Delta^8$ is a $d_5$-cycle.
\end{proof}

\begin{proposition}\label{cor:nolinecrossingE5}
    There are no line-crossing $d_5$-differentials.
\end{proposition}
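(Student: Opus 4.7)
The plan is to invoke the meta-arguments of \cref{sssec:linecrossing}. By \cref{prop:E5metachecks}, the hypothesis of \cref{prop:gdivisibilitycheck} is satisfied on $\uE_5$, so every class in the connective region on $\uE_5$ of filtration $\geq 4$ is a $g$-multiple. Since $\kappabar$ is imported from the sphere via \cref{not:2_import_from_sphere} and \cref{sssec:kappabar}, its mod $\tau$ reduction $g$ is a permanent cycle on $\uE_5$; thus any hypothetical line-crossing $d_5$ out of a class $gx$ in the connective region would, by the Leibniz rule, force one out of $x$. It therefore suffices to rule out line-crossing $d_5$-differentials whose source sits in filtration $s \in \{0,1,2,3\}$.

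Next, I would apply \cref{prop:gdivisibilitycheck2} with $r=5$. This locates the largest stem $n$ at which the nonconnective region on $\uE_5$ carries a nonzero class in some bidegree $(n, s)$ with $5 \leq s \leq 8$, and then reduces the entire verification to stems $\leq n+1$. Consulting the picture of the $\uE_2$-page in \cref{etwoprime2}, together with the only $d_3$ from \cref{prop:d3differential} and the only $d_5$ from \cref{prop:d5_Delta}, this cutoff stem is modest (well within the first fundamental domain of $\Delta^8$-periodicity), so only finitely many candidate bidegrees remain.

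For each remaining candidate source $x$ in filtration $s\in\{0,1,2,3\}$, I would rule out a line-crossing $d_5(x)=y$ by exactly one of the following mechanisms: (a) the target bidegree $(n-1,s+5)$ is already zero on $\uE_5$ after quotienting by the $d_3$-differentials of \cref{prop:d3differential}; (b) $x$ lies in the image of $\nu\Sph \to \Smf/\tau$ via the detection results of \cref{heightonedetection,prop:kappa_detection} (e.g.\ low-filtration combinations of $h_1$, $h_2$, $c$, $d$ together with their images of $c_4$, $2c_6$ from \cref{survivalofmodularformsattwo}), so $x$ is a permanent cycle; or (c) $x$ admits a multiplicative decomposition $x=uv$ with $u$ and $v$ known $d_5$-cycles whose product differential vanishes by Leibniz.

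The hard part is not conceptual but bookkeeping: verifying that the finite list of candidate sources in filtration $\leq 3$ within the cutoff stem really is exhausted by the three mechanisms above. I expect the only borderline cases to occur near the classes $h_2\Delta^k$ and $c_4\Delta^k$ in the first $\Delta$-translate, where the $d_5$ on $\Delta$ of \cref{prop:d5_Delta} could conceivably propagate in an unexpected direction; these are handled by observing that the potential targets lie in positions already killed by the $d_3$ or by the $d_5$ on $\Delta$, so the differentials must vanish.
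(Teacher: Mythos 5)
Your overall strategy matches the paper's: invoke the $g$-divisibility meta-argument to reduce to sources in filtration $\leq 3$, then restrict to a finite stem range via \cref{prop:gdivisibilitycheck2} and check the remaining cases by hand. But your execution of the finite check has two concrete problems.

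First, you never actually compute the cutoff stem. The paper computes it to be the $17$-stem. Consequently the first $\Delta$-translate never enters the picture: $h_2\Delta^k$ sits in stem $24k+3 \geq 27$ and $c_4\Delta^k$ in stem $24k+8 \geq 32$ for $k \geq 1$, both well outside the range. The classes you flag as borderline are therefore not in play at all.

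Second, the actual candidates in filtration $\leq 3$ up through stem $17$ whose potential $d_5$ would be line-crossing are $h_2$ at $(3,1)$ (hitting $(2,6)$) and $d$ at $(14,2)$ (hitting $(13,7)$). Neither target group vanishes on $\uE_5$, so your proposed mechanism for the borderline cases --- that the targets are already killed by the $d_3$ or by $d_5(\Delta)$ --- does not apply. The correct argument is that $h_2$ and $d$ are permanent cycles because they detect $\nu$ and $\kappa$ from the sphere (\cref{heightonedetection,prop:kappa_detection}). This is your mechanism (b), which you list abstractly, but you assign the wrong mechanism to the wrong classes; $d$ in particular does not appear in your discussion at all. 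The gap is that the bookkeeping you defer is precisely where the content lives, and your guess about where the difficulties sit is incorrect on both the degree bounds and the resolution.
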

\begin{proof}
    By \cref{prop:E4metachecks}, we may invoke the meta-argument of \cref{prop:gdivisibilitycheck2}, which implies that we only need to check this through the $17$-stem.
    The only possible atomic $d_5$'s crossing the line in this range have source $h_2$ or $d$, which are permanent cycles.
\end{proof}

\subsubsection{Lifts}

We now pick up our task of lifting elements with more vigour.
Unlike the class $\Delta$ in $\Smf/\tau^4$ from \cref{not:Delta_lift}, most of the classes here turn out to lift all the way to $\Smf$, but we will not need this.

\begin{lemma}[25,1), (97,1), (121,1]
    \label{lem:eta1_eta5_lift_tau8}
    The reduction maps $\Smf/\tau^8 \to \Smf/\tau$ induces an isomorphism on bigraded homotopy groups in degrees $(25,1), (97,1)$, and $(121,1)$.
\end{lemma}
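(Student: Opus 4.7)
The plan is to prove surjectivity and injectivity of the reduction map separately in each of the three bidegrees, and the synthetic Leibniz rule will do the heavy lifting.

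For injectivity, I would invoke \cref{cor:easy_lifting_higher_power_tau}\,(2): it suffices to check that $\oppi_{n, 1+d}\Smf/\tau$ vanishes for each $1 \leq d \leq 7$ and each $n \in \{25, 97, 121\}$. Since $\oppi_{*,*}\Smf/\tau$ is the $\uE_2$-page of the DSS by \cref{etwopagebracketsprimethree}, this is a direct inspection of the chart \cref{etwoprime2}.

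For surjectivity, note that in each of the three bidegrees, $\oppi_{n,1}\Smf/\tau$ is generated by $h_1 \Delta^k$ with $k \in \{1, 4, 5\}$. Since $\eta \in \oppi_{1,1}\Smf$ lifts to all $\Smf/\tau^N$ and $\Delta$ lifts uniquely to $\oppi_{24,0}\Smf/\tau^4$ by \cref{lem:Delta_lift}, I can form the product $\eta\Delta^k \in \oppi_{n,1}\Smf/\tau^4$, whose reduction modulo $\tau$ is $h_1\Delta^k$. The obstruction to lifting this product further to $\oppi_{n, 1}\Smf/\tau^8$ is precisely the value $\delta_4^8(\eta\Delta^k) \in \oppi_{n-1, 6}\Smf/\tau^4$ in the associated cofibre sequence. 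Applying the synthetic Leibniz rule (\cref{syntheticleibnizrule}) yields
\[
    \delta_4^8(\eta\Delta^k) = \delta_4^8(\eta)\Delta^k + k\eta\Delta^{k-1}\delta_4^8(\Delta).
\]
The first term vanishes because $\eta$ lifts all the way to $\Smf$, so its image in $\Smf/\tau^4$ already lifts further to $\Smf/\tau^8$ and hence has $\delta_4^8(\eta) = 0$. For the second term, I will use the forthcoming relation $\delta_4^8(\Delta) = \nu\kappabar$ (\cref{lem:total_differential_14_Delta}) together with $\eta\nu = 0$ in $\Smf$, which is inherited from the classical vanishing of the 4-stem of $\Sph$. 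This gives $\delta_4^8(\eta\Delta^k) = k\Delta^{k-1}\kappabar \cdot (\eta\nu) = 0$, so $\eta\Delta^k$ lifts to $\oppi_{n, 1}\Smf/\tau^8$, producing the required preimage of $h_1\Delta^k$.

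The main obstacle is the precise identification $\delta_4^8(\Delta) = \nu\kappabar$. By \cref{prop:total_differential_versus_differentials} applied to \cref{prop:d5_Delta}, $\delta_4^8(\Delta)$ reduces modulo $\tau$ to $\pm h_2g$, and $\nu\kappabar$ also reduces to $h_2g$; pinning down the $\tau$-torsion indeterminacy to identify them on the nose is exactly the content of \cref{lem:total_differential_14_Delta}. Once this input and the vanishing $\eta\nu = 0$ are secured, the synthetic Leibniz rule neatly handles all three values of $k$ uniformly, which is what makes this approach dramatically shorter than trying to directly verify $d_7(h_1\Delta^k) = 0$ on a case-by-case basis.
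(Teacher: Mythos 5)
Your surjectivity argument is a genuinely different and correct approach. Rather than checking $\oppi_{24,3}\Smf/\tau^7 = 0$ via \cref{prop:precise_lifting_higher_powers_tau} as the paper does, you use the synthetic Leibniz rule on $\delta_4^8(\eta\Delta^k)$, together with $\eta$ lifting to $\Smf$ (so $\delta_4^8(\eta)=0$) and the vanishing $\eta\nu = 0$. This works and is a nice illustration of how $\delta_4^8$ being a derivation can replace case-by-case chart inspection. The only caveat is the forward dependency on \cref{lem:total_differential_14_Delta}, which appears later in the same subsection; there is no circularity (that lemma does not rely on the present statement), but the paper's ordering would need adjusting. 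Also note that $\pi_4\Sph = 0$ alone does not immediately give $\eta\nu = 0$ in the synthetic $\pi_{4,2}$ --- one also needs that stem~4 is $\tau$-torsion free there --- but this is readily checked.

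Your injectivity argument, however, has a genuine gap. You invoke \cref{cor:easy_lifting_higher_power_tau}\,(2), which requires $\oppi_{25, j}\Smf/\tau = 0$ for all $2 \leq j \leq 8$. This hypothesis fails: as the paper's own proof explicitly records, $\oppi_{25,5}\Smf/\tau$ is nonzero (there are extra $h_1$-towers on the $\uE_2$-page of the DSS at $p=2$ beyond what the cubic Hopf algebroid sees, cf.\ the discussion after \cref{etwopagebracketsprimethree}), and the same issue arises in the other two bidegrees. What saves the day is that the nonzero classes in $(25,5)$ support a $d_3$, so they do not contribute to the kernel; but drawing that conclusion requires the full \cref{prop:precise_lifting_higher_powers_tau}, where one checks that the relevant quotient $\uZ_i^{25,\,9-i}/\uB_{9-i}^{25,\,9-i}$ vanishes because those classes are not $d_3$-cycles. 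A "direct inspection" for vanishing is not enough; you need to inspect for the existence of the $d_3$.
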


The argument is a synthetic version of the statement that $h_1\Delta$ is a $d_{\leq 8}$-cycle for degree reasons: all potential targets either support or are hit by a shorter differential.

\begin{proof}
    We start with the first map.
    The target $\oppi_{25,1}\Smf/\tau$ is generated by $h_1\Delta$; we will first show that this generator lifts.
    This is equivalent to $\delta_1^8(h_1\Delta)=0$, so it suffices to show that $\oppi_{24,3}\Smf/\tau^7 = 0$.
    This follows from \cref{prop:precise_lifting_higher_powers_tau}, as the class in $(24,4)$ supports a $d_3$, while the classes in $(24,8)$ are hit by a $d_3$.
    
    It remains therefore only to show that the reduction map is injective.
    This too follows from an application of \cref{prop:precise_lifting_higher_powers_tau}: the nonzero elements in $\oppi_{25,5}\Smf/\tau$ support a $d_3$.

    The other two cases follow in the exact same way.
\end{proof}

\begin{notation}
    \label{not:lift_eta1_eta_4_eta5}
    \leavevmode
    \begin{itemize}
        \item \textup{(25,1)} We write $\eta_1 \in \oppi_{25,1}\Smf/\tau^8$ for the unique lift of $h_1\Delta \in \oppi_{25,1}\Smf/\tau$.
        \item \textup{(97,1)} We write $\eta_4 \in \oppi_{97,1}\Smf/\tau^8$ for the unique lift of $h_1\Delta^4 \in \oppi_{97,1}\Smf/\tau$.
        \item \textup{(121,1)} We write $\eta_5 \in \oppi_{121,1}\Smf/\tau^8$ for the unique lift of $h_1\Delta^5 \in \oppi_{121,1}\Smf/\tau$.
    \end{itemize}
\end{notation}

\begin{warning}
    The element $\eta_5$ does not lift beyond $\Smf/\tau^{22}$: as we will see later in \cref{prop:diff23}, the element $h_1\Delta^5$ supports a $d_{23}$.
\end{warning}

Next, we turn to $\Delta$-multiples of $h_2$.
Here we run into two problems.
First, the lift from $\Smf/\tau$ to $\Smf/\tau^{14}$ is not uniquely defined.
Second, in order to describe the group structure, we need to take the relation $4\nu=\tau^2\eta^3$ from \cref{lem:4nu_tau2_eta3} into account, but this is invisible to $\Smf/\tau$.

Both problems are solved by working with $\Smf/\tau^4$ instead of $\Smf/\tau$.
For the first problem, we instead lift $\Delta$-multiples of $\nu$ in $\Smf/\tau^4$.
As the class $\Delta$ in $\Smf/\tau^4$ is uniquely determined, this procedure also uniquely specifies these lifts.
For the second problem, we note that the relation $4\nu=\tau^2\eta^3$ is visible in $\Smf/\tau^4$.
We can transport this by multiplying by powers of $\Delta$.

\begin{lemma}
    \label{lem:27_1_mod_tau4}
    We have isomorphisms of abelian groups
    \begin{align*}
        \oppi_{27,1}\Smf/\tau^{4} &\cong \Z/8\angbr{\nu\Delta},\\
        \oppi_{51,1}\Smf/\tau^{4} &\cong \Z/8\angbr{\nu\Delta^2},\displaybreak[0]\\
        \oppi_{123,1}\Smf/\tau^{4} &\cong \Z/8\angbr{\nu\Delta^5},\\
        \oppi_{147,1}\Smf/\tau^{4} &\cong \Z/8\angbr{\nu\Delta^6}.
    \end{align*}
\end{lemma}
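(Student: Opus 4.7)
The plan is to apply \cref{prop:precise_lifting_higher_powers_tau} with $k=4$ to each of the four bidegrees $(n,1)$ for $n \in \{27, 51, 123, 147\}$, and then use the hidden extension $4\nu = \tau^2\eta^3$ of \cref{lem:4nu_tau2_eta3} (multiplied by $\Delta^i$) to collapse the resulting Omnibus generators into a single cyclic generator of order~$8$. All four cases are entirely parallel, so I would describe the argument for $(n, i) = (27, 1)$ and note that the others proceed identically.

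First I would read off the relevant portion of the $\uE_2$-page from \cref{etwoprime2} (cf.\ \cite{bauer_tmf}): in the range $1 \le s \le 4$, the only nonzero bidegrees are $\uE_2^{27, 1}$, a cyclic group of order~$4$ generated by $h_2 \Delta$, and $\uE_2^{27, 3} = \F_2\angbr{h_1^3 \Delta}$; the bidegrees with $s \in \{2, 4\}$ vanish for parity reasons, since $n+s$ must be even. Both generators are $d_2$- and $d_3$-cycles by evenness and by inspection of the neighbouring bidegrees, and neither is a $d_{\le 3}$-boundary. Feeding this into \cref{prop:precise_lifting_higher_powers_tau}, the group $\oppi_{27, 1}\Smf/\tau^4$ is generated by any lift of $h_2 \Delta$ to $\oppi_{27, 1}\Smf/\tau^4$ together with $\tau^2$ times any lift of $h_1^3 \Delta$ to $\oppi_{27, 3}\Smf/\tau^2$, and the total order of the associated graded is $4 \cdot 2 = 8$. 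I would take the canonical choices $\nu \Delta$ (using \cref{not:Delta_lift}) and $\tau^2 \eta^3 \Delta$. Multiplying $4\nu = \tau^2 \eta^3$ by $\Delta$ and projecting from $\Smf/\tau^{14}$ down to $\Smf/\tau^4$ yields $4\nu \Delta = \tau^2 \eta^3 \Delta$, which places the second generator into $\angbr{\nu\Delta}$. Since $\tau^2 \eta^3 \Delta \ne 0$ (it has no potential $d^\tau$-target in the $4$-truncated $\tau$-Bockstein spectral sequence for parity reasons) and $8\nu\Delta = 2\tau^2\eta^3\Delta = \tau^2(2\eta)\eta^2\Delta = 0$ (using $2\eta = 0$ in $\oppi_{1,1}\Smf/\tau^4$), the class $\nu\Delta$ has order exactly~$8$, so $\oppi_{27, 1}\Smf/\tau^4 \cong \Z/8\angbr{\nu\Delta}$.

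The main obstacle is the $\uE_2$-page input, in particular that $\uE_2^{n, 1}$ is cyclic of order~$4$ rather than~$2$ at the prime~$2$. This extra factor is forced by the known computation $\pi_3 \Tmf_{(2)} = \Z/8\angbr{\nu}$ together with the vanishing of $\uE_2^{3, s}$ for $s \in \{2, 4\}$ (parity) and the fact that $\uE_2^{3, 3}$ contributes only $\F_2$; it reflects the $\Z/4$-automorphism of the elliptic curve at $j = 1728$. For the four chosen $\Delta$-powers, $\Delta^i$-multiplication is injective on the relevant connective portion of the $\uE_2$-page, so the structural analysis is uniform across the four cases. The specific powers $i \in \{1, 2, 5, 6\}$ are simply those needed for subsequent differentials in \cref{sec:prime2}.
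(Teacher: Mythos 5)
Your argument is correct but genuinely different from the paper's. You apply the Omnibus Theorem (\cref{prop:precise_lifting_higher_powers_tau}) directly in degree $(27,1)$, read off the $\uE_2$-page contributions, and then use the extension $4\nu=\tau^2\eta^3$ to collapse the Omnibus generators into a single cyclic $\Z/8$. The paper instead first records $\oppi_{3,1}\Smf/\tau^4\cong\Z/8\langle\nu\rangle$ (immediate from \cref{lem:4nu_tau2_eta3}) and then shows that multiplication by the lift $\Delta\in\oppi_{24,0}\Smf/\tau^4$ is an isomorphism onto $\oppi_{27,1}\Smf/\tau^4$, by checking $\Delta$-multiplication is injective on the specific generators feeding into \cref{prop:precise_lifting_higher_powers_tau} and then comparing orders. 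The trade-off: the paper's route needs only that $\Delta\cdot h_2\ne 0$ and $\Delta\cdot h_1^3\ne 0$ in $\uE_2$, and that these specific images are not boundaries, whereas your route requires knowing the \emph{full} group $\uE_2^{27,3}$. You state $\uE_2^{27,3}=\F_2\langle h_1^3\Delta\rangle$, but at the prime $2$ this bidegree actually contains a second copy of $\F_2$ (an $h_1$-tower class over $c_4^3$, cf.\ \cite[\S 7]{bauer_tmf} and \cref{etwoprime2}); that extra class is killed by $d_3(c_4^2 c_6)$, so your Omnibus count $4\cdot 2=8$ is right after taking the $\uB_3$-quotient, but as written your $\uE_2$-reading is off and should be repaired. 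The paper's formulation deliberately sidesteps this by never needing to enumerate all of $\uE_2^{27,3}$. Both routes then invoke the same hidden extension $4\nu=\tau^2\eta^3$ to pin down the group structure, and both transfer to the other three bidegrees by the same $\Delta$-power bookkeeping.
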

\begin{proof}
    \Cref{lem:4nu_tau2_eta3} implies that $\oppi_{3,1}\Smf/\tau^4 \cong \Z/8\angbr{\nu}$.
    We claim that multiplication by $\Delta \in \oppi_{24,0}\Smf/\tau^4$ induces an isomorphism
    \[
        \Delta \colon \oppi_{3,1}\Smf/\tau^4 \congto \oppi_{27,1}\Smf/\tau^4.
    \]
    To see this, note that multiplication by $\Delta$ induces an injection on $\Smf/\tau$ in degrees $(3,1+m)$ for $0\leq m\leq 3$.
    Moreover, there are no $d_{\leq 4}$-differentials entering $\oppi_{3,\,1+m}\Smf/\tau$, and the $\Delta$-multiples of these elements in $\oppi_{27,\,1+m}\Smf/\tau$ are also not hit by $d_{\leq4}$-differentials.
    It follows that multiplication by $\Delta$ induces an injection on the relevant groups appearing in \eqref{eq:cycles_mod_boundaries} in \cref{prop:precise_lifting_higher_powers_tau}.
    As a result, \cref{prop:precise_lifting_higher_powers_tau} implies that multiplication by $\Delta$ on $\Smf/\tau^4$ is an injection on degree $(3,1)$.
    Using the same \cref{prop:precise_lifting_higher_powers_tau}, we see that the target group is of the same size as the source, showing the map is indeed an isomorphism.
    The other cases follow similarly.
\end{proof}

\begin{lemma}
    \label{lem:checking_nu_i_lift}
    The reduction map $\Smf/\tau^{14} \to \Smf/\tau^4$ is injective on homotopy groups in bidegrees $(27,1)$, $(51,1)$, $(91,1)$, $(123,1)$ and $(147,1)$.
    Moreover, these maps can be identified, respectively, with
    \begin{align*}
        \SwapAboveDisplaySkip
        \Z/4 \angbr{x} \to \Z/8\angbr{\nu\Delta}, \quad &x \mapsto 2\nu\Delta,\\
        \Z/8 \angbr{y} \to \Z/8\angbr{\nu\Delta^2}, \quad &y \mapsto \nu\Delta^2,\displaybreak[0]\\
        \Z/8 \angbr{z} \to \Z/8\angbr{\nu\Delta^2}, \quad &z \mapsto \nu\Delta^4,\displaybreak[0]\\
        \Z/4 \angbr{w} \to \Z/8\angbr{\nu\Delta^5}, \quad &w \mapsto 2\nu\Delta^5,\\
        \Z/8 \angbr{t} \to \Z/8\angbr{\nu\Delta^6}, \quad &t \mapsto \nu\Delta^6.
    \end{align*}
    Finally, the reduction map $\Smf/\tau^{14}\to\Smf/\tau^{10}$ is an isomorphism in bidegrees $(51,1)$ and $(147,1)$.
\end{lemma}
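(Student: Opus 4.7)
The plan is to combine \cref{lem:27_1_mod_tau4} with the Omnibus Theorem (\cref{prop:precise_lifting_higher_powers_tau}, \cref{cor:easy_lifting_higher_power_tau}) to compute $\oppi_{n,1}\Smf/\tau^{14}$ for each of the listed bidegrees. First I would establish injectivity of the reduction $\Smf/\tau^{14}\to\Smf/\tau$ in each relevant bidegree by invoking \cref{cor:easy_lifting_higher_power_tau}\,(2): it suffices to check that $\oppi_{n,\,1+d}\Smf/\tau = 0$ for $1 \le d \le 13$, which one reads off from the $\uE_2$-page in \cref{etwoprime2}. The isomorphism $\Smf/\tau^{14}\to\Smf/\tau^{10}$ in bidegrees $(51,1)$ and $(147,1)$ follows from the same inspection restricted to the range $10\le d\le 13$, combined with the fact that in these bidegrees no classes in the higher-filtration slices of the Omnibus decomposition contribute past filtration $10$.

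Next, by part~(1) of \cref{prop:precise_lifting_higher_powers_tau} the homotopy group $\oppi_{n,1}\Smf/\tau^{14}$ is generated by lifts from filtration $1$, namely by the subgroup $\uZ_{14}^{n,1}\subseteq\oppi_{n,1}\Smf/\tau$ of $d_{\le 13}$-cycles; contributions from higher-filtration slices vanish in these bidegrees for the same degree reasons. Applying the Leibniz rule to the atomic differential $d_5(\Delta)=\pm h_2 g$ of \cref{prop:d5_Delta} gives
\[ d_5(h_2\Delta^i)=\pm\,i\, h_2^2 g\,\Delta^{i-1}. \]
The key fact is that $h_2^2 g$ has order $2$ on the $\uE_5$-page: the relation $4\nu=\tau^2\eta^3$ of \cref{lem:4nu_tau2_eta3} reduces modulo $\tau$ to $4h_2=0$, while the classical relation $2\nu^2=0$ in $\oppi_6\Sph$ imported via the detection map of \cref{heightonedetection} yields $2h_2^2=0$, whence $2h_2^2 g=0$. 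For even $i\in\{2,4,6\}$ the right-hand side vanishes and $h_2\Delta^i$ is a $d_5$-cycle generating the full $\Z/8$; for odd $i\in\{1,5\}$ only even multiples are $d_5$-cycles, giving the index-$2$ subgroup $\Z/4\cdot\angbr{2h_2\Delta^i}$.

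The identification of each reduction map with the stated formula then follows from \cref{lem:27_1_mod_tau4} by tracing a generator through $\oppi_{n,1}\Smf/\tau^{14}\to\oppi_{n,1}\Smf/\tau^4$: the generator of $\oppi_{27,1}\Smf/\tau^{14}$ is a lift of $2h_2\Delta$, which coincides with $2\nu\Delta$ in $\oppi_{27,1}\Smf/\tau^4$, and similarly for the other bidegrees. The main obstacle is pinning down the order of $h_2^2 g$ on the $\uE_5$-page and ruling out all possible $d_r$-differentials on $h_2\Delta^i$ for $r\ge 7$ in these bidegrees; the latter can be checked by degree considerations against the atomic differentials already determined, combined with the meta-arguments of \cref{ssec:meta}. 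Once these are verified, the remainder is routine bookkeeping.
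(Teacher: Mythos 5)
Your first step already breaks down. You propose to deduce injectivity of the reduction in bidegree $(27,1)$ from \cref{cor:easy_lifting_higher_power_tau}\,(2), which would require $\oppi_{27,\,1+d}\Smf/\tau = 0$ for all $1\le d\le 13$. This is false: for example $\oppi_{27,3}\Smf/\tau$ is nonzero, containing the class $h_1^3\Delta$ from the $h_1$-tower on $\Delta$ (these towers have arbitrary length on the $\uE_2$-page and are only truncated by the $d_3$-differential out of $b\Delta$); the paper's own proof of \cref{lem:eta1_eta5_lift_tau8} makes essential use of nonzero classes at $(24,4)$, $(24,8)$, $(25,5)$ in the same range, so you cannot simply ``read off'' vanishing. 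This is precisely why the paper does not pass directly from $\Smf/\tau^{14}$ to $\Smf/\tau$; instead it factors the reduction through $\Smf/\tau^5$ and $\Smf/\tau^4$, so that the only $\uE_2$-groups one must control are $\oppi_{27,5}\Smf/\tau$ and $\oppi_{26,6}\Smf/\tau$ (the first vanishes, and the boundary map into the second is identified with $d_5$ via \cref{prop:total_differential_versus_differentials}), and then uses the vanishing of $\oppi_{27,6}\Smf/\tau^9$ and $\oppi_{26,7}\Smf/\tau^9$ which can be verified with \cref{prop:precise_lifting_higher_powers_tau} using only $d_{\le 5}$-differentials.

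The same gap undermines your second step. You assert that the higher-filtration slices in the decomposition of \cref{prop:precise_lifting_higher_powers_tau} vanish ``for the same degree reasons''; but those reasons fail, and in fact the higher slices are \emph{essential} here: the hidden extension $4\nu\Delta = \tau^2\eta^3\Delta$ means that $\oppi_{27,1}\Smf/\tau^{14}$ receives a nontrivial contribution from a $\tau^2$-multiple of the lift of $h_1^3\Delta$, which is exactly what produces the extra factor of $2$ in the order of the group. Ignoring these slices, as you do, would give the wrong group. Finally, your plan to rule out all $d_{\ge 7}$-differentials on $h_2\Delta^i$ by invoking the meta-arguments of \cref{ssec:meta} is circular: this lemma sits in the Page-5 portion of the page-by-page computation, and the atomic $d_7$-differentials as well as the meta-argument checks for $r\ge 7$ (\cref{prop:gdivisibilitycheck,prop:gdivisibilitycheck2}) are only established in \cref{ssec:page7}. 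The paper's two-step exact-sequence argument is designed exactly so that none of this is needed.
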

\begin{proof}
    We study the case of the first of the five maps; the other ones are similar.
    We have an exact sequence
    \[
        \begin{tikzcd}
            \oppi_{27,5} \Smf/\tau \ar[r,"\tau^4"] & \oppi_{27,1}\Smf/\tau^5 \ar[r] & \oppi_{27,1} \Smf/\tau^4 \ar[r,"\delta_4^5"] & \oppi_{26,6} \Smf/\tau.
        \end{tikzcd}        
    \]
    The group on the left vanishes.
    The last map is equal to the $d_5$-differential by \cref{prop:total_differential_versus_differentials}\,(3): indeed, there are no shorter differentials entering or leaving bidegree $(26,6)$, so $\uE_5 = \uE_2$ in this bidegree.
    Since $d_5(2h_2\Delta) = 0$, this means that there is a unique lift of $2\nu\Delta$ from $\Smf/\tau^4$ to $\Smf/\tau^5$.
    Since $d_5(h_2\Delta) \neq 0$, the element $\nu \Delta$ does not lift.
    This means that $\oppi_{27,1}\Smf/\tau^5$ is isomorphic to $\Z/4$ and is generated by this unique lift of $2\nu\Delta$.
    
    Next, we have an exact sequence
    \[
        \begin{tikzcd}
            \oppi_{27,6} \Smf/\tau^9 \ar[r,"\tau^5"] & \oppi_{27,1}\Smf/\tau^{14} \ar[r] & \oppi_{27,1} \Smf/\tau^5 \ar[r,"\delta_5^{14}"] & \oppi_{26,7} \Smf/\tau^9.
        \end{tikzcd}
    \]
    The outer two groups vanish by \cref{prop:precise_lifting_higher_powers_tau}, so the middle map is an isomorphism.
    
    Finally, the last claim is checked using \cref{prop:precise_lifting_higher_powers_tau}.
\end{proof}

As before, we use a subscript to denote the power of $\Delta$ present in its mod $\tau$ reduction.
In some cases, the element cannot truly reduce to $h_2\Delta^i$, but has to differ by an element of small degree (in this case, 2).
Note that our notation does not indicate this.

\begin{notation}
    \label{not:lift_nu1_nu2_nu4_nu5_nu6}
    \leavevmode
    \begin{itemize}
        \item \textup{(27,1)} We write $\nu_1 \in \oppi_{27,1}\Smf/\tau^{14}$ for the unique lift of $2\nu\Delta \in \oppi_{27,1}\Smf/\tau^4$.
        \item \textup{(51,1)} We write $\nu_2 \in \oppi_{51,1}\Smf/\tau^{14}$ for the unique lift of $\nu\Delta^2 \in \oppi_{51,1}\Smf/\tau^4$.
        \item \textup{(99,1)} We write $\nu_4 \in \oppi_{99,1}\Smf/\tau^{14}$ for the unique lift of $\nu\Delta^4 \in \oppi_{99,1}\Smf/\tau^4$.
        \item \textup{(123,1)} We write $\nu_5 \in \oppi_{123,1}\Smf/\tau^{14}$ for the unique lift of $2\nu\Delta^5 \in \oppi_{123,1}\Smf/\tau^4$.
        \item \textup{(147,1)} We write $\nu_6 \in \oppi_{147,1}\Smf/\tau^{14}$ for the unique lift of $\nu\Delta^6 \in \oppi_{147,1}\Smf/\tau^4$.
    \end{itemize}
\end{notation}

\subsubsection{Relations}

\begin{lemma}[23,5]
    \label{lem:computation_23_5}
    Multiplication by $\kappabar$ induces an isomorphisms
    \begin{align*}
        \kappabar \colon \oppi_{3,1}\Smf/\tau^{14} &\congto \oppi_{23,5}\Smf/\tau^{14}\\
        \kappabar \colon \oppi_{99,1}\Smf/\tau^{14} &\congto \oppi_{119,5}\Smf/\tau^{14}
    \end{align*}
    In particular, the reduction maps $\Smf/\tau^{14} \to \Smf/\tau$ in degrees $(23,5)$ and $(119,5)$ can be identified with
    \begin{align*}
        \SwapAboveDisplaySkip
        \Z/8\angbr{\nu\kappabar} \twoheadto \Z/4\angbr{h_2g}, \qquad &\nu\kappabar \mapsto h_2g,\\
        \Z/8\angbr{\nu_4\kappabar} \twoheadto \Z/4\angbr{h_2g\Delta^4}, \qquad &\nu_4\kappabar \mapsto h_2g\Delta^4.
    \end{align*}
\end{lemma}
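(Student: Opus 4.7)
We prove both isomorphisms simultaneously, as the arguments are structurally parallel. By \cref{lem:4nu_tau2_eta3} and \cref{lem:checking_nu_i_lift}, the sources are cyclic of order $8$, with generators $\nu$ and $\nu_4$ respectively, and the former carries the relation $4\nu = \tau^2\eta^3$. Since $\kappabar$ reduces mod $\tau$ to the generator $g$ of $\oppi_{20,4}\Smf/\tau \cong \Z/8$ (\cref{not:kappabar_and_g}), the classes $\nu\kappabar \in \oppi_{23,5}\Smf/\tau^{14}$ and $\nu_4\kappabar \in \oppi_{119,5}\Smf/\tau^{14}$ reduce mod $\tau$ to $h_2 g$ and $h_2 g \Delta^4$, which generate the respective mod-$\tau$ targets $\Z/4\langle h_2 g\rangle$ and $\Z/4\langle h_2 g \Delta^4\rangle$.

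To bound the targets from above, the plan is to apply the truncated Omnibus theorem (\cref{prop:precise_lifting_higher_powers_tau}) with $k = 14$ to the bidegrees $(23, 5+d)$ and $(119, 5+d)$ for $0 \leq d \leq 13$. Consulting the $2$-primary cohomology of $\Mellbar$ (see \cref{etwoprime2}) and accounting for the $d_3$-differential propagated from $d_3(b) = h_1^4$ of \cref{prop:d3differential}, the only nonvanishing $\uE_4$-contributions in these strips are $\Z/4\langle h_2 g\rangle$ at $(23, 5)$ and $\Z/2\langle h_1^3 g\rangle$ at $(23, 7)$, together with their $\Delta^4$-translates at $(119, 5)$ and $(119, 7)$; every other bidegree in the strip either vanishes on $\uE_2$ or has its class killed by a $d_3$. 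Hence each target has order at most $8$.

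To finish, multiply the relation $4\nu = \tau^2\eta^3$ by $\kappabar$ to obtain $4\nu\kappabar = \tau^2\eta^3\kappabar$ in $\oppi_{23,5}\Smf/\tau^{14}$. Since $\eta^3\kappabar$ lifts $h_1^3 g$, a nonzero permanent cycle on $\uE_4$, the class $\tau^2\eta^3\kappabar$ is nonzero: this can be verified by inspecting the $14$-truncated $\tau$-Bockstein spectral sequence, noting that no $d_r^\tau$-differential can eliminate this class in the relevant range since $h_1^3 g$ is a permanent cycle not hit by shorter differentials in the $23$-stem. Thus $\nu\kappabar$ has order exactly $8$ and generates the target, establishing the first isomorphism; the reduction is then the canonical quotient $\Z/8 \twoheadto \Z/4$. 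The analogous argument at $(119, 5)$ proceeds identically, with the corresponding relation propagated by first reducing to $\Smf/\tau^4$, where $\Delta^4$ is a well-defined class and $4(\nu\Delta^4) = \tau^2 \eta^3 \Delta^4$, and then pushing forward to $\Smf/\tau^{14}$. The main obstacle throughout is the Omnibus bookkeeping, specifically verifying that no high-filtration classes in the relevant strips contribute unwanted extra generators via the group extension problem.
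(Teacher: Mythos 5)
Your proof takes a genuinely different route from the paper. The paper first shows that $\kappabar$ induces an isomorphism on $\Smf/\tau^4$ by an Omnibus bookkeeping argument parallel to \cref{lem:27_1_mod_tau4}, and then deduces the $\Smf/\tau^{14}$-statement by observing that the reduction $\Smf/\tau^{14}\to\Smf/\tau^4$ is injective in the relevant bidegrees. You instead bound the target $\oppi_{23,5}\Smf/\tau^{14}$ directly by $8$ via \cref{prop:precise_lifting_higher_powers_tau} (applied over the whole $14$-truncated strip), and then argue $\nu\kappabar$ has full order $8$ because $4\nu\kappabar = \tau^2\eta^3\kappabar$ is nonzero. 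This is a legitimate reorganization; the main trade-off is that the paper's reduction to $\Smf/\tau^4$ confines the bookkeeping to the first four $\tau$-Bockstein filtrations, which is precisely what lets one avoid the nonvanishing claim on which your version hinges.

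That nonvanishing claim is where your proof has a gap. You assert that $\tau^2\eta^3\kappabar \neq 0$ in $\oppi_{23,5}\Smf/\tau^{14}$ because "$h_1^3 g$ is a permanent cycle not hit by shorter differentials in the $23$-stem." Two problems. First, what needs verification is whether the class at $\tau$-Bockstein tridegree $(23,5,2)$ is killed by a $d^\tau_1$ or $d^\tau_2$, i.e.\ by a $d_2$ or $d_3$ in the signature spectral sequence with \emph{source in the $24$-stem}: the sources are in bidegrees $(24,5)$ and $(24,4)$, so the check is in the $24$-stem, not the $23$-stem. Second, "permanent cycle" addresses whether $h_1^3 g$ supports a differential, which is irrelevant to whether its formal $\tau^2$-multiple survives; what is relevant is whether it is \emph{hit}. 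The repair is short: $d_2 = 0$ by evenness, and a look at the chart shows $\oppi_{24,4}\Smf/\tau = 0$, so there is no $d_3$ hitting $h_1^3 g$ and the class at $(23,5,2)$ survives. For the $(119,5)$ case you have the analogous obligation at $\oppi_{120,4}\Smf/\tau$, and you should state it explicitly; there is also a small subtlety there since $\Delta^4$ is only well-defined in $\Smf/\tau^4$, so the relation $4\nu_4\kappabar = \tau^2\eta^3\Delta^4\kappabar$ must be read in $\Smf/\tau^4$ and the nonvanishing detected after reduction. With these corrections your argument goes through.
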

\begin{proof}
    In the same way as in \cref{lem:27_1_mod_tau4}, one can show that $\kappabar$ induces an isomorphism on $\Smf/\tau^4$ in degrees $(3,1)$ and $(99,1)$.
    Using \cref{prop:precise_lifting_higher_powers_tau}, we see that the reduction map $\Smf/\tau^{14}\to\Smf/\tau^4$ is injective in the relevant degrees, proving the claim.
\end{proof}

\subsubsection{Total differentials}

For many of the later pages, it will be crucial to compute a truncated total differential on~$\Delta$.
Given the $d_5$ of \cref{prop:d5_Delta}, a natural guess for $\delta_1^\infty(\Delta)$ would be $\nu \kappabar$.
Because we at this point do not know the fate of the elements in very high filtration, we cannot compute the entire differential, but only a truncated version.
While computing the $8$-truncated version would be enough to deduce $d_7(\Delta^4)$, it is hardly any more work to at this point record the $14$-truncated version, and this will be needed later on.
As $\Delta$ lifts uniquely to $\Smf/\tau^4$, we can work with $\delta_4^{14}$ instead.
This is both easier to compute, taking values in $\Smf/\tau^{10}$ rather than $\Smf/\tau^{13}$, and also gives more information, as we will see in our later computations.

In what follows, we think of $\nu\kappabar$ as defining an element in $\Smf/\tau^{10}$ via the reduction map.

\begin{proposition}[24,0]
    \label{lem:total_differential_14_Delta}
    We have $\delta_4^{14} (\Delta) = u \cdot \nu \kappabar$ in $\oppi_{23,5}\Smf/\tau^{10}$, where $u \in (\Z/8)^\times$.
\end{proposition}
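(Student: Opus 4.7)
The plan is to reduce this equation modulo $\tau$ and combine that identification with a computation of the target group $\oppi_{23,5}\Smf/\tau^{10}$.

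First, I would apply \cref{prop:total_differential_versus_differentials}\,\textup{(3)} with $r=5$ and $R=14$ to the class $\Delta\in\oppi_{24,0}\Smf/\tau^4$. This produces a commutative square identifying the image of $\delta_4^{14}(\Delta)$ in the subquotient $\uZ_{10}^{23,5}/\uB_5^{23,5}$ with $d_5(\Delta)$ on the $\uE_5$-page. By \cref{prop:d5_Delta}, we have $d_5(\Delta) = \pm h_2 g$, and $h_2 g$ is nonzero in $\uZ_{10}^{23,5}/\uB_5^{23,5}$: it is a permanent cycle (both $h_2$ and $g$ lift to $\Smf$ by \cref{not:2_import_from_sphere}), and $\uB_5^{23,5}$ is zero because $\uE_2^{24,1}=\uE_2^{24,2}=0$ on the $\uE_2$-page. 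Hence the mod~$\tau$ reduction of $\delta_4^{14}(\Delta)$ in $\oppi_{23,5}\Smf/\tau$ is precisely $\pm h_2g$.

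Next, I would identify the target group as $\oppi_{23,5}\Smf/\tau^{10}\cong\Z/8\angbr{\nu\kappabar}$. \Cref{lem:computation_23_5} already gives $\oppi_{23,5}\Smf/\tau^{14}\cong\Z/8\angbr{\nu\kappabar}$, with mod~$\tau$ reduction given by the standard quotient $\Z/8\twoheadrightarrow\Z/4\angbr{h_2 g}$. Applying \cref{prop:precise_lifting_higher_powers_tau}\,\textup{(2)} to the reduction $\Smf/\tau^{14}\to\Smf/\tau^{10}$ in bidegree $(23,5)$ reduces the claim that this map is an isomorphism to verifying that certain subquotients of $\oppi_{23,s}\Smf/\tau$ vanish for $s\in\{15,16,17,18\}$, which can be read off directly from Konter's computation of the $\uE_2$-page; cf.\ \cite[Figure~25]{konter_Tmf} and the connective-region analysis of \cref{sec:cubic_hopf_algebroid}.

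Combining the two steps: both $\delta_4^{14}(\Delta)$ and $\nu\kappabar$ lie in the cyclic group $\oppi_{23,5}\Smf/\tau^{10}\cong\Z/8\angbr{\nu\kappabar}$, and both reduce mod~$\tau$ to a generator of the quotient $\Z/4\angbr{h_2g}$ (namely $\pm h_2g$ and $h_2 g$, respectively). Consequently $\delta_4^{14}(\Delta) = u\,\nu\kappabar$ for some $u\in\Z/8$ with $u\equiv \pm 1\pmod 2$; in particular $u$ is odd, and hence lies in $(\Z/8)^\times$.

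The main obstacle is the chart inspection in the second step: one must verify that no new generators appear when passing from $\Smf/\tau^{14}$ to $\Smf/\tau^{10}$ in bidegree $(23,5)$. Once this is granted, the conclusion is purely formal, since every element of $\Z/8$ whose image in $\Z/4$ is a generator is automatically a unit.
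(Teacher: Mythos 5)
Your overall strategy — reduce $\delta_4^{14}(\Delta)$ mod $\tau$ to obtain $\pm h_2g$, then pin down the target group $\oppi_{23,5}\Smf/\tau^{10}$ — is the same as the paper's, but there is a genuine gap in your second step.

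You claim that applying \cref{prop:precise_lifting_higher_powers_tau}\,(2) to the reduction $\Smf/\tau^{14}\to\Smf/\tau^{10}$ and checking some vanishing in $\oppi_{23,*}\Smf/\tau$ ``reduces the claim that this map is an isomorphism.'' But that proposition only describes the \emph{kernel} of such a reduction map. Vanishing of the listed subquotients gives injectivity of $\oppi_{23,5}\Smf/\tau^{14}\to\oppi_{23,5}\Smf/\tau^{10}$, hence $\Z/8\hookrightarrow\oppi_{23,5}\Smf/\tau^{10}$, but it says nothing about surjectivity, so you have only bounded the target group from \emph{below}. You need an upper bound: you have not excluded the possibility that $\oppi_{23,5}\Smf/\tau^{10}$ contains extra generators arising from classes that are $d_{\leq 10}$-cycles but not $d_{\leq 14}$-cycles (these contribute to the truncated $\tau$-Bockstein for $\Smf/\tau^{10}$ but not for $\Smf/\tau^{14}$), or from the potential cokernel living in $\oppi_{22,16}\Smf/\tau^4$. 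Without this, the conclusion ``$\delta_4^{14}(\Delta)=u\cdot\nu\kappabar$'' does not follow merely from the mod-$\tau$ reduction being $\pm h_2g$. The paper's comparison runs in the opposite direction — $\Smf/\tau^{10}\to\Smf/\tau^4$ — which, given the prior identification $\oppi_{23,5}\Smf/\tau^4\cong\Z/8\angbr{\nu\kappabar}$ coming from \cref{lem:computation_23_5}, lets \cref{prop:precise_lifting_higher_powers_tau} give the needed upper bound (injectivity), while surjectivity is immediate because $\nu\kappabar$ already lives in $\Smf$ and hence in $\Smf/\tau^{10}$. Reorienting your comparison this way closes the gap.

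A secondary issue: you argue that $h_2g$ is nonzero in $\uZ_{10}^{23,5}/\uB_5^{23,5}$ by checking $\uE_2^{24,1}=\uE_2^{24,2}=0$, but $h_2g$ is itself $d_5(\Delta)$, i.e., a $d_5$-boundary, so with the paper's convention $\uE_{r+1}=\uZ_r/\uB_r$ the class $h_2g$ actually lies in $\uB_5^{23,5}$. If taken literally, this would make the reduced class \emph{zero} in that quotient and your first step would say nothing. The consistent reading (which is what the paper's own proof implicitly does by noting $\uE_5=\uE_2$ in the relevant bidegrees) is that the quotient should only kill $d_{\leq 4}$-boundaries, i.e., $\uB_4$, which your degree check does address. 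Worth stating this carefully, since otherwise the displayed quotient carries no information.
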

\begin{proof}
    First we show that
    \[
        \delta_4^{8}(\Delta) = u \cdot \nu \kappabar
    \]
    where $u \in (\Z/8)^\times$ is a unit.
    To show this, we first claim that we have a commutative diagram
    \[
        \begin{tikzcd}
            \oppi_{24,0}\Smf/\tau^4 \ar[r,"\delta_4^8"] \ar[d] & \oppi_{23,5}\Smf/\tau^4\ar[d] \\
            \oppi_{24,0}\Smf/\tau \ar[r,"d_5"'] & \oppi_{23,5} \Smf/\tau,
        \end{tikzcd}
    \]
    where the vertical maps are the reductions mod $\tau$.
    Indeed, we use \cref{prop:total_differential_versus_differentials} together with the fact that the differentials $d_2,d_3,d_4$ vanish on $(24,0)$ and hit no elements in $(23,5)$, so that $\uE_5$ is the same as $\uE_2$ in these bidegrees.
    The group $\oppi_{24,0}\Smf/\tau$ is the free (2-local) abelian group generated by $\Delta$ and $c_4^3$.
    The element $c_4$ is a $d_{\leq10}$-cycle for degree reasons, so we can ignore this summand in further analysis.

    The left vertical map is an isomorphism by \cref{lem:Delta_lift}, and the right vertical map is surjective by \cref{lem:computation_23_5}.
    In other words, restricting to the $\Delta$-summand, the commutative diagram is of the form
    \[
        \begin{tikzcd}
            \Z_{(2)} \ar[r] \ar[rd,two heads] & \Z/8 \ar[d,two heads]\\
            & \Z/4.
        \end{tikzcd}
    \]
    This means the top horizontal map must send $1$ to a unit in $\Z/8$, which is exactly the claim about $\delta_4^8(\Delta)$.

    Using \cref{prop:precise_lifting_higher_powers_tau}, we see that the reduction $\oppi_{23,5}\Smf/\tau^{10} \to \oppi_{23,5}\Smf/\tau^4$ is injective.
    It is also surjective, as $h_2g$ is a $d_5$-boundary, so in particular a permanent cycle.
    The claim about $\delta_4^{14}(\Delta)$ therefore follows.
\end{proof}

\subsection{Page 7}\label{ssec:page7}

\subsubsection{Atomic differentials}

\begin{proposition}\label{prop:dsevens}
    \leavevmode
    \begin{itemize}
        \item \textup{(24,0)} $d_7(4\Delta) = h_1^3 g$.
        \item \textup{(48,0)} $d_7(2\Delta^2) = h_1^3 g \Delta$.
        \item \textup{(72,0)} $d_7(4\Delta^3) = h_1^3g\Delta^2$.
        \item \textup{(96,0)} $d_7(\Delta^4) = h_1^3 g \Delta^3$.
        \item \textup{(120,0)} $d_7(4\Delta^5) = h_1^3 g \Delta^4$.
        \item \textup{(144,0)} $d_7(2\Delta^6) = h_1^3 g \Delta^5$.
        \item \textup{(168,0)} $d_7(4\Delta^7) = h_1^3 g \Delta^6$.
    \end{itemize}
\end{proposition}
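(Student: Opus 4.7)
The plan is to apply Burklund's synthetic Leibniz rule (\cref{syntheticleibnizrule}) to the truncated total differential $\delta_4^{14}(\Delta) = u\cdot\nu\kappabar$ from \cref{lem:total_differential_14_Delta}, and then to exploit the hidden $2$-extension $4\nu = \tau^2\eta^3$ of \cref{lem:4nu_tau2_eta3} to convert the resulting total differentials into the seven claimed $d_7$'s via \cref{prop:total_differential_versus_differentials}.

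First I would project $\delta_4^{14}(\Delta)$ down to $\Smf/\tau^4$ using \cref{totaldifferentialjuggling}\,(1), obtaining $\delta_4^8(\Delta) = u\nu\kappabar$ in $\oppi_{23,5}\Smf/\tau^4$. Since $\Delta$ has the canonical lift to $\oppi_{24,0}\Smf/\tau^4$ from \cref{not:Delta_lift}, its powers make sense there, and the synthetic Leibniz rule (applied with $n=4$) yields
\[
    \delta_4^8(c\Delta^n) \;=\; c n\, u\, \nu\kappabar\,\Delta^{n-1} \in \oppi_{24n-1,\,5}\Smf/\tau^4.
\]
The integer coefficients $c$ in the statement are chosen precisely so that in each of the seven cases $cn = 4m$ with $m$ odd, specifically $m = 1, 1, 3, 1, 5, 3, 7$ for $n = 1,\dots,7$. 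Reducing the relation $4\nu = \tau^2\eta^3$ from $\Smf$ to $\Smf/\tau^4$, the above then becomes
\[
    \delta_4^8(c\Delta^n) \;=\; m u\, \tau^2\eta^3\kappabar\Delta^{n-1} \in \oppi_{24n-1,\,5}\Smf/\tau^4.
\]

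Next I would pass to the non-truncated total differential via \cref{totaldifferentialjuggling}\,(2), which gives $\delta_1^\infty = \tau^3\delta_4^\infty$. Combined with the compatibility $\delta_4^8 \equiv \delta_4^\infty \pmod{\tau^4}$ and the fact that $\eta$ and $\kappabar$ lift to genuine elements of $\oppi_{*,*}\Smf$ (\cref{not:2_import_from_sphere}), reducing to $\Smf/\tau^6$ yields
\[
    \delta_1^7(c\Delta^n) \;=\; m u\,\tau^5\,\eta^3\kappabar\Delta^{n-1} \in \oppi_{24n-1,\,2}\Smf/\tau^6.
\]
Since this vanishes modulo $\tau^5$, \cref{prop:total_differential_versus_differentials}\,(1) confirms that $c\Delta^n$ is a $d_{\leq 6}$-cycle, and \cref{prop:total_differential_versus_differentials}\,(2) with $r = 6$ identifies the mod-$\tau$ reduction $m u\, h_1^3 g\Delta^{n-1}$ as a representative of $d_7(c\Delta^n)$ on $\uE_7$. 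Since $h_1^3g\Delta^{n-1}$ is $2$-torsion on $\uE_2$ and $m$ is odd, this simplifies to $u\, h_1^3 g\Delta^{n-1}$.

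The main obstacle will be verifying that each target $h_1^3 g\Delta^{n-1}$ survives as a nonzero class to $\uE_7$, which amounts to inspecting the $\uE_3$- and $\uE_5$-pages at bidegree $(24n-1,7)$ in the connective region to rule out both prior hits and earlier differentials entering that spot; I expect this to be straightforward chart bookkeeping given the atomic $d_3$ and $d_5$ differentials already computed. The ambiguous unit $u \in (\Z/8)^\times$ is absorbed into the $=$ of the statement, as is standard elsewhere in the paper.
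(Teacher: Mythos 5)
Your proposal is correct and follows essentially the same route as the paper: the truncated total differential $\delta_4^8(\Delta) = u\nu\kappabar$ extracted from \cref{lem:total_differential_14_Delta}, the synthetic Leibniz rule applied to $\delta_4^8$ on powers of $\Delta$, the hidden $2$-extension $4\nu = \tau^2\eta^3$ of \cref{lem:4nu_tau2_eta3}, and then \cref{prop:total_differential_versus_differentials} to convert the resulting $\tau^2$-divisible total differential into a $d_7$. The paper treats $\Delta^4$ in detail and remarks that the rest follow similarly, while you organize all seven cases uniformly via the integers $cn=4m$ with $m$ odd and spell out the passage from $\delta_4^8$ to $\delta_1^7$ explicitly — a slightly more verbose but entirely equivalent bookkeeping; note only that both $m$ and $u$ are odd and so both can be discarded against the $2$-torsion target, not just $m$.
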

The atomic $d_7$-differentials on powers of $\Delta$ are difficult to deduce directly.
Our approach essentially deduces these from the $d_5$ on $\Delta$, through the use of the total differential on $\Delta$ from \cref{lem:total_differential_14_Delta}.
In all of this, the lift of $\Delta$ to $\Smf/\tau^4$ (\cref{not:Delta_lift}) is the crucial input to make the following arguments work.

\begin{proof}
    Let us start with $d_7(\Delta^4)$.
    First of all, from \cref{lem:total_differential_14_Delta}, we learn in particular that $\delta_4^8(\Delta) = u\cdot \nu\kappabar$.
    The synthetic Leibniz rule of \cref{syntheticleibnizrule} tells us that
    \[
        \delta_4^8(\Delta^4) = 4\Delta^3\cdot \delta_4^8(\Delta) = u\cdot 4\nu\kappabar\Delta^3. 
    \]
    Combining this with the relation $4\nu=\tau^2\eta^3$ from \cref{lem:4nu_tau2_eta3}, we learn that
    \[
        \delta_4^8(\Delta^4) = u\cdot\tau^2 \eta^3 \kappabar\Delta^3 = \tau^2 \eta^3 \kappabar \Delta^3,
    \]
    where we use that $\eta$ is 2-torsion to ignore the unit.
    From this, the differential $d_7(\Delta^4)=h_1^3g\Delta^3$ follows by \cref{prop:total_differential_versus_differentials}.

    The remaining differentials follow similarly using the synthetic Leibniz rule for $\delta_4^8$ and the same relation $4\nu = \tau^2\eta^3$.
\end{proof}

\subsubsection{Meta-arguments}

\begin{proposition}\label{prop:E7metachecks}
    The condition of \cref{prop:gdivisibilitycheck} holds for $d_7$. Moreover, $\Delta^8$ is a $d_7$-cycle.
\end{proposition}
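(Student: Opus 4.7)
The proposition has two parts. For the claim that $\Delta^8$ is a $d_7$-cycle, my plan is to work in $\Smf/\tau^4$, where $\Delta$ lifts uniquely by \cref{lem:Delta_lift} and hence $\Delta^8 \in \oppi_{192,0}\Smf/\tau^4$ is well-defined. Using \cref{totaldifferentialjuggling}\,(1) to compare $\delta_4^{14}$ and $\delta_4^8$, \cref{lem:total_differential_14_Delta} shows that $\delta_4^8(\Delta) = u\cdot\nu\kappabar$ in $\oppi_{23,5}\Smf/\tau^4$ for some unit $u$. The synthetic Leibniz rule of \cref{syntheticleibnizrule} (applicable since $n=4$ and $2n=8$) then yields
\[
\delta_4^8(\Delta^8) \;=\; 8\Delta^7 \cdot \delta_4^8(\Delta) \;=\; 8u\,\nu\kappabar\,\Delta^7.
\]
Combining the relation $4\nu = \tau^2\eta^3$ of \cref{lem:4nu_tau2_eta3} with $2\eta = 0$ gives $8\nu = 2\tau^2\eta^3 = 0$, hence $\delta_4^8(\Delta^8) = 0$. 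By the cofibre sequence defining $\delta_4^8$, the class $\Delta^8 \in \oppi_{192,0}\Smf/\tau^4$ lifts to $\oppi_{192,0}\Smf/\tau^8$, and \cref{thm:omnibus}\,(1) then forces $d_r(\Delta^8) = 0$ for every $2 \le r \le 8$. In particular, $d_7(\Delta^8) = 0$, as required.

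For the $g$-divisibility condition on $d_7$, I will verify the hypothesis of \cref{prop:gdivisibilitycheck} case-by-case on the atomic $d_7$-differentials of \cref{prop:dsevens}. Each has target of the form $y = h_1^3 g\Delta^i$ with $0 \le i \le 6$, so that $gy = h_1^3 g^2\Delta^i$. By inspection of the $\uE_7$-page (built from the $\uE_2$-description in \cite{bauer_tmf, konter_Tmf} after cycling through $d_3$ and $d_5$), these $g^2$-multiples persist as nonzero classes in the $g$-polynomial pattern of the connective region: $d_3$ kills only $h_1^4$-towers, $d_5$ kills only $h_2g^\ast$-towers, and the earlier atomic $d_7$'s kill only classes of the form $h_1^3 g \Delta^j$, never $h_1^3 g^2 \Delta^i$. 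Hence $gy \neq 0$ on $\uE_7$ for every relevant $i$, so the implication of \cref{prop:gdivisibilitycheck} holds vacuously.

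The main obstacle is the first claim: the argument is intrinsically synthetic and turns on the interplay between the truncated total differential $\delta_4^8(\Delta)$, the synthetic Leibniz rule, and the hidden $2$-extension $4\nu = \tau^2\eta^3$ imported from the sphere. It is precisely the existence of the unique lift of $\Delta$ to $\Smf/\tau^4$ that allows $\Delta$ to carry nontrivial higher information beyond its $d_5$, and this mechanism is unavailable in the classical descent spectral sequence. In contrast, the second claim is essentially bookkeeping, relying only on routine chart inspection.
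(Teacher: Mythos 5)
Your proof is correct, but for the claim that $\Delta^8$ is a $d_7$-cycle you have taken a heavier route than the paper does, and your closing remark that "the argument is intrinsically synthetic" overstates the matter.  The paper's proof is a one-liner via the \emph{ordinary} Leibniz rule on the $\uE_7$-page: since $d_7(\Delta^4) = h_1^3 g \Delta^3$ is $h_1$-divisible and hence $2$-torsion, we get
\[
d_7(\Delta^8) \;=\; d_7\bigl((\Delta^4)^2\bigr) \;=\; 2\Delta^4\,d_7(\Delta^4) \;=\; 0.
\]
Nothing synthetic is needed at this stage: once the $d_7$-differentials of \cref{prop:dsevens} are in hand (those are indeed established synthetically), ruling out $d_7(\Delta^8)$ is purely a classical computation in the dga $(\uE_7, d_7)$.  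Your computation via $\delta_4^8(\Delta^8) = 8\Delta^7\delta_4^8(\Delta) = 8u\,\nu\kappabar\Delta^7 = 0$, using the hidden extension $4\nu = \tau^2\eta^3$ and $2\eta = 0$, is a valid alternative and in effect re-derives the same $2$-torsion in the synthetic world, but it is overkill here.  (It is a legitimate sanity check, and it does illustrate the consistency of the truncated total differentials with the page-by-page picture, but it should not be advertised as the essential mechanism.)  The verification of the hypothesis of \cref{prop:gdivisibilitycheck} on the atomic $d_7$-differentials is fine and matches the paper's "checked directly."
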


\begin{proof}
    The condition of \cref{prop:gdivisibilitycheck} is checked directly as before.
    Since $2d_7(\Delta^4)=0$, the Leibniz rule implies that $\Delta^8$ is a $d_7$-cycle.
\end{proof}

\begin{proposition}\label{cor:nolinecrossingE7}
    There are no line-crossing $d_7$-differentials.
\end{proposition}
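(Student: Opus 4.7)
The plan is to invoke the meta-argument of \cref{prop:gdivisibilitycheck2}. Its hypotheses are in place: by \cref{prop:E7metachecks} the condition of \cref{prop:gdivisibilitycheck} has been verified for all $d_{r'}$ with $r'\le 7$, so every class in the connective region of $\uE_7$ in filtration $\ge 4$ is divisible by $g$. Consequently any potential line-crossing $d_7$ with source in filtration $\ge 4$ is $g$-divisible, and by the Leibniz rule together with the established atomic $d_7$'s, it reduces to checking sources in filtrations $0 \le s \le 3$.

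Next I would pin down the finite range of stems in which line-crossing $d_7$'s could conceivably occur. Reading \cref{etwoprime2} (after passing to $\uE_7$ using the $d_3$ and $d_5$ information already recorded), one locates the largest stem $n$ in which the nonconnective region of $\uE_7$ carries a nonzero class in some bidegree $(n,s)$ with $7 \le s \le 10$. This stem is small (roughly in the $17$- to $20$-stem range after $\Delta$-periodicity is used via \cref{prop:E5metachecks}), so by \cref{prop:gdivisibilitycheck2} it suffices to inspect a short initial segment of the chart.

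Within that initial segment, I would then rule out each candidate line-crossing $d_7$ by direct inspection. Every potential source of filtration $\le 3$ in this range falls into one of three categories: (i) it is a permanent cycle detected from the sphere (such as $h_1,h_2,c,d$ and their $g$-free combinations, by \cref{heightonedetection,prop:kappa_detection}), (ii) it has already supported a nonzero $d_3$ or $d_5$ and hence is not present on $\uE_7$, or (iii) the prospective target is zero on $\uE_7$. Because $c_4, c_6, 2c_6, 8$ are likewise $d_7$-cycles (they are permanent cycles of the connective region, as handled by the transfer argument of \cref{survivalofmodularformsattwo} and by $g$-divisibility considerations), no atomic line-crossing $d_7$ can originate from them either.

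The only genuinely delicate step is the case-by-case check in category (iii), since one must verify on $\uE_7$ (not $\uE_2$) that the relevant nonconnective bidegrees have been emptied out by the $d_3$ and $d_5$ differentials already computed. I expect this to be a routine inspection of the chart, given that the range is bounded and the $\uE_7$ structure above the blue line has been completely described. With these checks done, \cref{prop:gdivisibilitycheck2} concludes the argument.
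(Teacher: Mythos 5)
Your proposal takes the same route as the paper (meta-argument via \cref{prop:gdivisibilitycheck2} followed by a finite inspection), but several details are imprecise or wrong, and you miss what makes this page different from the others.

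First, the meta-argument for $\uE_7$ needs the $g$-divisibility hypothesis of \cref{prop:gdivisibilitycheck} to have been verified for $d_{r'}$ with $r' < 7$, not $r' \le 7$. The relevant input is therefore \cref{prop:E5metachecks} (together with the base case \cref{prop:connectiveregiondivisiblebyg}); citing \cref{prop:E7metachecks} is a misreference, since that result concerns the $d_7$-condition, which feeds into the $\uE_9$ analysis and is not needed here.

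Second, your stem cutoff is wrong. Applying \cref{prop:gdivisibilitycheck2} to $\uE_7$ forces checking for line-crossing $d_7$'s through the $25$-stem, not the $17$-to-$20$ range. In particular, your estimate excludes the one stem where the genuinely nontrivial candidate lives.

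Third, your classification of candidate sources conflates several distinct situations. You assert that $c_4$, $c_6$, $2c_6$, and $8$ are all $d_7$-cycles ``as handled by the transfer argument.'' This is not accurate: $c_6$ is not a $d_7$-cycle at all, since $d_3(c_6)=h_1^3 c_4$; what survives to $\uE_7$ in that bidegree is $2c_6$. The transfer result \cref{survivalofmodularformsattwo} yields only the ideal $(8,2c_4,2c_6)$, so it does not cover $c_4$ or $c_6$. The element $c_4$ is a $d_7$-cycle for degree reasons, not by the transfer.

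Finally, and most importantly, you do not isolate that $2c_6$ in the $12$-stem is the unique source in the allowed range that cannot be ruled out by degree or $g$-divisibility arguments, and that the transfer argument was engineered precisely to handle this single case. (This is even flagged by a remark in the paper.) Without pinpointing $2c_6$ as the obstruction and invoking \cref{survivalofmodularformsattwo} specifically for it, the case-by-case inspection you sketch does not close.
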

\begin{proof}
    By \cref{prop:E5metachecks}, we may invoke the meta-argument of \cref{prop:gdivisibilitycheck2}, which implies we only need to check for line-crossing differentials through the $25$-stem. The only possible atomic $d_7$ crossing the line in this range has source $2c_6$, which is a permanent cycle because it is a transfer; see \cref{survivalofmodularformsattwo}. 
\end{proof}

\subsubsection{Total differentials}

In the proof of \cref{prop:dsevens}, we computed the total differentials $\delta_4^8$ on powers of $\Delta$ using the synthetic Leibniz rule.
The synthetic Leibniz rule does not apply to the higher truncations $\delta_4^N$ for $N>8$, so we have to manually compute these total differentials.

\begin{proposition}
    \label{prop:total_diff_14_Delta2}
    There is a unit $u\in(\Z/8)^\times$ such that
    \begin{itemize}
        \item \textup{(48,0)} $\delta_4^{14}(\Delta^2) = u \cdot \nu_1 \kappabar$,
        \item \textup{(72,0)} $\delta_4^{14}(\Delta^3) = u\cdot 3\nu_2\kappabar$,
        \item \textup{(144,0)} $\delta_4^{14}(\Delta^6) = u \cdot 3\nu_5\kappabar$,
        \item \textup{(168,0)} $\delta_4^{14}(\Delta^7) = u \cdot 7\nu_6 \kappabar$.
    \end{itemize}
\end{proposition}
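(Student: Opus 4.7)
The plan is to deduce each of these four total differentials from the identity $\delta_4^{14}(\Delta) = u\cdot\nu\kappabar$ of \cref{lem:total_differential_14_Delta} by combining the synthetic Leibniz rule with a lifting argument from $\Smf/\tau^4$ to $\Smf/\tau^{10}$.

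First, I would reduce \cref{lem:total_differential_14_Delta} modulo $\tau^4$ to obtain $\delta_4^8(\Delta) = u\cdot\nu\kappabar$ in $\oppi_{23,5}\Smf/\tau^4$, using that $\delta_4^8$ is the composition of $\delta_4^{14}$ with the reduction $\Smf/\tau^{10}\to\Smf/\tau^4$ (\cref{totaldifferentialjuggling}). The synthetic Leibniz rule of \cref{syntheticleibnizrule} applies to $\delta_4^8$ on the homotopy ring $\Smf/\tau^4$, and since $|\Delta|=24$ is even it yields
\[
    \delta_4^8(\Delta^i) \;=\; i\,\Delta^{i-1}\cdot\delta_4^8(\Delta) \;=\; u\cdot i\,\nu\Delta^{i-1}\kappabar
    \quad\text{in }\oppi_{24i-1,\,5}\Smf/\tau^4
\]
for every $i\ge 1$. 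Combining this with the fact that $\nu_1,\nu_2,\nu_5,\nu_6$ of \cref{not:lift_nu1_nu2_nu4_nu5_nu6} reduce modulo $\tau^4$ to $2\nu\Delta,\ \nu\Delta^2,\ 2\nu\Delta^5,\ \nu\Delta^6$ respectively, specialising to $i\in\{2,3,6,7\}$ identifies $\delta_4^8(\Delta^i)$ with the mod-$\tau^4$ reduction of the elements $u\nu_1\kappabar$, $3u\nu_2\kappabar$, $3u\nu_5\kappabar$, and $7u\nu_6\kappabar$ claimed in the proposition.

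Second, I would promote these equalities from $\Smf/\tau^4$ to $\Smf/\tau^{10}$. This reduces to showing that the reduction map
\[
    \oppi_{n,5}\Smf/\tau^{10} \longrightarrow \oppi_{n,5}\Smf/\tau^4
\]
is injective for each stem $n\in\{47,71,143,167\}$. By \cref{cor:easy_lifting_higher_power_tau}\,(2) this follows from the vanishing of $\oppi_{n,\,5+d}\Smf/\tau$ for $4\le d\le 9$ in each of these stems, a claim I would verify by direct inspection of \cite[Figure~25]{konter_Tmf}: each stem $n = 24i-1$ with $i\in\{2,3,6,7\}$ carries $\uE_2$-classes only in filtrations $5$ and $7$ (namely $h_2 g\Delta^{i-1}$ and $h_1^3 g\Delta^{i-1}$), with nothing in the window $9\le s\le 14$ needed to obstruct injectivity.

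The main obstacle is this final injectivity check, which requires careful bookkeeping on the $\uE_2$-chart in four separate stems. The conceptual content of the argument lies in the first step: the synthetic Leibniz rule for $\delta_4^8$ lets us use the derivation property even though $\Delta$ does not lift past $\Smf/\tau^4$, and this is exactly what converts $\delta_4^8(\Delta)$ into the total differentials on higher powers of $\Delta$.
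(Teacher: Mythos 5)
Your proposal is correct and follows essentially the same route as the paper's proof: reduce $\delta_4^{14}(\Delta^i)$ modulo $\tau^4$ to $\delta_4^8(\Delta^i)$, compute the latter with the synthetic Leibniz rule applied to the $\Smf/\tau^4$-lift of $\Delta$, identify the result with the mod-$\tau^4$ reductions of $\nu_i\kappabar$ (which is exactly how the $\nu_i$ were defined), and then use an injectivity check to lift the equality back to $\Smf/\tau^{10}$. The only cosmetic difference is that you invoke \cref{cor:easy_lifting_higher_power_tau}\,(2) via an explicit vanishing window $9 \le s \le 14$ on the $\uE_2$-page, while the paper cites the more general \cref{prop:precise_lifting_higher_powers_tau}; the latter would remain usable even if some of those groups were nonzero but killed by earlier differentials, so it is a more robust citation, but both paths land in the same place here.
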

\begin{proof}
    Using \cref{prop:precise_lifting_higher_powers_tau}, we see that $\Smf/\tau^{14} \to \Smf/\tau^4$ is injective in the relevant bidegrees.
    As $\delta_4^{14}$ reduces to $\delta_4^8$ when taken mod $\tau^4$, we are reduced to the computation of $\delta_4^8$ in these degrees. This now follows from \cref{lem:total_differential_14_Delta} and the synthetic Leibniz rule \cref{syntheticleibnizrule}.
    For example, we have
    \[\delta_4^8(\Delta^2) = 2\Delta \cdot \delta_4^8(\Delta) = 2\Delta \cdot u \cdot \nu \kappabar = u \cdot 2\nu\Delta \kappabar \qquad \text{in } \oppi_{47,5} \Smf/\tau^4,\]
    and $\nu_1$ is defined as a lift of $2\nu\Delta \in \oppi_{27,1}\Smf/\tau^4$.
    The other cases are proved in the exact same way.
\end{proof}

\subsubsection{Relations}

To later be able to define Toda brackets, we will need relations like $\tau^4\nu\kappabar = 0$.
Such a relation is plausible because $h_2g$ is the target of a $d_5$-differential.
However, knowing this differential is not enough to deduce that $\tau^4\nu \kappabar = 0$ holds (see \cref{warn:diff_not_imply_tau_torsion}), so we have to check this relation by hand.

\begin{lemma}
    \label{lem:tau4_nu_kappabar_is_zero}
        \leavevmode
    \begin{itemize}
        \item \textup{(23,1)} $\tau^4\nu\kappabar = 0$ in $\Smf/\tau^{12}$.
        \item \textup{(119,1)} $\tau^4\nu_4\kappabar = 0$ in $\Smf/\tau^{12}$.
    \end{itemize}
\end{lemma}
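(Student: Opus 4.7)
The plan is to apply Puppe exactness to the cofibre sequence defining $\delta_4^{14}$. Rotating
\[
    \opSigma^{0,-4}\Smf/\tau^{10} \xrightarrow{\tau^4} \Smf/\tau^{14} \to \Smf/\tau^4 \xrightarrow{\delta_4^{14}} \opSigma^{1,-5}\Smf/\tau^{10}
\]
one step produces a null composite $\tau^4 \circ \delta_4^{14}\colon \oppi_{*,*}\Smf/\tau^4 \to \oppi_{*,*}\Smf/\tau^{14}$. Every element in the image of $\delta_4^{14}$ therefore becomes $\tau^4$-torsion in $\oppi_{*,*}\Smf/\tau^{14}$, and a fortiori in $\oppi_{*,*}\Smf/\tau^{12}$ after applying the reduction $\Smf/\tau^{14} \to \Smf/\tau^{12}$.

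For the case $(23,1)$, this argument applies immediately. By \cref{lem:total_differential_14_Delta}, $\delta_4^{14}(\Delta) = u\,\nu\kappabar$ with $u \in (\Z/8)^{\times}$, so Puppe exactness forces $\tau^4 u\nu\kappabar = 0$ in $\oppi_{23,1}\Smf/\tau^{14}$. Absorbing the unit and reducing modulo $\tau^{12}$ yields $\tau^4\nu\kappabar = 0$ in $\oppi_{23,1}\Smf/\tau^{12}$.

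For the case $(119,1)$, the idea is the same but applied to $\Delta^5$. Using the synthetic Leibniz rule (\cref{syntheticleibnizrule}) for $\delta_4^8$ together with the formula $\delta_4^8(\Delta) = u\,\nu\kappabar$ obtained from \cref{lem:total_differential_14_Delta}, I compute
\[
    \delta_4^8(\Delta^5) \;=\; 5\Delta^4 \cdot \delta_4^8(\Delta) \;=\; 5u\,\nu\kappabar\Delta^4
\]
in $\oppi_{119,5}\Smf/\tau^4$. Since $\nu_4$ is by definition a lift of $\nu\Delta^4$ (\cref{not:lift_nu1_nu2_nu4_nu5_nu6}) and $5u$ is a unit in $\Z/8$, this reads $\delta_4^8(\Delta^5) = w\,\nu_4\kappabar$ for some unit $w$. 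By the compatibility diagram of \cref{totaldifferentialjuggling}\,(1), $\delta_4^{14}(\Delta^5)$ also reduces mod $\tau^4$ to $w\,\nu_4\kappabar$.

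The main obstacle will be upgrading this mod-$\tau^4$ identity to the full equality $\delta_4^{14}(\Delta^5) = w\,\nu_4\kappabar$ in $\oppi_{119,5}\Smf/\tau^{10}$, i.e., ruling out correction terms in the kernel of the reduction $\oppi_{119,5}\Smf/\tau^{10} \to \oppi_{119,5}\Smf/\tau^4$. The plan is to invoke \cref{prop:precise_lifting_higher_powers_tau} to show this reduction is injective, which amounts to verifying the vanishing of the relevant cycles-modulo-boundaries groups in bidegrees $(119,\,5+j)$ for $1 \leq j \leq 5$. I expect this to be routine from the $g$-periodic structure of the chart in this stem range and the $d_3$, $d_5$, $d_7$ differentials already established. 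Once this injectivity is in hand, Puppe exactness gives $w\tau^4\nu_4\kappabar = 0$ in $\oppi_{119,1}\Smf/\tau^{14}$, and the reduction to $\Smf/\tau^{12}$ furnishes $\tau^4\nu_4\kappabar = 0$ as required.
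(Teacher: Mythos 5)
Your argument for the first bullet point $(23,1)$ is valid and is in fact slicker than the paper's: the paper proceeds via the Omnibus Theorem~(3) applied to $d_5(\Delta)=\pm h_2g$, the classification of lifts from \cref{lem:computation_23_5}, and a $d_7$-input to rule out the ambiguity, whereas you obtain the conclusion directly from Puppe exactness of the defining cofibre sequence for $\delta_4^{14}$ together with the already-established value $\delta_4^{14}(\Delta)=u\nu\kappabar$. Your argument even gives the relation in $\Smf/\tau^{14}$, slightly more than is stated.

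The second bullet point, however, has a genuine gap that you have identified but not closed, and I do not think it is routine to close. First, a concrete error: the bidegrees you would need to check via \cref{prop:precise_lifting_higher_powers_tau} with $n=119$, $s=5$, $k=10$, $m=4$ are $(119,15-i)$ for $1\le i\le 6$, i.e.\ $(119,9),\dotsc,(119,14)$, not $(119,6),\dotsc,(119,10)$ as you wrote. Second, and more seriously, verifying the vanishing of $\uZ_i^{119,15-i}/\uB_{11-i}^{119,15-i}$ for $i=1$ requires knowing that all of $\uE_2^{119,14}$ consists of $d_{\le 10}$-boundaries; the $d_9$- and $d_{11}$-differentials have not yet been established at this point in the page-by-page induction, so you risk a circularity unless the relevant $\uE_2$-groups happen to vanish outright. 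Note also that injectivity of $\Smf/\tau^{14}\to\Smf/\tau^4$ in bidegree $(119,5)$ (which is what \cref{lem:computation_23_5} establishes) does \emph{not} formally imply injectivity of $\Smf/\tau^{10}\to\Smf/\tau^4$ there, since the cycles-modulo-boundaries groups $\uZ_i/\uB_{k+1-i}$ involve different boundary subgroups for different $k$; so you cannot simply borrow the paper's injectivity statement. It is perhaps telling that \cref{prop:total_diff_14_Delta2} records $\delta_4^{14}$ on $\Delta^2,\Delta^3,\Delta^6,\Delta^7$ but conspicuously not on $\Delta^4$ or $\Delta^5$: the authors deliberately avoid the total differential route for $\Delta^5$, and instead prove the second bullet by mimicking their first-bullet argument using $d_5(\Delta^5)=\pm h_2g\Delta^4$ and $d_7(4\Delta^5)=h_1^3g\Delta^4$, both of which are available at this stage. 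You could fix your proof by adopting the paper's strategy for the second case, or by replacing the full injectivity claim with a weaker statement that only controls the $\tau^4$-multiple of the ambiguity; as written the proposal is incomplete.
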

\begin{proof}
    Recall from \cref{prop:d5_Delta} that $d_5(\Delta) = \pm h_2g$.
    By Omnibus \cref{thm:omnibus}\,(3), this means that there exists a $\tau^4$-torsion lift of $h_2g$ to $\oppi_{23,5}\Smf/\tau^{12}$.
    We claim that this must be $\nu\kappabar$ itself.
    To see this, we first use \cref{lem:computation_23_5} to conclude that $\nu\kappabar$ and $5\nu\kappabar$ are the only lifts of $h_2g$.
    At least one of them is therefore $\tau^4$-torsion; we claim that this implies that both are $\tau^4$-torsion.
    
    Since their difference is $4\nu\kappabar = \tau^2 \eta^3 \kappabar$, it suffices to establish that $\eta^3\kappabar$ in $\oppi_{23,6}\Smf/\tau^{12}$ is $\tau^6$-torsion.
    The element $h_1^3 g$ is hit by a $d_7$, so by Omnibus \cref{thm:omnibus}, there exists a $\tau^6$-torsion lift of it to $\Smf/\tau^{12}$.
    But $\eta^3\kappabar$ is the only element in $\oppi_{23,7}\Smf/\tau^{12}$ that lifts $h_1^3 g$: the mod $\tau$ reduction map is injective in this bidegree by \cref{prop:precise_lifting_higher_powers_tau}.
    We learn that $\eta^3 \kappabar$ must therefore be this $\tau^6$-torsion lift. 

    A similar argument applied to the differential $d_5(\Delta^5)=\pm h_2 g \Delta^4$ yields $\tau^4\nu_4\kappabar=0$.
\end{proof}

\begin{lemma}
    \label{lem:2_times_nu1}
    \leavevmode
    \begin{itemize}
        \item \textup{(27,1)} $2\nu_1 = \tau^2\eta^2 \eta_1$ in $\Smf/\tau^8$.
        \item \textup{(123,1)} $2\nu_5 = \tau^2\eta^2 \eta_5$ in $\Smf/\tau^8$.
    \end{itemize}
\end{lemma}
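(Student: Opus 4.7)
The approach is to pass to $\Smf/\tau^4$, where both sides of each equation simplify, and then propagate the equality back to $\Smf/\tau^8$ using sparsity of the $\uE_2$-page.

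First I would invoke the relation $4\nu = \tau^2\eta^3$ in $\oppi_{3,1}\Smf$ from \cref{lem:4nu_tau2_eta3}. Multiplying by $\Delta \in \oppi_{24,0}\Smf/\tau^4$ (respectively $\Delta^5 \in \oppi_{120,0}\Smf/\tau^4$) using the $\Smf$-module structure gives $4\nu\Delta^i = \tau^2\eta^3\Delta^i$ in $\oppi_{24i+3,1}\Smf/\tau^4$ for $i = 1, 5$. Since $\nu_1$ and $\nu_5$ are defined in \cref{not:lift_nu1_nu2_nu4_nu5_nu6} as the unique lifts of $2\nu\Delta$ and $2\nu\Delta^5$ respectively to $\Smf/\tau^{14}$, their reductions to $\Smf/\tau^4$ are $2\nu\Delta^i$, and hence
\[
    2\nu_i \equiv \tau^2\eta^3\Delta^i \quad \text{in } \oppi_{24i+3,1}\Smf/\tau^4 \qquad (i = 1,5).
\]

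Next, I would identify $\eta_i|_{\Smf/\tau^4}$ with the evident product $\eta\Delta^i \in \oppi_{24i+1,1}\Smf/\tau^4$, formed using $\eta \in \oppi_{1,1}\Smf$ and $\Delta^i \in \oppi_{24i,0}\Smf/\tau^4$. By \cref{cor:easy_lifting_higher_power_tau}, the reduction $\oppi_{24i+1,1}\Smf/\tau^4 \to \oppi_{24i+1,1}\Smf/\tau$ is injective provided $\oppi_{24i+1,s}\Smf/\tau = 0$ for $s = 2,3,4$. Since $\eta_i|_{\Smf/\tau^4}$ and $\eta\Delta^i$ both lift $h_1\Delta^i$, they must then agree, giving
\[
    \tau^2\eta^2\eta_i|_{\Smf/\tau^4} = \tau^2\eta^2 \cdot \eta\Delta^i = \tau^2\eta^3\Delta^i = 2\nu_i|_{\Smf/\tau^4}.
\]

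Finally, to lift this equality to $\Smf/\tau^8$, it suffices to show that the kernel of $\oppi_{24i+3,1}\Smf/\tau^8 \to \oppi_{24i+3,1}\Smf/\tau^4$ vanishes. By \cref{cor:easy_lifting_higher_power_tau} this reduces to checking $\oppi_{24i+3,s}\Smf/\tau = 0$ for $s = 5, 6, 7$. All four families of vanishings ($(24i+1,s)$ for $s=2,3,4$ and $(24i+3,s)$ for $s=5,6,7$) are routine to verify on the connective $\uE_2$-page: via \cref{etwopagebracketsprimethree} they reduce to cohomology of the cubic Hopf algebroid, and by the $g$-periodicity of \cref{prop:connectiveregiondivisiblebyg} together with $\Delta^i$-multiplication they reduce to checking sparsity in the $1$-stem and the $3$-stem of the cubic Hopf algebroid, where the only nonzero classes below filtration $8$ are $h_1$ and $h_2$ in filtration $1$. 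The main obstacle is this bookkeeping, but the vanishings hold cleanly, so the proof goes through uniformly for both $i=1$ and $i=5$.
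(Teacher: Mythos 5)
Your proof is correct and follows essentially the same route as the paper's: reduce both sides to $\Smf/\tau^4$ using the relation $4\nu = \tau^2\eta^3$ and the lift $\Delta \in \oppi_{24,0}\Smf/\tau^4$, and then pull the equality back up to $\Smf/\tau^8$ via injectivity of the reduction map. You usefully spell out the step $\eta_i|_{\Smf/\tau^4} = \eta\Delta^i$, which the paper leaves implicit. One small slip: for the injectivity of $\oppi_{24i+3,1}\Smf/\tau^8 \to \oppi_{24i+3,1}\Smf/\tau^4$, \cref{cor:easy_lifting_higher_power_tau} requires $\oppi_{24i+3,s}\Smf/\tau = 0$ for $s = 5,6,7,8$ (not just $s=5,6,7$); fortunately $s=6$ and $s=8$ vanish automatically by the checkerboard/parity phenomenon on the even synthetic spectrum $\Smf$, so the conclusion is unaffected. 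The concluding paragraph about reducing to the $1$- and $3$-stems of the cubic Hopf algebroid is a bit loose (e.g.\ $h_1^3$ sits in the $3$-stem in filtration $3$), but the specific vanishings you need are indeed correct, and the paper itself cites the general \cref{prop:precise_lifting_higher_powers_tau} for the same reason.
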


\begin{proof}
    Using \cref{lem:4nu_tau2_eta3}, we have the relation $4\nu\Delta = \tau^2\eta^3 \Delta$ in $\Smf/\tau^4$.
    This means that $2\nu_1 \in \oppi_{27,1}\Smf/\tau^{14}$ and $\tau^2 \eta^2 \eta_1 \in \oppi_{27,1}\Smf/\tau^8$ reduce to the same element in $\Smf/\tau^4$.
    It is therefore enough to establish that $\oppi_{27,1}\Smf/\tau^8 \to \oppi_{27,1}\Smf/\tau^4$ is injective, which follows from \cref{prop:precise_lifting_higher_powers_tau}. The exact same arguments apply to the second relation.
\end{proof}

\begin{corollary}
    \label{cor:tau4_nu1_kappabar_is_zero}
        \leavevmode
    \begin{itemize}
        \item \textup{(47,1)} $\tau^4\nu_1\kappabar=0$ in $\Smf/\tau^{12}$.
        \item \textup{(143,1)} $\tau^4\nu_5\kappabar=0$ in $\Smf/\tau^{12}$.
    \end{itemize}
\end{corollary}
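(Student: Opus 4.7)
The proof plan parallels that of \cref{lem:tau4_nu_kappabar_is_zero}, with $\nu_1$ (resp.\ $\nu_5$) playing the role of $\nu$, and $\Delta^2$ (resp.\ $\Delta^6$) playing the role of $\Delta$. Applying the Leibniz rule to $d_5(\Delta) = \pm h_2g$ from \cref{prop:d5_Delta} produces $d_5(\Delta^2) = \pm 2h_2g\Delta$ and $d_5(\Delta^6) = \pm 2h_2g\Delta^5$, both nonzero on $\uE_5$. Omnibus \cref{thm:omnibus}\,(3) therefore supplies $\tau^4$-torsion lifts of these targets in $\oppi_{47,5}\Smf/\tau^{12}$ and $\oppi_{143,5}\Smf/\tau^{12}$. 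By \cref{not:lift_nu1_nu2_nu4_nu5_nu6,not:kappabar_and_g}, the reductions of $\nu_1\kappabar$ and $\nu_5\kappabar$ modulo $\tau$ coincide with $2h_2g\Delta$ and $2h_2g\Delta^5$ respectively, so these products are also lifts of the same classes once reduced to $\Smf/\tau^{12}$.

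To conclude, one must show that every lift of $2h_2g\Delta$ in $\oppi_{47,5}\Smf/\tau^{12}$ (and similarly for $2h_2g\Delta^5$) is $\tau^4$-torsion, so that $\nu_1\kappabar$ in particular must be. Two lifts of $2h_2g\Delta$ differ by a class in the image of $\tau \colon \oppi_{47,6}\Smf/\tau^{11} \to \oppi_{47,5}\Smf/\tau^{12}$. Using the relation $2\nu_1 = \tau^2\eta^2\eta_1$ of \cref{lem:2_times_nu1}, multiplication by $\kappabar$ shows that the ambiguity between the natural lifts arising from the $\Z/4$-structure of $\oppi_{27,1}\Smf/\tau^{14}$ (per \cref{lem:checking_nu_i_lift}) takes the form $\tau^2\eta^2\eta_1\kappabar$ in bidegree $(47,5)$, and analogously $\tau^2\eta^2\eta_5\kappabar$ in bidegree $(143,5)$. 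These candidate difference classes reduce modulo $\tau$ to $h_1^3g\Delta$ and $h_1^3g\Delta^5$, both of which are hit by $d_7$-differentials from \cref{prop:dsevens} (namely $d_7(2\Delta^2) = h_1^3g\Delta$ and $d_7(2\Delta^6) = h_1^3g\Delta^5$). Omnibus (3) then furnishes $\tau^6$-torsion lifts, and injectivity of the reduction maps $\oppi_{47,7}\Smf/\tau^{12} \to \oppi_{47,7}\Smf/\tau$ and $\oppi_{143,7}\Smf/\tau^{12} \to \oppi_{143,7}\Smf/\tau$, verified via \cref{prop:precise_lifting_higher_powers_tau} and the chart in \cref{etwoprime2}, pins $\eta^2\eta_1\kappabar$ and $\eta^2\eta_5\kappabar$ down as these $\tau^6$-torsion lifts themselves.

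Combining, the ambiguity is absorbed: $\tau^4\cdot\tau^2\eta^2\eta_1\kappabar = \tau^6\eta^2\eta_1\kappabar = 0$, and likewise for $\eta^2\eta_5\kappabar$. Hence $\nu_1\kappabar$ and $\nu_5\kappabar$ are themselves $\tau^4$-torsion in $\Smf/\tau^{12}$, yielding the two stated vanishings. The main technical obstacle is confirming that the lift differences truly exhaust the ambiguity, i.e.\ that no further $\tau$-divisible contributions in $\oppi_{47,6}\Smf/\tau^{11}$ and $\oppi_{143,6}\Smf/\tau^{11}$ sidestep the analysis above; this amounts to ruling out additional $d_{\leq 6}$-cycles on $\uE_2$ in these bidegrees via \cref{prop:precise_lifting_higher_powers_tau}, an exercise completed by direct inspection of \cref{etwoprime2}.
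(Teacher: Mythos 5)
Your strategy mirrors the paper's and the ingredients are the right ones: the Leibniz rule applied to $d_5(\Delta) = \pm h_2g$ to get $d_5(\Delta^2) = \pm 2h_2g\Delta$, Omnibus \cref{thm:omnibus}\,(3) to produce a $\tau^4$-torsion lift, the relation $2\nu_1 = \tau^2\eta^2\eta_1$ from \cref{lem:2_times_nu1}, and the $d_7$ on $2\Delta^2$ to absorb the $\tau^2$-divisible difference. But there is a genuine gap in the way you assemble them. Omnibus (3) only says that \emph{some} lift of $2h_2g\Delta$ to $\oppi_{47,5}\Smf/\tau^{12}$ is $\tau^4$-torsion; to conclude that $\nu_1\kappabar$ itself is $\tau^4$-torsion you must pin down the set of all lifts, and your argument only compares the "natural" lifts coming from odd multiples of $\nu_1\kappabar$. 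Your last paragraph acknowledges this and defers it to a "direct inspection" of the chart, but working directly in $\Smf/\tau^{12}$ the group $\oppi_{47,5}\Smf/\tau^{12}$ is not simply $\Z/4$ and its filtration structure carries more than the two elements you consider; the check is not routine.

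The paper sidesteps this cleanly: it first observes via \cref{prop:precise_lifting_higher_powers_tau} that the reduction $\Smf/\tau^{12}\to\Smf/\tau^6$ is injective in bidegree $(47,1)$, so it suffices to prove $\tau^4\nu_1\kappabar=0$ in $\Smf/\tau^6$. Then it computes $\oppi_{47,5}\Smf/\tau^{6}\cong\Z/4\langle\kappabar\nu_1\rangle$ (via $\kappabar$-multiplication from $\oppi_{27,1}\Smf/\tau^4$ and injectivity of $\Smf/\tau^6\to\Smf/\tau^4$), which shows the only lifts of $2gh_2\Delta$ are $\kappabar\nu_1$ and $3\kappabar\nu_1$, with difference $2\kappabar\nu_1 = \tau^2\eta^2\eta_1\kappabar$. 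One of them is $\tau^4$-torsion by Omnibus (3), and the difference is automatically killed by $\tau^4$ in $\Smf/\tau^6$, so both are $\tau^4$-torsion. You should either adopt this reduction to $\Smf/\tau^6$ (which collapses the ambiguity to a two-element set) or supply the full Omnibus analysis of $\oppi_{47,5}\Smf/\tau^{12}$ that your final paragraph only gestures at.
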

\begin{proof}
    It suffices to prove this statement in $\Smf/\tau^6$, as the reduction map is injective in this bidegree by \cref{prop:precise_lifting_higher_powers_tau}.
    Similar to the argument of \cref{lem:27_1_mod_tau4}, we see that multiplication by $\kappabar$ induces an isomorphism from $\oppi_{27,1}\Smf/\tau^{4}$ to $\oppi_{47,5}\Smf/\tau^{4}$, and that $\oppi_{47,5}\Smf/\tau^{6}\to\oppi_{47,5}\Smf/\tau^4$ is injective.
    Since $\oppi_{47,5}\Smf/\tau^4$ is generated by $\kappabar\nu_1$, which lifts to $\Smf/\tau^{6}$, we therefore conclude that
    \[
        \oppi_{47,5}\Smf/\tau^{6} \cong \Z/4\angbr{\kappabar\nu_1}.
    \]
    The only lifts of $2gh_2\Delta$ are therefore $\kappabar\nu_1$ and $3\kappabar\nu_1$.
    Because $2gh_2\Delta$ is the target of a $d_5$-differential, it must have a $\tau^4$-torsion lift to $\Smf/\tau^{6}$.
    Using the relation from \cref{lem:2_times_nu1}, we see that their difference is $\tau^2$-divisible.
    In the same way as in \cref{lem:tau4_nu_kappabar_is_zero}, we can deduce from this that both lifts are $\tau^4$-torsion, so in particular, $\tau^4\kappabar\nu_1=0$. The proof of the equality $\tau^4 \nu_5 \kappabar = 0$ follows from the same arguments.
\end{proof}

\subsubsection{Lifts}

We now turn to lifting powers of $\Delta$ times $c$.
This is more difficult, as it involves ruling out a $d_9$ on them.
We cannot use degree arguments: it turns out that the potential target for these $d_9$'s each support a $d_{11}$.
Instead we give a Toda bracket argument to show that these elements lift.

\begin{lemma}\label{lem:epsexists}
    \leavevmode
    \begin{itemize}
        \item \textup{(32,2)} $\angbr{\nu_1, \nu, \eta}$ consists of one element in $\Smf/\tau^{14}$, which lifts $c\Delta$ in $\Smf/\tau$.
        \item \textup{(104,2)} $\angbr{\nu, 2\nu_4, \eta}$ consists of one element in $\Smf/\tau^{14}$, which lifts $c\Delta^4$ $\Smf/\tau$.
        \item \textup{(128,2)} $\angbr{\nu_1, \nu_4, \eta}$ consists of one element in $\Smf/\tau^{14}$, which lifts $c\Delta^5$ in $\Smf/\tau$.
    \end{itemize}
\end{lemma}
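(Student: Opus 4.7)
The plan is to establish each of the three containments via three checks: (a) the consecutive products needed to define the bracket vanish in $\Smf/\tau^{14}$; (b) the mod~$\tau$ reduction of the bracket contains the claimed element; (c) the bracket has zero indeterminacy. All three cases follow the same template, so I would treat them uniformly.

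\textbf{Existence of the brackets.} The product $\eta\nu$ vanishes already in the sphere and hence in $\Smf/\tau^{14}$ by naturality along $\S\to\Smf$. For the remaining products $\nu_1\nu$, $2\nu\cdot\nu_4$, $\nu_1\nu_4$, and $\nu_4\eta$, the mod~$\tau$ reductions vanish from standard relations such as $h_1h_2=0$ and $2h_2^2=0$ on the $\uE_2$-page (see \cref{etwoprime2} and \cref{etwopagebracketsprimethree}). To promote these to vanishing in $\Smf/\tau^{14}$, I would apply \cref{cor:easy_lifting_higher_power_tau} in the relevant bidegrees $(4,2)$, $(30,2)$, $(100,2)$, $(102,2)$, $(126,2)$, inspecting the neighbouring filtration groups on the chart to rule out hidden $\tau$-extension contributions.

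\textbf{Mod~$\tau$ identification.} Starting from the containment $\eps\in\angbr{\nu,2\nu,\eta}$ in $\Smf_{(2)}$ (\cref{prop:toadyfromsphere}), \cref{prop:modtaupreservesbrackets} gives $c\in\angbr{h_2,2h_2,h_1}$ in $\pi_{*,*}\Smf/\tau$. Multiplying by the permanent cycles $\Delta,\Delta^4,\Delta^5$ and applying the shuffling rules of \cref{prop:shufflingformulas} yields
\[
c\Delta\in\angbr{2h_2\Delta,h_2,h_1},\qquad c\Delta^4\in\angbr{h_2,2h_2\Delta^4,h_1},\qquad c\Delta^5\in\angbr{2h_2\Delta,h_2\Delta^4,h_1}.
\]
By the mod~$\tau$ reductions of $\nu_1,\nu_4$ recorded in \cref{not:lift_nu1_nu2_nu4_nu5_nu6}, together with another application of \cref{prop:modtaupreservesbrackets}, the mod~$\tau$ images of our three synthetic brackets contain $c\Delta$, $c\Delta^4$, and $c\Delta^5$ respectively.

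\textbf{Indeterminacy.} The indeterminacy of a three-fold bracket $\angbr{a,b,c}$ is $a\cdot \pi_{\abs{b}+\abs{c}+1,*}\Smf/\tau^{14}+\pi_{\abs{a}+\abs{b}+1,*}\Smf/\tau^{14}\cdot c$ in the total bidegrees $(32,2)$, $(104,2)$, and $(128,2)$. For each bracket I would compute the two ambient groups in $\Smf/\tau^{14}$ by lifting from $\pi_{*,*}\Smf/\tau$ via \cref{cor:easy_lifting_higher_power_tau}, and then verify that multiplication by the outer term $\nu_i$ or $\eta$ annihilates each generator, either because the ambient group is zero in the relevant range, or by identifying each generator as an $h_1$- or $h_2$-multiple on which the product vanishes on the $\uE_2$-page of the cubic Hopf algebroid.

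The main obstacle will be this final indeterminacy check for the brackets at stems $104$ and $128$: the ambient groups $\pi_{n,*}\Smf/\tau^{14}$ for $n\in\{100,101,102,103,126,127\}$ are nontrivial, and the analysis will require invoking hidden $\tau$-extensions analogous to those of \cref{lem:2_times_nu1} and \cref{cor:tau4_nu1_kappabar_is_zero} to pin down the precise $\tau$-filtration behaviour of the generators. Once each indeterminacy summand is shown to vanish, each bracket consists of a unique element that lifts the corresponding class $c\Delta^i$, as claimed.
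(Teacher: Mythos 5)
Your plan has the right skeleton — establish nonemptiness, identify the mod~$\tau$ value, and kill indeterminacy — and the first two steps match the paper's proof closely. The genuine divergence is in how you handle the indeterminacy. You propose to compute the ambient groups $\pi_{n,*}\Smf/\tau^{14}$ in several adjacent stems and then verify that multiplication by the outer entries annihilates each generator; you correctly flag this as the hard part, especially around stems $104$ and $128$. The paper avoids that work entirely: it observes via \cref{prop:precise_lifting_higher_powers_tau} that the reduction map $\pi_{32,2}\Smf/\tau^{14}\to\pi_{32,2}\Smf/\tau$ (and likewise in $(104,2)$ and $(128,2)$) is \emph{injective}. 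Since the reduction sends the synthetic Toda bracket into the corresponding Massey product (\cref{prop:modtaupreservesbrackets}), and it sends the synthetic indeterminacy subgroup into the Massey-product indeterminacy, injectivity in the single bidegree of the bracket immediately forces the synthetic bracket to be a singleton once the algebraic Massey product is. You should use that observation to bypass the obstacle you identify: there is no need to analyse $\pi_{5,1}\Smf/\tau^{14}$, $\pi_{103,*}\Smf/\tau^{14}$, etc., or to invoke extra hidden $\tau$-extensions; the mod~$\tau$ computation, which lives in the explicit dga of the cubic Hopf algebroid, carries the full burden.

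A smaller point: the containment $c\Delta\in\angbr{2h_2\Delta,h_2,h_1}$ does not follow immediately from the shuffle formulas of \cref{prop:shufflingformulas} as written (the shuffles give you only $\langle h_2\Delta, 2h_2, h_1\rangle$ or similar reorderings). It is nonetheless true and is most cleanly verified by a direct Massey-product computation in the cubic Hopf algebroid (choosing an explicit $u$ with $du=2h_2^2$ and multiplying by the cycle $\Delta$), which is how the paper implicitly proceeds; you should replace the shuffle appeal there by that calculation.
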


\begin{proof}
    We use \cref{prop:todagoestomassey} to verify these claims. One can easily check that the indeterminacy of the mod $\tau$ reductions of these brackets vanishes, as well as compute the values of the corresponding Massey products.
    An application of \cref{prop:precise_lifting_higher_powers_tau} tells us that the reduction map $\Smf/\tau^{14}\to\Smf/\tau$ is injective in these bidegrees.
    This means that the Toda brackets in $\Smf/\tau^{14}$ therefore also consist of singletons, ending the argument.
\end{proof}

\begin{notation}
    \label{not:lifts_eps1_eps_4_eps5}
    \leavevmode
    \begin{itemize}
        \item \textup{(32,2)} We write $\eps_1 = \angbr{\nu_1, \nu, \eta}$ in $\oppi_{32,2}\Smf/\tau^{12}$.
        \item \textup{(104,2)} We write $\eps_4 = \angbr{\nu, 2\nu_4, \eta}$ in $\oppi_{104,2}\Smf/\tau^{12}$.
        \item \textup{(128,2)} We write $\eps_5 = \angbr{\nu_1, \nu_4,\eta}$ in $\oppi_{128,2}\Smf/\tau^{12}$.
    \end{itemize}
\end{notation}

\subsubsection{Toda brackets}

\begin{lemma}
    \label{lem:toda_brackets_nu1_nu2}
    \leavevmode
    \begin{itemize}
        \item\textup{(27,1)} $\nu_1 = \angbr{\kappabar, \tau^4 \nu,2\nu}$ in $\Smf/\tau^{12}$.
        \item\textup{(51,1)} $\nu_2 = \angbr{\kappabar,\tau^4\nu_1,\nu}$ in $\oppi_{51,1}\Smf/\tau^{12}$.
        \item\textup{(123,1)} $\nu_5 = \angbr{\kappabar, \tau^4 \nu,2\nu_4}$ in $\Smf/\tau^{12}$.
        \item\textup{(147,1)} $\nu_6 = \angbr{\kappabar,\tau^4\nu_1,\nu_4}$ in $\Smf/\tau^{12}$.
        \item\textup{(39,3)} $\eta_1\kappa = \angbr{\nu_1, \nu, \eps}$ in $\Smf/\tau^{10}$.
        \item\textup{(111,3)} $\eta_4\kappa = \angbr{\nu, 2\nu_4, \eps}$ in $\Smf/\tau^{12}$.
        \item\textup{(135,3)} $\eta_5\kappa = \angbr{\nu_1, \nu_4, \eps}$ in $\Smf/\tau^{10}$.
        \item\textup{(117,3)} $\tau^2\eta_4\kappabar = \angbr{\nu, 2\nu_4, \kappa}$ in $\Smf/\tau^{12}$.
        \item\textup{(20,4)} $\pm 2\kappabar \in \angbr{\nu, \eta, \eta\kappa}$ in $\Smf/\tau^{10}$.
    \end{itemize}
\end{lemma}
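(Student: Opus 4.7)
The plan is to treat the nine bracket identities in three groups, each with a different main tool and with careful attention to indeterminacies throughout.

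For the four identities presenting $\nu_1$, $\nu_2$, $\nu_5$, $\nu_6$ as three-fold brackets of the form $\angbr{\kappabar, \tau^4\nu_?, \nu_{??}}$, the approach is to apply the synthetic version of Moss' theorem (\cref{thm:synthetic_moss}, case~(2)) with $r=4$. The vanishings $\tau^4\nu\kappabar = 0$ and $\tau^4\nu_1\kappabar = 0$ needed for well-definedness come from \cref{lem:tau4_nu_kappabar_is_zero,cor:tau4_nu1_kappabar_is_zero}, and the vanishing of the right-hand product reduces by naturality to the sphere identity $2\nu^2 = 0$. The key $d_5$-differentials of \cref{prop:d5_Delta} applied to $\Delta^i$ provide explicit witnesses $H_0 = \pm\Delta^i$ with $d_5(H_0) = \kappabar\cdot\nu$ (or $\kappabar\cdot\nu_1$ for the $\nu_2,\nu_6$ cases). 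Moss' theorem then shows that the bracket contains a lift of $H_0 \cdot a_3 \in \oppi_{*,*}(\Smf/\tau^4)$; pushing this into $\Smf/\tau^{12}$ and invoking the uniqueness of such lifts from \cref{not:lift_nu1_nu2_nu4_nu5_nu6} (via \cref{prop:precise_lifting_higher_powers_tau}) identifies the resulting element exactly as $\nu_i$. Zero indeterminacy is verified by enumerating the flanking low-filtration generators.

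For the four identities involving $\eta_i\kappa$ and $\tau^2\eta_4\kappabar$, the plan is to lift the sphere-level brackets of \cref{prop:toadyfromsphere}---namely $\eta\kappa \in \angbr{\nu, 2\nu, \eps} \cap \angbr{2\nu, \nu, \eps}$ and $\tau^2\eta\kappabar \in \angbr{\nu, 2\nu, \kappa}$---uniquely to $\Smf/\tau^{10}$ or $\Smf/\tau^{12}$, and then multiply by the appropriate power of $\Delta$. The Toda shuffling formulas of \cref{prop:shufflingformulas} move $\Delta^i$ past a factor of $\nu$ or $2\nu$, converting it into the corresponding $\nu_i$, whose definition in \cref{not:lift_nu1_nu2_nu4_nu5_nu6} is precisely tailored for this conversion. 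The uniqueness of the lift of $h_1 d \Delta^i$ in the relevant truncation, verified via \cref{prop:precise_lifting_higher_powers_tau}, then pins down the bracket value as $\eta_i\kappa$ (respectively $\tau^2\eta_4\kappabar$). Well-definedness of each bracket follows from the observation that the relevant mod-$\tau^4$ products vanish in $\oppi_{*,*}(\Smf/\tau^4)$ (by the sphere relations $2\nu^2 = 0$, $\nu\eps = 0$, and $2\nu\kappa = 0$) combined with the injectivity of $\Smf/\tau^k \to \Smf/\tau^4$ in the relevant bidegrees.

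For the last identity $\pm 2\kappabar \in \angbr{\nu, \eta, \eta\kappa}$ in $\Smf/\tau^{10}$, well-definedness requires showing $\eta^2\kappa = 0$ in $\Smf/\tau^{10}$, which reduces to the fact that $h_1^2 d$ is killed by a sufficiently short differential on an early page (so that $\eta^2\kappa$ becomes $\tau^{10}$-divisible in $\Smf$). The containment is then derived from the four-fold bracket $\tau^2\kappabar \in \angbr{\kappa, 2, \eta, \nu}$ of \cref{cor:todabracketforkappabar} via reversal (yielding $\tau^2\kappabar \in \pm\angbr{\nu, \eta, 2, \kappa}$), together with a four-to-three juggling argument in which the factor of $2$ emerges from the $\tau$-powers absorbed when descending from the four-fold to the three-fold bracket. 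The main obstacle throughout is the careful bookkeeping of vanishing relations, indeterminacies, and uniqueness-of-lift statements across the various truncations $\Smf/\tau^k$, and this is especially delicate for this last identity, where the precise relationship between the four-fold sphere bracket and the three-fold bracket in $\Smf/\tau^{10}$ must account for all the relevant $\tau$-factors.
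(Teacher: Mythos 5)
Your plan for the first four brackets (the $\nu_i$-presentations) is essentially the paper's: verify nonemptiness via $\tau^4\nu\kappabar=0$ and $\tau^4\nu_1\kappabar=0$, check zero indeterminacy, and then apply the synthetic Moss' theorem using the $d_5(\Delta)=\pm h_2g$ differential to show the Massey product on $\uE_5$ contains $2h_2\Delta$ (resp.\ its $\Delta$-translates). Note a small infelicity: you write $d_5(H_0)=\kappabar\cdot\nu$, but the differential lives at the $\uE_5$-page and should be $d_5(\Delta)=\pm h_2 g$; $\nu\kappabar$ is its lift. Your Group~2 also lands on the paper's strategy (reduce to a Massey product in $\Smf/\tau$ via injectivity, then use $\Delta$-multiplication and \cref{prop:toadyfromsphere}), but your description has the order backwards: you cannot ``multiply by the appropriate power of $\Delta$'' after lifting to $\Smf/\tau^{10}$, because $\Delta$ is not an element of $\oppi_{*,*}\Smf/\tau^{10}$ (it only lifts to $\Smf/\tau^4$). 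The $\Delta$-juggling has to happen inside the Massey products on the $\uE_2$-page, after which the unique lift over the reduction map is identified; once phrased this way the argument is the paper's.

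The genuine gap is the last bracket, $\pm 2\kappabar\in\angbr{\nu,\eta,\eta\kappa}$. You propose to derive this from the four-fold bracket $\tau^2\kappabar\in\angbr{\kappa,2,\eta,\nu}$ by ``reversal'' and a ``four-to-three juggling argument in which the factor of $2$ emerges from the $\tau$-powers absorbed when descending from the four-fold to the three-fold bracket.'' No such juggling formula exists: the shuffling identities of \cref{prop:shufflingformulas} relate brackets of the same length, or shift elements between slots; there is no general mechanism converting a four-fold bracket with middle entries $(2,\eta)$ into a three-fold bracket with middle entry $\eta\kappa$, and the $\tau$-powers do not ``produce a factor of $2$'' in any meaningful sense. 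In fact, the two brackets are defined from completely different cell complexes (a cone on $C(2,\eta)$ versus a cone on $C(\eta\kappa)$), and there is no map between them that could carry one bracket to the other. The paper's route is entirely different and much more direct: by \cref{prop:precise_lifting_higher_powers_tau,not:kappabar_and_g} the reduction $\oppi_{20,4}\Smf/\tau^{10}\to\oppi_{20,4}\Smf/\tau$ is a bijection, and then Bauer's Hopf-algebroid computation $2g\in\angbr{h_2,h_1,h_1d}$ from \cite[Appendix~A]{bauer_tmf}, combined with \cref{prop:todagoestomassey,etwopagebracketsprimethree}, identifies the bracket directly in $\Smf/\tau$. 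You would need to either cite Bauer's Massey product computation (the paper's choice), or establish $\pm2\kappabar\in\angbr{\nu,\eta,\eta\kappa}$ in the synthetic sphere from a direct reference and transport it along $\nu\S\to\Smf$ --- but you cannot get there by reshuffling $\angbr{\kappa,2,\eta,\nu}$.
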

\begin{proof}
    The brackets for $\nu_1$ and $\nu_5$ are nonempty by \cref{lem:tau4_nu_kappabar_is_zero}. These brackets also have zero indeterminacy as $\oppi_{7,-3}\Smf/\tau^{12}$ and $\oppi_{103,-3} \Smf/\tau^{12}$ both vanish and $\oppi_{24,0}\Smf/\tau^{12}$ and $\oppi_{120,0}\Smf/\tau^{12}$ are both $2\nu$-torsion. The associated Massey product on $\uE_5$ contains $2\nu\Delta$, so the synthetic version of Moss' theorem \cref{thm:synthetic_moss} shows that $\angbr{\kappabar, \tau^4\nu, 2\nu} = \nu_1$, our chosen lift of $2\nu\Delta$. Similarly, we see $\nu_5 = \angbr{\kappabar, \tau^4\nu, 2\nu_4}$. The brackets for $\nu_2$ and $\nu_6$ follow similarly, except to see they are nonempty one refers to \cref{cor:tau4_nu1_kappabar_is_zero} and the proof of \cref{lem:epsexists}.

    For the bracket expressions for $\eta_1\kappa$, $\eta_4\kappa$, and $\eta_5\kappa$, we use \cref{prop:todagoestomassey}. One uses \cref{prop:precise_lifting_higher_powers_tau} to see that the appropriate reduction map to $\Smf/\tau$ is injective in these degrees, so it suffices to work in $\Smf/\tau$. In this case, these brackets are easily seen to have no indeterminacy, and therefore follow from \cref{prop:toadyfromsphere} by multiplication by a power of $\Delta$. For $\tau^2\eta_4\kappabar$, we make the same arguments but only reduce to $\Smf/\tau^4$.

    For the last Toda bracket for $\pm 2\kappabar$, we again use \cref{prop:todagoestomassey} and the fact that the reduction map
    \[\oppi_{20,4} \Smf/\tau^{10} \to \Smf/\tau\]
    is injective by \cref{prop:precise_lifting_higher_powers_tau}, and surjective as the generator $g$ is hit by $\kappabar$ by \cref{not:kappabar_and_g}. We can now use Bauer's computation of $2g \in \angbr{h_2,h_1,h_1d}$ from \cite[Appendix~A]{bauer_tmf} together with \cref{prop:todagoestomassey} and \cref{etwopagebracketsprimethree} which validates this relation in $\Smf/\tau$.
\end{proof}

\subsubsection{Hidden extensions}

\begin{lemma}
    \label{lem:hidden_extensions_nu1_nu2}
    \leavevmode
    \begin{itemize}
        \item \textup{(28,2)} $\nu_1 \eta = \tau^4 \eps \kappabar$ in $\Smf/\tau^{12}$.
        \item \textup{(35,3)} $\nu_1 \eps = \tau^4 \eta \kappa \kappabar$ in $ \Smf/\tau^{12}$.
        \item \textup{(41,3)} $\nu_1 \kappa = \tau^6 \eta \kappabar^2$ in $\Smf/\tau^{12}$.
        \item \textup{(52,2)} $\nu_2 \eta = \tau^4 \eps_1 \kappabar$ in $\Smf/\tau^{12}$.
        \item \textup{(59,3)} $\nu_2 \eps = \tau^4 \eta_1\kappa \kappabar$ in $\Smf/\tau^{10}$.
        \item \textup{(124,2)} $\nu_5 \eta = \tau^4 \eps_4 \kappabar$ in $\Smf/\tau^{12}$.
        \item \textup{(131,3)} $\nu_5 \eps = \tau^4 \eta_4 \kappa \kappabar$ in $ \Smf/\tau^{12}$.
        \item \textup{(137,3)} $\nu_5 \kappa = \tau^6 \eta_4 \kappabar^2$ in $\Smf/\tau^{12}$.
        \item \textup{(148,2)} $\nu_6 \eta = \tau^4 \eps_5 \kappabar$ in $\Smf/\tau^{12}$.
        \item \textup{(155,3)} $\nu_6 \eps = \tau^4 \eta_5\kappa \kappabar$ in $\Smf/\tau^{10}$.
    \end{itemize}
\end{lemma}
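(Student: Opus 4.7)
The plan is to establish all ten hidden extensions by the same general mechanism, namely by writing $\nu_i$ as a Toda bracket (as in \cref{lem:toda_brackets_nu1_nu2}), shuffling that bracket with the third factor ($\eta$, $\eps$, or $\kappa$), and then identifying the resulting ``inner'' bracket using the known relations of \cref{prop:toadyfromsphere} and \cref{lem:toda_brackets_nu1_nu2}. For concreteness, consider the first extension $\nu_1 \eta = \tau^4 \eps \kappabar$ in $\Smf/\tau^{12}$. From \cref{lem:toda_brackets_nu1_nu2} we have $\nu_1 = \angbr{\kappabar, \tau^4 \nu, 2\nu}$, and \cref{prop:shufflingformulas}\,(1), applied once the sub-bracket vanishing hypotheses are checked on $\Smf/\tau^{12}$, yields
\[
\nu_1 \cdot \eta \ \in\ \angbr{\kappabar,\,\tau^4\nu,\,2\nu}\cdot \eta \ =\ \kappabar\cdot \angbr{\tau^4\nu,\,2\nu,\,\eta}.
\]
Since $\eps \in \angbr{\nu,2\nu,\eta}$ in $\Smf$ by \cref{prop:toadyfromsphere}, multiplicativity and $\tau$-linearity give $\tau^4\eps \in \angbr{\tau^4\nu,2\nu,\eta}$, so $\tau^4\eps\kappabar$ is one such product; the remaining job is to verify that indeterminacy in the outer bracket, multiplied by $\eta$, and indeterminacy in $\angbr{\tau^4\nu,2\nu,\eta}$, multiplied by $\kappabar$, both vanish in $\oppi_{28,2}\Smf/\tau^{12}$. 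This is done by inspecting the relevant bigraded homotopy groups via \cref{prop:precise_lifting_higher_powers_tau}, after which one concludes that $\nu_1\eta$ equals $\tau^4\eps\kappabar$ on the nose. Note that both sides have the correct mod $\tau$ reduction (namely zero), so the extension is genuinely hidden.

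The remaining nine extensions follow exactly the same template with minor adjustments. For items (35,3) and (41,3), one shuffles $\angbr{\kappabar,\tau^4\nu,2\nu}$ past $\eps$ and past $\kappa$ respectively; the inner bracket is then evaluated using $\eta\kappa\in\angbr{\nu,2\nu,\eps}$ from \cref{prop:toadyfromsphere} and $\tau^2\eta\kappabar\in\angbr{\nu,2\nu,\kappa}$ from the same proposition, with the $\Delta$-multiples handled by $\Delta$-shifting. For items (52,2) and (59,3), one uses $\nu_2=\angbr{\kappabar,\tau^4\nu_1,\nu}$ and shuffles past $\eta$ or $\eps$; the relevant inner brackets are $\eps_1=\angbr{\nu_1,\nu,\eta}$ and $\eta_1\kappa=\angbr{\nu_1,\nu,\eps}$, both recorded in \cref{lem:epsexists,lem:toda_brackets_nu1_nu2}. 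Items (124,2) through (155,3) are the exact $\Delta^4$- or $\Delta^5$-shifted analogues of the first five, using the bracket descriptions of $\nu_5$ and $\nu_6$ from \cref{lem:toda_brackets_nu1_nu2} together with $\eps_4,\eps_5$ from \cref{not:lifts_eps1_eps_4_eps5} and $\eta_4\kappa,\eta_5\kappa$ from \cref{lem:toda_brackets_nu1_nu2}.

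The main obstacle in every case is the indeterminacy bookkeeping: the shuffling formula \cref{prop:shufflingformulas}\,(1) requires certain sub-brackets to be equal to $\set{0}$ (not merely to contain zero), and the outer and inner indeterminacies must be shown to contribute trivially after multiplication by $\eta$, $\eps$, $\kappa$, or $\kappabar$. Each such check reduces to a vanishing or small-order statement about some $\oppi_{n,s}\Smf/\tau^{k}$ in an adjacent bidegree, which we verify using \cref{prop:precise_lifting_higher_powers_tau} together with the $\uE_2$-page information in \cref{etwoprime2} and the $d_3, d_5, d_7$ computations of \cref{prop:d3differential,prop:d5_Delta,prop:dsevens}. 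In a handful of cases the target bidegree itself is already small enough that the extension is detected uniquely without any further indeterminacy analysis (this is why two of the items, (59,3) and (155,3), are stated in $\Smf/\tau^{10}$ rather than $\Smf/\tau^{12}$). Once these routine checks are dispatched, each equality holds on the nose as claimed.
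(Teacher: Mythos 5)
Your proof follows the same strategy as the paper: express $\nu_i$ via the Toda brackets of \cref{lem:toda_brackets_nu1_nu2}, apply the shuffling formulas of \cref{prop:shufflingformulas} to move the third factor into the bracket, and identify the resulting inner $3$-fold bracket with the values imported in \cref{prop:toadyfromsphere} and \cref{lem:toda_brackets_nu1_nu2}. The only discrepancy is minor: the reason two items are stated in $\Smf/\tau^{10}$ rather than $\Smf/\tau^{12}$ is not that the target bidegree is already small, but that the input brackets $\eta_1\kappa = \angbr{\nu_1,\nu,\eps}$ and $\eta_5\kappa = \angbr{\nu_1,\nu_4,\eps}$ are only established in $\Smf/\tau^{10}$ in \cref{lem:toda_brackets_nu1_nu2}.
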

\begin{proof}
The Toda bracket $\nu_1 = \angbr{\kappabar,\tau^4\nu, 2\nu}$ of \cref{lem:toda_brackets_nu1_nu2}, the shuffling formulas of \cref{prop:shufflingformulas}, the naturality of Toda brackets of \cref{lem:functorspreservebrackets}, and the Toda bracket $\eps \in \angbr{\nu,2\nu,\eta}$ of \cref{prop:toadyfromsphere} yield the relations
\[\nu_1 \eta = \angbr{\kappabar, \tau^4\nu,2\nu} \eta = \kappabar \angbr{\tau^4\nu,2\nu,\eta} \supseteq \kappabar \tau^4 \angbr{\nu,2\nu,\eta} \ni \tau^4 \eps \kappabar\]
in $\oppi_{28,2} \Smf/\tau^{12}$. Similarly, using the brackets $\nu_5 = \angbr{\kappabar, \tau^4\nu_4, 2\nu}$ and $\eps_4 = \angbr{\nu,2\nu_4,\eta}$ of \cref{lem:toda_brackets_nu1_nu2} and \cref{not:lifts_eps1_eps_4_eps5}, we obtain $\nu_5\eta = \tau^4\eps_4 \kappabar$.

The other extensions on $\nu_1$ and $\nu_2$ follow similarly, referring to \cref{prop:toadyfromsphere} and \cref{lem:toda_brackets_nu1_nu2} when necessary. The same goes for all of the extensions on $\nu_2$ and $\nu_6$, referring to \cref{lem:toda_brackets_nu1_nu2} and the definition of $\eps_1$ and $\eps_5$ from \cref{not:lifts_eps1_eps_4_eps5}. 
\end{proof}


\subsection{Page 9}

\subsubsection{Atomic differentials}

\begin{proposition}\label{prop:dnines}
    \leavevmode
    \begin{itemize}
        \item \textup{(49,1)} $d_9(h_1\Delta^2) = c g^2$.\footnote{As an alternative to the proof of this differential provided below, one can observe that
        \[\tau^8 \eps \kappabar^2 = \tau^8 \kappabar^2 \angbr{\nu, \eta, \nu} = \tau^4 \kappabar \angbr{\tau^4 \kappabar, \nu, \eta} \nu =0,\]
        and then apply \cref{thm:omnibus}.}
        \item \textup{(56,2)} $d_9(c\Delta^2) = h_1dg^2$.
        \item \textup{(73,1)} $d_9(h_1\Delta^3) = cg^2\Delta$.
        \item \textup{(81,2)} $d_9(c\Delta^3) = h_1dg^2\Delta$.
        \item \textup{(145,1)} $d_9(h_1\Delta^6) = cg^2\Delta^4$.
        \item \textup{(152,2)} $d_9(c\Delta^6) = h_1dg^2\Delta^4$.
        \item \textup{(169,1)} $d_9(h_1\Delta^7) = cg^2\Delta^5$.
        \item \textup{(176,2)} $d_9(c\Delta^7) = h_1dg^2\Delta^6$.
    \end{itemize}
\end{proposition}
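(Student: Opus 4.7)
The plan is to apply the Omnibus Theorem (\cref{thm:omnibus}, item~(2a) in contrapositive): for each atomic differential $d_9(x)=y$, we exhibit a lift $\beta\in\oppi_{*,*}\Smf/\tau^{12}$ of $y$ as an explicit product of the classes introduced in \cref{detectionsection,ssec:page7}, and prove that $\tau^8\beta=0$. This forces $y$ not to survive to the $\uE_{10}$-page, so $y$ is hit by a differential of length at most $9$. Combined with the checkerboard pattern of the DSS (only odd-length differentials occur because $\Smf/\tau$ is concentrated in even total degree) and inspection of the earlier pages to rule out the shorter $d_3$, $d_5$, and $d_7$ hitting $y$, this pins the length to be exactly~$9$; the unique candidate source is then the stated $x$, which survives to $\uE_9$ by Omnibus because the earlier differentials vanish on it.

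All eight vanishings of the form $\tau^8\beta=0$ reduce, via the hidden extensions of \cref{lem:hidden_extensions_nu1_nu2}, to the torsion relations $\tau^4\nu_i\kappabar=0$ of \cref{lem:tau4_nu_kappabar_is_zero,cor:tau4_nu1_kappabar_is_zero} together with their analogues for $\nu_2$ and $\nu_6$. We illustrate with $d_9(h_1\Delta^2)=cg^2$: the lift of $cg^2$ is $\eps\kappabar^2\in\oppi_{48,10}\Smf/\tau^{12}$, and multiplying the extension $\nu_1\eta=\tau^4\eps\kappabar$ by $\kappabar$ gives
\[
\tau^8\eps\kappabar^2=\tau^4\cdot\nu_1\eta\kappabar=\pm\eta\cdot(\tau^4\nu_1\kappabar)=0.
\]
For $d_9(c\Delta^2)=h_1dg^2$, the lift $\eta\kappa\kappabar^2$ paired with $\nu_1\eps=\tau^4\eta\kappa\kappabar$ yields $\tau^8\eta\kappa\kappabar^2=\pm\eps\cdot(\tau^4\nu_1\kappabar)=0$. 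The six remaining differentials follow in parallel by substituting the appropriate lifts $\eps_1,\eps_4,\eps_5$ (for the $h_1\Delta^i$-rows) and $\eta_1\kappa,\eta_4\kappa,\eta_5\kappa$ (for the $c\Delta^i$-rows) together with the corresponding extensions involving $\nu_2,\nu_5,\nu_6$ listed in \cref{lem:hidden_extensions_nu1_nu2}.

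The main obstacle is to establish the auxiliary vanishings $\tau^4\nu_2\kappabar=0$ and $\tau^4\nu_6\kappabar=0$ in $\Smf/\tau^{12}$, which are not explicitly recorded in \cref{cor:tau4_nu1_kappabar_is_zero} but follow by the same mechanism: the $d_5$-differentials $d_5(\Delta^3)=\pm 3\Delta^2 h_2g$ and $d_5(\Delta^7)=\pm 7\Delta^6 h_2g$, deducible from \cref{prop:d5_Delta} via the Leibniz rule, each force a $\tau^4$-torsion lift of the respective target, and since $3$ and $7$ are units modulo the orders of $\nu_2\kappabar$ and $\nu_6\kappabar$, the argument of \cref{cor:tau4_nu1_kappabar_is_zero} then upgrades to the desired vanishings. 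An independent route for the base case, indicated in the footnote, proceeds through the classical expression $\eps\in\angbr{\nu,\eta,\nu}$, the shuffling formula of \cref{prop:shufflingformulas} (applicable by $\tau^4\nu\kappabar=0$ from \cref{lem:tau4_nu_kappabar_is_zero} and $\eta\nu=0$), and graded commutativity of $\Smf$ to obtain $\tau^8\eps\kappabar^2=0$ directly, bypassing the hidden extensions at the cost of an indeterminacy check.
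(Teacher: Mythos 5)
Your approach is logically sound and is a genuinely different route from the paper's proof, essentially running the argument in reverse: where the paper computes the total differential $\delta_4^{10}$ on the \emph{source} $\eta\Delta^i$ (respectively $\eps\Delta^i$) by combining linearity over $\oppi_{*,*}\Smf/\tau^4$ with the hidden extensions of \cref{lem:hidden_extensions_nu1_nu2} and then invokes \cref{prop:total_differential_versus_differentials} to read off the $d_9$, you instead exhibit the \emph{target} as $\tau^8$-torsion in $\Smf/\tau^{12}$ and apply the contrapositive of \cref{thm:omnibus}\,(2a). Your version is in fact the approach sketched in the paper's own footnote to the first of the eight differentials (using a Toda shuffle rather than a hidden extension, but morally the same). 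The price you pay is real, however: your argument needs the auxiliary vanishings $\tau^4\nu_2\kappabar=0$ and $\tau^4\nu_6\kappabar=0$, which the paper never establishes because its forward total-differential argument sidesteps them entirely (the coefficients $3$ and $7$ in $\delta_4^{14}(\Delta^3)=u\cdot 3\nu_2\kappabar$ and $\delta_4^{14}(\Delta^7)=u\cdot 7\nu_6\kappabar$ can simply be carried through the Leibniz computation, without ever needing the torsion claim). Your sketch of how to obtain those vanishings — the $d_5(\Delta^3)=\pm 3h_2g\Delta^2$ forces, by \cref{thm:omnibus}\,(3), some $\tau^4$-torsion lift of $3h_2g\Delta^2$, and then multiplying by the inverse unit recovers $\nu_2\kappabar$ — is a reasonable idea, but to make it rigorous one still must pin down the group $\oppi_{71,5}\Smf/\tau^{12}$ (respectively $\oppi_{167,5}\Smf/\tau^{12}$) via \cref{prop:precise_lifting_higher_powers_tau} to verify that every lift of $3h_2g\Delta^2$ is a unit multiple of $\nu_2\kappabar$ and not offset by an extra summand; that bookkeeping is exactly the work the paper avoids. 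You also need to verify, per your opening paragraph, that no $d_3$, $d_5$, or $d_7$ can hit the targets and that the stated source is the unique class in the right bidegree that survives to $\uE_9$; these checks are routine from the charts but are additional content. On balance: your route is correct in outline but less economical, replacing one linearity computation with eight torsion checks plus source identification, two of which require extra lemmas not present in the paper.
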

\begin{proof}
    From \cref{prop:total_diff_14_Delta2} it follows that also $\delta_4^{10}(\Delta^2) = u \cdot \nu_1\kappabar$ for some $u\in(\Z/8)^\times$.
    Using \cref{lem:hidden_extensions_nu1_nu2}, we learn that
    \begin{gather*}
        \delta_4^{10}(\eta\Delta^2) = \eta \delta_4^{10}(\Delta^2) = u \cdot \eta \cdot \nu_1\kappabar = u \cdot \tau^4 \eps \kappabar^2\\
    \shortintertext{and}
        \delta_4^{10}(\eps\Delta^2) = u\cdot \eps \nu_1 \kappabar = u\cdot \tau^4 \eta\kappa\kappabar^2.
    \end{gather*}
    We obtain our desired $d_9$ on $h_1\Delta^2$ courtesy of \cref{prop:total_differential_versus_differentials}.
    The other differentials follow similarly using the hidden extensions of \cref{lem:hidden_extensions_nu1_nu2}.
\end{proof}

\subsubsection{Meta-arguments}\label{sssec:9nonconn}

\begin{proposition}\label{prop:E9metachecks}
    The condition of \cref{prop:gdivisibilitycheck} holds for $d_9$. Moreover, $\Delta^8$ is a $d_9$-cycle.
\end{proposition}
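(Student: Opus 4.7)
The plan is to mirror the proofs of \cref{prop:E5metachecks} and \cref{prop:E7metachecks}: verify the $g$-divisibility condition of \cref{prop:gdivisibilitycheck} one atomic differential at a time, and then use the Leibniz rule plus our existing control over lower-page differentials to argue that $\Delta^8$ is a $d_9$-cycle.

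For the first part, I would run through each of the atomic $d_9$-differentials listed in \cref{prop:dnines} and check, directly from the $\uE_9$-chart (which is known, since by \cref{prop:E7metachecks} we have full control of the $\uE_7$-page, and on passing to $\uE_9$ we only quotient by the atomic $d_7$'s listed in \cref{prop:dsevens}), the implication $g\cdot y=0 \Rightarrow g\cdot x=0$. For instance, for $d_9(h_1\Delta^2)=cg^2$, one verifies directly that $gh_1\Delta^2$ is $g$-torsion on $\uE_9$ whenever $cg^3$ is; and similarly for the $d_9$ on $c\Delta^2$, and then for all the $\Delta^k$-translates ($k=3,6,7$) by multiplying by appropriate powers of $\Delta$. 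Since every atomic $d_9$ is of the form (power of $\Delta$)$\,\cdot\,$(a $d_9$ near the $0$-stem), the check reduces to a finite list of cases, each of which is a direct chart inspection. This is entirely analogous to the checks done on previous pages.

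For the second part, that $\Delta^8$ is a $d_9$-cycle, I would argue as follows. On $\uE_9$, the element $2\Delta^4$ survives: it is a $d_5$-cycle because $2d_5(\Delta^4)=0$ for degree reasons (or because $\Delta^4$ itself is a $d_5$-cycle, as $4\cdot d_5(\Delta)=4h_2g=0$ with $h_2$ being $2$-torsion), and it is a $d_7$-cycle because $2\cdot d_7(\Delta^4)=2h_1^3g\Delta^3=0$ with $h_1$ being $2$-torsion. Write $\Delta^8 = \Delta^4\cdot\Delta^4$; the usual Leibniz rule then gives
\[
    d_9(4\Delta^8)=d_9\bigl((2\Delta^4)^2\bigr)=2\cdot(2\Delta^4)\cdot d_9(2\Delta^4)=4\Delta^4\cdot d_9(2\Delta^4),
\]
so the multiplicative Leibniz computation reduces the question to $d_9$ on $2\Delta^4$. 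Since there is no atomic $d_9$ on any pure power of $\Delta$ in \cref{prop:dnines} and the target bidegree $(95,9)$ of $d_9(2\Delta^4)$ is either zero on $\uE_9$ or is hit only by $g$-multiples of the already-accounted atomic $d_9$'s from \cref{prop:dnines} (another direct chart check), we obtain $d_9(2\Delta^4)=0$ and hence $d_9(4\Delta^8)=0$. The final step is to promote this from $4\Delta^8$ to $\Delta^8$: the target group $\uE_9^{191,9}$ is $4$-torsion free (it is, after the $d_7$'s in \cref{prop:dsevens} have been taken, a direct translate of $\uE_9^{-1,9}$, which consists of $g$-power elements of order at most $2$), so multiplication by $4$ is injective on it and $d_9(\Delta^8)=0$ follows.

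The main obstacle here is the second part: unlike $d_5$ and $d_7$, there is no atomic $d_9$ on a power of $\Delta$, so the Leibniz rule does not give an immediate one-line argument. The honest work is the degree-by-degree verification that $d_9(2\Delta^4)=0$ and that the $4$-torsion in $\uE_9^{191,9}$ vanishes; both reduce to reading off small regions of the $\uE_9$-chart near the $-1$-stem (via $\Delta^8$-periodicity on $\uE_4$ and the known $d_{\leq 7}$'s), so neither should pose a serious difficulty once one has the chart in front of oneself.
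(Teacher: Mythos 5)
Your first part is fine and matches the paper: the $g$-divisibility condition is verified directly on the atomic $d_9$'s, just as on previous pages.

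Your second part, however, takes a genuinely different route from the paper and it has a real gap. The paper's argument is just that the target group $\uE_9^{191,9}$ vanishes: $(191,9)$ is in the connective region with filtration $\geq 4$, so by the $g$-divisibility established in the first half of the proposition (more precisely, by \cref{prop:gdivisibilitycheck} applied to the atomic $d_{<9}$'s checked in \cref{prop:E5metachecks,prop:E7metachecks}) every element there is divisible by $g^2$, hence lands in $g^2 \cdot \uE_9^{151,1}$; and one checks directly that $\uE_2^{151,1}=0$ (no $h_1$, $h_2$, or $b$ multiple of $c_4^j c_6^k \Delta^l$ lands in stem $151$ for parity reasons). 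That is the ``degree reasons'' argument, and it makes the Leibniz detour unnecessary.

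Your Leibniz detour itself has two concrete problems. First, your final step claims that $\uE_9^{191,9}$ is $4$-torsion free because it is ``a direct translate of $\uE_9^{-1,9}$'' under $\Delta^8$ — but $(-1,9)$ lies in the nonconnective region ($5\cdot 9 = 45 > -1+12$) while $(191,9)$ lies in the connective region, and $\Delta^8$-multiplication does not identify these; the nonconnective classes are $\Delta$-power torsion and do not transport isomorphically across the boundary (compare \cref{prop:E4metachecks}, which only asserts $\Delta^8$-periodicity once you are sufficiently far from that boundary). Second, and more fundamentally, your parenthetical description contradicts your own conclusion: a group whose elements all have ``order at most $2$'' is an $\F_2$-vector space, so every element is annihilated by $4$; multiplication by $4$ on such a group is the zero map, not an injection, unless the group is actually zero. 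Knowing $d_9(4\Delta^8)=0$ therefore gives no information about $d_9(\Delta^8)$ on a $2$-torsion target — the argument only goes through if the target already vanishes, in which case the whole Leibniz computation is superfluous. You should replace the second half by the direct check that $\uE_9^{191,9}=0$.
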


\begin{proof}
    The condition of \cref{prop:gdivisibilitycheck} is checked directly as before. The class $\Delta^8$ is a $d_9$-cycle for degree reasons.
\end{proof}

\begin{proposition}\label{cor:nolinecrossingE9}
    There are no line-crossing $d_9$-differentials.
\end{proposition}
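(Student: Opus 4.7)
The plan is to apply the meta-argument of \cref{prop:gdivisibilitycheck2}. By \cref{prop:E7metachecks}, the $g$-divisibility condition from \cref{prop:gdivisibilitycheck} holds for all $d_{r'}$ with $r' \leq 7$, and by \cref{prop:E9metachecks}, it continues to hold at $d_9$. Invoking \cref{prop:gdivisibilitycheck} at $r=9$, every class in the connective region of $\uE_9$ lying in filtration $\geq 4$ is divisible by $g$. Consequently, in searching for line-crossing $d_9$'s, we may restrict attention to sources in filtration $0 \leq s \leq 3$, since any $g$-multiple is automatically forced to support a trivial $d_9$ once the underlying class does.

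Next, I would locate the largest stem $n$ where the nonconnective region of $\uE_9$ has a nonzero class in some bidegree $(n,s)$ with $9 \leq s \leq 12$; call this $n_0$. Then \cref{prop:gdivisibilitycheck2} tells us it suffices to check for line-crossing $d_9$-differentials with source in stems $\leq n_0 + 1$. Reading off the $\uE_9$-page (obtained from \cref{etwoprime2} by imposing the $d_3$, $d_5$, and $d_7$ differentials computed so far), this bounds the search to a short, explicit list of candidate sources in filtrations $0$, $1$, $2$, and $3$ near the boundary line $s = \tfrac{1}{5}n + \tfrac{12}{5}$.

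The last step is to verify by inspection that each such candidate is in fact a permanent cycle on the $\uE_9$-page. The expected candidates are low-filtration $\Delta$-multiples of generators such as $c_4$, $c_6$, and $h_1\Delta^i$, together with possibly $c$-multiples thereof. Each of these should be dispatched by one of three mechanisms already available: (i) detection from the ANSS for $\Sph$ via the unit map $\nu\S \to \Smf$, which handles classes coming from $\eta$, $\nu$, $\eps$, $\kappa$, and $\kappabar$ (\cref{heightonedetection,prop:kappa_detection,sssec:kappabar}); (ii) the synthetic transfer argument of \cref{survivalofmodularformsattwo}, which shows $8$, $2c_4$, and $2c_6$ (and their $\Delta$-multiples, to the extent they are cycles) are permanent cycles; and (iii) the lifting results of \cref{not:lift_eta1_eta_4_eta5,not:lift_nu1_nu2_nu4_nu5_nu6,not:lifts_eps1_eps_4_eps5}, which show that the various $\eta_i$, $\nu_i$, and $\eps_i$ lift far enough in the $\tau$-tower to rule out a $d_9$ by \cref{thm:omnibus}.

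The main obstacle I anticipate is confirming that nothing involving the class $c$ or its $\Delta$-multiples in filtration $2$ supports a line-crossing $d_9$; here the bookkeeping is slightly delicate because $c \Delta^i$ does support a $d_9$ in some cases (cf.\ \cref{prop:dnines}), and the line-crossing check requires distinguishing those cases where the target falls in the nonconnective region from those already treated in the connective region. In practice this is resolved by writing out the concrete candidates and noting that all the honestly line-crossing $d_9$'s would have sources already identified as permanent cycles by the mechanisms above, so there is nothing left to kill across the line.
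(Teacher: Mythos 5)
Your overall strategy matches the paper's: invoke \cref{prop:gdivisibilitycheck2} using the earlier check \cref{prop:E7metachecks}, which bounds the search for line-crossing $d_9$'s to a finite range (through the $32$-stem), and then dispatch the short list of candidates by hand. The paper's candidate list is $h_1$, $c$, $h_1\Delta$, and $c\Delta$; the first two are killed by detection from the sphere (your mechanism (i)), and $c\Delta$ by the lift $\eps_1$ (your mechanism (iii), via \cref{lem:epsexists}/\cref{not:lifts_eps1_eps_4_eps5}). So far, so good.

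There is, however, a genuine gap in how you plan to handle $h_1\Delta$, and this gap matters. You assign it to mechanism (iii) and cite \cref{not:lift_eta1_eta_4_eta5}, but that notation only records a lift of $h_1\Delta$ to $\Smf/\tau^8$ (that is, validity $8$). By the Omnibus Theorem (\cref{thm:omnibus}(1)), a lift to $\Smf/\tau^8$ only tells you that $d_2,\dots,d_8$ vanish on $h_1\Delta$; it says nothing about $d_9$. The stronger lift to $\Smf/\tau^{10}$ is only established later, in \cref{lem:eta1_eta5_lift_tau10} (stated after this proposition in the paper), and its proof itself relies on the $d_9$-computations of \cref{prop:dnines}, so you cannot simply cite it here without unwinding it. The paper's proof instead rules out $d_9(h_1\Delta)$ by a target-disqualification argument: the only element in bidegree $(24,10)$ that could receive $d_9(h_1\Delta)$ itself supports a nonzero $d_9$ (by \cref{prop:dnines}), and since $d_9\circ d_9=0$, the source must be a $d_9$-cycle. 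This is a different, and in this case necessary, argument; your proposal as written would not close the $h_1\Delta$ case.

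Two smaller points. First, you claim to verify that each candidate is a permanent cycle; that is more than what is needed and more than what the paper proves at this stage (for $h_1\Delta$, only ``$d_9$-cycle'' is established here). Second, mechanism (ii), the synthetic transfer (\cref{survivalofmodularformsattwo}), is not actually invoked for the $d_9$ case; it is the key to the $d_7$ case (\cref{cor:nolinecrossingE7}, where the critical candidate is $2c_6$) and plays no role in the present proof.
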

\begin{proof}
    By \cref{prop:E7metachecks}, we may invoke the meta-argument of \cref{prop:gdivisibilitycheck2}, which implies we only need to check for line crossing differentials through the $32$-stem.
    The only possible atomic $d_9$'s crossing the line in this range have source $h_1$, $c$, $h_1\Delta$, or $c\Delta$.
    The first two are permanent cycles.
    The third is a $d_9$-cycle because the only possible target supports a $d_9$ differential.
    Lastly, the $d_9$ on $c\Delta$ is excluded by the earlier \cref{lem:epsexists} (combined with \cref{thm:omnibus}\,(1)).
\end{proof}

\subsubsection{Lifts}

In \cref{lem:eta1_eta5_lift_tau8}, we provided lifts $\eta_1$, $\eta_4$ and $\eta_5$ to $\Smf/\tau^8$.
We will need further lifts of two of these elements.

\begin{lemma}[25,1), (121,1]
    \label{lem:eta1_eta5_lift_tau10}
    The reduction maps $\Smf/\tau^{10} \to \Smf/\tau^8$ induces an isomorphism on bigraded homotopy groups in degrees $(25,1)$ and $(121,1)$.
\end{lemma}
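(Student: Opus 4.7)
The plan is to apply the truncated Omnibus theorem \cref{prop:precise_lifting_higher_powers_tau} with $k = 10$ and $m = 8$, and to verify both that the reduction is surjective (i.e., that $\eta_1$ and $\eta_5$ lift to $\oppi_{*,*}\Smf/\tau^{10}$) and that it is injective. The cleanest packaging is probably to prove the statement simultaneously for both bidegrees, since the arguments are parallel, with $\eta_1$ corresponding to $\Delta$ and $\eta_5$ to $\Delta^5$.

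For surjectivity, the obstruction to lifting $\eta_1$ from $\Smf/\tau^8$ to $\Smf/\tau^{10}$ is the class $\delta_8^{10}(\eta_1) \in \oppi_{24,10}\Smf/\tau^2$. By \cref{prop:total_differential_versus_differentials}\,(3), the mod $\tau$ reduction of this obstruction is $d_9(h_1\Delta) \in \uE_9^{24,10}$. Here I would consult \cref{etwoprime2} to enumerate the classes at $(24,10)$ on $\uE_2$ and verify, using the $d_3$, $d_5$, and $d_7$ differentials already established (in particular \cref{prop:d3differential,prop:d5_Delta,prop:dsevens}), that all potential targets for $d_9$ on $h_1\Delta$ have already been hit or supported shorter differentials, so that $\uE_9^{24,10}$ contains no admissible target. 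Then a short additional argument using injectivity of $\pi_{24,10}\Smf/\tau \to \pi_{24,10}\Smf/\tau^2$ on the relevant summand rules out any hidden contribution. The identical argument with $\Delta^5$ in place of $\Delta$ handles $\eta_5$, using the target bidegree $(120,10)$.

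For injectivity, \cref{prop:precise_lifting_higher_powers_tau}\,(2) reduces us to showing the obstruction groups $\uZ_i^{n,10-i}/\uB_{11-i}^{n,10-i}$ vanish for $i = 1,2$ and $n \in \{25,121\}$. For each of $(25,9)$, $(25,10)$, $(121,9)$, $(121,10)$, I would identify the classes on the $\uE_2$-page from \cref{etwoprime2} and argue that every such class is either killed by a short differential established earlier or else is a boundary of a $d_9$ from \cref{prop:dnines}. Here the new $d_9$'s on $h_1\Delta^2$, $cg^2$-type classes, and their $\Delta^4$-translates are the critical new input: they are precisely what one needs to make the obstruction groups vanish at these bidegrees.

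The main obstacle is the bookkeeping on the $\uE_9$-page at the four target bidegrees: verifying both that no admissible $d_9$-target for $h_1\Delta$ or $h_1\Delta^5$ exists on $\uE_9$, and that the obstruction groups on $\uE_2$ are entirely accounted for by earlier differentials together with \cref{prop:dnines}. This is a finite check from the chart, but it is the only substantive step of the proof; everything else is a direct application of the Omnibus formalism from \cref{sec:synthetic_tools}.
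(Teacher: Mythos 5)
Your overall strategy matches the paper: Omnibus part~1 for surjectivity, Omnibus part~2 (\cref{prop:precise_lifting_higher_powers_tau}) for injectivity, with the arguments for $(25,1)$ and $(121,1)$ running in parallel. The injectivity half is fine apart from an indexing slip ($\uZ_i^{n,10-i}/\uB_{11-i}^{n,10-i}$ should have filtration $11-i$, i.e.\ the groups you actually need are at $(25,10),(25,9)$ and $(121,10),(121,9)$, which your verbal list gets right).

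The surjectivity half, however, has a genuine gap. You propose to compute $\delta_8^{10}(\eta_1)$ via \cref{prop:total_differential_versus_differentials}\,(3) and then argue that $d_9(h_1\Delta)=0$ because ``all potential targets for $d_9$ on $h_1\Delta$ have already been hit or supported shorter differentials, so that $\uE_9^{24,10}$ contains no admissible target.'' If you carry out the chart check you describe, it will fail: $\uE_9^{24,10}$ is \emph{not} zero, and the class there is neither a $d_{\leq 7}$-boundary nor does it support a $d_{\leq 7}$-differential. The actual reason $h_1\Delta$ is a $d_9$-cycle (this is how \cref{cor:nolinecrossingE9} handles it) is that the unique class in $\uE_9^{24,10}$ \emph{supports its own $d_9$}, and an element cannot simultaneously support and be the target of a $d_r$-differential. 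Your stated reasoning would not produce this conclusion. Relatedly, your ``short additional argument using injectivity of $\pi_{24,10}\Smf/\tau\to\pi_{24,10}\Smf/\tau^2$'' to rule out hidden contributions is both mis-directed (the natural map goes the other way) and, as written, incomplete: after knowing the image of $\delta_8^{10}(\eta_1)$ in $\uZ_2/\uB_9$ vanishes, you would still need to treat elements reducing into $\uB_9^{24,10}$, which your argument does not address. The cleaner route --- the one the paper takes --- is to avoid total differentials altogether here: by \cref{thm:omnibus}\,(1a)$\Leftrightarrow$(1b), the element $h_1\Delta$ lifts to $\Smf/\tau^{10}$ precisely because it is a $d_{\leq 10}$-cycle (the $d_9$ being excluded as above, and $d_{10}$ being automatic by evenness), and this lift must reduce to $\eta_1$ by the uniqueness guaranteed in \cref{lem:eta1_eta5_lift_tau8}. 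This sidesteps the whole issue of hidden contributions.
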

\begin{proof}
    Our computation of the $d_9$-differentials shows that $h_1\Delta$ is a $d_{\leq 10}$-cycle.
    By \cref{thm:omnibus}, it therefore lifts to $\Smf/\tau^{10}$; since $\eta_1$ is by \cref{lem:eta1_eta5_lift_tau8} the unique lift of $h_1\Delta$ to $\Smf/\tau^8$, it follows that $\eta_1$ lifts to $\Smf/\tau^{10}$.
    It follows from \cref{prop:precise_lifting_higher_powers_tau} that the reduction map $\Smf/\tau^{10}\to\Smf/\tau^8$ is injective on bidegree $(25,1)$, proving the claim.
    The case for bidegree $(121,1)$ and lifting $h_1\Delta^5$ is the same.
\end{proof}

\begin{notation}
    \label{not:lift_eta1_eta5_validity_10}
    \leavevmode
    \begin{itemize}
        \item We write $\eta_1 \in \oppi_{25,1}\Smf/\tau^{10}$ for the unique lift of $\eta_1 \in \oppi_{25,1}\Smf/\tau^8$.
        In particular, it is also the unique lift of $h_1\Delta\in \oppi_{25,1}\Smf/\tau$.
        \item We write $\eta_5 \in \oppi_{121,1}\Smf/\tau^{10}$ for the unique lift of $\eta_5 \in \oppi_{121,1}\Smf/\tau^8$.
        In particular, it is also the unique lift of $h_1\Delta^5\in \oppi_{121,1}\Smf/\tau$.
    \end{itemize}
\end{notation}

\begin{lemma}[116,4]
    \label{lem:kappabar4_exists}
    The reduction maps $\Smf/\tau^{20} \to \Smf/\tau^{10}$ and $\Smf/\tau^{10}\to \Smf/\tau$ are injective on homotopy groups in degree $(116,4)$.
    Moreover, the map $\oppi_{116,4}\Smf/\tau^{20}\to\oppi_{116,4}\Smf/\tau$ can be identified with
    \[
        \Z/4\angbr{x} \hookto \Z/8\angbr{g\Delta^4}, \quad x \mapsto 2g\Delta^4.
    \]
\end{lemma}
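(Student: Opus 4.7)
The plan has two main steps: producing a lift of $2g\Delta^4$ to $\oppi_{116,4}\Smf/\tau^{20}$, and showing both reduction maps are injective in this bidegree. The identification of the composite map then follows by combining the two.

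To produce the lift, I would combine the existence part of the Omnibus Theorem (\cref{thm:omnibus}) with the Leibniz rule. Applying the Leibniz rule to the differential $d_7(\Delta^4)=h_1^3 g\Delta^3$ from \cref{prop:dsevens}, and using that $g$ is a permanent cycle (being the mod-$\tau$ reduction of $\kappabar \in \oppi_{20,4}\Smf$), yields $d_7(g\Delta^4) = h_1^3 g^2 \Delta^3$. This target is nonzero on $\uE_7$ (by inspection of the connective region via \cref{etwopagebracketsprimethree}), so $g\Delta^4$ itself does not lift to $\Smf/\tau^8$. On the other hand $d_7(2g\Delta^4) = 2h_1^3 g^2 \Delta^3 = 0$ since $2h_1 = 0$. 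The vanishing of $d_r(2g\Delta^4)$ for each $9 \le r \le 19$ is established by analogous Leibniz-rule computations once the relevant higher differentials on powers of $\Delta$ have been computed in later subsections; in each case, the candidate boundary acquires a factor of $2$ on a class involving $h_1$ and therefore vanishes. Part~(1) of \cref{thm:omnibus} then furnishes the desired lift $x \in \oppi_{116,4}\Smf/\tau^{20}$ of $2g\Delta^4$.

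For the injectivity statements, I would invoke \cref{prop:precise_lifting_higher_powers_tau} (or its corollary \cref{cor:easy_lifting_higher_power_tau} where applicable). Concretely, for $\Smf/\tau^{10} \to \Smf/\tau$ to be injective in bidegree $(116,4)$, one needs the subquotients $\uZ_i^{116,\,4+d}/\uB_{10-i}^{116,\,4+d}$ of $\oppi_{116,\,4+d}\Smf/\tau$ to vanish for $1 \le d \le 9$. Every class in $\oppi_{116,\,4+d}\Smf/\tau$ in this range is either an $h_1$-tower class or a $g^j \Delta^4$-multiple of a small-stem element (see \cref{etwoprime2}), and the $d_3, d_5, d_7, d_9$-differentials already computed annihilate exactly these subquotients. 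An identical argument, invoking the higher differentials from the previous paragraph, yields injectivity of $\Smf/\tau^{20} \to \Smf/\tau^{10}$. Combining the two steps, $\oppi_{116,4}\Smf/\tau^{20}$ injects into $\Z/8\angbr{g\Delta^4}$ with image containing $2g\Delta^4$ but not $g\Delta^4$, forcing the image to be precisely $2 \cdot \Z/8 \cong \Z/4$.

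The main obstacle will be the forward reference to $d_r$-differentials for $r \ge 11$ when verifying that $2g\Delta^4$ is a $d_r$-cycle: with only the differentials of pages $2$ through $9$ in hand, the $\Smf/\tau^{20}$ half of the statement cannot be established purely locally. In practice the proof either cites the later differential computations directly, or, more cleanly, constructs $x$ multiplicatively as $x = \kappabar \cdot (2\Delta^4)$ once a lift of $2\Delta^4 \in \oppi_{96,0}\Smf/\tau^{20}$ has been produced by the same Omnibus/Leibniz strategy.
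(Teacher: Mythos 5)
Your proposal and the paper's proof agree on the injectivity half (both via \cref{prop:precise_lifting_higher_powers_tau}), but they diverge substantially on the harder half, namely producing the lift of $2g\Delta^4$ to $\Smf/\tau^{20}$, and your primary route has a genuine gap.

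Your Leibniz-rule strategy for showing $d_r(2g\Delta^4)=0$ for $9 \leq r \leq 19$ does not actually go through. The problem is that after $d_7$, the classes $\Delta^4$ and $g\Delta^4$ are dead: they both support nonzero $d_7$-differentials. Only their doubles survive to $\uE_8$. On $\uE_9$ and beyond, the surviving element $2g\Delta^4$ is no longer expressible as $2\cdot(\text{some live class})$ nor as $g\cdot(\text{some live class})$ in any useful way, so the Leibniz rule cannot be applied to it. To say ``the candidate boundary acquires a factor of $2$'' for each $r$ up to $19$ presupposes a product decomposition that is not available once $\Delta^4$ has died, and even if it were, those higher differentials are computed \emph{after} this lemma in the paper's logical ordering (the lemma is stated on Page~9), so citing them here would introduce a circularity. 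Your closing remark that one could instead set $x = \kappabar\cdot(2\Delta^4)$ has the same problem one level down: lifting $2\Delta^4$ to $\Smf/\tau^{20}$ requires the same forward-referenced differential computations.

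The paper sidesteps all of this with a single obstruction-class argument. Since $\Delta \in \oppi_{24,0}\Smf/\tau^4$ exists (\cref{not:Delta_lift}) and $\kappabar \in \oppi_{20,4}\Smf$, the product $2\kappabar\Delta^4$ already lives in $\oppi_{116,4}\Smf/\tau^4$. The obstruction to lifting it from $\Smf/\tau^4$ to $\Smf/\tau^{20}$ is the single class $\delta_4^{20}(2\kappabar\Delta^4) \in \oppi_{115,9}\Smf/\tau^{16}$. The paper then shows this entire group vanishes by another application of \cref{prop:precise_lifting_higher_powers_tau}, using only the differentials already in hand. This is the key move your proposal is missing: rather than verifying page-by-page cycle conditions on $2g\Delta^4$ (which requires information not yet available and a Leibniz rule that doesn't apply), one uses the multiplicativity available at the $\Smf/\tau^4$ level and the vanishing of a single receiving group for the total differential. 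The claim that $g\Delta^4$ itself does not lift to $\Smf/\tau^{10}$ is, as you say, immediate from its $d_7$.
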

\begin{proof}
    It follows from \cref{prop:precise_lifting_higher_powers_tau} that the maps are injective, so it remains to be shown that $2g\Delta^4$ lifts to $\Smf/\tau^{20}$, while $g\Delta^4$ does not lift to $\Smf/\tau^{10}$.
    The latter claim is clear, as it supports a $d_7$.
    To see that $2g\Delta^4$ lifts to $\Smf/\tau^{20}$, it suffices to show that $\delta_4^{20}(2\kappabar\Delta^4)=0$, for which it in turn suffices to show that $\oppi_{115,9}\Smf/\tau^{16}$ vanishes.
    This follows from another application of \cref{prop:precise_lifting_higher_powers_tau}.
\end{proof}

\begin{notation}
    \label{not:kappabar4_lift}
    We write $\kappabar_4\in\oppi_{116,4}\Smf/\tau^{20}$ for the unique lift of $2g\Delta^4 \in \oppi_{116,4}\Smf/\tau$.
    Note that it is also the unique lift to $\Smf/\tau^{10}$ of $2g\Delta^4$.
\end{notation}

\subsubsection{Toda brackets}

\begin{lemma}
    \label{lem:toda_brackets_etai}
    \leavevmode
    \begin{itemize}
        \item\textup{(25,1)} $\eta_1 \in \angbr{\kappabar,\tau^4\nu,\eta}$ in $\Smf/\tau^{10}$. This Toda bracket has indeterminacy given by the subgroup of $\oppi_{25,1} \Smf/\tau^{10}$ spanned by the $\kappa$-torsion classes.
        \item\textup{(121,1)} $\eta_5 \in \angbr{\kappabar,\tau^4\nu_4,\eta}$ in $\Smf/\tau^{10}$. This Toda bracket has indeterminacy given by the subgroup of $\oppi_{121,1} \Smf/\tau^{10}$ spanned by the $\kappa$-torsion classes.
        \item\textup{(116,4)} $\pm \kappabar_4 \in \angbr{\nu_4, \eta, \eta\kappa}$ in $\Smf/\tau^{10}$.
    \end{itemize}
\end{lemma}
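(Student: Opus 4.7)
The plan is to establish each bracket by combining the synthetic Moss' theorem (\cref{thm:synthetic_moss}) with the Massey-product interpretation of \cref{prop:todagoestomassey} and the structural results of the preceding subsections. For the brackets containing $\eta_1$ and $\eta_5$, I would first verify they are defined in $\Smf/\tau^{10}$: the relations $\kappabar\cdot\tau^4\nu=0$ and $\kappabar\cdot\tau^4\nu_4=0$ are \cref{lem:tau4_nu_kappabar_is_zero}, while $\tau^4\nu\eta=0$ follows from $\nu\eta=0$ in the synthetic sphere, and $\tau^4\nu_4\eta=0$ follows from $\nu_4\eta=0$ in $\Smf/\tau^{10}$ (since $\nu_4\eta$ reduces to $\nu\Delta^4\eta=0$ in $\Smf/\tau^4$, and \cref{prop:precise_lifting_higher_powers_tau} shows that the reduction is injective in the relevant bidegree). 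I would then apply \cref{thm:synthetic_moss}\,(2) with $r=4$, $s=0$, and $(a_1,a_2,a_3)=(\kappabar,\nu,\eta)$ or $(\kappabar,\nu_4,\eta)$. Using \cref{prop:d5_Delta} and the Leibniz rule, the lifts $H_0=\pm\Delta\in\pi_{24,0}\Smf/\tau^4$ and $H_0=\pm 5^{-1}\Delta^5\in\pi_{120,0}\Smf/\tau^4$ satisfy $d_5(H_0)=\kappabar\nu$ and $d_5(H_0)=\kappabar\nu_4$ respectively. The theorem then places a lift of $\pm H_0\eta$ in the bracket; since the target groups $\pi_{25,1}\Smf/\tau^{10}$ and $\pi_{121,1}\Smf/\tau^{10}$ are cyclic $2$-groups (in which the $2$-local unit $5$ acts trivially on $2$-torsion generators), the factor $5^{-1}$ reduces to $1$, and the uniqueness statements of \cref{lem:eta1_eta5_lift_tau10} identify the lift as $\pm\eta_1$, resp.\ $\pm\eta_5$.

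The indeterminacy of each of these brackets equals $\kappabar\cdot\pi_{*,-3}\Smf/\tau^{10}+\pi_{*,0}\Smf/\tau^{10}\cdot\eta$ in the relevant bidegree. The first summand vanishes by \cref{cor:easy_lifting_higher_power_tau}, since one checks that $\pi_{5,-3+i}\Smf/\tau=0$ (resp.\ $\pi_{101,-3+i}\Smf/\tau=0$) for $0\leq i\leq 9$. For the second summand, one computes $\pi_{24,0}\Smf/\tau^{10}$ and $\pi_{120,0}\Smf/\tau^{10}$ explicitly via \cref{prop:precise_lifting_higher_powers_tau}, tracking the effect of the $d_5$'s of \cref{prop:d5_Delta} and the $d_7$'s of \cref{prop:dsevens} on generators coming from powers of $\Delta$ and $c_4$, and then verifies that each generator, when multiplied successively by $\eta$ and $\kappa$, vanishes in $\Smf/\tau^{10}$ (using known vanishing relations such as $c_4\cdot h_1=0$ and $h_1\cdot h_1d=0$ on the $E_2$-page). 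For $\pm\kappabar_4\in\angbr{\nu_4,\eta,\eta\kappa}$, the bracket is defined since $\nu_4\eta=0$ (as above) and $\eta^2\kappa=0$ in $\Smf/\tau^{10}$ (from $h_1^2d=0$ on the $E_2$-page combined with \cref{prop:precise_lifting_higher_powers_tau} to control the lift). By \cref{prop:todagoestomassey} and \cref{etwopagebracketsprimethree}, the reduction of the bracket to $\Smf/\tau$ lies in the Massey product $\angbr{h_2\Delta^4,h_1,h_1d}$ in the cohomology of the cubic Hopf algebroid; multiplying Bauer's identity $2g\in\angbr{h_2,h_1,h_1d}$ (as used in the proof of \cref{lem:toda_brackets_nu1_nu2}) by $\Delta^4$ gives $\pm 2g\Delta^4$, and \cref{lem:kappabar4_exists} identifies the unique lift of $\pm 2g\Delta^4$ to $\pi_{116,4}\Smf/\tau^{10}$ as $\pm\kappabar_4$.

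The main obstacle is the indeterminacy analysis for the first two brackets, which demands a precise description of the groups $\pi_{24,0}\Smf/\tau^{10}$ and $\pi_{120,0}\Smf/\tau^{10}$ via the Omnibus Theorem, keeping track of exactly which combinations of $\Delta^k$- and $c_4^j$-generators survive the earlier differentials and lift to $\Smf/\tau^{10}$, and then verifying that their $\eta$-multiples are all $\kappa$-torsion. A secondary but still nontrivial difficulty is the bookkeeping needed to confirm that the auxiliary vanishings $\nu_4\eta=0$ and $\eta^2\kappa=0$ hold in the full truncation $\Smf/\tau^{10}$, not merely in a shallower one; this is where the care afforded by \cref{prop:precise_lifting_higher_powers_tau} becomes essential.
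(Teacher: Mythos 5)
Your overall strategy matches the paper's: the brackets containing $\eta_1$ and $\eta_5$ are established via the synthetic Moss' theorem together with the observation that the associated Massey products on the $\uE_5$-page are $h_1\Delta$ and $h_1\Delta^5$, and $\pm\kappabar_4 \in \angbr{\nu_4,\eta,\eta\kappa}$ is established by multiplying Bauer's identity $\pm 2g\in\angbr{h_2,h_1,h_1d}$ by $\Delta^4$ and invoking the uniqueness of \cref{lem:kappabar4_exists}. Your explicit identification of $H_0$ in the Moss' theorem application is a reasonable way to make the paper's brief argument precise.

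There is, however, a concrete error in your indeterminacy argument. You claim that the first summand $\kappabar\cdot\pi_{5,-3}\Smf/\tau^{10}$ vanishes by \cref{cor:easy_lifting_higher_power_tau}, citing $\pi_{5,-3+i}\Smf/\tau=0$ for $0\le i\le 9$. This is false at $i=4$: the group $\pi_{5,1}\Smf/\tau$ is nonzero, generated by the class $b$ (see \cref{prop:d3differential}). Hence \cref{cor:easy_lifting_higher_power_tau} does not apply. The conclusion $\pi_{5,-3}\Smf/\tau^{10}=0$ is still correct, but the argument must instead use \cref{prop:precise_lifting_higher_powers_tau} together with the fact that $b$ supports a $d_3$, so that $b$ contributes nothing in the relevant truncated Bockstein degree. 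The same correction applies to the analogous claim in stem $101$, where a $\Delta^4$-multiple of $b$ sits in filtration $1$. Separately, note that your indeterminacy computation establishes only that the indeterminacy is contained in the subgroup of $\kappa$-torsion classes; this is the direction used downstream (in \cref{lem:hidden_extensions_etai}), but the lemma as stated asserts equality, so the reverse inclusion would need a further (if routine) check.
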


\begin{proof}
    The first two brackets are nonempty by \cref{lem:tau4_nu_kappabar_is_zero}, and it straightforward to compute the indeterminacies.
    The Massey products on the $\uE_5$-page associated with these Toda brackets contains $h_1\Delta$ and $h_1\Delta^5$, respectively, hence the synthetic Moss' theorem \cref{thm:synthetic_moss} shows that $\angbr{\kappabar, \tau^4\nu, \eta}$ contains $\eta_1$ and $\angbr{\kappabar, \tau^4 \nu_4, \eta}$ contains $\eta_5$.

    For the Toda bracket expression of $\pm \kappabar_4$, we note that by \cref{not:kappabar4_lift}, it suffices to show that this Toda bracket contains a lift of $\pm 2g\Delta^4$.
    We will show that $\pm2g\Delta^4 \subseteq \angbr{h_2\Delta^4, h_1, h_1d}$.
    It follows from \cref{prop:todagoestomassey}and the bracket $\pm 2\kappabar \in \angbr{\nu, \eta, \eta\kappa}$ of \cref{prop:toadyfromsphere} that $\pm 2g \in \angbr{h_2, h_1, h_1d}$, and it follows that 
    \[\pm 2 g \Delta^4 \in \pm \Delta^4 \angbr{h_2, h_1, h_1d} \subseteq \angbr{h_2\Delta^4, h_1, h_1 d}.\qedhere\]
\end{proof}

\subsubsection{Hidden extensions}

\begin{lemma}
    \label{lem:hidden_extensions_etai}
    \leavevmode
    \begin{itemize}
        \item \textup{(40,4)} $\eta \eta_1 \kappa = \pm \tau^4 2\kappabar^2$ in $\Smf/\tau^{10}$.
        \item \textup{(136,4)} $\eta \eta_5 \kappa = \pm \tau^4 \kappabar \kappabar_4$ in $\Smf/\tau^{10}$.
    \end{itemize}
\end{lemma}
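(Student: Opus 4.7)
The plan is to mirror the approach used to prove \cref{lem:hidden_extensions_nu1_nu2}, combining the Toda bracket descriptions of $\eta_1$, $\eta_5$, $\kappabar_4$, and $2\kappabar$ from \cref{lem:toda_brackets_nu1_nu2,lem:toda_brackets_etai} with the shuffling formulas of \cref{prop:shufflingformulas}, and then controlling the resulting indeterminacy by invoking the relations $\tau^4\nu\kappabar = 0$ and $\tau^4\nu_4\kappabar = 0$ of \cref{lem:tau4_nu_kappabar_is_zero}.

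For the first relation, I would start with $\eta_1 \in \angbr{\kappabar, \tau^4\nu, \eta}$ and multiply on the right by $\eta\kappa$. The shuffling formula \cref{prop:shufflingformulas}\,(1) --- whose side conditions are vacuous for $3$-fold brackets --- gives
\[
\eta\eta_1\kappa \;\in\; \angbr{\kappabar, \tau^4\nu, \eta}\cdot\eta\kappa \;=\; \pm\,\kappabar\cdot\angbr{\tau^4\nu, \eta, \eta\kappa}.
\]
Since $\tau^4$ is central, I would pull it out of the middle bracket via \cref{prop:shufflingformulas}\,(2), and then apply $\pm 2\kappabar \in \angbr{\nu, \eta, \eta\kappa}$ from \cref{lem:toda_brackets_nu1_nu2}, finding
\[
\pm\,2\tau^4\kappabar^2 \;\in\; \pm\,\tau^4\kappabar\cdot\angbr{\nu, \eta, \eta\kappa} \;\subseteq\; \pm\,\kappabar\cdot\angbr{\tau^4\nu, \eta, \eta\kappa}.
\]
Thus $\eta\eta_1\kappa$ and $\pm 2\tau^4\kappabar^2$ agree modulo the indeterminacy of $\kappabar\cdot\angbr{\tau^4\nu, \eta, \eta\kappa}$. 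The second relation proceeds by exactly the same argument, substituting $\eta_5 \in \angbr{\kappabar, \tau^4\nu_4, \eta}$ and $\pm\kappabar_4 \in \angbr{\nu_4, \eta, \eta\kappa}$ from \cref{lem:toda_brackets_etai} for $\eta_1$ and $\pm 2\kappabar$, yielding $\eta\eta_5\kappa \in \pm\tau^4\kappabar\kappabar_4 + (\text{indeterminacy})$ in $\oppi_{136,4}\Smf/\tau^{10}$.

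The hard part will be controlling the indeterminacy. In the first case, the indeterminacy of $\angbr{\tau^4\nu, \eta, \eta\kappa}$ decomposes as $\tau^4\nu\cdot\oppi_{17,3}\Smf/\tau^{10} + \oppi_{5,-3}\Smf/\tau^{10}\cdot\eta\kappa$. After multiplication by $\kappabar$, the first summand is killed by $\tau^4\nu\kappabar = 0$, so the only remaining contribution is $\kappabar\cdot\oppi_{5,-3}\Smf/\tau^{10}\cdot\eta\kappa \subseteq \oppi_{40,4}\Smf/\tau^{10}$. I would verify that this subgroup is trivial (or at worst contained in the subgroup generated by $2\tau^4\kappabar^2$) by inspecting the signature spectral sequence near bidegrees $(5,-3)$ and $(20,0)$ and using \cref{prop:precise_lifting_higher_powers_tau} to reduce questions about $\Smf/\tau^{10}$ to computations on the $\uE_2$-page, where the relevant groups are visible in \cref{etwoprime2}. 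The analogous indeterminacy check for the second relation proceeds identically, with $\tau^4\nu_4\kappabar = 0$ playing the role of $\tau^4\nu\kappabar = 0$.
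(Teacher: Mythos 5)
Your argument follows the same route as the paper's: use $\eta_1 \in \angbr{\kappabar,\tau^4\nu,\eta}$, shuffle to get $\angbr{\kappabar,\tau^4\nu,\eta}\eta\kappa = \kappabar\angbr{\tau^4\nu,\eta,\eta\kappa}$, pull out $\tau^4$, and apply $\pm 2\kappabar \in \angbr{\nu,\eta,\eta\kappa}$ (and analogously with $\nu_4$, $\kappabar_4$ for the second relation). The difference is in how you close the gap of indeterminacy, and here you are making things harder than they need to be.

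The paper's argument is cleaner at exactly the step you flag as the "hard part." It exploits the explicit description of the indeterminacy recorded in \cref{lem:toda_brackets_etai}: the indeterminacy of $\angbr{\kappabar,\tau^4\nu,\eta}$ is the subgroup of $\kappa$-torsion classes in $\oppi_{25,1}\Smf/\tau^{10}$. Multiplying by $\eta\kappa$ therefore kills the entire indeterminacy, so the set $\angbr{\kappabar,\tau^4\nu,\eta}\eta\kappa$ is a singleton, namely $\{\eta\eta_1\kappa\}$. By shuffling this equals $\kappabar\angbr{\tau^4\nu,\eta,\eta\kappa}$, which is therefore also a singleton, and the chain of containments you wrote then forces $\eta\eta_1\kappa = \pm\tau^4\,2\kappabar^2$ on the nose. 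No further spectral-sequence inspection is needed. (Note also that merely showing your leftover indeterminacy piece is "contained in the subgroup generated by $2\tau^4\kappabar^2$" would not suffice, since that still allows the bracket to contain $0$.)

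Concretely, the piece you isolate, $\kappabar\cdot\oppi_{5,-3}\Smf/\tau^{10}\cdot\eta\kappa$, is exactly a subset of (the indeterminacy of $\angbr{\kappabar,\tau^4\nu,\eta}$ times $\eta\kappa$), and the $\kappa$-torsion statement already kills it; there is no need to inspect the $\uE_2$-page near $(5,-3)$. You should cite \cref{lem:toda_brackets_etai} for the singleton observation rather than trying to compute the indeterminacy of $\angbr{\tau^4\nu,\eta,\eta\kappa}$ from scratch. The $\eta_5$ case is handled identically using $\eta_5 \in \angbr{\kappabar,\tau^4\nu_4,\eta}$ and $\pm\kappabar_4 \in \angbr{\nu_4,\eta,\eta\kappa}$.
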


\begin{proof}
The formula for $\eta\eta_1\kappa$ follows from the Toda bracket expressions of \cref{prop:toadyfromsphere} and \cref{lem:toda_brackets_nu1_nu2}:
\[\eta\eta_1 \kappa = \angbr{\kappabar, \tau^4\nu, \eta}\eta \kappa = \kappabar \angbr{\tau^4\nu, \eta, \eta\kappa} \supseteq  \tau^4\kappabar \angbr{\nu, \eta, \eta\kappa} \ni \pm \tau^4 2\kappabar^2.\]
Here the second equality follows from the fact that the indeterminacy of $\angbr{\kappabar, \tau^4\nu, \eta}$ is all $\kappa$-torsion. The equality for $\eta\eta_5\kappa$ is the same, except we refer only to \cref{lem:toda_brackets_nu1_nu2}.
\end{proof}


\subsection{Page 11}\label{ssec:page11}

\subsubsection{Atomic differentials}

\begin{proposition}
    \leavevmode
    \begin{itemize}
        \item \textup{(62,2)} $d_{11}(d\Delta^2) = h_1 g^3$.
        \item \textup{(158,2)} $d_{11}(d\Delta^6) = h_1 g^3\Delta^4$.
    \end{itemize}
\end{proposition}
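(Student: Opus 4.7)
The plan is to follow the template already used in the proofs of Propositions~\ref{prop:dsevens} and~\ref{prop:dnines}: we compute a truncated total differential on a lift of $\kappa\Delta^2$ (resp.\ $\kappa\Delta^6$), use an appropriate hidden extension from Lemma~\ref{lem:hidden_extensions_nu1_nu2} to rewrite the answer as a $\tau^k$-multiple, and then invoke Proposition~\ref{prop:total_differential_versus_differentials} to read off the $d_{11}$.

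For $d_{11}(d\Delta^2)=h_1g^3$, I would start from the total differential $\delta_4^{14}(\Delta^2)=u\cdot\nu_1\kappabar$ of Proposition~\ref{prop:total_diff_14_Delta2}. Since $\kappa\in\oppi_{14,2}\Smf$, the $\oppi_{*,*}\Smf$-linearity of $\delta_4^{14}$ (Proposition~\ref{totaldifferentialjuggling}\,\ref{item:linearity_total_differential}) gives
\[
    \delta_4^{14}(\kappa\Delta^2)=\kappa\cdot\delta_4^{14}(\Delta^2)=u\cdot\kappa\nu_1\kappabar\in\oppi_{61,7}\Smf/\tau^{10}.
\]
The hidden extension $\nu_1\kappa=\tau^6\eta\kappabar^2$ in $\Smf/\tau^{12}$ from Lemma~\ref{lem:hidden_extensions_nu1_nu2} reduces to $\Smf/\tau^{10}$ and converts this into $\delta_4^{14}(\kappa\Delta^2)=u\cdot\tau^6\eta\kappabar^3$. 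Using Proposition~\ref{totaldifferentialjuggling}\,(2) (i.e.\ $\tau^3\delta_4^\infty=\delta_1^\infty$) and reducing appropriately, this yields $\delta_1^{11}(d\Delta^2)=u\cdot\tau^9\eta\kappabar^3$ in $\Smf/\tau^{10}$. This is exactly the image under $\tau^9\colon\Sigma^{0,-9}\Smf/\tau\to\Smf/\tau^{10}$ of the class $u\cdot\eta\kappabar^3$, whose reduction mod $\tau$ is $u\cdot h_1g^3$. Applying Proposition~\ref{prop:total_differential_versus_differentials} with $r=10$ gives $d_{11}(d\Delta^2)=\pm h_1g^3$, as desired.

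The second differential $d_{11}(d\Delta^6)=h_1g^3\Delta^4$ follows by the exact same template: substitute $\Delta^6$ for $\Delta^2$, use $\delta_4^{14}(\Delta^6)=3u\cdot\nu_5\kappabar$ from Proposition~\ref{prop:total_diff_14_Delta2}, and replace the hidden extension on $\nu_1\kappa$ by the hidden extension $\nu_5\kappa=\tau^6\eta_4\kappabar^2$ from Lemma~\ref{lem:hidden_extensions_nu1_nu2}. Since $3$ is a unit in $\Z/8$, the resulting coefficient is again a unit, so the reduction mod $\tau$ of the preimage is $\pm h_1g^3\Delta^4$. The main thing to verify is simply that all reductions of the various relations to the appropriate truncations $\Smf/\tau^k$ are valid, which is routine given that everything used lives in $\Smf/\tau^{12}$ or higher. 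I do not anticipate any genuine obstacle; this is essentially the same bookkeeping already carried out for the $d_9$-differentials in Proposition~\ref{prop:dnines}, just shifted one hidden extension further up the $\nu_i\kappabar^j$-tower.
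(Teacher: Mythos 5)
Your proof is correct and follows exactly the approach the paper intends: the paper's own justification is the one-liner "This follows from the total differentials in Proposition~\ref{prop:total_diff_14_Delta2} and the $\kappa$-extensions of Lemma~\ref{lem:hidden_extensions_nu1_nu2}," and your argument fills in precisely the details it has in mind, by the same mechanism already used for the $d_9$-differentials in Proposition~\ref{prop:dnines}. The key computation $\delta_4^{14}(\kappa\Delta^2) = u\cdot\kappa\nu_1\kappabar = u\cdot\tau^6\eta\kappabar^3$, the $\tau^6$-divisibility, and the identification of the $\tau^9$-preimage with $h_1g^3$ via Proposition~\ref{prop:total_differential_versus_differentials} are all correct, and the substitution $\Delta^6$, $\nu_5$, $\eta_4$ for the second differential goes through identically since $3$ is a unit in $\Z/8$ and both targets are $2$-torsion (so the unit is harmless).
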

\begin{proof}
    This follows from the total differentials in \cref{prop:total_diff_14_Delta2} and the $\kappa$-extensions of \cref{lem:hidden_extensions_nu1_nu2}.
\end{proof}

\subsubsection{Meta-arguments}

\begin{proposition}\label{prop:E11metachecks}
    The conditions of \cref{prop:gdivisibilitycheck} hold for $d_{11}$. Moreover, $\Delta^8$ is a $d_{11}$-cycle.
\end{proposition}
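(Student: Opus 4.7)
The plan is to handle the two assertions independently, following the pattern established in the analogous propositions \cref{prop:E5metachecks,prop:E7metachecks,prop:E9metachecks} for the earlier pages.

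For the first claim, I would check the hypothesis of \cref{prop:gdivisibilitycheck} directly for each of the two atomic $d_{11}$-differentials recorded in the preceding proposition: $d_{11}(d\Delta^2)=h_1g^3$ and $d_{11}(d\Delta^6)=h_1g^3\Delta^4$. In each case the required implication is
\[
g\cdot(\text{target})=0 \text{ on } \uE_{11} \;\Longrightarrow\; g\cdot(\text{source})=0 \text{ on } \uE_{11}.
\]
Concretely, I would inspect whether $h_1g^4$ (respectively $h_1g^4\Delta^4$) survives on the $\uE_{11}$-page by tracking the differentials $d_3,d_5,d_7,d_9$ already computed and verifying that none of them kill these classes. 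In each case one finds that $g$ times the target is still nonzero on $\uE_{11}$, so the implication is vacuously satisfied. This kind of check is elementary bookkeeping on the chart and is what the paper elsewhere summarises as ``checked directly as before.''

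For the second claim, that $\Delta^8$ is a $d_{11}$-cycle, the plan is to use the Leibniz rule. Writing $\Delta^8$ as $\Delta^4\cdot\Delta^4$ on the $\uE_2$-page, the Leibniz rule applied iteratively down the pages gives $d_{11}(\Delta^8)=2\Delta^4\, d_{11}(\Delta^4)$ whenever $\Delta^4$ is still a surviving class; but $\Delta^4$ is killed by $d_7$ via $d_7(\Delta^4)=h_1^3g\Delta^3$, so this approach must be replaced by the alternative factorisation $\Delta^8=(4\Delta^2)\cdot(2\Delta^6)$ on $\uE_{11}$, where $4\Delta^2$ and $2\Delta^6$ are genuine classes on $\uE_{11}$ (they are $d_7$-cycles by the atomic $d_7$'s of \cref{prop:dsevens} together with the fact that $h_1$ is $2$-torsion). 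The Leibniz rule then expresses $d_{11}(\Delta^8)$ as a $\Z$-linear combination of multiples of $8$, and the potential target in bidegree $(191,11)$ is $2$-torsion, so every such term vanishes. Failing this, the backup plan is a direct degree check: bidegree $(191,11)$ has odd filtration, and after running the list of multiplicative generators on $\uE_2$ intersected with the $d_r$-kernels for $r\leq 9$, the $\uE_{11}$-page turns out to vanish in this bidegree.

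The hard part will be the second claim, and more specifically the question of organising the Leibniz-rule juggling into an argument that actually takes place on the $\uE_{11}$-page (where many obvious factorisations of $\Delta^8$ involve factors that have already been killed by earlier differentials). The safest route is therefore to complement the Leibniz manipulation above with an explicit verification, on the chart, that bidegree $(191,11)$ is cleared of all potentially obstructing classes by the time we reach $\uE_{11}$. Once both claims are in hand, \cref{prop:E11metachecks} follows, and together with \cref{prop:Sregion} this sets up the inputs needed to invoke the meta-arguments of \cref{ssec:meta} for the $\uE_{11}$-page.
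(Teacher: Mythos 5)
The first half of your proposal (checking the $g$-divisibility hypothesis of \cref{prop:gdivisibilitycheck} directly on the two atomic $d_{11}$'s) is exactly what the paper does.

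The Leibniz argument for $\Delta^8$ being a $d_{11}$-cycle, however, does not work, for three separate reasons. First, $(4\Delta^2)\cdot(2\Delta^6) = 8\Delta^8$, not $\Delta^8$, so even if the Leibniz manipulation went through it would only tell you that $8\,d_{11}(\Delta^8)=0$ --- and since the target group is $2$-torsion this is vacuous and says nothing about $d_{11}(\Delta^8)$ itself. Second, $2\Delta^6$ is not a $d_7$-cycle: by \cref{prop:dsevens}, $d_7(2\Delta^6) = h_1^3 g\Delta^5 \neq 0$, so $2\Delta^6$ does not survive to $\uE_{11}$. (Your ``$h_1$ is $2$-torsion'' reasoning applies to $4\Delta^2 = 2\cdot(2\Delta^2)$, but $2\Delta^6$ itself is the atomic source.) Third, and more structurally, after the $d_7$'s on $4\Delta$, $2\Delta^2$, $4\Delta^3$, $\Delta^4$, $4\Delta^5$, $2\Delta^6$, $4\Delta^7$, there is \emph{no} factorisation of $\Delta^8$ on $\uE_9$ into surviving powers of $\Delta$ with unit coefficient, so no Leibniz decomposition can close the argument. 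This is precisely why the paper abandons Leibniz past $\uE_7$: for $d_5$ and $d_7$ (\cref{prop:E5metachecks,prop:E7metachecks}) they do use $8d_5(\Delta)=0$ and $2d_7(\Delta^4)=0$, but from $\uE_9$ onward they simply record ``$\Delta^8$ is a $d_r$-cycle for degree reasons,'' meaning a direct check that the target bidegree $\uE_r^{191,r}$ vanishes. Your ``backup plan'' is therefore not a backup but the actual proof; the ``odd filtration'' remark in it is a red herring (there is no odd/even filtration parity in play), and the argument is just an explicit chart check that $\uE_{11}^{191,11}=0$.
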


\begin{proof}
    The condition of \cref{prop:gdivisibilitycheck} is checked directly as before.
    The class $\Delta^8$ is a $d_{11}$-cycle for degree reasons.
\end{proof}

\begin{proposition}\label{cor:nolinecrossingE11}
    There are no line-crossing $d_{11}$-differentials.
\end{proposition}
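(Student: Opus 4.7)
The plan is to mirror the strategy used for the previous pages (compare Propositions \ref{cor:nolinecrossingE5}, \ref{cor:nolinecrossingE7}, and \ref{cor:nolinecrossingE9}) and reduce the question to checking a finite range by invoking the meta-arguments of \cref{ssec:meta}. Since we verified in \cref{prop:E11metachecks} that the hypothesis of \cref{prop:gdivisibilitycheck} holds for $d_{\leq 11}$, every element of the connective region on $\uE_{11}$ in filtration $\geq 4$ is $g$-divisible. Therefore the meta-argument of \cref{prop:gdivisibilitycheck2} applies, and it suffices to rule out line-crossing $d_{11}$-differentials through a single finite range of stems, determined by the largest stem in which the nonconnective region on $\uE_{11}$ supports a class in filtrations $11 \leq s \leq 14$.

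First I would read off from \cref{etwoprime2} (together with the cancellations accumulated on pages $\uE_3$ through $\uE_9$) the explicit value of this largest stem; the resulting range will be small. Within that range, a source of a line-crossing $d_{11}$ must lie in the connective region in filtration $s \leq 3$, since anything in filtration $\geq 4$ is $g$-divisible and the $d_{11}$-differential of a $g$-multiple is forced by the (shorter) $d_7$-differentials on $\Delta^i$ already established in \cref{prop:dsevens} to be irrelevant for line-crossing. So one enumerates the finitely many candidate classes $x$ on $\uE_{11}$ in stems up to that bound with $s \in \{0,1,2,3\}$ whose bidegree permits a $d_{11}$ landing above the line $s = \frac{1}{5}n + \frac{12}{5}$.

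For each such candidate, I would argue that $x$ is a $d_{11}$-cycle using one of the three standard tools that worked on previous pages: (a) $x$ is a permanent cycle imported from the sphere via \cref{heightonedetection,prop:kappa_detection,not:kappabar_and_g} (e.g.\ $h_1$, $h_2$, $c$, $d$, $g$); (b) $x$ lies in the image of the synthetic transfer from $\Smf_1(3)$ (so is a permanent cycle by \cref{survivalofmodularformsattwo}, for instance $2c_6$-type classes); or (c) $x$ is forced to be a $d_{11}$-cycle because every candidate target has already been identified as the source of a shorter differential, or has been realised as the mod-$\tau$ reduction of a Toda bracket lift in some $\Smf/\tau^k$ with $k > 11$ (as in the use of \cref{lem:epsexists} inside \cref{cor:nolinecrossingE9}).

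The main obstacle will be the bookkeeping: enumerating all connective-region classes in low filtration within the finite stem range and matching them to one of (a)--(c). The genuine content is likely to concentrate on $\Delta$-multiples of $c_6$, $2c_6$ and $h_1\Delta^i$, $c\Delta^i$, $d\Delta^i$ not yet killed by the $d_9$'s and $d_{11}$'s above --- in each case we already have a lift, a transfer-detection, or a permanent-cycle statement from the sphere, so no new differential can be manufactured. Once every candidate in the finite range is accounted for, \cref{prop:gdivisibilitycheck2} completes the proof.
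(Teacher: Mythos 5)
Your plan correctly invokes \cref{prop:E11metachecks} (hypotheses of \cref{prop:gdivisibilitycheck} for $d_{\le 11}$) and the reduction of \cref{prop:gdivisibilitycheck2} to filtrations $0\le s\le 3$ through a finite range of stems, which is the right framework. But the toolbox you list in (a)--(c) is missing the one argument that the proposition actually requires. After the finite-range reduction (through the $40$-stem), the candidate sources are $c_4$ and $h_1^3\Delta$, not the $c_6$-, $c\Delta^i$- or $d\Delta^i$-type classes you anticipated, and neither of $c_4$, $h_1^3\Delta$ is disposed of by the sphere-detection argument (a), the transfer argument (b), or either half of (c). What actually kills both is the observation that the \emph{unique potential target} of the would-be $d_{11}$ itself \emph{supports a $d_{11}$-differential}, i.e.\ a same-length differential; since the image of $d_{11}$ lies in $\ker d_{11}$, such a class can never be hit by a $d_{11}$. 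This is a distinct mechanism from your (c): ``target is the source of a shorter differential'' means the class is already dead on $\uE_{11}$, whereas here the class survives to $\uE_{11}$ and its own $d_{11}$ provides the contradiction.

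You also conflate source- and target-side arguments inside (c): the Toda-bracket lift criterion (\cref{lem:epsexists} plus \cref{thm:omnibus}\,(1), as used in \cref{cor:nolinecrossingE9}) rules out a differential \emph{supported by} the candidate, not one \emph{hitting} a potential target. As written, your case analysis would fail to account for $c_4$ and $h_1^3\Delta$, so the proof as proposed does not go through without adding the ``target supports a same-length differential'' argument --- which, notably, is also the step the introduction flags as one that genuinely fails for the ANSS of $\Tmf$ (\cref{sssec:DSSvsANSS}).
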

\begin{proof}
    By \cref{prop:E9metachecks}, we may invoke the meta-argument of \cref{prop:gdivisibilitycheck2}, which implies we only need to check for line crossing differentials through the $40$-stem. The only possible atomic $d_{11}$'s crossing the line in this range have sources $c_4$ or $h_1^3\Delta$, but the only possible target supports a $d_{11}$ differential in both cases. 
\end{proof}

\subsubsection{Hidden extensions}

To establish the $d_{13}$-differentials, we need some hidden extensions.
These turn out to require four-fold Toda brackets, and computing these is a delicate matter.
We provide a very detailed and careful analysis of these in \cref{sec:large_carrick_brackets}.
Using these, we now apply various shuffling formulas to obtain the following.

\begin{lemma}
    \label{prop:nu2_times_2_nu_hidden_extension}
    \leavevmode
    \begin{itemize}
        \item \textup{(54,2)} $\nu_2\cdot 2\nu = \tau^8\, \kappabar^2\, \widetilde{d}$ in $\Smf/\tau^{10}$, where $\widetilde{d}\in\oppi_{14,2}\Smf/\tau^{10}$ is an element that is sent to $d$ under the map $\Smf/\tau^{10}\to\Smf/\tau$.
        \item \textup{(150,2)} $\nu_6\cdot 2\nu = \tau^8 \, \kappabar^2\, \widetilde{d}_4$ in $\Smf/\tau^{10}$, where $\widetilde{d}_4\in\oppi_{110,2}\Smf/\tau^{10}$ is an element that is sent to $d\Delta^4$ under the map $\Smf/\tau^{10}\to\Smf/\tau$.
    \end{itemize}
\end{lemma}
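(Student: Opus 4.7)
The two extensions are proved by parallel 4-fold Toda bracket arguments; we outline the $(54,2)$ case. The plan is to realize $\nu_2\cdot 2\nu$ as an element of an iterated Toda bracket whose value can be pinned down using classical bracket identities from the sphere, together with the synthetic relation $4\nu=\tau^2\eta^3$ of \cref{lem:4nu_tau2_eta3}.

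Starting from $\nu_2=\angbr{\kappabar,\tau^4\nu_1,\nu}$ of \cref{lem:toda_brackets_nu1_nu2}, shuffling formula~(1) of \cref{prop:shufflingformulas}, whose hypotheses follow from $\tau^4\nu_1\kappabar=0$ in \cref{cor:tau4_nu1_kappabar_is_zero} together with the classical vanishing of $2\nu^2$ imported from the sphere, yields the equality
\[
\nu_2\cdot 2\nu \;=\; \kappabar\cdot\angbr{\tau^4\nu_1,\nu,2\nu}
\]
modulo indeterminacy. Substituting $\nu_1=\angbr{\kappabar,\tau^4\nu,2\nu}$ into the inner bracket and invoking the 4-fold Toda bracket substitution machinery of \cref{sec:large_carrick_brackets} rewrites this inner bracket as a $\kappabar$-multiple of a 4-fold Toda bracket involving $\kappabar$, a $\tau$-power, and multiples of $\nu$ and $\eta$. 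The classical identities $\eps\in\angbr{\nu,2\nu,\eta}$ and $\eta\kappa\in\angbr{\nu,2\nu,\eps}$ of \cref{prop:toadyfromsphere}, combined with $4\nu=\tau^2\eta^3$, then convert the remaining $\nu$-factors into $\tau$-multiples of $\eta$-powers and identify the 4-fold bracket value with $\tau^8\widetilde d$ for some lift $\widetilde d$ of $d$, giving $\nu_2\cdot 2\nu=\tau^8\kappabar^2\widetilde d$. The $(150,2)$ case is entirely analogous, using $\nu_6=\angbr{\kappabar,\tau^4\nu_1,\nu_4}$ from \cref{lem:toda_brackets_nu1_nu2} in place of $\nu_2$.

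The hard part is the substitution step: to verify that the nested Toda bracket substitution is defined in $\Smf/\tau^{10}$, one must check that each relevant 3-fold sub-bracket contains zero, keep track of signs, and control the indeterminacy of the resulting 4-fold bracket. The residual indeterminacy in bidegree $(54,2)$ of $\Smf/\tau^{10}$ is then ruled out by applying \cref{prop:precise_lifting_higher_powers_tau} to the vanishing of the relevant $\uE_2$-groups visible in \cref{etwoprime2}. This technical bracket bookkeeping is precisely what \cref{sec:large_carrick_brackets} is designed to handle.
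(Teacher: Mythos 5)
Your route is genuinely different from the paper's, and there are gaps that make it unclear whether it succeeds. The paper does not start from the $3$-fold expression $\nu_2=\angbr{\kappabar,\tau^4\nu_1,\nu}$ and substitute $\nu_1$; instead the whole content of \cref{sec:large_carrick_brackets} is to prove the single $4$-fold identity $\nu_2=\angbr{\nu,2\nu\tau^4,\nu\tau^4,\kappabar^2}$ in $\Smf/\tau^{10}$ (\cref{prop:nu2_fourfold_carrick_bracket}), which is then shuffled to $2\nu\cdot\nu_2 = \angbr{2\nu,\nu,2\nu\tau^4,\nu\tau^4}\kappabar^2$, then reduced via $\tau^8\angbr{2\nu,\nu,2\nu,\nu}\subseteq\angbr{2\nu,\nu,2\nu\tau^4,\nu\tau^4}$ to the classical fact $d\in\angbr{2h_2,h_2,2h_2,h_2}$ (Bauer, Eq.~(7.14)). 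Your proposal misreads \cref{sec:large_carrick_brackets} as supplying a general ``nested Toda bracket substitution machinery''; it does not. It proves that one specific $4$-fold bracket is a singleton and identifies it, and none of its arguments directly legitimate expanding $\angbr{\tau^4\nu_1,\nu,2\nu}$ with $\nu_1=\angbr{\kappabar,\tau^4\nu,2\nu}$ inserted. Unfolding a nested bracket $\angbr{\angbr{\kappabar,\tau^4\nu,2\nu}\tau^4,\nu,2\nu}$ into a flat higher-fold bracket with controlled indeterminacy is precisely the sort of delicate business the paper deliberately avoids by proving the $4$-fold expression for $\nu_2$ from scratch.

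Two further concrete issues. First, your shuffling step $\nu_2\cdot 2\nu = \kappabar\angbr{\tau^4\nu_1,\nu,2\nu}$ already needs $\angbr{\tau^4\nu_1,\nu,2\nu}$ to be non-empty (so, $\tau^4\nu_1\nu=0$ in $\Smf/\tau^{10}$) and needs the equality rather than an inclusion; neither is addressed. Second, the decisive classical input that makes the target a lift of $d$ is Bauer's $4$-fold Massey product $d\in\angbr{2h_2,h_2,2h_2,h_2}$ --- the relations $\eps\in\angbr{\nu,2\nu,\eta}$, $\eta\kappa\in\angbr{\nu,2\nu,\eps}$, and $4\nu=\tau^2\eta^3$ you invoke do not straightforwardly produce $d$ (they naturally land you on $\eta\kappa$-divisible things, not on $d$, and the extra $\eta$ has to be cancelled with non-trivial indeterminacy consequences). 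Without pinning down that the bracket you end up with projects exactly to $d$ mod $\tau$, the conclusion $\nu_2\cdot 2\nu=\tau^8\kappabar^2\widetilde d$ with $\widetilde d\mapsto d$ is not established. In short, the sketch substitutes the paper's hard technical step with an appeal to machinery that doesn't exist in the form you describe, and omits the one classical identity that actually forces the answer to be $d$.
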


\begin{proof}
    In \cref{prop:nu2_fourfold_carrick_bracket}, we show that
    \[
        \nu_2 = \angbr{\nu,\, 2\nu\tau^4,\, \nu\tau^4,\, \kappabar^2} \quad \text{in }\oppi_{51,1}\Smf/\tau^{10}.
    \]
    The Toda brackets $\angbr{2\nu,\nu,2\nu\tau^4}$ and $\angbr{\nu,2\nu,\nu\tau^4}$ are strictly zero, so that the four-fold Toda bracket $\angbr{2\nu,\nu,2\nu\tau^4,\nu\tau^4}$ is nonempty and we may apply the shuffling formula of \cref{prop:shufflingformulas} to see that
    \[
        2\nu\cdot\nu_2=2\nu\angbr{\nu,\, 2\nu\tau^4,\,\nu\tau^4,\,\kappabar^2}=\angbr{2\nu,\, \nu,\, 2\nu\tau^4,\, \nu\tau^4}\kappabar^2.
    \]
    In particular, we also see that the set $\angbr{2\nu,\nu,2\nu\tau^4,\nu\tau^4}\kappabar^2$ is a singleton.
    Applying the shuffling formula of \cref{prop:shufflingformulas}, one has
    \[\angbr{2\nu,\nu,2\nu,\nu}\tau^8\subseteq \angbr{2\nu,\nu,2\nu,\nu\tau^8}\subseteq \angbr{2\nu,\nu,2\nu\tau^4,\nu\tau^4}.\]
    Since $\angbr{2\nu,\nu,2\nu\tau^4,\nu\tau^4}\kappabar^2$ is a singleton, we are reduced to showing that every element of $\angbr{2\nu,\nu,2\nu,\nu}$ projects to $d$ mod $\tau$.
    However, using \cref{prop:todagoestomassey}, any class in $\angbr{2\nu,\nu,2\nu,\nu}$ projects to $d\in\oppi_{14,2}\Smf/\tau$, as the latter has zero indeterminacy and contains $d$ by \cite[Equation~(7.14)]{bauer_tmf}.

    The expression for $2\nu \cdot \nu_6$ follows by similar arguments. First, we use the Toda bracket expression
    \[\nu_6 = \angbr{\nu_4, 2\nu\tau^4, \nu\tau^4, \kappabar^2}\]
    of \cref{prop:nu2_fourfold_carrick_bracket} and the shuffling formula of \cref{prop:shufflingformulas}
    \[2\nu \cdot \nu_6 = 2\nu \angbr{\nu_4, 2\nu\tau^4, \nu\tau^4, \kappabar^2} = \angbr{2\nu, \nu_4, 2\nu\tau^4, \nu\tau^4} \kappabar^2.\]
    Combining this with the the containments
    \[d\Delta^4 \in \Delta^4 \angbr{2h_2, h_2, 2h_2, h_2} \subseteq \angbr{2h_2\Delta^4, h_2, 2h_2, h_2} \subseteq \angbr{2h_2, h_2\Delta^4, 2h_2, h_2}\]
    in $\Smf/\tau$ yields the result.
\end{proof}


\subsection{Page 13}\label{ssec:page13}

\subsubsection{Atomic differentials}

\begin{proposition}\label{prop:d13}
        \leavevmode
    \begin{itemize}
        \item \textup{(75,1)} $d_{13}(2h_2\Delta^3) = dg^3$.
        \item \textup{(81,3)} $d_{13}(h_2^3\Delta^3) = \pm 2g^4$.
        \item \textup{(171,1)} $d_{13}(2h_2\Delta^7) = dg^3\Delta^4$.
        \item \textup{(177,3)} $d_{13}(h_2^3\Delta^7) = \pm 2g^4\Delta^4$.
    \end{itemize}
\end{proposition}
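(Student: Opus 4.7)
The plan is to split the four $d_{13}$-differentials into two pairs. The filtration-$1$ atomic differentials $d_{13}(2h_2\Delta^3)=dg^3$ and $d_{13}(2h_2\Delta^7)=dg^3\Delta^4$ will be obtained by the same total-differential technique used for the $d_9$'s of \cref{prop:dnines} and the $d_{11}$'s of \cref{ssec:page11}, now stretched by a longer factor of $\tau^8$. The filtration-$3$ atomic differentials will then be deduced from these by applying the ordinary Leibniz rule against the permanent cycle $h_2^2$, using the $\uE_2$-page relation $h_2^2 d = 4g$.

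To establish $d_{13}(2h_2\Delta^3)=dg^3$, I start from $\delta_4^{14}(\Delta^3)=3u\,\nu_2\kappabar$ of \cref{prop:total_diff_14_Delta2}. Multiplying by $2\nu$ via the $\oppi_{*,*}\Smf$-linearity of $\delta_4^{14}$ from \cref{totaldifferentialjuggling}\,(3), and then applying the hidden extension $\nu_2\cdot 2\nu=\tau^8\kappabar^2\widetilde d$ of \cref{prop:nu2_times_2_nu_hidden_extension}, yields
\[
\delta_4^{14}(2\nu\Delta^3)\,=\,2\nu\cdot 3u\,\nu_2\kappabar\,=\,3u\,\tau^8\kappabar^3\widetilde d\qquad\text{in }\Smf/\tau^{10}.
\]
Since this value is $\tau^8$-divisible (in particular, it vanishes upon reduction to $\Smf/\tau^8$), the class $2\nu\Delta^3$ admits a lift $\alpha\in\oppi_{75,1}\Smf/\tau^{12}$. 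The juggling relation $\tau^8\delta_{12}^\infty=\delta_4^\infty$ of \cref{totaldifferentialjuggling}\,(2) then forces $\delta_{12}^{13}(\alpha)$ to reduce mod $\tau$ to $3u\cdot dg^3$, and \cref{prop:total_differential_versus_differentials}\,(2) delivers $d_{13}(2h_2\Delta^3)=\pm dg^3$. An entirely analogous argument, using $\delta_4^{14}(\Delta^7)=7u\,\nu_6\kappabar$ together with $\nu_6\cdot 2\nu=\tau^8\kappabar^2\widetilde d_4$, produces $d_{13}(2h_2\Delta^7)=\pm dg^3\Delta^4$.

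For the filtration-$3$ differentials, I use that $h_2^2$ is a permanent cycle (being the mod-$\tau$ reduction of $\nu^2\in\oppi_{6,2}\Smf$), and that the relation $\nu^2\kappa=4\kappabar$ of \cref{cor:todabracketforkappabar} reduces mod $\tau$ to $h_2^2 d=4g$ in the cubic Hopf algebroid cohomology, via \cref{etwopagebracketsprimethree}. Applying the Leibniz rule on $\uE_{13}$ yields
\[
d_{13}(2h_2^3\Delta^3)\,=\,h_2^2\cdot d_{13}(2h_2\Delta^3)\,=\,h_2^2\cdot dg^3\,=\,\pm 4g^4,
\]
and similarly $d_{13}(2h_2^3\Delta^7)=\pm 4g^4\Delta^4$. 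Given that $h_2^3\Delta^3$ is a $d_{\leq 12}$-cycle (checked using the meta-arguments of \cref{ssec:meta} together with $2$-torsion freeness in the possible targets of shorter differentials from $(81,3)$), the linearity $d_{13}(2h_2^3\Delta^3)=2\cdot d_{13}(h_2^3\Delta^3)$ combined with the $g$-divisibility of the high-filtration part of $\uE_{13}^{80,16}$ (from \cref{prop:gdivisibilitycheck}) pins down the $2$-torsion subgroup as $\langle 4g^4\rangle$, forcing $d_{13}(h_2^3\Delta^3)=\pm 2g^4$; the $\Delta^4$-shifted case is identical.

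The main obstacle I anticipate is the careful truncation bookkeeping needed to convert the $\tau^8$-divisibility of $\delta_4^{14}(2\nu\Delta^3)$ into the precise identification of $\delta_{12}^{13}(\alpha)$ with $3u\,dg^3$; this is the same idea used for the $\tau^4$-stretch in the $d_9$-proof and the $\tau^6$-stretch in the $d_{11}$-proof, but applied at the boundary where $\delta_4^{14}$ lands in $\Smf/\tau^{10}$ and only two units of target truncation remain after dividing by $\tau^8$. A secondary technicality is the verification that $h_2^3\Delta^3$ survives as a cycle up to $\uE_{13}$ before applying the Leibniz rule, but this should fall out of the $g$-divisibility machinery together with the absence of suitable differentials in the connective region.
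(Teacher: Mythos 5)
Your argument for the two filtration-one differentials $d_{13}(2h_2\Delta^3)=dg^3$ and $d_{13}(2h_2\Delta^7)=dg^3\Delta^4$ matches the paper's: both compute $\delta_4^{14}(2\nu\Delta^3)$ via linearity of the truncated total differential and the $\tau^8$-stretched hidden extension $\nu_2\cdot 2\nu=\tau^8\kappabar^2\widetilde d$ from \cref{prop:nu2_times_2_nu_hidden_extension}, and then read off the $d_{13}$ via \cref{prop:total_differential_versus_differentials}. That part is fine.

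Your argument for the filtration-three differentials has a fatal gap. You propose to apply the Leibniz rule against the permanent cycle $h_2^2$ to obtain $d_{13}(2h_2^3\Delta^3)=h_2^2\cdot dg^3 = 4g^4$, and then divide by two. But the element $2h_2^3\Delta^3$ is \emph{zero} on $\uE_2$ (and hence on every later page): the relation $h_2^3=h_1c$ shows $h_2^3\Delta^3$ is $2$-torsion. So $d_{13}(2h_2^3\Delta^3)=d_{13}(0)=0$, and your Leibniz computation only reproves that $4g^4=0$ on $\uE_{13}$ --- it puts no constraint whatsoever on $d_{13}(h_2^3\Delta^3)$. (In particular, the relation $h_2^2 dg^3 = 4g^4 = 0$ on the $\uE_2$-page is exactly what makes the Leibniz rule consistent here; it is not the nonzero class you want.) The same problem afflicts the $(177,3)$ case. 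There is also no easy way to salvage this by switching the multiplier: $h_1$, $h_2$, $c$, and $d$ each annihilate $h_2\Delta^3$ modulo $2$-torsion, so no direct Leibniz comparison from the $(75,1)$ differential reaches the $(81,3)$ one.

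The paper's route for the filtration-three cases is to compute the total differential on $\nu^3\Delta^3\in\oppi_{81,3}\Smf/\tau^4$ \emph{directly}: the classical relation $\nu^3=\eta\eps$ together with the hidden extensions $\eps\nu_2=\tau^4\eta_1\kappa\kappabar$ (\cref{lem:hidden_extensions_nu1_nu2}) and $\eta\eta_1\kappa=\pm\tau^4\cdot 2\kappabar$ (\cref{lem:hidden_extensions_etai}) yield
\[
\delta_4^{14}(\nu^3\Delta^3)=3u\cdot\eta\eps\cdot\nu_2\kappabar=\pm 2\tau^8\kappabar^4,
\]
which is $\tau^8$-divisible with $\tau^8$-quotient reducing to $\pm 2g^4$. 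By \cref{prop:total_differential_versus_differentials} this single equality gives in one stroke both that $h_2^3\Delta^3$ is a $d_{\leq 12}$-cycle and that $d_{13}(h_2^3\Delta^3)=\pm 2g^4$; the same argument with $\Delta^7$ in place of $\Delta^3$ and $\eta_5$, $\kappabar_4$ in place of $\eta_1$, $2\kappabar$ handles $(177,3)$. The two secondary checks you flag as technicalities --- that $h_2^3\Delta^3$ survives to $\uE_{13}$ and that $\uE_{13}^{80,16}$ has the right structure for your divide-by-two step --- are precisely the bookkeeping the total-differential argument renders unnecessary.
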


\begin{proof}
    Recall the total differentials of \cref{prop:total_diff_14_Delta2}
    \begin{align*}
        \delta^{14}_4(\Delta^3) = u \cdot 3\nu_2\kappabar \qquad &\text{and}\qquad \delta^{14}_4(\Delta^7) = u \cdot 7\nu_6\kappabar
    \intertext{and the extensions of \cref{prop:nu2_times_2_nu_hidden_extension}}
        \nu_2 \cdot 2\nu = \tau^8\, \kappabar^2\, \widetilde{d} \qquad &\text{and}\qquad \nu_6 \cdot 2\nu = \tau^8\, \kappabar \, \widetilde{d}_4,
    \end{align*}
    where $\widetilde{d}$ is some element whose mod~$\tau$ reduction is $d$, and where $\widetilde{d}_4$ is some element whose mod~$\tau$ reduction is $d\Delta^4$.
    Together these give
    \[
        \delta_4^{14}(2\nu \Delta^3) = 3u\cdot 2\nu \cdot \nu_2 \kappabar = 3u\cdot \tau^8\,\widetilde{d} \,\kappabar^3,
    \]
    and similarly $\delta_4^{14}(2\nu\Delta^7) = 7u\cdot \tau^8 \, \widetilde{d}_4 \, \kappabar^3$.
    Combining these equalities with \cref{prop:total_differential_versus_differentials} gives the differentials supported in degrees $(75,1)$ and $(171,1)$.

    For the other two, first recall the classical relation $\nu^3 = \eta\eps$ in the non-synthetic sphere; 
    see \cite[Theorem~3.3.15\,(a)]{kochman_stable}.
    This relation immediately lifts to the synthetic sphere $\Sph$ as there is no $\tau$-power torsion in $\oppi_{8,2}\S$, so we also have $\eta\eps = \nu^3$ in $\Smf$ as well as $h_2^3 = h_1 c$ in $\Smf/\tau$. Using the extensions
    \[\eps \nu_2 = \tau^4\eta_1 \kappa\kappabar\qquad \text{and}\qquad \eta \eta_1 \kappa = \pm \tau^4 2\kappabar\]
    of \cref{lem:hidden_extensions_nu1_nu2} and \cref{lem:hidden_extensions_etai}, respectively, we obtain the total differential
    \[\delta_4^{14}(\nu^3 \Delta^3) = 3u\cdot \eta \eps \cdot \nu_2 \kappabar = 3u \cdot (\tau^4 \eta\eta_1 \kappa \kappabar) \kappabar = 3u \cdot \tau^4 \kappabar^2 (\eta \eta_1 \kappa) 
    = \pm 2 \tau^8 \kappabar^4.\]

    Similarly, we have
    \[\delta_4^{14}(\nu^3 \Delta^7) = 7u\cdot \eta \eps \cdot \nu_6 \kappabar = 7u \cdot (\tau^4 \eta\eta_5 \kappa \kappabar) \kappabar = 7u \cdot \tau^4 \kappabar^2 (\eta \eta_5 \kappa) 
    = \pm \tau^8 \kappabar_4\kappabar^3\]
    using \cref{lem:hidden_extensions_nu1_nu2} and \cref{lem:hidden_extensions_etai}. These two total differentials combined with \cref{prop:total_differential_versus_differentials} yield the remaining two atomic $d_{13}$'s.
\end{proof}

\subsubsection{Meta-arguments}

\begin{proposition}\label{prop:E13metachecks}
    The condition of \cref{prop:gdivisibilitycheck} holds for $d_r$ for $13\le r\le 21$. Moreover, $\Delta^8$ is a $d_{21}$-cycle.
\end{proposition}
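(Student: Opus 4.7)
The plan is to split \cref{prop:E13metachecks} into two independent checks: the $g$-divisibility condition of \cref{prop:gdivisibilitycheck} for each $r\in\{13,15,17,19,21\}$, and the vanishing of $d_r(\Delta^8)$ in the same range. Since \cref{prop:E11metachecks} has already verified both properties through page~$11$, the only remaining work is to handle the atomic differentials introduced on pages~$13$ through~$21$.

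For the $g$-divisibility condition, the only atomic differentials in this range are the four $d_{13}$'s established in \cref{prop:d13}. For each we would directly compute both $gx$ and $gy$ from the $\uE_{13}$-chart in \cref{efiveprime2_part2,efiveprime2_part3,efiveprime2_part4}: for instance, $g\cdot dg^3 = dg^4$ and $g\cdot 2h_2\Delta^3 = 2gh_2\Delta^3$ are both nonzero on $\uE_{13}$ by inspection, so the implication $gy=0\Rightarrow gx=0$ holds vacuously; the $\Delta^7$-variant is handled identically, and the analogous checks for $d_{13}(h_2^3\Delta^3)=\pm 2g^4$ and $d_{13}(h_2^3\Delta^7)=\pm 2g^4\Delta^4$ go through in the same way. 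For pages $r\in\{15,17,19,21\}$, we expect no new atomic differentials in the connective region: inspection of the chart should show that every surviving class there is either a known permanent cycle (arising from the sphere via \cref{heightonedetection,prop:kappa_detection}, from the transfer of \cref{survivalofmodularformsattwo}, or from the synthetic Toda bracket detections of \cref{sssec:kappabar,sssec:todabracketsfromsphere}), or sits in a bidegree whose potential targets have already been cleared on an earlier page. With no new atomic differentials on these pages, the hypothesis of \cref{prop:gdivisibilitycheck} is vacuously inherited from page~$11$.

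For the claim that $\Delta^8$ is a $d_{21}$-cycle, we would proceed page by page, starting from the fact that $\Delta^8$ is already a $d_{\le 11}$-cycle by \cref{prop:E11metachecks}. That $\Delta^8$ survives $d_{13}$ follows by the Leibniz rule together with earlier differentials: $d_7(\Delta^4)=h_1^3 g \Delta^3$ from \cref{prop:dsevens} combined with $2h_1^3=0$ gives $d_7(\Delta^8)=2\Delta^4\cdot d_7(\Delta^4)=0$, and the potential $d_{13}$-target in bidegree $(191,13)$ is empty on $\uE_{13}$ as can be read from the chart. For $r\in\{15,17,19,21\}$, the same inspection of bidegree $(191,r)$ on $\uE_r$ will rule out a nonzero target for $d_r(\Delta^8)$.

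The main obstacle we anticipate is the verification that pages $15$, $17$, $19$, $21$ carry no atomic differentials in the connective region. This is not conceptually deep, but requires a careful sweep of the chart, matching every class surviving to $\uE_r$ in the connective region with an explicit reason to be a cycle --- either as a permanent cycle via a previously established synthetic lift or Toda bracket detection, or by a bidegree constraint on its potential targets. Care must also be taken to rule out spurious $d_{15}$- or $d_{17}$-differentials whose source and target bidegrees would otherwise be numerically consistent, which is where the bookkeeping becomes most delicate.
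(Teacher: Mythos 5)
Your approach matches the paper's, which simply states that the $g$-divisibility condition ``is checked directly as before'' and that $\Delta^8$ is a $d_{21}$-cycle ``for degree reasons'' --- you have unpacked both: direct verification on the four $d_{13}$'s of \cref{prop:d13}, observation that pages $15$--$21$ carry no new atomic differentials, and inspection of the target bidegrees $(191,r)$. One small point: the Leibniz calculation $d_7(\Delta^8)=2\Delta^4\cdot d_7(\Delta^4)=0$ you include is a red herring here --- it reproves that $\Delta^8$ survives to $\uE_8$, which was already secured in \cref{prop:E7metachecks,prop:E9metachecks,prop:E11metachecks}; the only content needed for the $d_{13}$-cycle claim is the degree check on $(191,13)$, which you do supply.
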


\begin{proof}
    The condition of \cref{prop:gdivisibilitycheck} is checked directly as before. The class $\Delta^8$ is a $d_{21}$-cycle for degree reasons.
\end{proof}

\begin{proposition}\label{cor:nolinecrossingE13}
    There are no line-crossing $d_{r}$-differentials for $13\le r\le 21$.
\end{proposition}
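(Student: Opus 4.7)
The plan is to follow the same strategy used for the earlier line-crossing propositions (\cref{cor:nolinecrossingE5,cor:nolinecrossingE7,cor:nolinecrossingE9,cor:nolinecrossingE11}). By \cref{prop:E11metachecks}, the $g$-divisibility condition of \cref{prop:gdivisibilitycheck} has been verified for all $d_{r'}$ with $r'\le 11$, so the hypothesis of \cref{prop:gdivisibilitycheck2} is met. For each $r$ in the range $13\le r\le 21$, this meta-argument reduces the task to checking atomic line-crossing $d_r$-differentials in a finite range of stems, namely up to the $(n+1)$-stem, where $n$ is the largest stem at which the nonconnective region has a nonzero class in filtrations $r\le s\le r+3$. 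This range is read directly off \cref{etwoprime2} and grows modestly with $r$.

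For each such $r$, I would enumerate candidate sources. Since we may restrict to the connective region in filtration $0\le s\le 3$ (higher filtration sources are divisible by $g$ and handled by the Leibniz rule), the candidates are $\Delta^k$-multiples of classes among $\{1,h_1,h_2,c_4,c_6,c,d,h_1^2,h_1c_4,h_1^3,\ldots\}$ in the appropriate stems. Each candidate is then ruled out by one of four mechanisms already used in earlier pages: (a) the source is a permanent cycle because it comes from the sphere via $\S\to\Smf$ (cases $h_1=\eta$, $h_2=\nu$, $c=\eps$, $d=\kappa$, and $g$ through $\kappabar$, see \cref{not:2_import_from_sphere,not:kappabar_and_g}); (b) the source is a permanent cycle because it lies in the image of the synthetic transfer from $\Smf_1(3)$ (cases $8$, $2c_4$, $2c_6$, see \cref{survivalofmodularformsattwo}); (c) the only candidate target on the current page already supports or has been hit by a shorter differential, hence has died; or (d) the source has been constructed as a lift to some $\Smf/\tau^k$ above (e.g.\ $\eta_1,\eta_4,\eta_5,\eps_1,\eps_4,\eps_5,\nu_i,\kappabar_4$), which by \cref{thm:omnibus}\,(1) forces it to be a $d_{\le k}$-cycle.

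Concretely, many candidates are eliminated uniformly: for $r\ge 13$, a class like $h_1\Delta^k$ or $c\Delta^k$ is either already dead (by the $d_9$'s of \cref{prop:dnines} or the $d_{11}$'s of \cref{ssec:page11}), or it agrees modulo the relevant $\tau$-power with one of the named lifts $\eta_i,\eps_i$, which cuts off the list of possible line-crossers to a handful per page. The $d_{13}$'s of \cref{prop:d13} themselves are not line-crossing, as their targets lie in the connective region. The only candidates that require genuine care are $h_2\Delta^k,h_2^3\Delta^k$ for $k\in\{1,2,3,5,6,7\}$ and the filtration-$0$ generators $c_4\Delta^k,c_6\Delta^k,\Delta^k$; the first group is handled either by the atomic $d_{13}$'s just established or by comparison with the nearby $\nu_i$ lifts, while the second group is handled by the transfer argument of \cref{survivalofmodularformsattwo} together with $\Delta^8$ being a $d_{\le 21}$-cycle (\cref{prop:E13metachecks}).

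The main obstacle is simply the bookkeeping: as $r$ increases from $13$ to $21$, the cutoff stem moves to the right, so one must redo the enumeration on each page. However, no new ideas are required, because the $g$-periodic band structure of $\uE_2$ (\cref{prop:E4metachecks}) ensures that each page presents only a bounded, small number of new atomic candidates, all of which fall into one of the four categories above.
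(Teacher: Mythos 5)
Your proposal follows the paper's strategy: apply the meta-argument of \cref{prop:gdivisibilitycheck2} to reduce to a finite stem range, then check. Two points, though.

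First, a logical gap. You verify the hypothesis of \cref{prop:gdivisibilitycheck2} only for $r'\le 11$ via \cref{prop:E11metachecks}. That justifies the meta-argument for $r=13$, but not for $r\ge 15$: invoking \cref{prop:gdivisibilitycheck2} on page $r$ requires the $g$-divisibility condition for all $r'<r$, so for $r=15,\dots,21$ you additionally need the condition at $r'=13,\dots,19$, which is precisely what \cref{prop:E13metachecks} supplies. You cite that proposition only in passing (for $\Delta^8$ being a $d_{\le21}$-cycle), not where it is load-bearing.

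Second, the paper's finite check is far simpler than your enumeration suggests. Once one restricts to the relevant stems (through the $46$-stem for $r=13$, through the $96$-stem for $15\le r\le 21$), there simply are no remaining nonzero classes in the nonconnective region in filtration $r\le s\le r+3$ that could serve as targets of a line-crossing $d_r$ from a surviving source in filtration $\le 3$. The classes you flag as needing "genuine care," such as $h_2\Delta^k$ and $h_2^3\Delta^k$, have already supported $d_5$-differentials (by Leibniz from $d_5(\Delta)=\pm h_2 g$) or otherwise died pages earlier, so they are not present on $\uE_r$ at all. Your mechanisms (a)--(d) are all valid in general, but here the check is a routine inspection of the chart rather than a case analysis. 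The proposal is not wrong, but it misrepresents the paper's proof as more involved than it is, and the over-elaboration risks obscuring the fact that the check is vacuous in the relevant range.
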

\begin{proof}
    By \cref{prop:E11metachecks}, we may invoke the meta-argument of \cref{prop:gdivisibilitycheck2} for the case $r=13$, which implies we only need to check for line-crossing differentials through the $46$-stem, and there are no possibilities in this range.

    For $15\le r\le 21$, \cref{prop:E13metachecks} implies we only need to check for line-crossing differentials through the $96$-stem, and again there are no possibilities in this range.
\end{proof}

\subsubsection{Lifts}

Before we start the computation of the $\uE_{23}$-page, we need to lift a relation from $\Smf/\tau^{12}$ to $\Smf/\tau^{24}$.

\begin{lemma}[23,1]
\label{lem:tau4_nu_kappabar_is_zeroweiter}
    $\tau^4\nu\kappabar = 0$ in $\Smf/\tau^{24}$.
\end{lemma}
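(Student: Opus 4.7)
The plan is to mimic the proof of \cref{lem:tau4_nu_kappabar_is_zero} in $\Smf/\tau^{24}$. First I would apply \cref{thm:omnibus}\,(3) to the differential $d_5(\Delta) = \pm h_2 g$ of \cref{prop:d5_Delta} with $k = 24$ to obtain a $\tau^4$-torsion lift of $h_2 g$ in $\oppi_{23,5}\Smf/\tau^{24}$. Similarly, applying \cref{thm:omnibus}\,(3) to the differential $d_7(4\Delta) = h_1^3 g$ of \cref{prop:dsevens} with $k = 24$ yields a $\tau^6$-torsion lift of $h_1^3 g$ in $\oppi_{23,7}\Smf/\tau^{24}$. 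The relation $4\nu\kappabar = \tau^2 \eta^3 \kappabar$, obtained from \cref{lem:4nu_tau2_eta3} by multiplying by $\kappabar$, holds in $\Smf$ and so in $\Smf/\tau^{24}$.

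The next step is to use \cref{prop:precise_lifting_higher_powers_tau} to pin down the structure of $\oppi_{23,5}\Smf/\tau^{24}$ and $\oppi_{23,7}\Smf/\tau^{24}$, showing in particular that $\nu\kappabar$ and $5\nu\kappabar$ are the only lifts of $h_2 g$ to the former group (modulo additional $\tau$-multiples that do not interfere with the $\tau^4$-torsion argument), and that $\eta^3\kappabar$ is the unique lift of $h_1^3 g$ to the latter group in the same sense. This requires enumerating classes in stem $23$ at filtrations ranging up to $24$ and observing that all the high-filtration classes contributing here, such as the $h_1$-tower multiples $h_1^k c_4$ for $k \geq 4$, are boundaries under the $d_3$-differential $d_3(b) = h_1^4$ from \cref{prop:d3differential}, and therefore do not contribute to the relevant quotients $\uZ_i/\uB_{25-i}$ computed by \cref{prop:precise_lifting_higher_powers_tau}.

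With these identifications in hand, the argument of \cref{lem:tau4_nu_kappabar_is_zero} proceeds unchanged: the difference of the two lifts of $h_2 g$ equals $4\nu\kappabar = \tau^2\eta^3\kappabar$; multiplying by $\tau^4$ gives $\tau^6 \eta^3 \kappabar$, which vanishes because $\eta^3\kappabar$ agrees with the $\tau^6$-torsion lift of $h_1^3 g$ up to $\tau^6$-torsion. Hence both lifts of $h_2 g$, and in particular $\nu\kappabar$ itself, are $\tau^4$-torsion in $\Smf/\tau^{24}$, so $\tau^4\nu\kappabar = 0$. The main technical obstacle is the enumeration of cycle-modulo-boundary groups in the relevant bidegrees up through filtration $24$; the pattern is that of \cref{lem:tau4_nu_kappabar_is_zero}, but the verification is longer because $k = 24$ rather than $k=12$.
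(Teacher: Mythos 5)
Your proposal takes a genuinely different route from the paper, and it has a real gap. The paper does not redo the Omnibus argument at $k=24$. Instead it notes that the relation $\tau^4\nu\kappabar = 0$ already holds in $\Smf/\tau^{12}$ by \cref{lem:tau4_nu_kappabar_is_zero}, and then reduces the claim to showing that the reduction map $\oppi_{23,1}\Smf/\tau^{24}\to\oppi_{23,1}\Smf/\tau^{12}$ is injective. Since $\tau^4\nu\kappabar$ lives in degree $(23,1)$, this is exactly what is needed, and it is checked with a single application of \cref{prop:precise_lifting_higher_powers_tau}(2): one needs the quotients $\uZ_i^{23,\,25-i}/\uB_{25-i}^{23,\,25-i}$ to vanish for $1\le i\le 12$, and the only nonobvious one sits at filtration $19$ (the nonconnective region of the $23$-stem), where a $d_9$ from \cref{sssec:9nonconn} kills the class. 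That is the whole proof.

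Your approach instead reruns the $k=12$ argument with $k=24$, working in degrees $(23,5)$ and $(23,7)$. The gap is in the parenthetical ``modulo additional $\tau$-multiples that do not interfere with the $\tau^4$-torsion argument.'' This needs to be proved, not asserted. Concretely: the Omnibus gives \emph{some} $\tau^4$-torsion lift $\alpha$ of $h_2g$ to $\oppi_{23,5}\Smf/\tau^{24}$; if the kernel of $\oppi_{23,5}\Smf/\tau^{24}\to\oppi_{23,5}\Smf/\tau$ contains an element $x$ that is not $\tau^4$-torsion, then $\alpha$ could equal $\nu\kappabar+x$ and the deduction $\tau^4\nu\kappabar=0$ would fail. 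Verifying that no such $x$ exists requires analysing the groups $\uZ_i/\uB_{25-i}$ in bidegrees $(23,s')$ over the much larger range $5\le s'\le 28$, including the nonconnective region $s'>7$. You only account for the $h_1$-tower multiples of $c_4$ (which are indeed $d_3$-boundaries), but you do not address the nonconnective classes in stem $23$ — in particular the one at filtration $19$ that the paper must deal with explicitly. Your argument could likely be completed with that extra bookkeeping, but as written the claim that the higher-filtration contributions are harmless is exactly the unproved step, and it is considerably more work than the paper's reduction to the already-established $k=12$ result.
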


\begin{proof}
    We know this relation holds in $\Smf/\tau^{12}$ by \cref{lem:tau4_nu_kappabar_is_zero}, so it suffices to show that the reduction map
    \[\oppi_{23,1} \Smf/\tau^{24} \to \oppi_{23,1} \Smf/\tau^{12}\]
    is injective.
    This now follows from \cref{prop:precise_lifting_higher_powers_tau}; the nontrivial item to check is that there is a $d_{9}$ hitting the class in filtration 19, and this is a consequence of \cref{sssec:9nonconn}.
\end{proof}


\subsection{Page 23}\label{ssec:page23}

\subsubsection{Atomic differentials}

\begin{proposition}[121,1]\label{prop:diff23}
    $d_{23}(h_1\Delta^5) = g^6$.
\end{proposition}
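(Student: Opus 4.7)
The plan is to show that $\tau^{22}\kappabar^6 = 0$ in $\oppi_{120,2}\Smf/\tau^{24}$, and then invoke the truncated Omnibus Theorem to force a differential of length at most $23$ hitting $g^6$; an inspection of the $121$-stem on the $\uE_{23}$-page will leave $h_1\Delta^5$ as the only possible source.

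The calculation of $\tau^{22}\kappabar^6$ will proceed via a $4$-fold Toda bracket manipulation. Starting from the containment $\tau^2\kappabar \in \angbr{\kappa, 2, \eta, \nu}$ in $\Smf$ from \cref{cor:todabracketforkappabar}, I would multiply on the left by $\tau^{16}\kappabar^4$ and on the right by $\tau^4\kappabar$ to obtain
\[
    \tau^{22}\kappabar^6 \in \tau^{16}\kappabar^4 \,\angbr{\kappa, 2, \eta, \nu}\, \tau^4\kappabar.
\]
Applying shuffling formula~(1) of \cref{prop:shufflingformulas} to move $\tau^{16}\kappabar^4$ into the bracket from the left rewrites the right-hand side as $\angbr{\tau^{16}\kappabar^4, \kappa, 2, \eta}\cdot \tau^4\nu\kappabar$. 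Then \cref{lem:tau4_nu_kappabar_is_zeroweiter} gives $\tau^4\nu\kappabar = 0$ in $\Smf/\tau^{24}$, so this set collapses to $\set{0}$, yielding $\tau^{22}\kappabar^6 = 0$.

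Next, I would apply the Truncated Omnibus Theorem (\cref{thm:omnibus}) with $k = 24$ and the lift $\alpha = \kappabar^6 \in \oppi_{120,24}\Smf/\tau^{24}$ of $g^6$; this lift exists because $\kappabar$ is defined already in $\Smf$ itself, so $\kappabar^6$ descends to every truncation. Part~(1) of the Omnibus Theorem then says that $g^6$ is a $d_{\leq 23}$-cycle, so it cannot support any differential of length at most~$23$. On the other hand, if $g^6$ also survived to the $\uE_{24}$-page as a target, then part~(2a) applied with $r = 24$ would force $\tau^{22}\kappabar^6$ to be nonzero, contradicting the calculation above. Therefore $g^6$ must be hit by a differential $d_r$ for some $r \leq 23$. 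Inspection of the bidegrees $(121, 24-r)$ on the $\uE_r$-pages already computed, together with the meta-arguments of \cref{ssec:meta}, should show that the only surviving class able to hit $g^6$ is $h_1\Delta^5 \in (121,1)$, forcing the differential to have length exactly~$23$.

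The main obstacle is verifying that the $4$-fold shuffling is legitimate in $\Smf/\tau^{24}$. This requires checking that all of the relevant $3$-fold sub-brackets of $\angbr{\tau^{16}\kappabar^4, \kappa, 2, \eta, \nu}$ contain zero (so that the hypotheses of \cref{prop:shufflingformulas}~(1) are met and the $4$-fold brackets are defined in the truncated setting), and that the pairwise products $\tau^{16}\kappabar^4 \cdot \kappa$, $2\eta$ and $\eta\nu$ all vanish in $\Smf/\tau^{24}$. The latter two are classical; the first and the sub-bracket conditions should follow from the homotopy group computations of the earlier pages together with the fact that the original bracket $\angbr{\kappa, 2, \eta, \nu}$ is defined in $\Smf$ itself. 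A careful reading of the indeterminacies is necessary to rule out a nonzero element of $\angbr{\tau^{16}\kappabar^4, \kappa, 2, \eta}\cdot \tau^4\nu\kappabar$ sneaking in via indeterminacy.
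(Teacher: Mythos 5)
Your approach is the same as the paper's: prove $\tau^{22}\kappabar^6 = 0$ in $\Smf/\tau^{24}$ by shuffling the four-fold bracket $\tau^{16}\kappabar^4\angbr{\kappa,2,\eta,\nu}\tau^4\kappabar$ into $\angbr{\tau^{16}\kappabar^4,\kappa,2,\eta}\cdot\tau^4\nu\kappabar$ and applying \cref{lem:tau4_nu_kappabar_is_zeroweiter}, then invoke the truncated Omnibus Theorem. The overall structure matches.

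However, the gap you identify as the ``main obstacle'' is left open, and it is precisely the one delicate point. Two issues: first, the hypothesis of \cref{prop:shufflingformulas}\,(1) is that the relevant sub-brackets are \emph{equal to} $\set{0}$, not merely that they \emph{contain} zero; with indeterminacy present, the shuffling formula cannot be invoked, so this distinction matters. Second, and more substantively, the key nontrivial sub-bracket is $\angbr{\tau^{16}\kappabar^4, \kappa, 2}$, which sits in degree $(95,1)$. The paper establishes that this bracket is strictly zero because the entire group $\oppi_{95,1}\Smf/\tau^{24}$ vanishes, a fact that follows from \cref{prop:precise_lifting_higher_powers_tau} together with the $d_7$-differential $d_7(\Delta^4)=h_1^3g\Delta^3$ of \cref{prop:dsevens}. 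This is the crucial input: it is the $d_7$ on $\Delta^4$ that ultimately makes the $d_{23}$ go through, and you should single it out rather than folding it into ``should follow from the homotopy group computations of the earlier pages.'' Without this vanishing you cannot justify the shuffle, and the argument does not close.
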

\begin{proof}
    First, we claim that by \cref{thm:omnibus}, it suffices to show that $\tau^{22}\kappabar^6 = 0$ in $\Smf/\tau^{24}$.
    Indeed, the element $g^6$ is a $d_{\leq 22}$-cycle and the only potential source of a differential to hit $g^6$ is $h_1\Delta^5$.
    Next, we claim that all of the proper sub-brackets of the four-fold Toda brackets
    \[
        \angbr{\kappa,\, 2,\, \eta,\, \nu} \qquad \text{and} \qquad \angbr{\tau^{16}\kappabar^4,\, \kappa,\, 2,\, \eta}
    \]
    are equal to zero in $\Smf/\tau^{24}$.
    This computation for the first bracket is straightforward. 
    For the second bracket, we need to use the key fact that $\oppi_{95,1}\Smf/\tau^{24}=0$.
    This follows from \cref{prop:precise_lifting_higher_powers_tau}, using the $d_7$ on $\Delta^4$ of \cref{prop:dsevens}.
    Lastly, we have that $\tau^2 \kappabar \in \angbr{\kappa, 2, \eta, \nu}$ in $\Smf/\tau^{24}$, which follows from \cref{cor:todabracketforkappabar}.
    
    The fact that these sub-brackets are strictly zero allows us to apply the shuffling formula of \cref{prop:shufflingformulas}\,(1), and we obtain
    \[
        \tau^{22}\kappabar^6 = \tau^{16}\kappabar^4\cdot(\tau^2\kappabar)\cdot\tau^4\kappabar \in \tau^{16}\kappabar^4 \cdot \angbr{\kappa, 2, \eta, \nu} \cdot \tau^4 \kappabar = \angbr{\tau^{16}\kappabar^4, \kappa, 2, \eta} \cdot \tau^4 \nu \kappabar = 0,
    \]
    where for the last equality we used the relation $\tau^4 \nu\kappabar = 0$ from \cref{lem:tau4_nu_kappabar_is_zeroweiter}.
\end{proof}

All other $d_{23}$'s in the connective region follow from the Leibniz rule as $g$, $[\Delta^8]$, and $[h_1\Delta]$ are $d_{\leq23}$-cycles.

\subsubsection{Meta-arguments}

\begin{proposition}\label{prop:E23metachecks}
    The conditions of \cref{prop:gdivisibilitycheck} hold for $d_{23}$. 
\end{proposition}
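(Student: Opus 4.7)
The plan is to verify the condition of \cref{prop:gdivisibilitycheck} specialised to $r = 23$: for every atomic $d_{23}$-differential $d_{23}(x) = y$ in the connective region, the implication $gy = 0 \text{ in } \uE_{23} \Rightarrow gx = 0 \text{ in } \uE_{23}$ must hold. By \cref{prop:diff23}, the only atomic $d_{23}$-differential is $d_{23}(h_1\Delta^5) = g^6$, and its $\Delta^8$-translates in the $S$-region pose no new condition because $\Delta^8$ is a $d_{23}$-cycle and $\uE_{23}$ is $\Delta^8$-torsion free in $S$ by \cref{prop:Sregion}. The entire verification therefore reduces to the single implication $g^7 = 0 \text{ in } \uE_{23} \Rightarrow gh_1\Delta^5 = 0 \text{ in } \uE_{23}$.

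My strategy is to make the hypothesis vacuous by showing $g^7 \neq 0$ in $\uE_{23}$. The class $g^7$ is nonzero on $\uE_2$ by Bauer's structural description of the connective region of the cubic Hopf algebroid cohomology (see \cref{sec:cubic_hopf_algebroid} and \cite[Section~7]{bauer_tmf}), so it suffices to exclude any $d_r$-differential with $r < 23$ killing $g^7$. I would argue by contradiction: suppose $d_r(z) = g^7$ for some $r < 23$ and some $z \in \uE_r^{141,\,28-r}$. Since $r < 23$ gives filtration $28-r \geq 6 \geq 4$, the prior invocations of \cref{prop:gdivisibilitycheck} via \cref{prop:E5metachecks,prop:E7metachecks,prop:E9metachecks,prop:E11metachecks,prop:E13metachecks} guarantee that $z$ is $g$-divisible, so we may write $z = g z'$. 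Since $g$ is a $d_r$-cycle (it is the mod $\tau$ reduction of the permanent cycle $\kappabar$ of \cref{not:kappabar_and_g}), the Leibniz rule yields $g \cdot d_r(z') = g^7$. The iterated $g$-divisibility structure on the connective region then forces $d_r(z') = g^6$, contradicting the survival of $g^6$ to $\uE_{23}$.

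The main obstacle is the final step $g \cdot d_r(z') = g^7 \Rightarrow d_r(z') = g^6$: that is, injectivity of multiplication by $g$ on the relevant bidegree $\uE_r^{120,\,24}$. By the $g$-divisibility already established on $\uE_r$, this group is generated by $g^6$ via iterated $g$-multiplication, and one can verify directly from \cref{etwoprime2} together with the differentials recorded in \cref{ssec:page7,ssec:page11,ssec:page13,ssec:page23} that multiplication by $g$ from $(120,24)$ to $(140,28)$ is injective in the range of interest. Once this is in place, the implication in the meta-condition is vacuously satisfied, completing the proof.
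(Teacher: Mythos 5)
The condition to verify for $d_{23}$ is indeed the single implication $g^7=0 \Rightarrow gh_1\Delta^5 = 0$ on $\uE_{23}$, and showing $g^7\neq 0$ is a correct way to discharge it. (The aside about $\Delta^8$-translates is unnecessary: translates like $h_1\Delta^{13}$ are decomposable products $h_1\Delta^5\cdot\Delta^8$, so they are never atomic and impose no condition, regardless of any torsion-freeness in $S$.) The paper's own proof is simply ``checked directly,'' which one should read as a chart inspection: both $gh_1\Delta^5$ and $g^7$ visibly survive to $\uE_{23}$, with the $d_{23}$ connecting them.

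Your proposed contradiction argument has a genuine gap at its crux. From $g\cdot d_r(z') = g^7$ you only learn that $d_r(z') - g^6$ lies in the kernel of multiplication by $g$ on $\uE_r^{120,24}$; to conclude $d_r(z') = g^6$ (and hence contradict the survival of $g^6$ to $\uE_{23}$) you need that kernel to be zero, and you offer no argument for this beyond ``one can verify directly.'' That injectivity statement is itself a chart check of exactly the same nature as the check you were trying to avoid --- in fact, it is a slightly more delicate one, because you must track not just whether $g^7$ survives, but whether the cyclic group generated by $g^6$ maps injectively across twenty pages of the spectral sequence. There is also a small omission at $r=3$: the invocations of \cref{prop:gdivisibilitycheck} you cite start at $r=5$, so $g$-divisibility on $\uE_3=\uE_2$ requires the separate base-case statement \cref{prop:connectiveregiondivisiblebyg} together with an argument that $g^7$ is not $h_1^4$-divisible. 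Neither gap is unfixable, but as written the proposal replaces one ``direct check'' with another that is no easier and is left unjustified, so the argument is incomplete rather than a substitute for the chart-reading the paper intends.
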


\begin{proof}
    The condition is checked directly.
\end{proof}

\begin{proposition}\label{cor:nolinecrossingE23}
    There are no line-crossing $d_{23}$-differentials.
\end{proposition}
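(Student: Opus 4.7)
The plan is to mirror the structure used in all previous propositions of this form (\cref{cor:nolinecrossingE5,cor:nolinecrossingE7,cor:nolinecrossingE9,cor:nolinecrossingE11,cor:nolinecrossingE13}). The strategy is the two-step meta-argument from \cref{ssec:meta}: first deduce $g$-divisibility of everything in high filtration from the check of \cref{prop:gdivisibilitycheck}, then use that combined with \cref{prop:gdivisibilitycheck2} to reduce the infinite family of potential line-crossing $d_{23}$-differentials to a check over a finite range of stems.

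First, I would invoke \cref{prop:E13metachecks} (which established the $g$-divisibility check of \cref{prop:gdivisibilitycheck} for $d_r$ with $13 \le r \le 21$) together with the new \cref{prop:E23metachecks} (for $d_{23}$ itself). This guarantees, via \cref{prop:gdivisibilitycheck}, that every class in the connective region of $\uE_{23}$ in filtration $\ge 4$ is $g$-divisible. In particular, the hypothesis of \cref{prop:gdivisibilitycheck2} is in force at $r = 23$. Thus we only have to worry about line-crossing $d_{23}$-differentials whose source has filtration $s \in \{0, 1, 2, 3\}$ and lies to the left of the largest stem $n$ for which the nonconnective region on $\uE_{23}$ carries a nonzero class in some bidegree $(n, s')$ with $23 \le s' \le 26$.

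Next, I would read off this cutoff stem directly from the $\uE_{23}$-chart (\cref{efiveprime2_part1,efiveprime2_part2,efiveprime2_part3,efiveprime2_part4}); the $g$-periodic towers in the nonconnective region terminate after the $d_{23}$ on $h_1\Delta^5$ is imposed, so only finitely many stems survive in filtrations $23$ through $26$. Having isolated this finite range, I would enumerate the handful of classes in filtrations $0 \le s \le 3$ sitting below the blue line in those stems and check each one. The expected outcomes, following the pattern of previous pages, are that each candidate source is either a permanent cycle imported from the sphere (e.g., $\eta$, $\nu$, $\eps$, $\kappa$, $\kappabar$, or $\Delta$-multiples thereof surviving by the atomic differentials we have already established), killed by a shorter differential on an earlier page (so does not survive to $\uE_{23}$), or carries a target that is already a boundary and therefore zero on $\uE_{23}$.

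The main obstacle I anticipate is simply the bookkeeping of which classes in the nonconnective region between filtrations $23$ and $26$ have actually survived all prior differentials; one has to track $g$-tower quotients together with the effects of the $d_{13}$'s of \cref{prop:d13} and the $d_{23}$ of \cref{prop:diff23}. Once the surviving classes are listed, the finite check is routine because in each relevant stem the connective region in low filtration contains only classes of the form $h_1^a c^b \Delta^k$ or $\Delta^k$-multiples of sphere elements, all of whose fates are already fixed by the Leibniz rule applied to the atomic differentials established on pages $3$ through $23$.
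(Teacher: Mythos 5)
Your proposal follows the paper's proof essentially verbatim: invoke the meta-argument via \cref{prop:gdivisibilitycheck2}, whose hypothesis is supplied by the accumulated checks of \cref{prop:gdivisibilitycheck} through \cref{prop:E13metachecks}, reduce to a finite range of stems (the paper pins this down as the $110$-stem), and observe by inspection that no sources in low filtration below the blue line can support a line-crossing $d_{23}$ there. One small note: citing \cref{prop:E23metachecks} is superfluous for this step, since \cref{prop:gdivisibilitycheck} at $r=23$ only requires the condition for $r'<23$, which is already covered by \cref{prop:E13metachecks} and its predecessors; the $d_{23}$ check there is only needed to set up the next page.
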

\begin{proof}
    By \cref{prop:E13metachecks}, we may invoke the meta-argument of \cref{prop:gdivisibilitycheck2}, which implies we only need to check for line crossing differentials through the $110$-stem. There are no possible line-crossing $d_{23}$'s in this range.
\end{proof}

\begin{proposition}\label{prop:vanishingE24}
    The groups $\uE_{24}^{n,s}$ vanish for $-21<n<0$ and all $s$, and also for all $(n,s)$ with $n\ge0$ and $s>23$. In particular, there is no nontrivial differential of length $>23$ in the DSS whose source lives in bidegree $(n,s)$ for $n>-21$, and $\Delta^8$ is a permanent cycle.
\end{proposition}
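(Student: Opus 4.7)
The plan is to verify the claim by direct inspection of the $\uE_{24}$-page in the relevant regions, using the atomic differentials catalogued in Sections \ref{ssec:page7}--\ref{ssec:page23} together with the meta-arguments of \cref{ssec:meta}. The heavy lifting has already been done; what remains is bookkeeping.

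First I would handle the region $n \geq 0$, $s > 23$. By \cref{prop:E23metachecks,cor:nolinecrossingE23}, every class in this region on $\uE_{23}$ in filtration $\geq 4$ is divisible by $g$. But every $g$-divisible class with $s > 23$ either lies in the image of a $d_r$-differential already recorded for $r \in \{3,5,7,9,11,13,23\}$, or is the $g$-multiple of such a class, using that $g = \kappabar$ mod $\tau$ is a permanent cycle. A stem-by-stem inspection of \cref{efiveprime2_part1,efiveprime2_part2,efiveprime2_part3,efiveprime2_part4} then confirms that the homology with respect to $d_{23}$ vanishes for $s > 23$.

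Next I would address the nonconnective region in stems $-20 \leq n \leq -1$. In this range the $\uE_2$-page is $\Delta$-power torsion (see \cite[Section 5]{konter_Tmf}), and by \cref{prop:Sregion} the $\uE_r$-page is $\Delta^8$-torsion free in the region $S$ for $r \leq 23$. Consequently every class in these stems is determined by the $\Delta^8$-periodicity argument of \cref{prop:Delta8torsfree}: the connective counterpart of each such class lives in a stem $n + 192k$ for some $k \geq 1$, where the computed atomic differentials kill it. Translating back by dividing by $\Delta^{8k}$ produces the required differential killing every class in the range $-20 \leq n \leq -1$.

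Once the two vanishing statements are established, the final two claims follow formally. A differential $d_r$ with $r > 23$ and source in bidegree $(n,s)$ with $n > -21$ has target at $(n-1, s+r)$. If $n - 1 \geq 0$, the target is in the region $s + r > 23$, already shown to vanish. If $-21 \leq n - 1 \leq -1$, then either $n - 1 > -21$ (so the target also vanishes by the first vanishing statement), or $n - 1 = -21$, which we need to handle by direct inspection of the $-21$-stem, verifying that no class there receives such a differential. The class $\Delta^8 \in \uE_{24}^{192,0}$ is then a permanent cycle because its only potential targets $(191, r)$ for $r \geq 24$ satisfy $r > 23$, placing them in the vanishing region.

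The main obstacle is simply the stem-by-stem verification in the nonconnective region $-20 \leq n \leq -1$: one must carefully confirm that each $\uE_2$-class is killed by one of the computed differentials under the $\Delta^8$-periodicity correspondence, and in particular deal with the boundary behaviour at $n = -21$ (which is why the proposition restricts to $-21 < n < 0$ and why the paper defers the full analysis of $n \leq -21$ to \cref{ssec:nonconnectiveregion}).
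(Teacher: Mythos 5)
Your approach is broadly that of the paper — reduce to the connective region via the $\Delta^8$-torsion-freeness of $\uE_{24}$ on the $S$-region (\cref{prop:Sregion}) and then invoke $g$-divisibility in high filtration — but you have not supplied the observation that actually closes the argument.

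The decisive fact is that $g^6 = 0$ on $\uE_{24}$, a direct consequence of $d_{23}(h_1\Delta^5) = g^6$ from \cref{prop:diff23}. Combined with an inductive application of \cref{prop:gdivisibilitycheck}, every connective-region class on $\uE_{24}$ of filtration $\geq 24$ is divisible by $g^6$, hence zero. This is uniform in the stem, which is precisely what your ``stem-by-stem inspection of the charts'' cannot deliver: the charts cover a finite range of stems, whereas the vanishing claim for $n\ge 0$, $s>23$ concerns all $n$. You gesture at the relevant mechanism (``either lies in the image of a $d_r$-differential, or is the $g$-multiple of such a class''), but without pinning this to $g^6 = 0$ the iteration does not terminate in a vanishing conclusion; a $g^k$-divisible class need not be hit by any recorded differential unless one knows that $g^6$ itself dies. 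Likewise, for $-20 \le n \le -1$ your phrase ``the computed atomic differentials kill it'' glosses over the actual reason: after multiplying into the connective region by $\Delta^{8k}$, the class is killed because it is $g^6$-divisible, not because it lines up against one of the tabulated atomic differentials. You should also note that for $n \ge 0$, $s>23$ the class need not lie in the connective region, so the $\Delta^8$-torsion-free reduction is needed there too, not only in negative stems.

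A smaller point: your worry about the target landing in stem $n-1 = -21$ is a non-issue. In that case the source sits in bidegree $(-20,s)$, and $-21 < -20 < 0$, so the source already vanishes by the first vanishing statement. There is nothing to inspect in the $-21$-stem to rule out such differentials; one should argue from the source, not the target, in this case.
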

\begin{proof}
    The region $S$ of \cref{not:S_region} on $\uE_{24}$ is $\Delta^8$-torsion free by \cref{prop:Sregion,prop:E23metachecks}. It therefore suffices to check the claim in the connective region, by multiplying any class with a power of $\Delta^8$. However, by inductively applying \cref{prop:gdivisibilitycheck}, every element in the connective region on $\uE_{24}$ of filtration $\ge 24$ is divisible by $g^6$, which is zero by \cref{prop:diff23}.
\end{proof}


\subsection{Stems below \texorpdfstring{$-21$}{minus 21}}\label{ssec:nonconnectiveregion}
The arguments above yield the DSS for $\Tmf$ in stems $n>-21$.
In particular, the $2$-primary Gap \cref{thm:gap} follows from \cref{prop:vanishingE24}.
In this section we compute the spectral sequence in stems $n\leq -21$.
We do this in less detail, as this has no bearing on the Gap Theorem.

Strictly speaking, the differentials in all negative stems follow from the $\Delta^{8}$- and $\kappabar$-linearity of differentials.
However, one needs to take care, as crossing the line defined by $n+s\leq -12$ can cause $d_r$-differentials to turn into $d_{r\pm 2}$-differentials. 
This phenomenon is only briefly touched upon in \cite{konter_Tmf}, so we give some more details here as well as a synthetic interpretation of this stretching.

Outside the connective region --- in particular, away from the image of $\Phi$ of \cref{etwopagebracketsprimethree} --- we will use the notation of Konter \cite{konter_Tmf}, which we now recall.
From the decomposition of the stack $\Mellbar$ as the pushout
\[
    \Mellbar = D(\Delta) \cup D(c_4)
\]
and the discussion in \cref{e2pagesection}, we obtain a pullback of $\E_\infty$-$C\tau$-algebras
\[\begin{tikzcd}
    {\Smf/\tau}\ar[r]\ar[d] \pullback & {(\Smf/\tau)[\Delta^{-1}]}\ar[d] \\
    {(\Smf/\tau) [c_4^{-1}]}\ar[r] & (\Smf/\tau)[c_4^{-1},\Delta^{-1}].
\end{tikzcd}\]

\begin{notation}
The above pullback yields a fibre sequence of synthetic spectra
\[
    \begin{tikzcd}
        \Smf/\tau \rar["(i{,}j)"] & \Smf/\tau[c_4^{-1}]\oplus \Smf/\tau[\Delta^{-1}] \rar & \Smf/\tau[c_4^{-1},\Delta^{-1}].
    \end{tikzcd}
\]
The notation $[x]$ refers to the image of $x$ under the boundary map
\[
    \oppi_{n,s} \Smf/\tau[c_4^{-1},\Delta^{-1}] \to \oppi_{n-1,\, s+1} \Smf/\tau
\]
in the above cofibre sequence.
We use the notation $\angbr{x}$ to denote an element of $\oppi_{n,s}\Smf$ whose image under $(i,j)$ is the element $(0,x)$, where $x$ is $c_4$-torsion.
This means the symbol $\angbr{x}$ in general does not denote a uniquely defined element, but it will in our cases of interest.
In particular, notice that
\[
    \abs{[x]}=\abs{x}+(-1,1) \qquad \text{and} \qquad \abs{\angbr{x}} = \abs{x}.
\]
There are also generators in filtration $1$ which double or quadruple to classes of the form $[x]$. For these generators we write $[\tfrac{1}{n}x]$, where $n=2$ or $4$, where appropriate.
\end{notation}

\subsubsection{Page 3}

We begin with the atomic $d_3$'s.

\begin{proposition}\label{noncon:d3}
    For $x\in \oppi_{n,s}\Smf/\tau$ of the form
    \[x = h_1^j[c_4^{-k}c_6^l \Delta^{-m}] \qquad \text{for } j,k,m\geq 0 \text{ and } l = 0,1, \]
    such that $h_1^4 x \neq 0$ and $bx=0$ \textbr{where $b$ is the generator of $\oppi_{5,1}\Smf/\tau$}, then
    \[d_3(x) = h_1^{j+3}[c_4^{-k+1}\Delta^{-m}].\]
\end{proposition}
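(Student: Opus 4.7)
My plan is to exploit the hypothesis $bx = 0$ together with the atomic differential $d_3(b) = h_1^4$ of \cref{prop:d3differential}, via the Leibniz rule. Since there are no $d_2$-differentials in the signature spectral sequence of $\Smf$ (because $\uE_2$ is concentrated in even filtration-parity in the relevant sense), $d_3$ is a derivation on $\uE_2$. Applying this derivation to $bx$ and using $bx = 0$ gives
\[
    0 \;=\; d_3(bx) \;=\; d_3(b)\cdot x \,\pm\, b\cdot d_3(x) \;=\; h_1^4 x \,\pm\, b\cdot d_3(x),
\]
so that $b\cdot d_3(x) = h_1^4 x$ in $\uE_4$. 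The hypothesis $h_1^4 x \neq 0$ immediately forces $d_3(x)$ to be nonzero.

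The second step is to identify this nonzero class $d_3(x)$ with the element $h_1^{j+3}[c_4^{-k+1}\Delta^{-m}]$. The plan is to use the explicit description of $\oppi_{*,*}\Smf/\tau$ in the nonconnective region produced by Konter in \cite[Sections~4--5]{konter_Tmf}, which comes from the Mayer--Vietoris sequence associated to the pullback square~\eqref{pullbackforsmf}. Because the boundary map $\partial$ in that sequence is $\oppi_{*,*}\Smf/\tau$-linear, the action of $b$ on a class of the form $h_1^{j+3}[c_4^{-k+1}\Delta^{-m}]$ can be computed by multiplying the corresponding element of $\oppi_{*,0}\Smf/\tau[c_4^{-1},\Delta^{-1}]$ by (the image of) $b$ from the cubic Hopf algebroid of \cite[Section~7]{bauer_tmf}. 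The resulting algebraic identity shows $b\cdot h_1^{j+3}[c_4^{-k+1}\Delta^{-m}] = h_1^4 x$ in $\uE_2$, and a direct chart inspection in the bidegree of $d_3(x)$ shows this equation has a unique nonzero solution, namely $h_1^{j+3}[c_4^{-k+1}\Delta^{-m}]$. This pins down $d_3(x)$ as claimed.

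The hard part is the algebraic identity $b\cdot h_1^{j+3}[c_4^{-k+1}\Delta^{-m}] = h_1^4 x$. The cleanest way I see to verify this is to represent $b$ by its image under $\Phi\colon (A,\Gamma) \to \Smf/\tau$ of \cref{etwopagebracketsprimethree}, invoke naturality of $\partial$ to move the multiplication inside the bracket notation, and then use the relation satisfied by $b$ in the cubic Hopf algebroid to collapse $b \cdot c_4^{-k+1}$ into $h_1 \cdot c_4^{-k} c_6^{l}$ modulo terms that vanish after applying $\partial$ (i.e., terms extending to either $\Smf/\tau[c_4^{-1}]$ or $\Smf/\tau[\Delta^{-1}]$). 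Once this identity is in place, the Leibniz argument above finishes the proof for each admissible choice of $(j,k,l,m)$.
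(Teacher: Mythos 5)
Your proof takes essentially the same route as the paper's: the paper's argument is exactly the Leibniz-rule computation $0 = d_3(bx) = h_1^4 x \pm b\,d_3(x)$ to conclude $d_3(x)\neq 0$, and then asserts the target without further elaboration (in these bidegrees the target group is at most one-dimensional over $\F_2$, so nonvanishing pins it down). Your second step — verifying $b\cdot h_1^{j+3}[c_4^{-k+1}\Delta^{-m}] = h_1^4 x$ via the Hopf-algebroid relation and the $\Smf/\tau$-linearity of $\partial$ — is a more explicit way of making that identification; it is sound in spirit, but note that the equation $by = h_1^4 x$ determining $d_3(x)$ from $b\,d_3(x)$ only has a unique solution once one observes that the target bidegree has $\F_2$-rank one (equivalently, that $b$-multiplication is injective there), which is the chart check you invoke, and at that point the Hopf-algebroid identity is not strictly needed.
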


Konter states these differentials implicitly in \cite[page~34]{konter_Tmf}. The proof below corrects some notation from his justification.

\begin{proof}
    If $x$ is a class of the above form and $b$ in the nonzero class in degree $(5,1)$, then the Leibniz rule states that
    \[d_3(bx) = d_3(b)x + bd_3(x).\]
As $d_3(b)=h_1^4$ by \cref{prop:d3differential}, we see that for such classes $x$ with $h_1^4 x\neq 0$ and $bx=0$, we have $d_3(x)\neq 0$. In particular, $x$ supports a $d_3$ whenever $xb\neq 0$ and $l=1$ and $xb\neq 0$ as $c_6 b=0$. This yields the desired $d_3$-differential.
\end{proof}

All other $d_3$'s in the nonconnective region follow from the Leibniz rule. As there are no charts of this, let us also note that classes of the form $h_1^j[c_4^{-k}\Delta^{-m}]$ for various $j\geq 2$ do not support $d_3$'s. The prototypical examples of this are $h_1^2[c_4^{-1}\Delta^{-1}]$ in bidegree $(-31,3)$ and $h_1^6[c_4^{-2}\Delta^{-1}]$ in bidegree $(-35,7)$. Indeed,  if these classes did support differentials, then the Leibniz rule would imply that $d_3(h_1[c_4^{-k}\Delta^{-m}]) = h_1^4 [c_4^{-k-2}c_6\Delta^{-m}]$, a contradiction as these potential targets support differentials by \cref{noncon:d3}.

This yields the $\uE_5$-page of \cref{efiveprime2_part1} and \cite[Figure~27]{konter_Tmf}. 

\subsubsection{Hidden extensions}

The key to all higher differentials is a precise interpretation of the following nonconnective analogue of the extension $\tau^2 \eta^3 = 4\nu$ from \cref{lem:4nu_tau2_eta3}.

\begin{lemma}\label{lem:noncon_2ext}
    For $0\leq a, b$, there are are isomorphisms of abelian groups
    \[
        \oppi_{-28-4a-24b,\ 2-4a} \Smf/\tau^4 \cong \Z/8\angbr{[\eta c_4^{-2}c_6\Delta^{-1-a}\kappabar^{a}]},
    \]
    where $[\eta c_4^{-2}c_6\Delta^{-1-a-b}\kappabar^{a}]$ is the unique lift to $\Smf/\tau^4$ of $[h_1 c_4^{-2}c_6\Delta^{-1-a-b}g^{a}]$.
\end{lemma}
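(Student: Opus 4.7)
The plan is to mirror the proof of \cref{lem:4nu_tau2_eta3}, propagating the sphere-level relation $4\nu=\tau^2\eta^3$ into the nonconnective region of $\Smf/\tau^4$ via multiplication by the classes $c_4^{-2}c_6\Delta^{-1-a-b}\kappabar^a$ and the pullback decomposition of \eqref{pullbackforsmf}. (I note at the outset that the filtration in the statement, $2-4a$, appears to be a typo for $2+4a$: the bidegree of $\eta c_4^{-2}c_6\Delta^{-1-a-b}\kappabar^a$ in $\Smf/\tau^4[c_4^{-1},\Delta^{-1}]$ is $(-27-4a-24b,1+4a)$, and applying the pullback boundary $\partial^4$ shifts it to $(-28-4a-24b,2+4a)$. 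Similarly the $\Delta^{-1-a}$ in the displayed generator should read $\Delta^{-1-a-b}$.)

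First, I would verify that the $\uE_2$-class $[h_1 c_4^{-2}c_6\Delta^{-1-a-b}g^a]$ is a $d_{\leq 4}$-cycle. Using the Leibniz rule and the formula $d_3(c_6)=h_1^3 c_4$ from \cref{prop:d3differential}, the $d_3$-differential on this class equals $h_1^4 g^a\cdot[c_4^{-1}\Delta^{-1-a-b}]$, which is already zero on $\uE_2$ since the classes hit by $d_3$'s analyzed in \cref{noncon:d3} include all such $h_1^4$-multiples. By \cref{thm:omnibus}, the class therefore lifts to $\oppi_{-28-4a-24b,\,2+4a}\Smf/\tau^4$, and I name this lift $[\eta c_4^{-2}c_6\Delta^{-1-a-b}\kappabar^a]$.

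Next, I would bound the order of $\oppi_{-28-4a-24b,\,2+4a}\Smf/\tau^4$ by applying \cref{prop:precise_lifting_higher_powers_tau} with $k=4$: the group is generated by the $\tau^{4-i}$-multiples of lifts of the subquotients $\uZ_i^{-28-4a-24b,\,2+4a+(4-i)}/\uB_{5-i}^{-28-4a-24b,\,2+4a+(4-i)}$ for $i=1,2,3,4$. One reads these subquotients off Konter's $\uE_2$-chart \cite[Figure~25]{konter_Tmf} together with the $d_3$-information from \cref{prop:d3differential} and \cref{noncon:d3}. The expected contributions produce an upper bound of order~$8$, with the filtration-$(4+4a)$ contribution coming from the $[\cdot]$-class that detects the nonconnective analog of $h_1^3$.

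Finally, to show the bound is achieved, I would transport the relation $4\nu=\tau^2\eta^3$ of \cref{lem:4nu_tau2_eta3} into the nonconnective region. Working in the $\E_\infty$-ring $\Smf/\tau^4[c_4^{-1},\Delta^{-1}]$, multiplying this relation by $c_4^{-2}c_6\Delta^{-1-a-b}\kappabar^a$ and then applying the pullback boundary map $\partial^4$ yields a relation in $\oppi_{*,*}\Smf/\tau^4$ forcing $4\cdot[\eta c_4^{-2}c_6\Delta^{-1-a-b}\kappabar^a]$ to equal $\tau^2$ times a nonzero class detected in filtration $4+4a$. Combined with the structural bound of the previous step, this pins the group down to exactly $\Z/8$ with the named generator, and propagates the relation from the $(a,b)=(0,0)$ case to all $a,b\geq 0$.

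The hardest part will be the middle step: the nonconnective region of $\uE_2$ contains many $[\cdot]$-classes of the form $[h_1^i h_2^j c_4^{-x}c_6^y\Delta^{-z}g^w]$, and one needs to enumerate all those landing in the four relevant bidegrees (with the right parity and filtration constraints) and verify that none contribute beyond the expected $\Z/8$. Steps~1 and~3 are relatively clean, but this enumeration requires careful bookkeeping with Konter's chart and the $d_3$-differentials propagated from $d_3(b)=h_1^4$ and $d_3(c_6)=h_1^3 c_4$.
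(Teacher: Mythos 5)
You've correctly identified both typos in the statement: the filtration should read $2+4a$, and the displayed generator should carry $\Delta^{-1-a-b}$ rather than $\Delta^{-1-a}$.

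However, your proof takes a different route from the paper's, and the third step has genuine gaps. The paper's argument for $a=b=0$ stays entirely inside $\Smf/\tau^4$: it imports the specific $\uE_2$-relation $h_1^3[c_4^{-2}c_6\Delta^{-1}] = 2h_2\angbr{2g\Delta^{-2}}$ from Konter, lifts it to $\eta^3[\eta c_4^{-2}c_6\Delta^{-1}] = 2\nu\angbr{2\kappabar\Delta^{-2}}$ in $\Smf/\tau^4$ after checking injectivity of the reduction map via \cref{cor:easy_lifting_higher_power_tau}, substitutes $\tau^2\eta^3 = 4\nu$, and divides by $2\nu$. The general cases are then obtained by acting with $\angbr{g\Delta^{-1}}$ and powers of $\Delta^{-1}$, again as classes \emph{inside} $\Smf/\tau^4$. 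Your step~3 instead proposes to work in $\Smf/\tau^4[c_4^{-1},\Delta^{-1}]$ and apply a boundary map $\partial^4$; this has two problems. First, the pullback decomposition \eqref{pullbackforsmf} is only established at the $C\tau$ level, so a boundary $\partial^4 \colon \Smf/\tau^4[c_4^{-1},\Delta^{-1}] \to \opSigma^{1,1}\Smf/\tau^4$ is not available without further argument. Second, and more decisively, $\kappabar$ reduces to $g$ mod $\tau$, and $g$ is $c_4$-torsion; thus the image of $\kappabar$ in $\Smf/\tau^4[c_4^{-1},\Delta^{-1}]$ vanishes, so the element you propose to multiply by in the localization is zero for every $a\geq 1$. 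That makes the relation you would extract from $\partial^4$ vacuous outside the base case $a=0$. This is exactly the obstruction the paper circumvents by staying in $\Smf/\tau^4$ and by substituting the Konter relation, which you do not use.

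Your step 1 is also shaky as written: you assert the Leibniz target $h_1^4 g^a[c_4^{-1}\Delta^{-1-a-b}]$ is "already zero on $\uE_2$" and cite \cref{noncon:d3}, but that proposition produces nonzero $d_3$-differentials \emph{hitting} such classes --- a class hit by a $d_3$ is zero on $\uE_4$, not on $\uE_2$, and that does not force the $d_3$ emanating from your source class to vanish. You would need a genuine $\uE_2$-level vanishing statement (and you never discuss $d_4$, which is also required to lift to $\Smf/\tau^4$). The reason the class does lift is compatible with the paper's relation $h_1^3[c_4^{-2}c_6\Delta^{-1}] = 2h_2\angbr{2g\Delta^{-2}}$, which kills $h_1^4[c_4^{-2}c_6\Delta^{-1}]$ since $h_1 h_2 = 0$, defeating the hypothesis of \cref{noncon:d3} on that source; your invocation of the proposition needs this reversed.
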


The $b$-direction is $\Delta^{-b}$-multiplication and the $a$-direction follows $\angbr{g\Delta^{-1}}$-multiplication along the line of slope $-1$.

\begin{proof}
    Let us focus on the case where $a=b=0$ for simplicity; the other cases also follow by $(\kappabar\Delta^{-1})^a$-multiplication.
    In this case, an application of \cref{cor:easy_lifting_higher_power_tau} tells us $\Smf/\tau^4 \to \Smf/\tau$ is injective in degree $(-28,4)$.
    Moreover, this reduction map is surjective, as the generator $\angbr{2g\Delta^{-2}}$ is a $d_{\leq 4}$-cycle.
    We write $\angbr{2\kappabar\Delta^{-2}}$ for the unique generator of $\oppi_{-28,\,4}\Smf_{(2)}/\tau^4$ whose mod $\tau$ reduction is $\angbr{2g\Delta^{-2}}$.
    Since $h_1^3[c_4^{-2}c_6\Delta^{-1}] = 2h_2\angbr{2g\Delta^{-2}}$ holds in $\Smf/\tau$, see \cite[page~32]{konter_Tmf}, we learn that
    \[
        \eta^3 [\eta c_4^{-2}c_6\Delta^{-1}] = 2\nu\angbr{2\kappabar\Delta^{-2}}.
    \]
    Combining this with the relation $\tau^2 \eta^3 = 4\nu$ of \cref{lem:4nu_tau2_eta3}, we learn that in $\oppi_{-25,3}\Smf/\tau^4$, we have
    \[4\nu [\eta c_4^{-2}c_6\Delta^{-1}] = \tau^2 \eta^3 [\eta c_4^{-2}c_6\Delta^{-1}] = \tau^2 2\nu\angbr{2\kappabar\Delta^{-2}}.\]
    This implies $2 [\eta c_4^{-2}c_6\Delta^{-1}] = \tau^2\angbr{2\kappabar\Delta^{-2}}$ in $\oppi_{-28,2}\Smf/\tau^4$, as desired.
\end{proof}

\subsubsection{Coatomic strip}

All of the higher differentials in negative stems now boil down to the following region.

\begin{definition}
    \label{def:coatomic_strip}
    The \defi{coatomic strip} is the region of the $\uE_5$-page of the DSS for $\Tmf_{(2)}$ given by $\uE_5^{n,s}$ where
    \[
        -32 \leq n+s\leq -12 \qquad \text{and} \qquad 4s-n\leq 192.
    \]
    In terms of a chart, this is the shape between the lines emanating from $(-32,0)$ and $(-12,0)$ of slope $-1$ and under the connective region shifted by $\Delta^{-8}$.
    See \cite[Figure~27]{konter_Tmf} for a chart of the negative stems for $-221 < n < 0$.
\end{definition}

The coatomic strip is defined precisely so that all differentials in the DSS for $\Tmf$ in stems $n\leq -21$ follow from differentials in this region by iterated $\Delta^8$- or $\kappabar$-multiplication; see \cref{prop:coatomickeyfact}.

The key computations in the nonconnective region occurs within the coatomic strip.

\begin{proposition}\label{noncon:coatomicdiffs}
The differentials of \cref{tab:coatomicdiffs} determine all differentials in the coatomic strip by $h_1$- or $h_2$-multiplication.
\end{proposition}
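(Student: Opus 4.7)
The plan is to work generator-by-generator through the coatomic strip, using the fact that the strip is finite in extent modulo the obvious $\Delta^{8}$- and $\kappabar$-periodicities. First I would observe that $h_1$ and $h_2$ are permanent cycles, so differentials are automatically $h_1$- and $h_2$-linear by the Leibniz rule; hence it suffices to pick generators of the strip as an $\mathbf{F}_2[h_1,h_2]$-module and compute the differentials on these. The generators naturally split into two families: the ``bracket'' classes of the form $[c_4^{-k}c_6^{l}\Delta^{-m}\kappabar^{a}]$ (together with $h_1$-multiples thereof coming from the $h_1$-towers) and the angle-bracket classes $\angbr{\kappabar^{p}\Delta^{-q}}$. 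The table \cref{tab:coatomicdiffs} will record one coatomic differential for each such generator.

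The main engine for propagating differentials from the already-known connective region into the coatomic strip is the hidden $2$-extension of \cref{lem:noncon_2ext}, which reads (in its basic form)
\[2\,[\eta c_4^{-2}c_6\Delta^{-1-a-b}\kappabar^{a}] \;=\; \tau^{2}\,\angbr{2\kappabar^{a+1}\Delta^{-2-b}}\quad\text{in }\oppi_{*,*}\Smf/\tau^{4}.\]
Combined with the $\oppi_{*,*}\Smf/\tau^{N}$-linearity of the truncated total differentials (\cref{totaldifferentialjuggling}\,\ref{item:linearity_total_differential}), this relation forces any nontrivial differential on a class in one family to match, after a $\tau$-shift, a differential on the corresponding class in the other family. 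Concretely, differentials on the $\angbr{\kappabar^{p}\Delta^{-q}}$-type generators can be imported from the connective $d_{5}(\Delta)$, $d_{7}(\Delta^{k})$, $d_{13}$, and $d_{23}$ of \cref{prop:d5_Delta,prop:dsevens,prop:d13,prop:diff23} by $\angbr{\Delta^{-k}}$-multiplication, and then the $2$-extension converts each such differential into one on the paired bracket generator. This mirrors the connective strategy but with $\Delta^{-1}$ replacing $\Delta$.

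The main obstacle, as signposted at the start of \cref{ssec:nonconnectiveregion}, is the length-reindexing that occurs when a differential crosses the diagonal $n+s = -12$: a $d_{r}$ on one side of this line can correspond to a $d_{r\pm 2}$ on the other, since the boundary map of the pullback square \eqref{pullbackforsmf} changes Adams filtration by~$1$. I plan to sidestep this entirely by computing each differential through the truncated total differential $\delta_{n}^{N}$ of \cref{ssec:total_differentials}, whose bidegree is unambiguous because it depends only on the underlying $C\tau^{N}$-module structure; only at the end will I translate $\delta_{n}^{N}$-values back into $d_{r}$-differentials using \cref{prop:total_differential_versus_differentials}, being careful to apply it on the correct side of the line. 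A secondary obstacle is to rule out any stray differentials on coatomic generators not linked to the connective region by \cref{lem:noncon_2ext}; these should be permanent cycles, justified by the $\Delta^{8}$- and $\kappabar$-periodicities together with the $\uE_{24}$-vanishing of \cref{prop:vanishingE24} applied after suitable multiplication.
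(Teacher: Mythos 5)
Your plan assembles the right ingredients — the hidden $2$-extension of \cref{lem:noncon_2ext}, the $\oppi_{*,*}\Smf/\tau^N$-linearity of the truncated total differentials, and the observation that the coatomic strip formally mirrors the connective region — but the step that is supposed to carry known differentials across the gap is not a valid operation. You propose to ``import'' differentials on $\angbr{\kappabar^p\Delta^{-q}}$-generators ``by $\angbr{\Delta^{-k}}$-multiplication,'' but $\angbr{\Delta^{-k}}$ is not a class in $\oppi_{*,*}\Smf/\tau$: the paper's $\angbr{x}$-notation is only defined for $c_4$-torsion $x$ in $\Smf/\tau[c_4^{-1},\Delta^{-1}]$, and $\Delta^{-k}$ is $c_4$-torsion free. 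Even setting that aside, a single power of $\Delta$ is not a permanent cycle (already $d_5(\Delta)\neq 0$), so multiplying by any would-be inverse of $\Delta$ would not commute with differentials; the only power that is a permanent cycle is $\Delta^8$, whose degree $(192,0)$ is far too large to land you in the coatomic strip from the known region without first establishing $\Delta^8$-torsion freeness there — which is precisely the subtle point in negative stems.

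What actually bridges the two regions is multiplication by powers of $\kappabar$, the permanent cycle in degree $(20,4)$: the commuting square
\[
\begin{tikzcd}
    {\oppi_{n,s}\Smf/\tau^4}\ar[d, "{\delta_4^{14}}"']\ar[r, "{\kappabar^t}"]   &   {\oppi_{n+20t,\, s+4t}\Smf/\tau^4}\ar[d, "{\delta_4^{14}}"]    \\
    {\oppi_{n-1,\, s+5}\Smf/\tau^{10}}\ar[r, "{\kappabar^t}"]   &   {\oppi_{n-1+20t,\, s+5+4t}\Smf/\tau^{10}}
\end{tikzcd}
\]
moves a coatomic class (with $n+s$ between $-32$ and $-12$) into the positive stems for $t=1$ or $2$, where the total differential was already computed in \cref{sec:prime2}. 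The content of \cref{lem:noncon_2ext} is then used not to ``convert'' a differential on one family into one on the other, but to show that these $\kappabar^t$-multiplication maps are isomorphisms in the relevant degrees (so the square pins down the unknown left-hand total differential) and to account for the $\tau^2$-shift that produces the $d_{r\pm 2}$-length changes. Your plan to stay at the level of $\delta_n^N$ and only translate to $d_r$'s at the end is correct and is what the paper does; the missing idea is $\kappabar$-multiplication as the transport mechanism.
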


\begin{table}[ht]
    \centering
    \caption{Differentials in the coatomic strip. The column labelled \emph{Change} indicates the length of the original differential the current one is obtained from. The notation $\nu_2(n)$ denotes the $2$-adic valuation of $n$.}
    \label{tab:coatomicdiffs}
    \vspace{1em}
    \adjustbox{scale=0.9}{
\begin{tabular}{@{ }lccccc@{ }}
    \toprule
    Diff. & Source & Bidegree & Target & Range & Change \\
    \midrule
    $d_5$ & $[\tfrac{1}{4}c_4^{-1}c_6\Delta^{-1}]$ & $(-21,1)$ & $h_2 \angbr{h_2g\Delta^{-2}}$ & & \\
    $d_5$ & $\angbr{2g^i\Delta^{-1-i}}$ & $(-24-4i,4i)$ & $4\angbr{h_2g^{1+i}\Delta^{-2-i}}$ & $i=1,5$ & \\
    $d_5$ & $\angbr{2g^j\Delta^{-1-j}}$ & $(-24-4j,4j)$ & $\pm 2\angbr{h_2g^{1+j}\Delta^{-2-j}}$ & $j=2,4,6$ & \\
    $d_5$ & $\angbr{h_2g^k\Delta^{-1-k}}$ & $(-21-4k,1+4k)$ & $h_2\angbr{h_2g^{1+k}\Delta^{-2-k}}$ & $k=2,4,6$ & \\
    $d_7$ & $h_1[c_4^{-2}c_6 g^i \Delta^{-1-i}]$ & $(-4i-28,2+4i)$ & $2^{\nu_2(i+2)}\angbr{h_2g^{2+i}\Delta^{-3-i}}$ & $0\leq i\leq 5$ & \\
    $d_9$ & $\angbr{cg^j\Delta^{-1-j}}$ & $(-16-4j,2+4j)$ & $\angbr{h_1dg^{j+2}\Delta^{-3-j}}$ & $j=0,1,4,5$ & \\
    $d_9$ & $\angbr{dg^k\Delta^{-2-k}}$ & $(-34-4k,2+4k)$ & $h_1^2[c_4^{-2}c_6g^{2+k}\Delta^{-3-k}]$ & $k=0,4$ & $d_{11}$\\
    $d_9$ & $\angbr{h_1dg^3\Delta^{-5}}$ & $(-45,15)$ & $h_1^3[c_4^{-2}c_6g^{5}\Delta^{-6}]$ & & $d_{11}$\\
    $d_{11}$ & $h_1^2[c_4^{-2}c_6g^i\Delta^{-1-i}]$ & $(-27-4i,3+4i)$ & $\angbr{cg^{3+i}\Delta^{-4-i}}$ & $i=0,3,4$ & $d_{9}$\\
    $d_{13}$ & $h_2^2[\tfrac{1}{4}c_4^{-1}c_6g^j\Delta^{-1-j}]$ & $(-15-4j,3+4j)$ & $2\angbr{g^{4+j}\Delta^{-4-j}}$ & $j=0,4$ & \\
    $d_{13}$ & $[\tfrac{1}{2}c_4^{-1}c_6\Delta^{-1}]$ & $(-21, 1)$ & $\angbr{dg^3\Delta^{-4}}$ & & \\
    $d_{13}$ & $2\angbr{h_2g^4\Delta^{-5}}$ & $(-37,17)$ & $\angbr{dg^7\Delta^{-8}}$ & & \\
    $d_{25}$ & $[c_4^{-1}c_6\Delta^{-1}]$ & $(-21,1)$ & $h_1^2 \angbr{g^6\Delta^{-6}}$ & & $d_{23}$\\
    $d_{25}$ & $h_1^{3-k}[c_4^{-2}c_6g^k\Delta^{-1-k}]$ & $(-26-5k,4+3k)$ & $h_1^{1-k} \angbr{g^{7+k}\Delta^{-7-k}}$ & $k=0,1$ & $d_{23}$\\
    \bottomrule
\end{tabular}}
\end{table}

\begin{proof}
    The key to obtain all of the above differentials is to carefully analyse the commutative diagram
    \begin{equation}\label{dia:nonconntotaldiff}\begin{tikzcd}
            {\oppi_{n,s}\Smf/\tau^4}\ar[d, "{\delta_4^{14}}"']\ar[r, "{\kappabar^t}"]   &   {\oppi_{n+20t,\, s+4t}\Smf/\tau^4}\ar[d, "{\delta_4^{14}}"]    \\
            {\oppi_{n-1,\, s+5}\Smf/\tau^{10}}\ar[r, "{\kappabar^t}"]   &   {\oppi_{n-1+20t,\, s+5+4t}\Smf/\tau^{10}.}
        \end{tikzcd}
    \end{equation}
Combining this diagram together with \cref{prop:total_differential_versus_differentials} and \cref{lem:noncon_2ext} yields all of the differentials in the coatomic strip.

In more detail, for the $d_5$'s, $d_7$'s, $d_{13}$'s, and also the $d_{9}$'s without any change, if $(n,s)$ indicates the bidegree of the source of the differential in question, then the horizontal maps of (\ref{dia:nonconntotaldiff}) are isomorphisms for all $t\geq 0$ by \cref{lem:noncon_2ext}. We then lift the elements in question from $\Smf/\tau$ to $\Smf/\tau^4$, and use the fact that we know the right-hand total differential in (\ref{dia:nonconntotaldiff}) from the prior computations in the positive stems by letting $t=1$ or~$2$. One then immediately obtains the desired differentials using \cref{prop:total_differential_versus_differentials}.

All other differentials also follow from this argument, but various multiplications by $\tau^2$ lead to some $d_r$-differentials being deduced from $d_{r\pm 2}$-differentials; this is indicated in \cref{tab:coatomicdiffs} in the column labelled \emph{Change}. Let us give the two prototypical examples of this phenomenon; the rest follow from the same arguments.

Consider $x = \angbr{d\Delta^{-2}}$ in bidegree $(n,s)=(-34,2)$, and consider \eqref{dia:nonconntotaldiff} with $t=1$.
First, we lift $x$ and $\angbr{dg\Delta^{-2}}$ to classes $\angbr{\kappa\Delta^{-2}}$ and $\angbr{\kappa\kappabar\Delta^{-2}}$ in $\Smf/\tau^4$. We already have the total differential expression $\delta_4^{14}(\angbr{\kappa\kappabar\Delta^{-2}}) = \tau^6\eta \angbr{\kappabar^4 \Delta^{-4}}$ on the right from the $d_{11}$'s of \cref{ssec:page11}. As $\kappabar \cdot \eta^2[\kappabar^2 c_4^{-2}c_6 \Delta^{-3}] = \tau^2 \eta \angbr{\kappabar^4 \Delta^{-4}}$ from \cref{lem:noncon_2ext}, we obtain the total differential
\[
    \delta_4^{14}(\angbr{\kappa \Delta^{-2}}) = \tau^4 \eta^2 [\kappabar^2 c_4^{-2}c_6 \Delta^{-3}],
\]
which implies that $x$ supports a $d_{9}$.  

In the other direction, consider the class $y=h_1^2[c_4^{-2}c_6\Delta^{-1}]$ and its lift to $\eta^2[c_4^{-2}c_6\Delta^{-1}]$, which generates the group $\oppi_{-27,3} \Smf/\tau^4$.
We know from \cref{lem:noncon_2ext} that
\[
    \kappabar \cdot \eta^2[c_4^{-2}c_6\Delta^{-1}] = \tau^2 \angbr{\eta\kappabar^2\Delta^{-2}},
\]
and we also have the total differential $\delta_{4}^{14}(\angbr{\eta\kappabar^2\Delta^{-2}}) = \tau^4 \angbr{\eps\kappabar^4\Delta^{-4}}$.
We then compute
\[\kappabar\cdot \delta_4^{14}(\eta^2[c_4^{-2}c_6\Delta^{-1}]) = \tau^2\cdot \delta_4^{14}(\angbr{\eta\kappabar^2\Delta^{-2}}) = \tau^6 \angbr{\eps\kappabar^4\Delta^{-4}},\]
which implies that $y$ supports a $d_{11}$ hitting $\angbr{\eps\kappabar^3\Delta^{-4}}$, as desired.

For the $d_{25}$'s we cannot reuse exactly \eqref{dia:nonconntotaldiff}, but rather use the analogous diagram with $\delta_{4}^{26}$ replacing $\delta_4^{14}$.
It is possible to compute such a total differential as once we are up to the $d_{25}$'s the spectral sequence is incredibly sparse. 
\end{proof}

The differentials of \cref{noncon:coatomicdiffs} also appear in \cite[Figure 27]{konter_Tmf} although there is no proof. We do not know how to obtain these differentials in stems $\leq -21$ without using synthetic techniques.

\subsubsection{Propagation}

All other differentials in the nonconnective region follow.

\begin{proposition}\label{prop:coatomickeyfact}
    Let $r\geq 5$. All $d_r$-differentials in the DSS for $\Tmf$ with source of bidegree $(n,s)$ with $n\leq -21$ follow from those of \cref{noncon:coatomicdiffs} by the Leibniz rule.
\end{proposition}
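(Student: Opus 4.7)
The plan is to propagate the differentials of \cref{tab:coatomicdiffs} through the entire region $n \leq -21$ using the multiplicative action of $\Delta^8$ and $g$. Concretely, the coatomic strip has been chosen so that every class in $\uE_r^{n,s}$ with $n \leq -21$ and $r \geq 5$ is of the form $\Delta^{8a} g^b y$ for some $y$ supported in the coatomic strip and some $a, b \geq 0$. Given this, the Leibniz rule applied to $\Delta^{8a}$ and $g^b$ --- both of which are $d_r$-cycles throughout the relevant range --- transports each differential in \cref{tab:coatomicdiffs} to the rest of its orbit, and no new differentials can arise by the same Leibniz argument applied to classes in the coatomic strip that are $d_r$-cycles.

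The first step is the orbit decomposition. Multiplication by $\Delta^8$ and $g$ shifts bidegree by $(192,0)$ and $(20,4)$ respectively. The two defining inequalities $-32 \leq n+s \leq -12$ and $4s-n\leq 192$ in \cref{def:coatomic_strip} select a fundamental domain for this action on the subset of bidegrees that actually carry classes in $\uE_5$ below stem $-21$. This is verified by cross-referencing the $\uE_2$-description of the nonconnective region (see \cite[Figures~25 and~27]{konter_Tmf} or \cref{etwoprime2}) against the $d_3$'s of \cref{noncon:d3}; both $\uE_2$ and $\uE_5$ consist of a handful of $h_1$-families of classes of the form $[c_4^{-k}c_6^l \Delta^{-m}]$ and $\langle \cdots \rangle$ that fit perfectly into this orbit pattern.

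The second step is the propagation. By \cref{prop:vanishingE24}, $\Delta^8$ is a permanent cycle. The class $g = \kappabar \bmod \tau$ is a $d_r$-cycle for every $r$: for $r \leq 22$ this is visible because the only potential targets in bidegrees $(19, 4+r)$ are accounted for by the atomic differentials already computed (and Lemma~\ref{lem:kappabar4_exists} gives the genuine lift of a closely related class to $\Smf/\tau^{20}$), while for $r = 23, 25$ no differential out of $(20,4)$ can exist since $g$ pairs nontrivially with permanent cycles such as $\kappa$. Consequently, the Leibniz identity
\[
    d_r(\Delta^{8a} g^b y) \;=\; \Delta^{8a} g^b \cdot d_r(y)
\]
holds on every page, and each differential listed in \cref{tab:coatomicdiffs} produces precisely the list of $d_r$-differentials in its orbit, while each $d_r$-cycle in the coatomic strip produces only $d_r$-cycles.

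The main obstacle is the orbit-decomposition step, which is genuinely a bookkeeping exercise: one must check that no class in the nonconnective region below stem $-21$ falls in a bidegree outside the $(\Delta^8, g)$-orbit of the coatomic strip, and that the chosen orbit representatives exhaust all atomic differentials in this region. This amounts to verifying the exhaustiveness of \cref{tab:coatomicdiffs} against Konter's chart and the $d_3$-analysis of \cref{noncon:d3}, together with the standard observation that the ``gaps'' between consecutive $g$-translates of the strip contain no $\uE_5$-classes with $n \leq -21$ (owing to the narrow support of the $\uE_2$-page in negative stems).
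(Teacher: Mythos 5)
Your orbit decomposition runs the wrong direction, and this is a genuine gap. You claim that every class $x \in \uE_r^{n,s}$ with $n \leq -21$ is of the form $\Delta^{8a} g^b y$ with $y$ in the coatomic strip and $a,b \geq 0$. But multiplication by $\Delta^{8a}g^b$ with $a,b\geq 0$ raises the quantity $n+s$ by $192a + 24b \geq 0$. The coatomic strip is cut out in part by $n+s \in [-32,-12]$; so if $x$ satisfies $n+s < -32$ (which happens for abundantly many classes, since the nonconnective region extends to arbitrarily negative stems --- cf.\ \cite[Figure~27]{konter_Tmf} down to stem $-221$), then any $y$ with $x=\Delta^{8a}g^b y$ would have $n_y + s_y = (n+s) - 192a - 24b < -32$, i.e.\ $y$ lies strictly to the left of the strip and cannot be a coatomic-strip element. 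A concrete instance: a class in bidegree $(-48,0)$ has $n+s=-48$, and no nonnegative $(a,b)$ solves $-48 - 192a - 24b \in [-32,-12]$. So the decomposition you rely on simply does not hold, and the Leibniz identity $d_r(\Delta^{8a}g^b y) = \Delta^{8a}g^b d_r(y)$ cannot be invoked in the way you propose.

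The paper's proof goes the other way around: given $x$ in stem $\leq -21$, it multiplies \emph{forward} by $g^a\Delta^{8b}$ (which increases $n+s$) until $x\cdot g^a\Delta^{8b}$ lands in the coatomic strip, and then uses the fact that this multiplication map is an \emph{isomorphism} (or a surjection when $s=1$) on the relevant $\uE_r$-groups to pull $d_r(x\cdot g^a\Delta^{8b})$ back to $d_r(x)$ via the Leibniz rule. High filtration $s\geq 28$ is handled separately by first writing $x = g^c y$ with $y$ in filtration $\leq 21$ and then applying the low-filtration case. The injectivity/surjectivity of $g^a\Delta^{8b}$-multiplication is the substantive ingredient --- supplied by the $g$-divisibility and $\Delta^8$-torsion-freeness statements already established in \cref{ssec:meta} --- and it is exactly what lets one ``divide'' by a forward multiplication; your write-up never addresses why such a division is possible, because your decomposition tries to avoid it. A secondary quibble: your argument that $g$ is a $d_r$-cycle for $r=23,25$ (``pairs nontrivially with permanent cycles such as $\kappa$'') is too loose; the clean reason is that $g$ is the mod-$\tau$ reduction of $\kappabar \in \pi_{20,4}\Smf$ (\cref{prop:checking_kappabar}), hence a permanent cycle outright.
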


The moral is that all differentials whose source has bidegree $(n,s)$ with $n+s+10\geq 0$ follow from the differentials in the connective region by $g^a\Delta^{8b}$-multiplication, and all of those with $n+s+10 \leq 0$ follow from the differentials in the coatomic region for the same reason.

\begin{proof}
    Suppose we want to compute $d_r(x)$ for a class $x$ on the $\uE_r$-page of the DSS for $\Tmf$ in stem $n\leq -21$. If $x$ has filtration $s\leq 27$, then for some $a,b\geq 0$, the element $xg^a\Delta^{8b}$ lies in the coatomic strip. If $x$ has filtration $2\leq s\leq 27$, for this choice of $a$ and $b$, the multiplication by $g^a\Delta^b$-map is an isomorphism on the $\uE_r$-page in degrees $(n,s)$ and $(n-1,\, s+r)$. Using that $g$ and $\Delta^{8}$ are permanent cycles, the latter by \cref{prop:vanishingE24}, this allows us to important the differentials straight from those in the coatomic strip of \cref{noncon:coatomicdiffs} by the Leibniz rule. If $x$ has filtration $s=1$, the same argument works, except the multiplication by $g^a\Delta^b$-map might only be an surjection. Regardless, the argument persists.

    For classes $x$ with filtration $s\geq 28$, we use the fact that there is a natural number $c$ such that $x = g^c y$ for some unique class $y$ of filtration $2\leq s'\leq 21$. One then obtains the value of $d_r(x)$ from $d_r(y)$ using the argument above and the Leibniz rule.
\end{proof}

This finishes our computation of the DSS for $\Tmf$, so we have proven \cref{thm:dssattwo}.


\subsection{Rainchecked four-fold Toda brackets in Smf}\label{sec:large_carrick_brackets}

Logically speaking, this section appears just after we have finished with the $\uE_{11}$-page computations above and just before our discussion of the $\uE_{13}$-page.
To establish our earlier $d_{13}$'s, we needed to establish some hidden extensions.
This used that the element $\nu_2$ (\cref{not:lift_nu1_nu2_nu4_nu5_nu6}) is contained in the bracket
\[\angbr{\nu,\,2\nu\tau^4,\,\nu\tau^4,\,\kappabar^2}\subseteq\oppi_{51,1}\Smf/\tau^{10},\]
which allowed us to determine the product $2\nu\cdot\nu_2$.
We now compute this four-fold bracket.
In the following discussion we work in the symmetric monoidal $\infty$-category of modules in $\Syn$ over $\Smf/\tau^{10}$.
In particular, we write $\1$ for $\Smf/\tau^{10}$.

A simple long exact sequence argument shows there is a diagram as follows
\[
\begin{tikzcd}
    \1^{7,-7}\arrow[r,dashed,"\exists!\overline{\nu\tau^4}"]\arrow[dr,"\nu\tau^4"']&C(2\nu\tau^4)\arrow[d]\\
    &\1^{4,-4},
\end{tikzcd}
\]
where the vertical map is projection onto the top cell, and similarly with the roles of $\nu$ and $2\nu$ replaced. Taking the cofibers with respect to the dashed maps gives the unique forms of the $3$-cell complexes $C(\nu\tau^4,2\nu\tau^4)$ and $C(2\nu\tau^4,\nu\tau^4)$, in the sense of \cref{def:formsofcones}.

\begin{proposition}
\label{prop:nu2_fourfold_carrick_bracket}
    \leavevmode
\begin{itemize}
    \item \textup{(51,1)} The set
\[
    \angbr{\nu,\, 2\nu\tau^4,\, \nu\tau^4,\, \kappabar^2} \subseteq \oppi_{51,1}\Smf/\tau^{10}
\]
is a singleton consisting of \textbr{the mod $\tau^{10}$ reduction of} the element $\nu_2$ from \cref{not:lift_nu1_nu2_nu4_nu5_nu6}.
    \item \textup{(147,1)} The set
\[
    \angbr{\nu_4,\, 2\nu\tau^4,\, \nu\tau^4,\, \kappabar^2} \subseteq \oppi_{51,1}\Smf/\tau^{10}
\]
is a singleton consisting of \textbr{the mod $\tau^{10}$ reduction of} the element $\nu_6$ from \cref{not:lift_nu1_nu2_nu4_nu5_nu6}.
\end{itemize}
\end{proposition}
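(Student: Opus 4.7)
My plan is to establish both parts of the proposition in parallel, as the arguments are essentially identical after multiplying by the permanent cycle $\Delta^4$. I will focus on the $(51,1)$ case; the $(147,1)$ case then follows by the same argument, using $\nu_4$ in place of $\nu$ and tracking an extra factor of $\Delta^4$ throughout.

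\smallskip

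\noindent\textbf{Step 1: Non-emptiness.} By \cref{prop:conesandbracketsexistence}\,(3), the four-fold bracket $\angbr{\nu,2\nu\tau^4,\nu\tau^4,\kappabar^2}$ is non-empty once all proper sub-brackets of length $<4$ are strictly zero (or at least contain zero). Each pairwise product vanishes: $\nu\cdot 2\nu\tau^4 = 0$ and $2\nu\tau^4\cdot\nu\tau^4 = 0$ follow from $2\nu^2 = 0$ in $\oppi_{*,*}\S$ (and hence in $\Smf/\tau^{10}$), while $\tau^4\nu\kappabar^2 = 0$ follows from \cref{lem:tau4_nu_kappabar_is_zero}. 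The sub-brackets $\angbr{\nu,2\nu\tau^4,\nu\tau^4}$ and $\angbr{2\nu\tau^4,\nu\tau^4,\kappabar^2}$ are $\tau^4$-multiples of three-fold brackets in the sphere part of $\Smf/\tau^{10}$, and I will verify that they contain zero by inspecting the relevant bidegrees and using \cref{prop:todagoestomassey} plus the cubic Hopf algebroid computations of \cref{etwopagebracketsprimethree}.

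\smallskip

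\noindent\textbf{Step 2: Indeterminacy.} The indeterminacy of a four-fold bracket lives in the sum
\[
\nu\cdot\angbr{2\nu\tau^4,\nu\tau^4,\kappabar^2} \;+\; \angbr{\nu,2\nu\tau^4,\nu\tau^4}\cdot\kappabar^2 \;+\; \nu\cdot\oppi_{48,-2}\Smf/\tau^{10}\cdot\kappabar^2,
\]
as in \cite[Theorem~2.3.1]{kochman_stable}. The outer factors are the three-fold brackets just shown to contain zero; each of them, being a shifted $\tau^4$-multiple, is contained in $\tau^4\kappabar^2\cdot\oppi_{*,*}\Smf$ and $\tau^4\nu\cdot\oppi_{*,*}\Smf$ respectively, which one checks vanishes in the relevant bidegrees using \cref{etwoprime2} together with \cref{prop:precise_lifting_higher_powers_tau}. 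The middle term is zero for bidegree reasons. This shows the bracket is a singleton.

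\smallskip

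\noindent\textbf{Step 3: Identification of the element.} To identify the unique element of the bracket as $\nu_2$, I will first reduce mod $\tau$ and show it coincides with $h_2\Delta^2$ on the $\uE_5$-page, and then use uniqueness of the lift to $\Smf/\tau^{10}$. For the mod-$\tau$ identification, I apply the cell complex analysis sketched in \cref{ssec:moss}: build the unique form of $C(2\nu\tau^4,\nu\tau^4)$ through the map $\overline{\nu\tau^4}\colon \1^{7,-7}\to C(2\nu\tau^4)$; the bracket element is then the composite of a suitable $\overline{\kappabar^2}\colon \1^{51,-7}\to C(2\nu\tau^4,\nu\tau^4)$ with $\underline{\nu}$. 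Tensoring down to $C\tau$ splits this complex into a sum of cells, and the resulting mod $\tau$ contribution is governed by two total differentials: $\delta_4^8(\Delta) = u\cdot \nu\kappabar$ of \cref{lem:total_differential_14_Delta} (which detects the inner piece) and the ordinary differential $d_5(\Delta) = \pm h_2 g$ of \cref{prop:d5_Delta}. Unravelling these, the mod-$\tau$ image of the bracket element is $h_2\Delta^2$, so the element lifts $\nu\Delta^2$. By the uniqueness assertion of \cref{lem:checking_nu_i_lift}, the unique such lift to $\Smf/\tau^{10}$ is the mod-$\tau^{10}$ reduction of $\nu_2$.

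\smallskip

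\noindent\textbf{Main obstacle.} The step I expect to be most delicate is the explicit cell-complex computation in Step~3, in particular keeping careful track of which summands in $C\tau\otimes C(2\nu\tau^4,\nu\tau^4)$ contribute after tensoring with $\kappabar^2$, and verifying that the relevant matrix entry is a unit rather than zero or an even multiple. The key leverage is that $\delta_4^8(\Delta)$ is a \emph{unit} multiple of $\nu\kappabar$, so the composite acquires exactly one factor of $\Delta^2$ on mod-$\tau$ reduction. If direct manipulation becomes unwieldy, the cleaner alternative is to apply the shuffling formula of \cref{prop:shufflingformulas}\,(1) to rewrite the four-fold bracket in terms of a three-fold bracket involving $\Delta$, and then invoke the synthetic Moss' theorem (\cref{thm:synthetic_moss}) directly; I expect this to be the most efficient route and will pursue it first before falling back on the cell-level analysis.
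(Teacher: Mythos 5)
Your overall framework --- non-emptiness, indeterminacy, identification --- matches the shape of the argument, and Steps~1 and~2 are roughly workable, though somewhat loose. In Step~1, the sub-bracket $\angbr{\nu,2\nu,\nu}=\set{0}$ is not a routine bidegree check: the paper needs a shuffling argument (via $\nu\eps=0$ and the nonvanishing of $\eta^3c_4$) to rule out the a priori possibility $\eta^2c_4$. In Step~2, the Kochman-style indeterminacy formula you write down does not have consistent bidegrees (the third term as you have written it lands far from $(51,1)$), and the paper in fact shows directly that $\overline{\kappabar^2}$ is \emph{not} unique --- its indeterminacy lands in $\oppi_{48,0}\Smf/\tau^{10}\neq0$ --- and then saves the singleton property by observing that the composite with $\underline{\nu}$ kills that group. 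This is a genuinely different, and more careful, handling than a formulaic indeterminacy bound.

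The serious gap is Step~3. Your ``cleaner alternative'' of rewriting the four-fold bracket as a three-fold one via \cref{prop:shufflingformulas}\,(1) and then invoking the synthetic Moss theorem does not work: that shuffling formula equates $a_1\angbr{a_2,\dotsc,a_n}$ with $\angbr{a_1,\dotsc,a_{n-1}}a_n$ --- it moves an outer entry across, it does not reduce an $n$-fold bracket to an $(n-1)$-fold one --- and \cref{thm:synthetic_moss} is only stated for three-fold brackets. So the fallback cell-level analysis is mandatory, and there your sketch is both incomplete and slightly wrong. You propose tensoring the cell complex down to $C\tau$ and reading off matrix entries mod~$\tau$, but mod~$\tau$ both primary attaching maps $2\nu\tau^4$ and $\nu\tau^4$ collapse to zero, so the splitting carries no information and the entries cannot be pinned down; and as you yourself note the bracket degenerates mod~$\tau$. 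The paper instead tensors down to $\Smf/\tau^4$, where $\tau$ survives, and then determines the components $f_1$ and $f_2$ of the split map inductively by computing their truncated total differentials $\delta_4^8(f_2)=\kappabar^2\nu$ and $\delta_4^8(f_1)=2\nu\kappabar\Delta$ via explicit auxiliary commutative squares. This forces $f_2=\kappabar\Delta$ and $f_1=\Delta^2$ modulo $\nu$-torsion, whence $\nu f_1=\nu_2$ by the definition of $\nu_2$ as a lift in $\Smf/\tau^4$. The diagram chase identifying $\delta_4^8$ of the \emph{intermediate} matrix entries (not of $\Delta$ itself, as you suggest) is the content of the proof, and without carrying it out your Step~3 is a restatement of the goal rather than a proof of it.
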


In a diagram, this proposition says the composition
\[
\begin{tikzcd}
    \boxed{48,0}\arrow[r,"\kappabar^2"]&\boxed{8,-8}\arrow[d,"\nu\tau^4",dash]\\
&\boxed{4,-4}\arrow[d,"2\nu\tau^4",dash]\\
&\boxed{0,0}\arrow[r,"\underline{\nu}"]&\boxed{-3,-1}
\end{tikzcd}
\]
can be constructed uniquely and that it is sent to $\nu\Delta^2$ in $\Smf/\tau^4$, which uniquely specifies $\nu_2 \in \oppi_{51,1}\Smf/\tau^{10}$; see \cref{lem:checking_nu_i_lift}.

\begin{proof}
To construct $\underline{\nu}$, we must extend the map $\nu$ over $C(2\nu\tau^4,\nu\tau^4)$, which by \cref{prop:conesandbracketsexistence} exists if
\[0\in\angbr{\nu,2\nu\tau^4,\nu\tau^4}.\]
This bracket is nonempty since $2\nu\cdot\nu=0$ in $\oppi_{*,*}\1$, and a degree check shows it has zero indeterminacy. Shuffling shows it contains $\tau^8\angbr{\nu,2\nu,\nu}$, hence it suffices to show $0\in\angbr{\nu,2\nu,\nu}$. This also has zero indeterminacy, and for degree reasons is either $0$ or $\eta^2c_4$. However, in the latter case, the shuffling formula
\[\angbr{\nu,2\nu,\nu}\eta=\nu\angbr{2\nu,\nu,\eta}=\nu\epsilon=0\]
would give a contradiction, as $\eta^3c_4\neq0$. Two extensions $\underline{\nu}$ so constructed differ by an element in $\oppi_{11,-7}\1=0$, so this constructs $\underline{\nu}$ uniquely.

We construct $\overline{\kappabar^2}$ analogously by constructing its dual. As before, we must show that
\[0\in \angbr{2\nu\tau^2,\nu\tau^4,\kappabar^2}.\]
This bracket is nonempty since $2\nu\cdot\nu=0$ and $\nu\tau^4\kappabar^2=0$ by \cref{cor:todabracketforkappabar}, and it lives in $\oppi_{47,1}\1$, which is zero. A degree argument shows there is a unique extension of $\kappabar^2$ over $C(\nu\tau^4)$, and two further extensions over $C(\nu\tau^4,2\nu\tau^4)$ differ by an element in $\oppi_{48,0}\1\neq 0$, via the exact sequence
\[
    \begin{tikzcd}
        \oppi_{48,0} \1 \ar[r,"\iota"] & \oppi_{48,0} C(\nu\tau^4,2\nu\tau^4) \ar[r] & \oppi_{48,0}\Sigma^{4,-4}C(2\nu\tau^4)
    \end{tikzcd}
\]
so $\overline{\kappabar^2}$ is not unique. However, the composite $\underline{\nu}\circ\iota=\nu$, and $\nu$ kills $\oppi_{48,0}\1$, hence $\underline{\nu}\circ\overline{\kappabar^2}$ is unique, which proves the set
\[\angbr{\nu,2\nu\tau^4,\nu\tau^4,\kappabar^2}\]
is a singleton.

To determine the image of $\angbr{\nu,2\nu\tau^4,\nu\tau^4,\kappabar^2}$ under the map $\Smf/\tau^{10}\to\Smf/\tau^4$, we note that the primary attaching maps $2\nu\tau^4$ and $\nu\tau^4$ in $C(2\nu\tau^4,\nu\tau^4)$ are sent to zero in $\Smf/\tau^4$.
Therefore the attaching map for the middle cell in $C(2\nu\tau^4,\nu\tau^4)$ is sent to zero, and the attaching map for the top cell in $C(2\nu\tau^4,\nu\tau^4)$ factors through a map in $\oppi_{7,-7}\Smf/\tau^4=0$.
Applying $-\otimes_{\1}\Smf/\tau^4$ to the composite  $\underline{\nu}\circ\overline{\kappabar^2}$ therefore gives a diagram
\[
\begin{tikzcd}
    \boxed{48,0}\arrow[r,"\kappabar^2"]\arrow[dr,"f_2",dashed]\arrow[ddr,"f_1"',dashed]&\boxed{8,-8}\\
&\boxed{4,-4}\\
&\boxed{0,0}\arrow[r,"\nu"]&\boxed{-3,-1}
\end{tikzcd}
\]
in modules over $\Smf/\tau^4$, using that the components of $\underline{\nu}$ on the middle and top cell are zero for degree reasons. The composite is therefore given by $\nu \cdot f_1$, and it remains to identify $f_1$ with $\Delta^2\in\oppi_{48,0}\Smf/\tau^4$ up to terms in the kernel of multiplication by $\nu$.
 
We first construct a commutative square
\[
\begin{tikzcd}
    \opSigma^{4,-4}C(\nu\tau^4)\arrow[d,"\underline{2\nu}"]\arrow[r]&\1^{1,-1}\arrow[d,equal]\\
    \1^{1,-5}\arrow[r,"\tau^4"]&\1^{1,-1}.
\end{tikzcd}
\]
Here we choose the left-hand vertical map to be the unique extension of $2\nu\colon \1^{4,-4}\to\1^{1,-5}$ over $C(\nu\tau^4)$. Taking fibers produces a commutative square
\begin{equation}\label{eq:MossSquareFourFold}
\begin{tikzcd}
    C(2\nu\tau^4,\nu\tau^4)\arrow[r]\arrow[d]&\opSigma^{4,-4}C(\nu\tau^4)\arrow[d]\\
    C(\tau^4)\arrow[r]&\1^{1,-5}
\end{tikzcd}
\end{equation}
where each of the horizontal maps crush the bottom cell. Applying the counterclockwise composite in this diagram to $\overline{\kappabar^2}$ gives a composition that we may represent with the following diagram.
\[
\begin{tikzcd}
    \boxed{48,0}\arrow[r,"\overline{\kappabar^2}"]&\boxed{8,-8}\arrow[d,"\nu\tau^4",dash]\\
&\boxed{4,-4}\arrow[d,"2\nu\tau^4",dash]\arrow[r,"2\nu"]&\boxed{1,-5}\arrow[d,"\tau^4",dash]\arrow[r,equal]&\boxed{1,-5}\\
&\boxed{0,0}\arrow[r,equal]&\boxed{0,0}
\end{tikzcd}
\]
After tensoring down to $\Smf/\tau^4$, the composite of the first two morphisms above has bottom component equal to $f_1$ and the diagram shows that $\delta_4^8(f_1)=2\nu \cdot f_2$. 

To determine $2\nu \cdot f_2$, we apply the clockwise composite in \eqref{eq:MossSquareFourFold} to $\overline{\kappabar^2}$, which we may represent with the following diagram.
\[
\begin{tikzcd}
    \boxed{48,0}\arrow[r,"\kappabar^2"]&\boxed{8,-8}\arrow[d,"\nu\tau^4",dash]\arrow[r,equal]&\boxed{8,-8}\arrow[d,"\nu\tau^4",dash]\\
&\boxed{4,-4}\arrow[d,"2\nu\tau^4",dash]\arrow[r,equal]&\boxed{4,-4}\arrow[r,"2\nu"]&\boxed{1,-5}\\
&\boxed{0,0}
\end{tikzcd}
\]
After tensoring down to $\Smf/\tau^4$, the composite $\phi$ of the first two morphisms above has bottom component equal to $f_2$, so we analyze $\phi$ carefully. One may construct a diagram
\[
\begin{tikzcd}
    \boxed{48,0}\arrow[r,"\kappabar^2"]&\boxed{8,-8}\arrow[d,"\nu\tau^4",dash]\arrow[r,equal]&\boxed{8,-8}\arrow[d,"\nu\tau^4",dash]\arrow[r,"\nu"]&\boxed{5,-9}\arrow[d,dash,"\tau^4"]\\
&\boxed{4,-4}\arrow[d,"2\nu\tau^4",dash]\arrow[r,equal]&\boxed{4,-4}\arrow[r,equal]&\boxed{4,-4}\\
&\boxed{0,0}
\end{tikzcd}
\]
where the composite of the first two maps is $\phi$. After tensoring down to $\Smf/\tau^4$, the component of this composite onto the cell of dimension $(4,-4)$ remains $f_2$, so this diagram shows that $\delta_4^8(f_2)=\kappabar^2\nu$, which guarantees that $f_2=\kappabar\Delta$.

Putting these facts together we conclude that  $\delta_4^8(f_1)=2\nu\kappabar\Delta$, which guarantees that $f_1=\Delta^2\in\oppi_{*,*}\Smf/\tau^4$ modulo $\nu$-torsion, and we conclude that $\nu \cdot f_1=\nu_2$.

The bracket for $\nu_6$ follows from analogous arguments.
\end{proof}

\section{Computations away from the prime 2}
\label{sec:computations_away_from_2}

Above we computed (most of) the signature spectral sequence of $\Smf_{(2)}$, and here we would like to do the same for $\Smf_{(3)}$ and $\Smf[\tfrac{1}{6}]$.
All together, these three results yield the signature spectral sequence for $\Smf$.

For any collection of primes $J$, the signature spectral sequence associated to $\Smf[J^{-1}]$ is naturally identified with the DSS for $\Tmf[J^{-1}]$. Indeed, by \cite[Proposition 4.16]{CDvN_part1}, the natural map of synthetic $\E_\infty$-rings
\[\Smf[J^{-1}] = \calO^\syn(\Mellbar)[J^{-1}] \simeqto \calO^\syn(\Mellbar\times \Spec \Z[J^{-1}])\]
is an equivalence. This is implicitly used below.

\subsection{Computations at the prime 3}

As is often the case for $\Tmf$, the $3$-local DSS computation is a vast simplification of the $2$-local analogue discussed above. 
We implicitly work $3$-locally in this subsection.

\begin{theorem}\label{dssatthree}
    The signature spectral sequence of $\Smf$, i.e., the DSS for $\Tmf$, is determined below, and has precisely the form depicted in \cref{ssprime3}.
\end{theorem}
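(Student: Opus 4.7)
The plan is to mirror the $2$-primary computation of \cref{sec:prime2}, but the sparsity of the $\uE_2$-page at the prime $3$ will make nearly every step much easier. First I would recall from \cref{etwopagebracketsprimethree} and \cite[Section~4]{bauer_tmf} the identification of the connective region of $\oppi_{\ast,\ast}\Smf_{(3)}/\tau$ with the cohomology of the (simplified) $3$-primary cubic Hopf algebroid $\Z_{(3)}[c_4,c_6,\Delta]/(c_4^3-c_6^2-1728\Delta)$, together with the standard picture including the classes $\alpha \in \uE_2^{3,1}$ and $\beta \in \uE_2^{10,2}$; see \cref{ssprime3}. Using \cref{heightonedetection}, the class $\alpha$ in $\oppi_{3,1}\S_{(3)}$ is detected in $\Smf_{(3)}$, and an analogue of the Toda-bracket detection argument of \cref{prop:kappa_detection} shows that $\beta \in \oppi_{10,2}\S_{(3)}$ is detected as well; I would import any other bracket relations from the sphere exactly as in \cref{sssec:todabracketsfromsphere}.

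The key atomic differentials I expect are $d_5(\Delta) = \pm \alpha\beta^2$ and $d_9(\alpha\Delta^2) = \pm \beta^5$; all other $d_5$'s and $d_9$'s will follow from these by the Leibniz rule applied to the $\Z_{(3)}[\alpha,\beta]$-module structure and $\Delta^3$-multiplication. For $d_5(\Delta)$, I would argue as in \cref{prop:d5_Delta}: some suitable power of the synthetic analogue of $\beta$ is forced to be $\tau$-power torsion by Nishida nilpotence applied to its image in $\Smf_{(3)}$, and degree considerations identify the only possible supporting class as $\Delta$. For $d_9(\alpha\Delta^2)$, I would either repeat a synthetic Toda-bracket argument of the same spirit as \cref{prop:diff23} --- lifting a suitable multiple of $\beta^5$ to a Toda bracket in $\Smf_{(3)}/\tau^k$ that is forced to vanish by the previously established $d_5$'s --- or apply the synthetic Leibniz rule of \cref{syntheticleibnizrule} to a total differential on $\Delta^2$ computed from \cref{lem:total_differential_14_Delta}'s $3$-primary analogue.

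Next, I would run the meta-arguments of \cref{ssec:meta}, suitably reformulated for the prime $3$: the role of $g$ is played by $\beta$ and the role of $\Delta^8$ by $\Delta^3$, since $3 d_5(\Delta) = 0$ forces $\Delta^3$ to be a $d_5$-cycle (in fact a permanent cycle, as $\Delta^3$ lies in a sparse region). These give the absence of line-crossing differentials and the $\Delta^3$-periodicity of the nonconnective region in the analogue of the $S$-region. Computing homology with respect to $d_5$ and $d_9$ yields $\uE_{11}$, and I would verify directly that for degree/sparsity reasons no longer differentials are possible, so $\uE_{11} = \uE_\infty$. In particular the Gap Theorem at $3$ follows immediately from the structure of $\uE_\infty$ in the connective region.

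Finally, for stems $n \leq -21$ I would follow the strategy of \cref{ssec:nonconnectiveregion}: propagate differentials from a ``coatomic strip'' by $\Delta^3$- and $\beta$-multiplication, with hidden extensions between negative-stem classes controlled by a $3$-primary analogue of \cref{lem:noncon_2ext}, using that $3\alpha = 0$ presents no relation complications. The main obstacle, modest compared to the $2$-primary case, will be ensuring that the synthetic Toda bracket argument for $d_9(\alpha\Delta^2)$ goes through with the right indeterminacy --- specifically, exhibiting the necessary $\tau$-torsion relation $\tau^? \alpha\beta^2 = 0$ in $\Smf_{(3)}/\tau^k$ for $k$ large enough to support the bracket, as in \cref{lem:tau4_nu_kappabar_is_zeroweiter}. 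Once that is in hand, the rest of the page-by-page computation is essentially forced, and produces the chart of \cref{ssprime3}.
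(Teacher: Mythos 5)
Your plan is broadly the right shape — identify the two atomic differentials $d_5(\Delta)=\pm\alpha\beta^2$ and $d_9(\alpha\Delta^2)=\pm\beta^5$, propagate, and observe collapse — but the argument you give for the crucial $d_5$ does not go through, and you have also mischaracterized the $2$-primary model you are imitating.

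You propose to deduce $d_5(\Delta)$ from Nishida nilpotence: ``some suitable power of the synthetic analogue of $\beta$ is forced to be $\tau$-power torsion.''  Nishida's theorem does tell you that $\beta^N$ is $\tau$-power torsion in $\nu\S$ for some $N$, but it gives no control over the length of the differential that eventually kills it or the filtration of the source; in particular it cannot single out a $d_5$ on $\Delta$ out of the \emph{a priori} possibilities on various pages.  Note also that \cref{prop:d5_Delta}, which you cite as your template, does not actually use Nishida nilpotence: it uses the concrete vanishing $\oppi_{29}\Sph_{(2)}=0$ (via \cref{lem:kappabarkeyrelation2} and \cref{cor:todabracketforkappabar}) to conclude that $\nu^3\kappabar$ is $\tau$-power torsion, and then a degree count to locate the unique possible source.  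At $p=3$ there is no analogously convenient vanishing stem: the element $\alpha\beta^2$ is already zero in $\Ext_{BP_*BP}$ for the sphere, so one cannot run the argument via a vanishing $\oppi_n\Sph_{(3)}=0$.  What the paper actually does instead is import a specific classical fact: the Toda differential $d_5(\beta_{3/3})=\pm\alpha\beta^3$ in the ANSS for the $3$-local sphere (\cite[Theorem~4.4.22]{ravenel_green_book}).  This forces $\alpha\beta^3$ to die in the signature spectral sequence of $\Smf_{(3)}$, and for degree reasons the only possibility is $d_5(\pm\beta\Delta)=\alpha\beta^3$, whence $d_5(\Delta)=\pm\alpha\beta^2$ by Leibniz and the fact that $\beta$ is a permanent cycle.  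You need this sharper classical input; nilpotence alone is not enough.

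Your sketch for $d_9(\alpha\Delta^2)$ is in the right spirit and matches the paper: establish $\tau^4\alpha\beta^2=0$ in $\Smf/\tau^{14}$ (via \cref{thm:omnibus} and the $d_5$ just proved), then use the Toda bracket $\beta=\angbr{\alpha,\alpha,\alpha}$ together with the shuffling formula to obtain $\tau^8\beta^5=0$, which forces $\beta^5$ to be a $d_{\leq 9}$-target; by degree the source must be $\alpha\Delta^2$.  Finally, you considerably over-engineer the remainder: the $3$-primary $\uE_{10}$-page already has a horizontal vanishing line at $s=8$ and is so sparse that collapse and the absence of further differentials are immediate.  No $3$-primary analogue of the meta-arguments or of the coatomic strip analysis is needed, and the paper does not do one.
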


The $\uE_2$-page is computed using sheaf cohomology; see \cref{ssprime3} or \cite[Figure 10]{konter_Tmf}. We recommend the reader keeps these charts nearby throughout the following arguments.

There are only two atomic differentials.

\begin{proposition}
        \leavevmode
    \begin{itemize}
        \item \textup{(24,0)}   $d_5(\Delta) = \pm \al \be^2$.
        \item \textup{(51,1)}   $d_9(\al \Delta^2) = \pm \be^5$.
    \end{itemize}
\end{proposition}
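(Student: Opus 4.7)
The plan is to follow the prime-$2$ strategy of \cref{sec:prime2} closely, simplified considerably by the sparsity of the $3$-local $\uE_2$-page. Both atomic differentials will be established via total-differential and Toda-bracket arguments. As a preliminary, we detect the sphere-elements $\alpha\in\oppi_{3,1}\Smf_{(3)}/\tau$ (via \cref{heightonedetection}) and $\beta\in\oppi_{10,2}\Smf_{(3)}/\tau$ (via the classical bracket $\beta_1\in\angbr{\alpha_1,\alpha_1,\alpha_1}$ at $p=3$, combined with \cref{lem:functorspreservebrackets} and \cref{prop:todagoestomassey}). Both $\alpha$ and $\beta$ are then permanent cycles in the DSS as they lift from $\Sph$.

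For $d_5(\Delta)=\pm\alpha\beta^2$, we mirror the argument of \cref{prop:d5_Delta}. The classical ANSS differential $d_5(\alpha_{3/3})=\alpha_1\beta_1^3$ in $\Sph_{(3)}$, combined with \cref{prop:non_truncated_omnibus} applied to $\nu\Sph_{(3)}$, produces a $\tau$-power torsion lift of $\alpha\beta^3\in\uE_2^{33,7}$ in $\oppi_{33,7}\nu\Sph_{(3)}$. Pushing along the unit $\nu\Sph_{(3)}\to\Smf_{(3)}$ shows that $\alpha\beta^3$ is $\tau$-power torsion in $\Smf_{(3)}$ while remaining nonzero modulo $\tau$; it must therefore be hit by a differential in the DSS. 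Inspection of the $3$-local $\uE_2$-page shows that the unique potential source of appropriate length is $\beta\Delta\in\uE_2^{34,2}$ via $d_5$. Since $\beta$ is a $d_{\leq 9}$-cycle for degree reasons, the Leibniz rule gives $d_5(\beta\Delta)=\beta\cdot d_5(\Delta)$, forcing $d_5(\Delta)=\pm\alpha\beta^2$.

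For $d_9(\alpha\Delta^2)=\pm\beta^5$, we first observe that $\beta^5\in\uE_2^{50,10}$ is a $d_{\leq 8}$-cycle because the bidegrees $(51,s)$ with $2\leq s\leq 9$ are empty on the $3$-local $\uE_2$-page. The only potential source for a nontrivial differential hitting $\beta^5$ is therefore $\alpha\Delta^2\in\uE_*^{51,1}$ via $d_9$. To establish that this differential is nonzero, the strategy is to exhibit $\beta^5$ as $\tau$-power torsion in $\Smf_{(3)}$, forcing it to vanish on $\uE_\infty$. Concretely, the $d_5$ just proved together with \cref{thm:omnibus}\,(3) yields a $\tau^4$-torsion lift of $\alpha\beta^2$ in a suitable truncation $\Smf_{(3)}/\tau^k$. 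We then use the synthetic Toda bracket $\beta\in\angbr{\alpha,\alpha,\alpha}$, the shuffling formulas of \cref{prop:shufflingformulas}, and the synthetic Moss' theorem (\cref{thm:synthetic_moss}) in a multi-fold synthetic Toda bracket expression for $\tau^N\beta^5$ that collapses via the relation $\tau^4\cdot\alpha\beta^2=0$. This is directly analogous to the argument used in \cref{prop:diff23} at the prime~$2$, where $\tau^{22}\kappabar^6=0$ was deduced via shuffling of $\angbr{\kappa,2,\eta,\nu}$ and the relation $\tau^4\nu\kappabar=0$.

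The main obstacle will be this final Toda bracket computation. Since $\alpha^2=0$, direct application of the synthetic Leibniz rule (\cref{syntheticleibnizrule}) to $\alpha\Delta^2$ vanishes, forcing us to work with multi-fold synthetic brackets; tracking the resulting indeterminacies in the associated truncated $\tau$-Bockstein spectral sequence will require careful bookkeeping analogous to that of \cref{sec:large_carrick_brackets}. The preliminary detection of $\beta$ and the $\tau$-power torsion argument for $\alpha\beta^3$ are routine given the tools developed in earlier sections, as are the degree-based emptiness checks on the $3$-local $\uE_2$-page.
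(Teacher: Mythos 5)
Your proposal tracks the paper's strategy very closely and is essentially correct: detect $\alpha$ and $\beta$; import the Toda differential $d_5(\beta_{3/3})=\pm\alpha\beta^3$ to force $\alpha\beta^3$ to die, giving $d_5(\Delta)$; then show $\beta^5$ is $\tau$-power torsion via the bracket $\beta=\angbr{\alpha,\alpha,\alpha}$ to force $d_9(\alpha\Delta^2)$. Two points, however, are worth flagging.

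First, you substantially overestimate the difficulty of the final Toda bracket computation. You anticipate ``multi-fold synthetic brackets'' and ``careful bookkeeping analogous to that of \cref{sec:large_carrick_brackets}'', presumably by analogy with the four-fold $\angbr{\kappa,2,\eta,\nu}$ used at $p=2$. But at $p=3$ the class $\beta$ has the much friendlier three-fold expression $\angbr{\alpha,\alpha,\alpha}$, and the collapse is a one-line shuffle via \cref{prop:shufflingformulas}\,(1):
\[
\tau^8\beta^5=\tau^4\beta^2\cdot\angbr{\alpha,\alpha,\alpha}\cdot\tau^4\beta^2=\angbr{\tau^4\beta^2,\alpha,\alpha}\cdot\tau^4\alpha\beta^2=0.
\]
No four-fold brackets, no Moss' theorem, and no delicate indeterminacy tracking are needed here.

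Second, deriving $\tau^4\alpha\beta^2=0$ from the $d_5$ requires one more step than you state. \Cref{thm:omnibus}\,(3) only guarantees that \emph{some} lift of the $\uE_2$-class $\alpha\beta^2$ to $\Smf_{(3)}/\tau^{14}$ is $\tau^4$-torsion (cf.\ \cref{warn:diff_not_imply_tau_torsion}); to conclude that the actual product $\alpha\cdot\beta^2\in\oppi_{23,5}\Smf_{(3)}/\tau^{14}$ is $\tau^4$-torsion one must additionally show the lift is unique, which the paper does via \cref{cor:easy_lifting_higher_power_tau}. Without the uniqueness check, the shuffling argument above cannot get off the ground.
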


\begin{proof}
    Recall from \cref{heightonedetection} that $\al\in \oppi_{3,1}\Sph$ has nonzero image in $\oppi_{3,1}\Smf$.
    The class in the sphere $\be \in \oppi_{10,2} \Sph$ is defined as the Toda bracket $\angbr{\al,\al,\al}$. From the Massey product structure of the $3$-local cubic Hopf algebroid, combined with \cref{prop:todagoestomassey} and \cref{etwopagebracketsprimethree}, we see that $\be$ hits the generator of $\oppi_{10,2}\Smf/\tau \cong \F_3$, which we also call $\beta$; see \cite[Equation~(5.1)]{bauer_tmf}.
    
    In the ANSS for $\Sph$, there is the classical Toda differential $d_5(\beta_{3/3}) = \pm\alpha \beta^3$; see \cite[Theorem~4.4.22]{ravenel_green_book}.
    As a result, the element $\alpha\beta^3$ in $\Smf/\tau$ must also be hit by a differential.
    For degree reasons, the only possibility is $d_5(\pm \beta \Delta) = \al\be^3$.
    From the Leibniz rule and the fact that $\beta$ is a permanent cycle, we obtain $d_5(\Delta) =\pm \al\beta^2$. The Leibniz rule gives all other $d_5$'s in this spectral sequence. 
    
    For degree reasons, the next possible differential is a $d_9$. To compute this atomic $d_9$, we show that the relation
    \begin{equation}
        \label{eq:tau4_alpha_beta2_zero}
        \tau^4 \al\beta^2 = 0
    \end{equation}
    holds in $\Smf/\tau^{14}$.
    As the mod $\tau$ reduction $\alpha\beta^2 \in \oppi_{23,5}\Smf/\tau$ is the target of a $d_5$, , \cref{thm:omnibus} tells us that there exists a $\tau^4$-torsion lift to $\oppi_{23,5}\Smf/\tau^{14}$.
    Using \cref{cor:easy_lifting_higher_power_tau}, we see that this lift is unique, proving that \eqref{eq:tau4_alpha_beta2_zero} holds.
    
    Next, recall that the defining Toda bracket expression $\be = \angbr{\al,\al,\al}$ also holds in $\Smf/\tau^{14}$.
    Applying the usual juggling formula \cref{prop:shufflingformulas} and the relation \eqref{eq:tau4_alpha_beta2_zero}, we find that
    \[
        \tau^8\be^5 = \tau^4\be^2 \angbr{\al,\al,\al} \tau^4\be^2 = \angbr{\tau^4\be^2,\al,\al} \tau^4 \al\be^2 = 0.
    \]
    (Note that \eqref{eq:tau4_alpha_beta2_zero} also justifies that the second bracket is nonempty.)
    In other words, we have learned that $\beta^5$ is $\tau^8$-torsion, which by \cref{thm:omnibus} means that its mod $\tau$ reduction $\beta^5 \in \oppi_{50,10}\Smf/\tau$ is hit by a $d_{\leq 9}$-differential.
    As this class is a $d_{\leq 8}$-cycle for degree reasons, it must be hit by $d_9$.
    The only possibility is the desired $d_9(\al\Delta^2) = \pm \be^5$. All other $d_9$'s follow from the Leibniz rule.\footnote{As an alternative to the computation of $d_9$ above, one can apply the synthetic Leibniz rule to $\delta_4^8(\Delta) = \pm \alpha \be^2$ and use the extension $\al[\al\Delta] = \tau^4 \beta^3$ to compute
    \[\delta_4^8(\al\Delta^2) = \pm \alpha [\al\Delta] \beta^2 = \pm \tau^4 \beta^5.\]
    This directly shows $d_9(\al\Delta^2) = \pm \beta^5$.}
\end{proof}

This spectral sequence then collapses with a horizontal vanishing line at $s=8$. This yields the homotopy groups of $\Tmf$, which can be read off from \cref{ssprime3}.
In other words, we have proved \cref{dssatthree}.

\subsection{Computations away from 6}

\begin{theorem}\label{rationalcomputation}
    There is an isomorphism of bigraded $\Z[\tfrac{1}{6}, \tau]$-modules
    \[
        \oppi_{\ast,\ast}\Smf[\tfrac{1}{6}] \cong \Z[\tfrac{1}{6}, \tau][c_4,\Delta]\otimes E(c_6) \oplus \Z[\tfrac{1}{6}, \tau]\set*{ c_4^i c_6^j \Delta^k }_{\substack{i,k\leq -1 \\ 0\leq j\leq 1}}
    \]
    where $E(\blank)$ denotes exterior algebra and
    \[
        \abs{c_4^i c_6^j \Delta^k} = \begin{cases}
            \ (8i+12j+24k,\ 0) & \text{if }i \geq 0,\\
            \ (8i+12j+24k-1,\ 1) & \text{if }i\leq -1.
        \end{cases}
    \]
    Moreover, in nonnegative degrees, this is an isomorphism of rings.
\end{theorem}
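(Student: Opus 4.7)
The plan is to identify $\Mellbar[\tfrac{1}{6}]$ with the weighted projective stack $\mathcal{P}(4,6)_{\Z[1/6]}$, compute the sheaf cohomology of the line bundles $\omega^{\otimes *}$, and verify that the descent spectral sequence for $\Tmf[\tfrac{1}{6}]$ collapses for dimensional reasons. After inverting $6$, every generalised elliptic curve admits a unique Weierstrass presentation $y^2 = x^3 + Ax + B$, and the classical characteristic classes satisfy the relation $c_6^2 = c_4^3 - 1728\Delta$; this identifies $\Mellbar[\tfrac{1}{6}]$ with $\mathcal{P}(4,6)_{\Z[1/6]}$. A standard computation then gives
\[
    H^0\bigl(\Mellbar[\tfrac{1}{6}],\, \omega^{\otimes *}\bigr) \cong \Z[\tfrac{1}{6}][c_4, c_6, \Delta]/(c_6^2 - c_4^3 + 1728\Delta),
\]
while Serre duality, together with the fact that $\mathcal{P}(4,6)$ has cohomological dimension $1$, determines $H^1$ as the claimed negative Serre-dual summand and shows that $H^s = 0$ for $s \geq 2$. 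Both $H^0$ and $H^1$ are free as $\Z[\tfrac{1}{6}]$-modules on bases $\{c_4^i c_6^j \Delta^k\}$ with the appropriate sign constraints on $i, k$ and $0 \leq j \leq 1$, which accounts for the factor $E(c_6)$ interpreted as the free module on $\{1, c_6\}$.

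Having identified the $\uE_2$-page of the DSS for $\Tmf[\tfrac{1}{6}]$ with the claimed bigraded $\Z[\tfrac{1}{6}]$-module, I would observe that its collapse is automatic: every differential $d_r$ has bidegree $(-1, r)$ with $r \geq 2$, and its target lies in filtration $\geq 2$, which vanishes. To promote this collapse to a description of the full bigraded homotopy $\oppi_{*,*}\Smf[\tfrac{1}{6}]$, I would invoke the $\tau$-Bockstein spectral sequence of \cref{ssec:tau_BSS}: by \cref{lem:diff_in_BSS}, the collapse of the signature SS forces the collapse of the $\tau$-BSS, which converges strongly since $\Smf[\tfrac{1}{6}]$ is $\tau$-complete as a limit of $\tau$-complete synthetic spectra. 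For each fixed $(n,w)$, the associated graded $\bigoplus_{s \geq 0} \oppi_{n,\,w+s}\Smf[\tfrac{1}{6}]/\tau$ has at most two non-vanishing summands (those with $w+s \in \{0,1\}$), both free over $\Z[\tfrac{1}{6}]$, so no $\tau$-hidden extensions can occur and the stated isomorphism as a bigraded $\Z[\tfrac{1}{6},\tau]$-module follows.

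For the ring structure in nonnegative degrees, note that $c_4, c_6, \Delta$ all live in filtration $0$, where no higher-filtration classes can obstruct the lifts from $\oppi_{*,0}\Smf[\tfrac{1}{6}]/\tau$ to $\oppi_{*,0}\Smf[\tfrac{1}{6}]$; so the multiplicative structure $\Z[\tfrac{1}{6},\tau][c_4, c_6, \Delta]/(c_6^2 - c_4^3 + 1728\Delta)$ on nonnegative degrees carries through verbatim, matching the module presentation once $E(c_6)$ is read as a module on $\{1, c_6\}$. The main obstacle is really only the classical sheaf-cohomology computation for $\mathcal{P}(4,6)$; the collapse and lifting arguments are entirely formal from the sparsity of the $\uE_2$-page once that is in hand.
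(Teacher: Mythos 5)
Your argument is essentially the paper's: identify $\Mellbar[\tfrac{1}{6}]$ with the weighted projective stack $\mathbf{P}(4,6)$ over $\Z[\tfrac{1}{6}]$, observe that this stack has cohomological dimension $1$ so the descent spectral sequence is concentrated in filtrations $0$ and $1$ and collapses for degree reasons, and read off the bigraded homotopy; the paper is simply terser, citing Konter for the explicit sheaf cohomology and bundling the DSS step and the $\tau$-Bockstein step into a single trigraded synthetic DSS via \cite[Construction~3.1]{CDvN_part1}. One small reasoning slip to flag: the claim of ``at most two non-vanishing summands, both free over $\Z[\tfrac{1}{6}]$, so no $\tau$-hidden extensions'' is not by itself a valid deduction, since freeness over $\Z[\tfrac{1}{6}]$ alone would not preclude a nontrivial $\tau$-action linking two nonzero filtrations --- what actually saves the day is the checkerboard constraint that $n+s$ must be even for $\omega^{\otimes(n+s)/2}$ to be defined, forcing at most \emph{one} nonzero filtration $s\in\{0,1\}$ for each stem $n$, and it is that parity observation (rather than freeness) which makes the $\Z[\tfrac{1}{6},\tau]$-module assemble freely with no extension problems.
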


This $\Z[\tau]$-module structure on the synthetic homotopy groups shows that the signature spectral sequence of $\Smf[\tfrac{1}{6}]$ converging to $\Tmf[\tfrac{1}{6}]$ collapses on the $\uE_2$-page.

\begin{proof}
    By \cite[Construction~3.1]{CDvN_part1}, there is a synthetic DSS for this sheaf of synthetic $\E_\infty$-rings that takes the form
    \[\uE_2^{k,\ast,s} = \uH^s(\Mellbar[\tfrac{1}{6}],\, \omega^{k+s/2})[\tau] \implies \oppi_{k, \ast} \Smf[\tfrac{1}{6}].\]
    As $\Mellbar[\tfrac{1}{6}]$ is the weighted projective stack $\mathbf{P}(4,6)$, we see that in positive $s$-degree this spectral sequence is concentrated in filtration $0$ and in negative $s$-degree in filtration $1$; see \cite[Section~6]{konter_Tmf}. This spectral sequence then collapses and we obtain the desired result.
\end{proof}


\section{Proofs of main theorems}
\label{sec:proofs_main_theorems}

With all of the computations out of the way, we can now prove our main theorems and corollaries, including the Gap Theorem (\cref{thm:gap}) and a description of the DSS for $\Tmf$ (\cref{thm:dss}). 
This removes the circularity discussed in \cref{ssec:history} from the literature.
As corollaries, we also compute the homotopy groups of $\Tmf$ (\cref{thm:homotopyoftmf}), the ANSS for $\TMF$ (\cref{thm:sssTMF}), and the ANSS for $\tmf$ (\cref{thm:ansstmf}).
For the convenience of the reader, we repeat the statements of these results below.

As emphasised in the introduction, the computation of the DSS for $\Tmf$ should be seen as the fundamental computation towards the homotopy groups of $\Tmf$ and $\tmf$ --- there is no other path through the literature that is not circular.


We start with the Gap Theorem.

\thmgap*

\begin{proof}
    Using the usual fracture square for $\Tmf$, it suffices to prove the Gap Theorem for $\Tmf_{(p)}$ for each prime $p$. By \cref{rationalcomputation}, this holds for all primes $p\neq 2,3$. By \cref{dssatthree}, also see \cref{ssprime3}, this also holds at the prime $3$. At the prime $2$, we do not need the whole DSS for $\Tmf_{(2)}$; the Gap Theorem follows from \cref{prop:vanishingE24}, which only relies on computations in stems $n > -21$.
\end{proof}

Our proof of the Gap Theorem only requires the DSS for $\Tmf_{(2)}$ only in stems $n\geq -20$.
The additional computations of \cref{ssec:nonconnectiveregion} yield the entire DSS for $\Tmf$.

\thmdss*

\begin{proof}
By \cite[Proposition~4.16]{CDvN_part1}, the natural map of synthetic $\E_\infty$-rings
\[\Smf[J^{-1}] = \calO^\syn(\Mellbar)[J^{-1}] \simeqto \calO^\syn(\Mellbar\times \Spec \Z[J^{-1}])\]
is an equivalence for any set of primes $J$. In particular, we obtain the DSS for $\Tmf$ from the DSS for $\Tmf[\tfrac{1}{6}]$ of \cref{rationalcomputation}, which collapses, the DSS for $\Tmf_{(3)}$ of \cref{dssatthree}, and the DSS for $\Tmf_{(2)}$ of \cref{thm:dssattwo}.
\end{proof}

We deduce the following, which will be helpful for later results.

\begin{lemma}
    \label{lem:Delta24_lifts}
    \leavevmode
    \begin{itemize}
        \item The element $\Delta \in \oppi_{24,0}\Smf[\frac{1}{6}]/\tau$ lifts to $\oppi_{24,0}\Smf[\frac{1}{6}]$.
        \item The element $\Delta^3 \in \oppi_{72,0}\Smf_{(3)}/\tau$ lifts to $\oppi_{72,0}\Smf_{(3)}$.
        \item The element $\Delta^8 \in \oppi_{192,0}\Smf_{(2)}/\tau$ lifts to $\oppi_{192,0}\Smf_{(2)}$.
        \item The element $\Delta^{24} \in \oppi_{576,0}\Smf/\tau$ lifts to $\oppi_{576,0}\Smf$.
    \end{itemize}
\end{lemma}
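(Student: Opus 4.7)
The plan is to apply the non-truncated Omnibus Theorem (\cref{prop:non_truncated_omnibus}) to each of the four statements. For this one needs $\Smf$ and its localisations to be $\tau$-complete with strongly convergent signature spectral sequences; both properties follow from the construction of these synthetic $\E_\infty$-rings as global sections of a synthetic sheaf on the cohomologically bounded stack $\Mellbar$, together with the compatibility of $\sigma$ with limits (\cref{prop:sigma_and_pistar}). Granting this, each claim reduces to exhibiting the relevant power of $\Delta$ as a permanent cycle in the corresponding DSS.

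The first three statements are immediate from computations already in place. Away from $6$, \cref{rationalcomputation} shows that the DSS for $\Tmf[\tfrac{1}{6}]$ collapses on the $\uE_2$-page, so $\Delta$ is automatically a permanent cycle. At the prime $3$, the only atomic differentials in the DSS for $\Tmf_{(3)}$ are $d_5(\Delta) = \pm \alpha\beta^2$ and $d_9(\alpha\Delta^2) = \pm \beta^5$ by \cref{dssatthree}; the Leibniz rule together with $3=0$ gives $d_5(\Delta^3) = 0$, and a direct inspection of \cref{ssprime3} shows that the target bidegree $(71,9)$ contains no $d_9$-cycles surviving to $\uE_9$, while the vanishing line at $s \leq 8$ on $\uE_{10}$ rules out all higher differentials. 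Hence $\Delta^3$ is a permanent cycle. At the prime $2$, \cref{prop:vanishingE24} states precisely that $\Delta^8$ is a permanent cycle.

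For the final statement I would argue prime by prime. In each bidegree the $\uE_2$-page of the DSS for $\Tmf$ is a finitely generated abelian group (since $\Mellbar$ is a Deligne--Mumford stack of finite type over $\Spec \Z$), and hence embeds into the direct sum of its localisations at $2$, at $3$, and away from $6$; by induction on $r$ the same embedding persists on every $\uE_r$-page. Consequently, a class is a permanent cycle if and only if its image in each of the three localised DSSs is a permanent cycle. Writing $\Delta^{24} = (\Delta^8)^3$ at the prime $2$ and $\Delta^{24} = (\Delta^3)^8$ at the prime $3$, and invoking the fact that products of permanent cycles are permanent cycles, the previous three cases combine to show that $\Delta^{24}$ is a permanent cycle in the DSS for $\Tmf$. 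The main subtlety I anticipate is justifying the prime-by-prime reduction cleanly on every $\uE_r$-page; beyond that, everything follows directly from the Omnibus Theorem and the Leibniz rule.
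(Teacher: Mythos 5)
Your overall structure matches the paper's: apply the non‐truncated Omnibus Theorem, reduce to showing the indicated powers of $\Delta$ are permanent cycles, cite the $6$‐inverted, $3$‐local, and $2$‐local computations, then combine for $\Delta^{24}$. Your more explicit spelling out of the prime-by-prime reduction for the last bullet (finite generation of the $\uE_2$-page in each bidegree, embedding into localisations persisting on every $\uE_r$-page, and $\Delta^{24} = (\Delta^8)^3 = (\Delta^3)^8$) is fine and is what the paper's terse ``their lowest common multiple is $\Delta^{24}$'' is implicitly invoking. The explicit check that $\Delta^3$ is a permanent cycle at $p=3$ via $3\cdot\alpha\beta^2 = 0$, the chart in bidegree $(71,9)$, and the $s\le 8$ vanishing line on $\uE_{10}$ is also correct.

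There is, however, a genuine gap in your justification of strong convergence. You assert that $\tau$-completeness and strong convergence ``follow from the construction \dots\ on the cohomologically bounded stack $\Mellbar$.'' But $\Mellbar$ is emphatically \emph{not} cohomologically bounded at the primes $2$ and $3$: the $\uE_2$-page of the DSS contains infinite $\kappabar$- and $\beta$-towers, so $\uH^s(\Mellbar,\omega^\ast)$ is nonzero for arbitrarily large $s$ after $2$- or $3$-localisation. (This is exactly why the Gap Theorem is a nontrivial statement.) Unbounded $\uE_2$-pages do not automatically yield strong convergence from conditional convergence. The correct argument, which the paper gives in a footnote, is \emph{a posteriori}: the explicit computations of \cref{sec:prime2,sec:computations_away_from_2} show that the $\uE_\infty$-page has a horizontal vanishing line at each prime, and a vanishing line on the $\uE_\infty$-page upgrades conditional convergence to strong convergence. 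The $\tau$-completeness (i.e.\ conditional convergence) part of your claim is fine and does follow from the sheaf construction together with $\sigma$ preserving limits, but the boundedness claim you use for strong convergence is false and must be replaced with the vanishing-line argument.
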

\begin{proof}
Using \cref{prop:non_truncated_omnibus}, it suffices to show that the indicated powers of $\Delta$ are permanent cycles in the localised DSS for $\Tmf$.\footnote{Our computations of the $\uE_\infty$-pages for the $6$-invertible, $3$-primary and $2$-primary cases in particular shows that the conditional convergence of the DSS for $\Tmf$ is in fact strong, so we can indeed apply this proposition.}
This now follows from the computations of \cref{sec:prime2,sec:computations_away_from_2}.
Indeed, when $6$ is inverted, the class $\Delta$ is a permanent cycle; $3$-locally, the power $\Delta^3$ is a permanent cycle; $2$-locally, the power $\Delta^8$ is a permanent cycle.
Their lowest common multiple is $\Delta^{24}$, proving the final claim.
\end{proof}

With the whole of the DSS for $\Tmf$ at hand, we have almost computed the homotopy groups of $\Tmf$; all that is left is to compute some extension problems. Most of these extension problems follow from rudimentary algebraic arguments given our computations so far.
The lone exception is a $2$-extension in degree $110$, which instead follows from our knowledge of total differentials.
This method is used in, e.g., \cite[Proposition~A.20]{burklund_hahn_senger_manifolds}, and is further explained in \cite[Method~2.17]{isaksen_etal_motivic_ANSS_tmf}.
The application of this method to this hidden extension is due to \cite[Proposition 4.5]{isaksen_etal_motivic_ANSS_tmf} (using a slight modification to the method stated in Method~2.17 of op.\ cit.).

\htpytmf*

\begin{proof}
Away from $6$, these homotopy groups are obtained from \cref{rationalcomputation} by inverting~$\tau$.
Localised at the prime $3$, they follow immediately from \cref{dssatthree} as there are no extension problems.
At the prime $2$, we use the DSS for \cref{thm:dssattwo}, but there are some extension problems to solve.

First, let us deal with the positive stems.
The $2$-extensions which follow from \cref{lem:4nu_tau2_eta3} in $\Smf_{(2)}/\tau^4$ are clear, so we ignore these.
There can be no extensions between the $\ko$-patterns, indicated by solid diamonds on the $0$-line of \cref{efiveprime2_part1,efiveprime2_part2,efiveprime2_part3,efiveprime2_part4}; this can be checked on a case by case basis. 
Indeed, there cannot be any $2$-extensions with source in filtration $0$ for algebraic reasons, as these sources are torsion-free. All other parts of the $\ko$-patterns are divisible by $\eta$, which cannot support multiplication by $2$.
Similar arguments discount many other potential extensions: there cannot be a $2$-extension in stem $65$ from filtration $3$ to $9$ as the source is divisible by the $2$-torsion class $\kappa$.
Using the lift of $\Delta^8$ from \cref{lem:Delta24_lifts}, the above arguments reduce us to verifying $2$-extensions in the following stems:
\[54,\quad 110,\quad 130,\quad 150.\]
The extension in the $110$-stem implies the one in the $130$-stem by $\kappabar$-multiplication.
In the $54$-stem, this extension is precisely \cref{prop:nu2_times_2_nu_hidden_extension}.
This also gives the lower half of the extensions in the $150$-stem, and the upper half follows from the ones in the $130$-stem by $\kappabar$-multiplication.
We are reduced to the extension in degree $110$.
In this case, consider the exact sequence

\adjustbox{scale=0.93,center}{
   \begin{tikzcd}
       {\oppi_{*,*}\Smf_{(2)}/\tau^{28}} \rar & {\oppi_{*,*} \Smf_{(2)}/\tau^4} \rar["\delta_4^{28}"] & {\oppi_{*-1,\,*+4} \Smf_{(2)}/\tau^{28}} \rar["\tau^4"] & {\oppi_{*-1,\, *} \Smf_{(2)}/\tau^{28}}.
   \end{tikzcd}}

We will show that there is an equality
\begin{equation}\label{twoextensioninluine}
    2 \,\tau^8 \,\kappabar^3\, \kappa_4 = \tau^{20}\,  \eta_1^2 \, \kappabar^6 \qquad \text{in } \oppi_{170, 6} \Smf_{(2)}/\tau^{28},
\end{equation}
where all of the elements displayed are their unique lifts to $\Smf/\tau^{28}$; from this extension in the $170$-stem, our desired extension in $\Tmf_{(2)}$ in the $110$-stem follows by $\kappabar^3$-divisibility and from the fact that the DSS for $\Tmf_{(2)}$ collapses on the $\uE_{26}$-page.
The differentials
\[
    d_{13}(2 h_2\Delta^7) = dg^3 \Delta^4 \qquad \text{and}\qquad d_{23}(h_1^3\Delta^7) = h_1^2g^6\Delta^2
\]
of \cref{ssec:page13,ssec:page23}, respectively, show by \cref{thm:omnibus}\,(3) that $\tau^8 \,\kappa_4 \,\kappabar$ and $\tau^{20} \,\eta_1^2\,\kappabar^6$ are both $\tau^4$-torsion (as they are the unique lift of the respective $\uE_2$-elements).
One then uses the exact sequence above to compute
\[
    \delta_4^{28}(2 \nu \Delta^7) = \tau^8 \,\kappa_4 \,\kappabar \qquad \text{and} \qquad \delta_4^{28}(\tau^2 \eta^3 \Delta^7) = \tau^{20} \, \eta_1^2\, \kappabar^6.
\]
In $\Smf/\tau^4$ we have $\tau^2 \eta^3 = 4\nu$ courtesy of the mod $\tau^4$ reduction of \cref{lem:4nu_tau2_eta3}, which also yields $4 \nu\Delta^7 = \tau^2 \eta^3 \Delta^7$.
Combining what we have so far yields the desired equality \eqref{twoextensioninluine}:
\[
    2 \tau^8 \,\kappabar^3\, \kappa_4 = \delta_4^{28}(4 \nu \Delta^7) = \delta_4^{28}(\tau^2 \eta^3 \Delta^7) = \tau^{20}\, \eta_1^2 \, \kappabar^5.
\]

We are reduced to extension problems in negative degrees. Again, there are no extension problems away from $2$, so we are reduced to $\Tmf_{(2)}$. In this case, all of the extension problems in negative degrees follow from their counterparts in positive degrees by $\Delta^{8t}$-multiplication for large enough $t$.
\end{proof}

\begin{remark}\label{rmk:moreexts}
    The methods used to deduce the $2$-extension in stem $110$ from the proof of \cref{thm:homotopyoftmf} above, adopted the argument from \cite[Method~2.17, Proposition~4.5]{isaksen_etal_motivic_ANSS_tmf}, generalise to capture many hidden extensions in the DSS for $\Tmf$. For example, consider the total differentials
    \[\delta_4^{28}(\eta\Delta^5) = \tau^{18}\kappabar^6 \qquad \text{and} \qquad \delta_4^{28}(\Delta^5) = u \nu_4\kappabar\]
    in $\Smf_{(2)}/\tau^{28}$, where $u$ is a unit of $\Z/8$; these can be computed from the computations of \cref{sec:prime2}.
    From this one obtains
    \[\eta \nu_4 \kappabar = \eta \cdot \delta_4^{28}(\Delta^5) = \delta_4^{28}(\eta \Delta^5) = \tau^{18} \kappabar^6,\]
    and hence the hidden $\eta$-extension
    \[\eta \nu_4 = \kappabar^5 \qquad \text{in } \oppi_{100} \Tmf_{(2)}\]
    of \cite[Corollary~8.7\,(2)]{bauer_tmf}.
    These arguments completely avoid the use of six-fold Toda brackets seen in \cite{bauer_tmf}.
\end{remark}

Using the Gap Theorem, Mathew computed the Hopf algebroid computing the $\uE_2$-page of the ANSS of $\tmf$, which Bauer then used to compute the $\uE_2$-page.
We will now show how this, combined with our computation of the DSS for $\Tmf$, computes the ANSS of $\tmf$.
This recovers Bauer's differentials of \cite{bauer_tmf} without any circularity issues.

\ansstmf*

\begin{proof}
    By \cite[Corollary~5.3]{mathew_homology_tmf}, which in turn relies on the Gap Theorem, the $\uE_2$-page of ANSS for $\tmf$ is isomorphic to the cohomology of the cubic Hopf algebroid of \cref{e2pagesection}.
    By \cref{etwopagebracketsprimethree}, this means that the composition of natural maps of synthetic $\E_\infty$-rings
    \[\opnu \tmf \to \opnu \Tmf \to \Smf\]
    induces an isomorphism on $\oppi_{n,s}(C\tau\otimes \blank)$ for $5s \leq n+12$, and in general is a retract of bigraded abelian groups. At $p=2$, the atomic $d_3$ in the signature spectral sequence for $\Smf$ lifts uniquely to the signature spectral sequence for $\opnu\tmf$. This propagates to all other $d_3$'s using the Leibniz rule. From the $\uE_5$-page on, there are no differentials whose source is in the connective region and whose target is outside the connective region (i.e., there are no line-crossing differentials \`{a} la \cref{sssec:linecrossing}). Therefore, we can safely import the differentials in the connective region of the signature spectral sequence of $\Smf_{(2)}$ to differentials in the signature spectral sequence of $\opnu\tmf_{(2)}$.
    
    The same is true at $p=3$, i.e., there are no line-crossing differentials. Away from $6$ this is tautological as $\tmf[\tfrac{1}{6}]$ is complex-oriented. 
    It follows that the map $\opnu \tmf\to \Smf$ induces a retract of spectral sequences.
\end{proof}

The DSS for $\TMF$ follows easily from \cref{thm:dss} by inverting an appropriate power of~$\Delta$.
In \cite[Corollary~4.17]{CDvN_part1}, we showed that if $\Delta^{24}\in \oppi_{576,0}\Smf/\tau$ lifts in the manner of \cref{lem:Delta24_lifts}, then the natural map of synthetic $\E_\infty$-rings 
\begin{equation}\label{eq:invertingdelta}
    \Smf[\Delta^{-24}] \simeqto \SMF
\end{equation}
is an equivalence.
Using this, the computation of the DSS for $\TMF$ now follows.

\sssTMF*

\begin{proof}
    By \cite[Theorem~C]{CDvN_part1}, the signature spectral sequence of $\SMF$ is the DSS for $\TMF$.
    As tensoring with $C\tau$ preserves colimits, the map \eqref{eq:invertingdelta} therefore identifies the $\uE_2$-page of the DSS for $\TMF$ with the $\Delta$-inversion of the $\uE_2$-page of the DSS for $\Tmf$.
    The $\Z[\Delta^{\pm}]$-module $\oppi_{\ast,\ast}\SMF/\tau$ is generated by the image of the connective region in $\oppi_{\ast,\ast}\Smf/\tau$.
    This means that by inverting $\Delta$ on the DSS for $\Tmf$, we obtain all of the differentials in the DSS for $\TMF$.
    The ANSS for $\TMF$ then follows from the equivalence of synthetic $\E_\infty$-rings $\opnu \TMF \simeq \SMF$ of \cite[Theorem~C]{CDvN_part1}, as $\opnu \TMF$ implements the ANSS for $\TMF$; see \cite[Proposition~1.25]{CDvN_part1}.
\end{proof}

\begin{remark}
    One can also directly compute the DSS for $\TMF$ using the techniques of this article; simply replace the meta-arguments with the fact that at the prime $2$ the class $\Delta^8$ acts (resp.\ at the prime $3$ the class $\Delta^3$ acts) by isomorphisms on each $E_r$-page, a fact which is inductively determined page-by-page. We leave the details up to an interested reader.
\end{remark}


\appendix
\section{Tables and charts}
\label{theappendix}


\subsection{Tables}
\label{ssec:tables}

\Cref{tab:2_sphere_import,tab:2_lifts,tab:2_tau_power_torsion,tab:2_hidden_ext,tab:2_total_diffs,tab:2_toda_brackets} collect the lifts, hidden extensions, Toda brackets, and total differentials proved in the 2-primary computation of \cref{sec:prime2}.
Every entry in the table is accompanied with the location where the element is defined or the relation is proved.

In these tables, the term \emph{validity} refers to the number $k$ for which the element or relation lives in $\Smf/\tau^k$, where validity $\infty$ means it lives in $\Smf$.
However, we only list the validity that we prove (and that we require); listing a finite validity does not necessarily mean that it does not lift further.
For total differentials, the validity is a pair of numbers $(n,N)$; this refers to the total differential $\delta_n^N$.
We omit the unknown units in the formulas for the total differentials, and refer to the location where the differential is proved for the expression with units included.
For the Toda brackets, if no indeterminacy is listed, this means it is zero.

\begin{table}[H]
    \centering
    \caption{Elements imported from the sphere to $\Smf_{(2)}$.}
    \label{tab:2_sphere_import}
    \vspace{1em}
\begin{tabular}{@{}lcccc@{}}
    \toprule
    Name & Degree & Detected by & Location & Comment\\
    \midrule
    $\eta$ & $(1,1)$ & $h_1$ & \ref{heightonedetection}, \ref{not:2_import_from_sphere} & \\
    $\nu$ & $(3,1)$ & $h_2$ & \ref{heightonedetection}, \ref{not:2_import_from_sphere} & \\
    $\eps$ & $(8,2)$ & $c$ & \ref{heightonedetection}, \ref{not:2_import_from_sphere} & \\
    $\kappa$ & $(14,2)$ & $d$ & \ref{prop:kappa_detection}, \ref{not:2_import_from_sphere} & \\
    $\kappabar$ & $(20,4)$ & $g$ & \ref{not:kappabar_and_g}, \ref{not:2_import_from_sphere} & Determined up to $\nu^2\kappa$-multiples\\
    \bottomrule
\end{tabular}
\end{table}

\begin{table}[H]
    \centering
    \caption{Lifts of elements from $\Smf_{(2)}/\tau$ to $\Smf_{(2)}/\tau^k$.}
    \label{tab:2_lifts}
    \vspace{1em}
\begin{tabular}{@{}lcccc@{}}
    \toprule
    Name & Degree & Validity & Lift of & Location \\
    \midrule
    $\Delta$ & $(24,0)$ & 4 & $\Delta$ in $\Smf/\tau$ & \ref{not:Delta_lift} \\
    
    $\eta_1$ & $(25,1)$ & 8 & $h_1\Delta$ in $\Smf/\tau$ & \ref{not:lift_eta1_eta_4_eta5} \\
    & & 10 & & \ref{not:lift_eta1_eta5_validity_10} \\
    $\eta_4$ & $(97,1)$ & 8 & $h_1\Delta^4$ in $\Smf/\tau$ & \ref{not:lift_eta1_eta_4_eta5} \\
    $\eta_5$ & $(121,1)$ & 8 & $h_1\Delta^5$ in $\Smf/\tau$ & \ref{not:lift_eta1_eta_4_eta5}\\
    & & 10 & & \ref{not:lift_eta1_eta5_validity_10} \\
    
    $\nu_1$ & $(27,1)$ & 14 & $2\nu\Delta$ in $\Smf/\tau^4$ & \ref{not:lift_nu1_nu2_nu4_nu5_nu6} \\
    $\nu_2$ & $(51,1)$ & 14 & $\nu\Delta^2$ in $\Smf/\tau^4$ & \ref{not:lift_nu1_nu2_nu4_nu5_nu6}\\
    $\nu_4$ & $(99,1)$ & 14 & $\nu\Delta^4$ in $\Smf/\tau^4$ & \ref{not:lift_nu1_nu2_nu4_nu5_nu6}\\
    $\nu_5$ & $(123,1)$ & 14 & $2\nu\Delta^5$ in $\Smf/\tau^4$ & \ref{not:lift_nu1_nu2_nu4_nu5_nu6}\\
    $\nu_6$ & $(147,1)$ & 14 & $\nu\Delta^6$ in $\Smf/\tau^4$ & \ref{not:lift_nu1_nu2_nu4_nu5_nu6}\\
    
    $\eps_1$ & $(32,2)$ & 12 & $c\Delta$ in $\Smf/\tau$ & \ref{not:lifts_eps1_eps_4_eps5}\\
    $\eps_4$ & $(104,2)$ & 12 & $c\Delta^4$ in $\Smf/\tau$ & \ref{not:lifts_eps1_eps_4_eps5}\\
    $\eps_5$ & $(128,2)$ & 12 & $c\Delta^5$ in $\Smf/\tau$ & \ref{not:lifts_eps1_eps_4_eps5}\\
    
    $\kappabar_4$ & $(116,4)$ & 20 & $2g\Delta^4$ in $\Smf/\tau$ & \ref{not:kappabar4_lift}\\
    \bottomrule
\end{tabular}
\end{table}

\begin{table}[H]
    \centering
    \caption{A few $\tau$-power torsion relations in $\oppi_{*,*}\Smf_{(2)}/\tau^k$.}
    \label{tab:2_tau_power_torsion}
    \vspace{1em}
\begin{tabular}{@{}lccc@{}}
    \toprule
    Relation & Degree & Validity & Location \\
    \midrule
    $\tau^4 \nu \kappabar=0$ & $(23,1)$ & 12 & \ref{lem:tau4_nu_kappabar_is_zero}\\
    & & 24 & \ref{lem:tau4_nu_kappabar_is_zeroweiter}\\
    $\tau^4 \nu_1\kappabar = 0$ & $(47,1)$ & 12 & \ref{cor:tau4_nu1_kappabar_is_zero}\\
    $\tau^4 \nu_4 \kappabar=0$ & $(119,1)$ & 12 & \ref{lem:tau4_nu_kappabar_is_zero}\\
    $\tau^4 \nu_5\kappabar = 0$ & $(143,1)$ & 12 & \ref{cor:tau4_nu1_kappabar_is_zero}\\
    \bottomrule
\end{tabular}
\end{table}


\begin{table}[H]
    \centering
    \caption{Relations and hidden relations in $\oppi_{*,*}\Smf_{(2)}/\tau^k$.}
    \label{tab:2_hidden_ext}
    \vspace{1em}
\begin{tabular}{@{}lccc@{}}
    \toprule
    Relation & Degree & Validity & Location \\
    \midrule
    $4\nu = \tau^2 \eta^3$ & $(3,1)$ & 14 & \ref{lem:4nu_tau2_eta3}\\
    $2\nu_1=\tau^2 \eta^2\eta_1$ & $(27,1)$ & 8 & \ref{lem:2_times_nu1}\\
    $2\nu_5=\tau^2 \eta^2\eta_5$ & $(123,1)$ & 8 & \ref{lem:2_times_nu1}\\

    $\nu_1 \eta = \tau^4 \eps \kappabar$ & $(28,2)$ & 12 & \ref{lem:hidden_extensions_nu1_nu2}\\
    $\nu_1 \eps = \tau^4 \eta \kappa \kappabar$ & $(35,3)$ & 12 & \ref{lem:hidden_extensions_nu1_nu2}\\

    $\nu_1 \kappa = \tau^6 \eta \kappabar^2$ & $(41,3)$ & 12 & \ref{lem:hidden_extensions_nu1_nu2} \\
    $\nu_2 \eta = \tau^4 \eps_1 \kappabar$ & $(52,2)$ & 12 & \ref{lem:hidden_extensions_nu1_nu2} \\
    $\nu_2 \eps = \tau^4 \eta_1\kappa \kappabar$ & $(59,3)$ & 10 & \ref{lem:hidden_extensions_nu1_nu2} \\
    $\nu_5 \eta = \tau^4 \eps_4 \kappabar$ & $(124,2)$ & 12 & \ref{lem:hidden_extensions_nu1_nu2} \\
    $\nu_5 \eps = \tau^4 \eta_4 \kappa \kappabar$ & $(131,3)$ & 12 & \ref{lem:hidden_extensions_nu1_nu2} \\
    $\nu_5 \kappa = \tau^6 \eta_4 \kappabar^2$ & $(137,3)$ & 12 & \ref{lem:hidden_extensions_nu1_nu2} \\
    $\nu_6 \eta = \tau^4 \eps_5 \kappabar$ & $(148,2)$ & 12 & \ref{lem:hidden_extensions_nu1_nu2} \\
    $\nu_6 \eps = \tau^4 \eta_5\kappa \kappabar$ & $(155,3)$ & 10 & \ref{lem:hidden_extensions_nu1_nu2} \\

    $\nu_2\cdot 2\nu = \tau^8 \kappabar^2 \widetilde{d}$ & $(54,2)$ & 10 & \ref{prop:nu2_times_2_nu_hidden_extension}\\
    $\nu_6\cdot 2\nu = \tau^8 \kappabar^2 \widetilde{d}_4$ & $(150,2)$ & 10 & \ref{prop:nu2_times_2_nu_hidden_extension}\\
    \bottomrule
\end{tabular}
\end{table}


\begin{table}[H]
    \centering
    \caption{Total differentials on $\Smf_{(2)}/\tau$.}
    \label{tab:2_total_diffs}
    \vspace{1em}
\begin{tabular}{@{}lcccc@{}}
    \toprule
    Source & Source degree & Target & Validity & Location\\
    \midrule
    $\Delta$ & $(24,0)$ & $\nu\kappabar$ & $(4,14)$ & \ref{lem:total_differential_14_Delta} \\
    $\Delta^2$ & $(48,0)$ & $\nu_1\kappabar$ & $(4,14)$ & \ref{prop:total_diff_14_Delta2} \\
    $\Delta^3$ & $(72,0)$ & $3\nu_2\kappabar$ & $(4,14)$ & \ref{prop:total_diff_14_Delta2} \\
    $\Delta^6$ & $(144,0)$ & $3\nu_5\kappabar$ & $(4,14)$ & \ref{prop:total_diff_14_Delta2} \\
    $\Delta^7$ & $(168,0)$ & $7\nu_6\kappabar$ & $(4,14)$ & \ref{prop:total_diff_14_Delta2} \\
    \bottomrule
\end{tabular}
\end{table}


\begin{table}[H]
    \centering
    \caption{Toda brackets in $\Smf_{(2)}/\tau^k$.}
    \label{tab:2_toda_brackets}
    \vspace{1em}
\begin{tabular}{@{}lccccc@{}}
    \toprule
    Name & Degree & Toda bracket & Indeterminacy & Validity & Location \\
    \midrule

    $\eta_1$ & $(25,1)$ & $\angbr{\kappabar,\tau^4\nu,\eta}$ & $\kappa$-torsion classes & 10 & \ref{lem:toda_brackets_etai}\\
    $\eta_5$ & $(121,1)$ & $\angbr{\kappabar,\tau^4\nu_4,\eta}$ & $\kappa$-torsion classes & 10 & \ref{lem:toda_brackets_etai}\\

    $\nu_1$ & $(27,1)$ & $\angbr{\kappabar,\tau^4\nu,2\nu}$ & & 12 & \ref{lem:toda_brackets_nu1_nu2}\\
    
    $\nu_2$ & $(51,1)$ & $\angbr{\kappabar,\tau^4\nu_1,\nu}$ & & 12 & \ref{lem:toda_brackets_nu1_nu2}\\
     & & $\angbr{\nu,\tau^4 2\nu,\tau^4\nu, \kappabar^2}$ & & 10 & \ref{prop:nu2_fourfold_carrick_bracket}\\
    
    $\nu_5$ & $(123,1)$ & $\angbr{\kappabar,\tau^4\nu,2\nu_4}$ & & 12 & \ref{lem:toda_brackets_nu1_nu2}\\
    $\nu_6$ & $(147,1)$ & $\angbr{\kappabar,\tau^4\nu_1,\nu_4}$ & & 12 & \ref{lem:toda_brackets_nu1_nu2}\\
     &  & $\angbr{\nu_4,\tau^4 2\nu,\tau^4\nu, \kappabar^2}$ & & 10 & \ref{prop:nu2_fourfold_carrick_bracket}\\

    $\eps_1$ & $(32,2)$ & $\angbr{\nu_1,\nu,\eta}$ & & 12 & \ref{not:lifts_eps1_eps_4_eps5}\\
    $\eps_4$ & $(104,2)$ & $\angbr{\nu,2\nu_4,\eta}$ & & 12 & \ref{not:lifts_eps1_eps_4_eps5}\\
    $\eps_4$ & $(128,2)$ & $\angbr{\nu_1,\nu_4,\eta}$ & & 12 & \ref{not:lifts_eps1_eps_4_eps5}\\
    
    $\eta_1\kappa$ & $(39,3)$ & $\angbr{\nu_1, \nu, \eps}$ & & 10 & \ref{lem:toda_brackets_nu1_nu2}\\
    $\eta_4\kappa$ & $(111,3)$ & $\angbr{\nu, 2\nu_4, \eps}$ & & 12 & \ref{lem:toda_brackets_nu1_nu2}\\
    $\eta_5\kappa$ & $(135,3)$ & $\angbr{\nu_1, \nu_4, \eps}$ & & 10 & \ref{lem:toda_brackets_nu1_nu2}\\
    $\tau^2\eta_4\kappabar$ & $(117,3)$ & $\angbr{\nu, 2\nu_4, \kappa}$ & & 12 & \ref{lem:toda_brackets_nu1_nu2}\\
    
    $\pm 2\kappabar$ & $(20,4)$ & $\angbr{\nu, \eta, \eta\kappa}$ & not discussed & 10 & \ref{lem:toda_brackets_nu1_nu2}\\
    $\tau^2\kappabar$ & $(20,2)$ & $\angbr{\kappa,2,\eta,\nu}$ & not discussed & $\infty$ & \ref{cor:todabracketforkappabar}\\
    $\pm \kappabar_4$ & $(116,4)$ & $\angbr{\nu_4,\eta,\eta\kappa}$ & not discussed & 10 & \ref{lem:toda_brackets_etai}\\
    \bottomrule
\end{tabular}
\end{table}


\begin{landscape}

\subsection{Descent spectral sequence charts}\label{ssec:ss}

Here we display the descent spectral sequences for $\Tmf_{(3)}$ and $\Tmf_{(2)}$. We use the following conventions.

\begin{itemize}
    \item Black arrows are differentials. Black lines are multiplication by either $\al$ or $\be$ for $\Tmf_{(3)}$ and either $\eta$ or $\nu$ for $\Tmf_{(2)}$. Under the blue line is the \emph{connective region} (\cref{def:regions}) and right of the orange line is the \emph{$S$-region} (\cref{not:S_region}).
    \item Red lines indicate hidden extensions by $\al$ for $\Tmf_{(3)}$ and by $\eta, \nu, \eps$, or $\kappa$ for $\Tmf_{(2)}$. We only include those hidden extensions needed in this article.
    \item Hollow squares refer to $\Z_{(p)}$ and hollow circles to $\F_p$. For $\Tmf_{(2)}$, two enclosed circles represents $\Z/4$ and three enclosed circles represents $\Z/8$.
    Two symbols in the same bidegree represents their sum.
\end{itemize}

These charts correct some small oversights in those of \cite{konter_Tmf}. We would like to mention in particular the key $d_{23}$-differential of \cref{prop:diff23} is missing from Konter's chart of the DSS for $\Tmf_{(2)}$.

\begin{figure}[ht]
    \centering
    \makebox[\textwidth]{\includegraphics[trim={2.5cm 11cm 2.5cm 3cm},clip,page = 5, scale = 1]{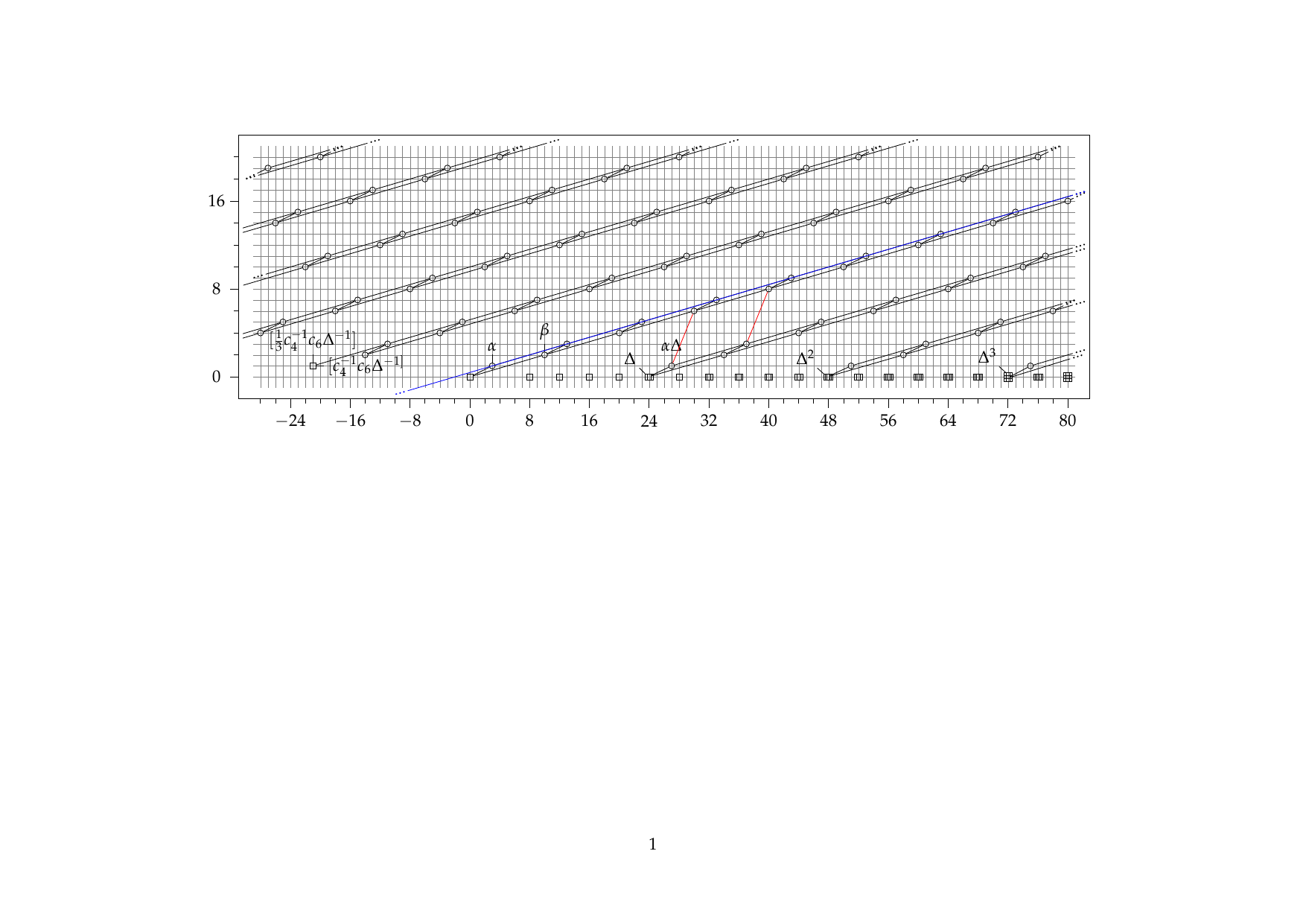}}
    \caption{The DSS for $\Tmf_{(3)}$. The lines represent multiplication by either $\al$ or $\beta$.}
    \label{ssprime3}
\end{figure}







\begin{figure}[ht]
    \centering
    \makebox[\textwidth]{\includegraphics[trim={2.5cm 10cm 2.5cm 4cm},clip,page = 1, scale = 0.75]{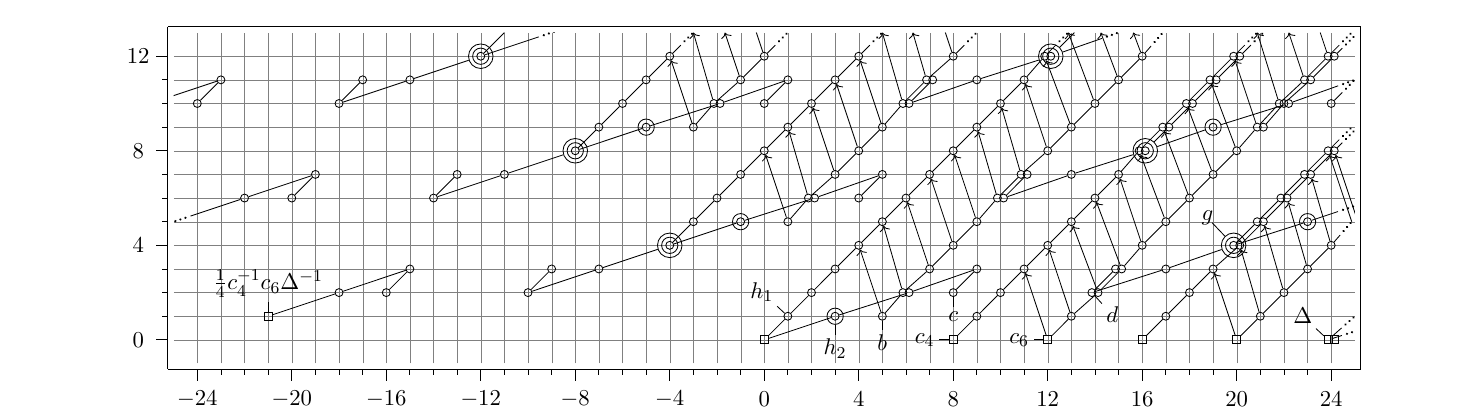}}
    \caption{$\uE_3$-page of the DSS for $\Tmf_{(2)}$.}
    \label{etwoprime2}
\end{figure}

\begin{figure}[ht]
    \centering
    \makebox[\textwidth]{\includegraphics[trim={2.5cm 5cm 2.5cm 4cm},clip,page = 2, scale = 0.9]{Sseqs/SS2.pdf}}
    \caption{The DSS for $\Tmf_{(2)}$ from the $\uE_5$-page in the range $-24\leq n \leq 40$.}
    \label{efiveprime2_part1}
\end{figure}

\begin{figure}[ht]
    \centering
    \makebox[\textwidth]{\includegraphics[trim={2.5cm 5cm 2.5cm 4cm},clip,page = 3, scale = 0.9]{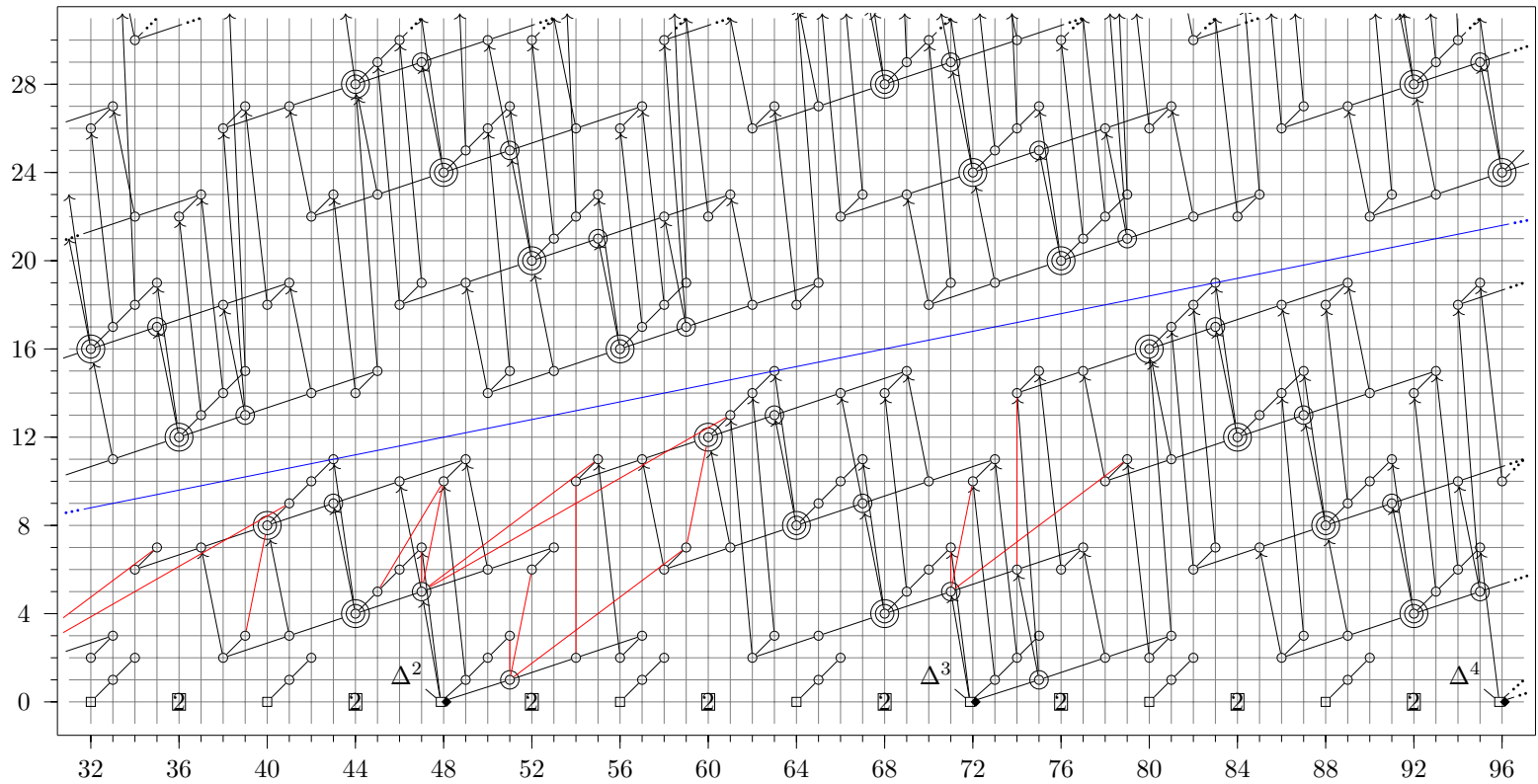}}
    \caption{The DSS for $\Tmf_{(2)}$ from the $\uE_5$-page in the range $32\leq n \leq 96$.}
    \label{efiveprime2_part2}
\end{figure}

\begin{figure}[ht]
    \centering
    \makebox[\textwidth]{\includegraphics[trim={2.5cm 5cm 2.5cm 4cm},clip,page = 4, scale = 0.9]{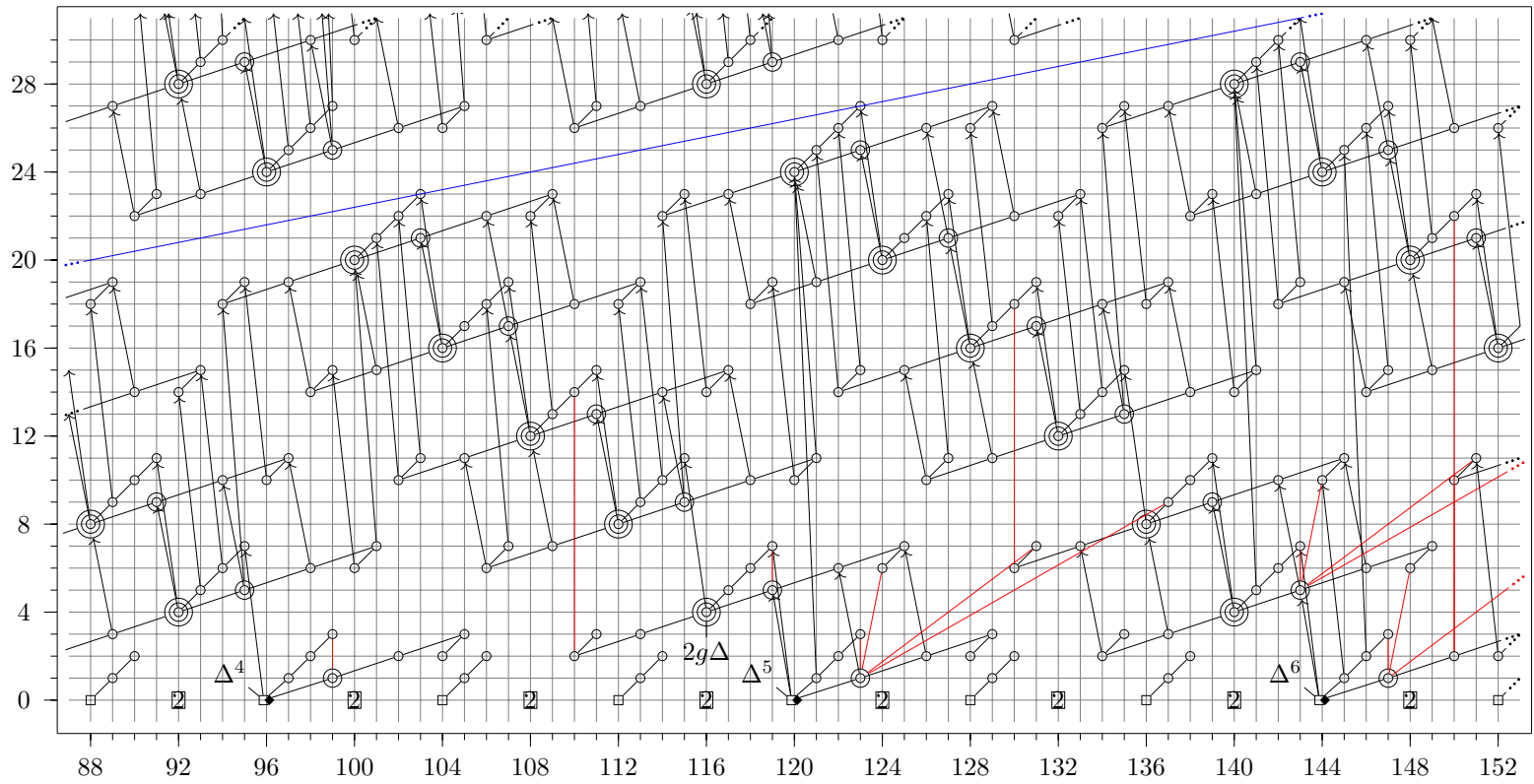}}
    \caption{The DSS for $\Tmf_{(2)}$ from the $\uE_5$-page in the range $88\leq n \leq 152$.}
    \label{efiveprime2_part3}
\end{figure}

\begin{figure}[ht]
    \centering
    \makebox[\textwidth]{\includegraphics[trim={2.5cm 5cm 2.5cm 4cm},clip,page = 5, scale = 0.9]{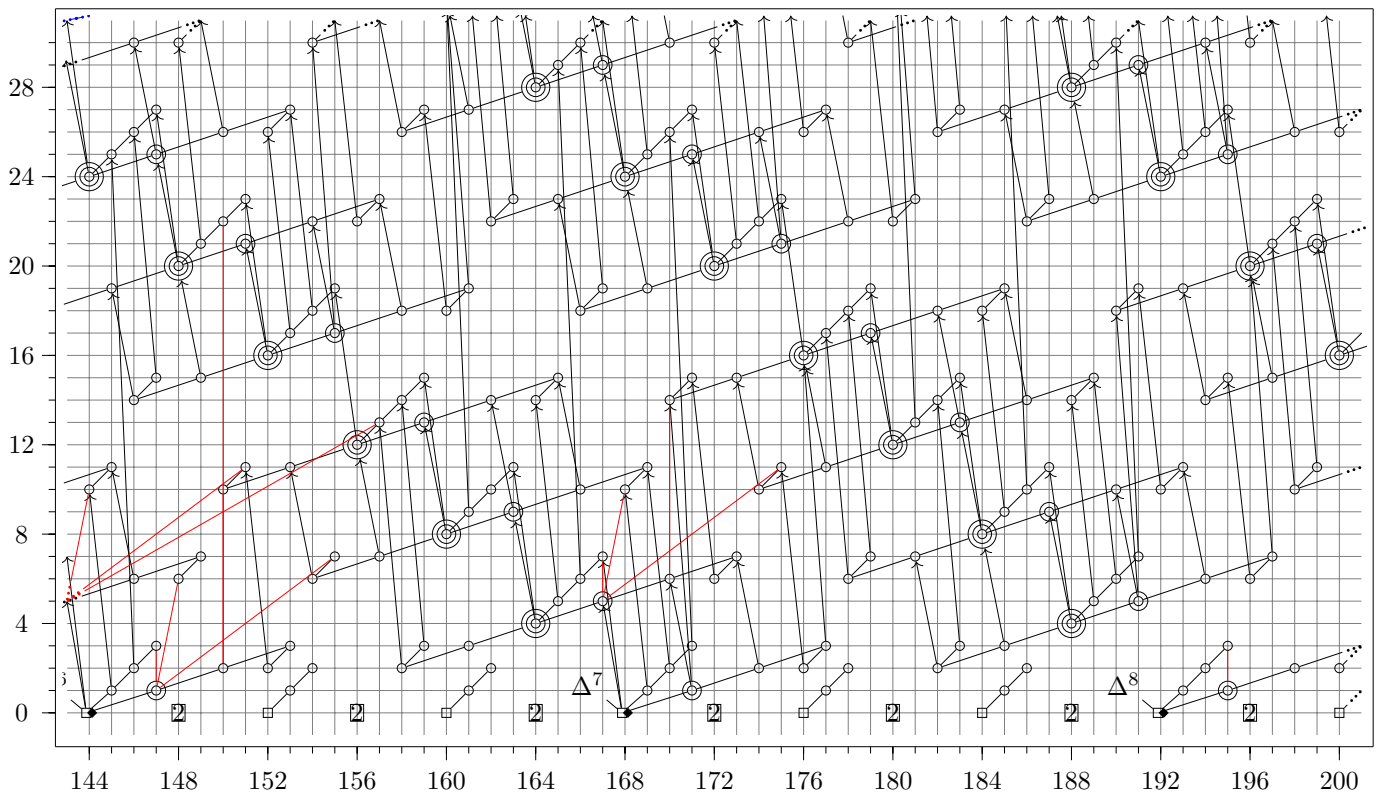}}
    \caption{The DSS for $\Tmf_{(2)}$ from the $\uE_5$-page in the range $144\leq n \leq 200$.}
    \label{efiveprime2_part4}
\end{figure}





\end{landscape}

%
%
%
%

\printbibliography[heading=bibintoc]

\end{document}